\title[Shadows of 2-knots and complexity]
{Shadows of 2-knots and complexity}
\author{Hironobu Naoe}
\address{
Department of Mathematics, Chuo University, 1-13-27 Kasuga Bunkyo-ku, Tokyo, 112-8551, Japan}
\email{naoe@math.chuo-u.ac.jp}
\keywords{2-knot, Turaev shadow, 4-manifold, complexity, banded unlink diagram. }
\theoremstyle{plain}
\newtheorem{theorem}{Theorem}[section]
\newtheorem{lemma}[theorem]{Lemma}
\newtheorem{proposition}[theorem]{Proposition}
\theoremstyle{definition}
\newtheorem{definition}[theorem]{Definition}
\newtheorem{remark}[theorem]{Remark}
\newtheorem{step}{Step}
\newtheorem{case}{Case}
\newtheoremstyle{mycitation}%
    {}%
    {}%
    {\it}%
    {}%
    {\bf}%
    {}%
    {5pt}%
    {\thmname{#1} \thmnumber{#2}\thmnote{#3}.}%
\theoremstyle{mycitation}
\newtheorem*{citingtheorem}{Theorem}
\newtheoremstyle{myclaim}%
    {}%
    {}%
    {\it}%
    {}%
    {\it}%
    {}%
    {5pt}%
    {\thmname{#1} \thmnumber{#2}\thmnote{#3}.}%
\theoremstyle{myclaim}
\newtheorem{claim}{Claim}
\newcommand{\K}{\mathcal{K}}
\newcommand{\Z}{\mathbb{Z}}
\newcommand{\CP}{\mathbb{CP}^2}
\newcommand{\Int}{\mathrm{Int}}
\newcommand{\Nbd}{\mathrm{Nbd}}
\newcommand{\pr}{\mathrm{pr}}
\newcommand{\Cl}{\mathrm{Cl}}
\newcommand{\gl}{\mathfrak{gl}}
\newcommand{\shco}{\mathrm{sc}}
\newcommand{\spshco}{\mathrm{sc}^{\mathrm{sp}}}
\newcommand{\vt}[1]{(\mathrm{#1})}
\newcommand{\lrangle}[2]{\left\langle #1 \mathrel{}\middle|\mathrel{} #2 \right\rangle}
\definecolor{darkred}{rgb}{.80,.0,.0}
\definecolor{bl}{gray}{0.7}
\newlength{\myheight}
\newlength{\myheighta}
\long\def\@makecaption#1#2{
  \small
  \vskip\abovecaptionskip
  \sbox\@tempboxa{#1. #2}
  \ifdim \wd\@tempboxa >\hsize
    #1. #2\par
  \else
    \global \@minipagefalse
    \hb@xt@\hsize{\hfil\box\@tempboxa\hfil}
  \fi
  \vskip\belowcaptionskip}
\begin{document}
\begin{abstract}
We introduce a new invariant for a $2$-knot in $S^4$, called the shadow-complexity, 
based on the theory of Turaev shadows, 
and we give a characterization of $2$-knots with shadow-complexity at most $1$. 
Specifically, we show that the unknot is the only $2$-knot with shadow-complexity $0$ 
and that there exist infinitely many $2$-knots with shadow-complexity $1$. 
\end{abstract}

\maketitle
\setcounter{tocdepth}{1}
\tableofcontents
\section{Introduction}

A $2$-{\it knot} is a smoothly embedded $2$-sphere in $S^4$. 
The first example of a non-trivial $2$-knot was given by Artin in \cite{Art25}, 
which is so-called a spun knot. 
The non-triviality is of fundamental interest in knot theory. 
In the classical case, namely the $1$-knot case, 
the trefoil knot can be said to be the simplest non-trivial knot 
in some terms of numerical invariants: 
the crossing number, the bridge number, the tunnel number, and so forth. 
Then one naturally wonder which $2$-knot is the simplest non-trivial one. 
The answer to this question should be 
based on as many criteria as possible that measures, in some sense, 
how different a given $2$-knot is from the trivial one. 

There are several studies on numerical invariants for $2$-knots. 
For examples, 
we refer the reader 
to \cite{Sat00, SS04, Yaj64} for the triple point number and 
to \cite{SS05, Sat09} for the sheet number. 
These two invariants are defined with broken surface diagrams \cite{CRS97, Ros98}, 
which is a natural analogue of classical knot diagrams. 
Yoshikawa introduced the ch-index by using ch-diagrams (also called marked graph diagrams), 
and he gave the table of $2$-knots with ch-index up to 10 in \cite{Yos94}. 
These invariants measure how complicated the descriptions of a given $2$-knot must be, 
like the crossing numbers for $1$-knots. 
On the one hand, for examples, the bridge number, the tunnel number for classical knots seem to be the complexity of the embedding or the complement. 
Recently, bridge positions of knotted surfaces, called bridge trisection, was introduced by Meier and Zupan in \cite{MZ17, MZ18} 
using a trisection (see \cite{GK16} for the details) of the ambient $4$-manifold, 
and the bridge numbers for knotted surfaces were defined by using this notion. 
Incidentally, Kirby and Thompson introduced an integer-valued invariant, 
called the $\mathcal{L}$-invariant or the Kirby-Thompson length, for closed $4$-manifolds, 
and this notion was adapted to the knotted surfaces in \cite{BCTT22}. 

In the present paper, we propose to study $2$-knots using a Turaev's {\it shadow}. 
The notion of shadows was introduced by Turaev \cite{Tur94} in the 1990s. 
A shadow is a simple polyhedron embedded in a closed $4$-manifold as a $2$-skeleton, 
in other words, a simple polyhedron such that the complement of its neighborhood is diffeomorphic to a $4$-dimensional $1$-handlebody. 
Turaev showed that regions of a shadow are naturally equipped with half-integers such as ``relative self-intersection numbers'', 
which has a sufficient information to reconstruct the ambient $4$-manifold. 
It is known as Turaev's reconstruction theorem. 
Thus, a shadow can be treated as a description of a $4$-manifold, which brings about several studies: 
Stein structures, spin${}^c$ structures and complex structures, 
stable maps and hyperbolic structures on $3$-manifolds, Lefschetz fibrations, and so on, see \cite{Cos06,Cos08,CT08,IK17,IN20} for examples. 
Moreover, as another worth of the theory of shadows, 
we can define a complexity for $4$-manifolds, called the {\it shadow-complexity}. 
Costantino introduced this notion in \cite{Cos06b}. 
The shadow-complexity of a $4$-manifold is defined as the minimum number of specific points called true vertices contained in a shadow of the $4$-manifold. 
He also gave the classification of closed $4$-manifold with complexity up to $1$ in the special case. 
Martelli gave a characterization of all the closed $4$-manifolds with shadow-complexity $0$ in \cite{Mar11}. 
He also showed in the paper that a closed simply-connected $4$-manifold with shadow-complexity $0$ is diffeomorphic to 
$S^4$ or the connected sum of some copies of $\CP$, $\overline{\CP}$ and $S^2\times S^2$. 
This implies that the shadow-complexity is a diffeomorphism invariant. 
Note that exotic smooth structures on $\CP\# k\overline{\CP}$ have been found for some $k$. 
In \cite{KMN18}, Koda, Martelli and the present author introduced a new complexity, called the connected shadow-complexity, 
and they gave a characterization of all the closed $4$-manifolds with connected shadow-complexity at most $1$. 

If a surface $F$ is embedded in a shadow of a $4$-manifold, 
then such $F$ in the $4$-manifold is smoothly embedded and generally knotted. 
In view of this situation, we define a shadow of a $2$-knot $K$ 
as a shadow $X$ of the ambient $4$-manifold $S^4$ such that $K$ is embedded in $X$. 
Of course, we can define a shadow for a knotted surface as well (c.f. Remark~\ref{rmk:non-trivial_Kirby_diag}), 
but the focus in this paper is $2$-knots. 

Every closed $4$-manifold admits a shadow, which follows from the existence of a handle decomposition. 
We can also show the following. 
\begin{citingtheorem}
[\ref{thm:2-knot_shadow}]
Every $2$-knot admits a shadow. 
\end{citingtheorem}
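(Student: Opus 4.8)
The plan is to produce the shadow as a spine of a regular neighborhood of $K$, enlarged by finitely many $2$-cells, using the handle-theoretic construction behind the cited fact that every closed $4$-manifold admits a shadow. Recall how that construction works: given a handle decomposition of $S^4$, the union $W$ of the handles of index at most $2$ collapses onto a simple polyhedron $P$, and the handles of index at least $3$, read upside down, form a $4$-dimensional $1$-handlebody; hence $S^4 = \Nbd(P) \cup (\text{1-handlebody})$ exhibits $P$ as a shadow. My aim is to choose the decomposition so that $K \subset P$.

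First I would put $K$ into normal form by means of a banded unlink diagram: an unlink $L \subset S^3$ together with bands $b_1, \dots, b_m$ such that surgering $L$ along the bands yields an unlink $L'$, with $K$ the surface swept out as the radial coordinate increases (caps on $L$, then the band traces, then caps on $L'$). Equivalently, choose a Morse function $f$ on $S^4$, Morse on $K$ and with $K$ invariant under a gradient-like flow, whose restriction $f|_K$ has all minima below all saddles below all maxima. Near a critical point of $f|_K$ of index $j \le 2$ I would impose the local model $f = -x_1^2 - \dots - x_j^2 + x_{j+1}^2 + \dots + x_4^2$ with $K = \{x_3 = x_4 = 0\}$; then this point is an index-$j$ critical point of $f$ whose descending cell is exactly the corresponding cell of $K$. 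With $\nabla f$ tangent to $K$, the descending cells of these critical points assemble precisely into $K$, so $K$ is a subcomplex of the spine $P$ obtained from the handles of index $\le 2$.

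The decisive point, and the main obstacle, is that $K$ alone cannot be the whole spine: the exterior $E = S^4 \setminus \Int \Nbd(K)$ is generally not a $1$-handlebody (its fundamental group is the knot group), so $P$ must strictly contain $K$. The extra $2$-cells of $P$ should come from additional index-$\le 2$ critical points placed off $K$, whose descending disks cap the top unlink $L'$ and, more generally, kill $\pi_1$ and $H_2$ of the complement. Concretely I would attach $2$-handles to $\Nbd(K)$ along curves in $\partial \Nbd(K) = S^2 \times S^1$ read off from the banded unlink diagram, using that both $L$ and $L'$ bound disjoint disks, and then check that the remaining piece of $E$ is a $1$-handlebody; the cores of these $2$-handles, together with $K$, form $P$. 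After a generic perturbation the capping disks, the bands, and the crossings of the diagram meet transversally, so $P$ is an honest simple polyhedron with finitely many true vertices. Verifying that this enlargement can always be arranged so that the complement becomes a $1$-handlebody, while keeping $P$ simple, is where the real work lies; the unknottedness of $L$ and $L'$ is exactly what makes it possible.
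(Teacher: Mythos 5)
Your opening strategy --- a handle decomposition of $S^4$ adapted to $K$ so that $K$ lies in the spine of the index-$\leq 2$ part --- is exactly the paper's (one-line) proof: take a handle decomposition of $S^4$ \emph{relative to} $\Nbd(K;S^4)$, collapse $\Nbd(K;S^4)\cup(1\text{-handles})\cup(2\text{-handles})$ onto a simple polyhedron containing $K$ by the relative form of Turaev's theorem, and observe that the $3$- and $4$-handles, turned upside down, form the required $1$-handlebody. But your concrete final step is not merely ``real work left undone''; it is impossible in general. You propose that the shadow be $P=K\cup(\text{cores of }2\text{-handles attached to }\Nbd(K)\text{ along curves }c_i\subset\partial\Nbd(K)=S^2\times S^1)$, together with the vertical annuli joining the $c_i$ to $K$, with the rest of the exterior a $1$-handlebody. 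If such a $P$ were a shadow, van Kampen would force $G(K)\cong\Z$: handles of index $\geq 3$ do not change $\pi_1$, so $G(K)$ is computed from $S^2\times S^1$ (whose $\pi_1$ is generated by the meridian $\mu$) with $2$-cells attached along the $c_i$, and each $[c_i]$ is a power of $\mu$; hence $G(K)$ is cyclic, and since it surjects onto $\Z$ it is infinite cyclic. This is precisely the paper's Proposition~3.6 (and Remark~3.7) applied to a shadow whose complementary part $X'$ is a union of disks. So for the spun trefoil, say, no choice of curves $c_i$ ``read off from the banded unlink diagram'' can make the remaining piece a $1$-handlebody, and the unknottedness of $L$ and $L'$ cannot rescue this.

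What your $P$ is missing is an extra two-dimensional region, not of this capped-annulus form, that carries the knot group. In the paper's constructive proof (Section~4) this appears automatically: the polyhedron is the mapping cylinder of a generic projection of $\Gamma=L\cup(\text{cores of the bands})$ onto a disk $D_0$ (a spine of the $0$-handle, not of $\Nbd(K)$), so it contains the planar region $D_0\setminus(D_1\sqcup\cdots\sqcup D_m)$ with true vertices at the crossings of the diagram; only then are the bands adjoined and $0$-framed $2$-handles attached along the unlink $L_{\bm{b}}$, with Laudenbach--Po\'enaru supplying the $3$- and $4$-handles. Separately, your intermediate claim that ``$K$ is a subcomplex of the spine $P$ obtained from the handles of index $\leq 2$'' does not follow from the absolute construction you cite: Turaev's spine is not the union of descending cells (those have dimensions $0$, $1$, $2$, and the spine is instead built over a spine of the $1$-handlebody part via mapping cylinders), so putting $K$ inside the spine already requires the relative collapsing argument --- which is the paper's actual proof, and which allows $1$-handles and more general regions attached to $\Nbd(K)$, exactly what your construction forbids.
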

The above theorem allows us to define a complexity for $2$-knots
\[
\{2\text{-knots in }S^4\}
\to\Z_{\geq0} 
\]
as the minimum number of true vertices in a shadow of $K$. 
We call this number the {\it shadow-complexity} of $K$ and write it as $\shco(K)$. 

The aim of this paper is to give the classification of $2$-knots with shadow-complexity at most $1$. 
Firstly, one might expect that the unknot is the only $2$-knot with shadow-complexity $0$ 
as well as some other numerical invariants for $1$- or $2$-knots, which is indeed true. 
\begin{citingtheorem}
[\ref{thm:complexity0}]
A $2$-knot has shadow-complexity $0$ if and only if it is unknotted. 
\end{citingtheorem}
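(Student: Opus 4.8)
The plan is to prove the two implications separately; the implication ``unknotted $\Rightarrow \shco = 0$'' is a short explicit construction, while the converse is the substantive part. For the easy direction I would produce a vertex-free shadow of $S^4$ containing the unknot. Using the decomposition $S^4 = \partial(D^3 \times D^2) = (S^2 \times D^2) \cup (D^3 \times S^1)$, the summand $D^3 \times S^1 \cong S^1 \times D^3$ is a $4$-dimensional $1$-handlebody, so by Turaev's reconstruction the core sphere $K_0 = S^2 \times \{0\}$, carried with trivial normal bundle (gleam $0$), is itself a shadow of $S^4$. As a closed surface it has empty singular set and hence no true vertices, while $K_0$ bounds an evident $3$-ball and is therefore unknotted. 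Consequently the unknot admits a shadow with no true vertices, that is, $\shco = 0$.

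For the converse, suppose $\shco(K) = 0$ and fix a vertex-free shadow $X$ of $S^4$ with $K \subset X$. The first step is structural. Since $X$ has no true vertices, its singular set is a disjoint union of triple circles and its regions are surfaces glued along these circles, so $X$ falls under Martelli's description of shadows of complexity zero \cite{Mar11}. Using that the reconstructed manifold is $S^4$, together with the shadow moves employed in \cite{Mar11, KMN18}, I would reduce $X$ to a standard model while keeping track of $K$: the aim is a normal form in which $K$ appears as a single spherical region with gleam $0$, as in the model $K_0$ above, and in which the complementary $1$-handlebody is attached in the standard way.

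The second step is to read off the knot type of $K$ from this normal form. From a vertex-free shadow I would extract a handle decomposition of the pair $(S^4, K)$, equivalently a banded unlink diagram for $K$; the absence of true vertices forces the relevant attaching curves to form an unlink with no essential bands, so the diagram is the trivial one and $K$ bounds an embedded $3$-ball. Equivalently, one checks that the exterior $S^4 \setminus \Int\Nbd(K)$ together with its boundary framing matches that of $K_0$, so that $K$ is isotopic to the standard sphere.

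The main obstacle is controlling the position of $K$ throughout the structural reduction. A priori $X$ may be a complicated vertex-free polyhedron and $K$ may meet the triple circles intricately, passing straight through two of the three incident sheets at each such circle; the danger is that the knotting of $K$ is concealed in the combinatorics of $X$ rather than in any true vertex. The heart of the argument is thus to show that the simplifying moves can always be performed compatibly with the embedded surface, so that reducing $X$ to its normal form carries $K$ to the standard sphere without altering its isotopy class. Once this compatibility is secured, the triviality of $K$ follows from the model.
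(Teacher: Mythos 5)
Your easy direction is fine and agrees with the paper (the unknotted sphere, with gleam $0$, is a shadow of itself; the paper dismisses this implication as obvious). The hard direction, however, contains a genuine gap, and it is exactly the one you name yourself. You propose to run Martelli's complexity-zero machinery and ``reduce $X$ to a standard model while keeping track of $K$,'' and separately you assert that ``the absence of true vertices forces the relevant attaching curves to form an unlink with no essential bands.'' Neither claim is argued, and the second is essentially the theorem restated: a vertex-free shadow containing $K$ may a priori be a complicated polyhedron in which $K$ runs through many triple circles, and nothing in Martelli's classification (which concerns only the ambient $4$-manifold) controls an embedded surface, nor are his simplification moves shown by you to be performable relative to $K$. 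Since you explicitly label this compatibility the ``heart of the argument'' and leave it open, what you have is a plan for a proof, not a proof.

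The paper closes this gap by a mechanism your proposal never touches, and it is homological rather than move-theoretic. After arranging $X$ to be closed (Lemma~\ref{lem:can_make_closed}), every component of $S(X)\cap K$ is a circle; such a circle bounds a disk region $D$ on the sphere $K$, and the component $X'$ of $X\setminus K$ attached along it has no true vertices. The encoding-graph analysis (Lemma~\ref{lem:pi_1_of_subpolyhedron1}) gives $\pi_1(X')\cong\Z/2^k\Z$, in particular $H_1(X')$ is finite; then Lemma~\ref{lem:H_1_of_subpolyhedron} --- whose proof exploits the triviality of the intersection form of $S^4$ --- forces $\gl(D)=0$, so the connected-sum reduction (Proposition~\ref{prop:connected_sum}) deletes $X'$ while keeping a shadow of $K$ (Lemma~\ref{lem:disk_region_on_K}). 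Iterating prunes away all of $X\setminus K$, leaving $K$ itself as a shadow with gleam $0$, whence $K$ is unknotted. This is precisely the argument that dispels your stated worry that ``the knotting of $K$ is concealed in the combinatorics of $X$'': knotting cannot hide in a vertex-free complement because finiteness of $H_1$ of the complementary pieces, combined with $Q_{S^4}=0$, kills the gleams that would be needed to conceal it. To complete your proposal you would need either this homological input or a fully worked-out argument that the complexity-zero simplification moves can be made relative to $K$; as written, neither is present.
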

The unknotted $2$-knot admits a special shadow with no true vertices, 
which implies that the unknot is also the only $2$-knot with special shadow-complexity $0$. 
The condition for a shadow to be special is fairly strong, 
and any special polyhedron with one true vertex can not be a shadow of any $2$-knot except for the unknot. 
\begin{citingtheorem}
[\ref{thm:no_spsc=1}]
There are no $2$-knots with special shadow-complexity $1$.
\end{citingtheorem}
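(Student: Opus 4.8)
It suffices to prove that any $2$-knot $K$ admitting a special shadow $X$ with exactly one true vertex is unknotted: the unknot has special shadow-complexity $0$ by Theorem~\ref{thm:complexity0} and the remark following it, so if the value $1$ were attained by some $K$ it could not be attained by the unknot, hence by a knotted one, and such a $K$ would admit a special shadow with a single true vertex, contradicting this assertion. So let $X$ be a special shadow of $S^4$ carrying $K$ and having one true vertex $w$. Since $X$ is special, $\Sing(X)$ is a connected $4$-valent graph whose unique vertex is $w$; hence it consists of exactly two edges, each a loop based at $w$. First I would pin down the global combinatorics. Writing the reconstructed manifold as $\Nbd(X)\cup\left(\natural_k(S^1\times D^3)\right)$, where the $4$-dimensional $1$-handlebody fills $S^4\setminus\Int\Nbd(X)$, an Euler characteristic count gives $2=\chi(X)+1-k$; combined with $\chi(X)=1-2+f=f-1$ for the $f$ disk regions, this yields $f=k+2$. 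Imposing in addition that the reconstructed manifold have trivial fundamental group and trivial intersection form restricts $X$ to a short finite list of candidates, which I would enumerate by recording how the two loops meet at $w$ and how the disk regions are attached along $\Sing(X)$.

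Next I would describe the possible positions of $K$ inside each candidate. As $K$ is a closed surface while the regions are open disks, $K$ must be a union of region closures glued along $\Sing(X)$, subject to the local manifold condition: along each triple edge contained in $K$, exactly two of the three incident sheets belong to $K$, and at the vertex either $w\notin K$ or the link of $K$ at $w$ is a cycle in the link of $w$ in $X$, which is the $1$-skeleton of a tetrahedron; such a cycle is a triangle or a quadrilateral. Together with the demand that $K$ be a sphere, these constraints leave only finitely many configurations for each candidate $X$.

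The heart of the argument, and the step I expect to be the main obstacle, is to show that every resulting $K$ is unknotted. The plan is to remove the true vertex at the level of the knot: I would isotope $K$ off a neighborhood of $w$ so that it comes to lie in a subpolyhedron of $X$ carrying no true vertex, and then recognize this subpolyhedron, with its induced gleams and complementary handlebody, as a shadow of $S^4$ with shadow-complexity $0$. Theorem~\ref{thm:complexity0} then forces $K$ to be unknotted. Equivalently, one may read a banded unlink diagram of $K$ directly from $X$ and check that a single true vertex can only produce a diagram of the unknot. The delicate points are verifying, case by case, that this reduction really produces a shadow of $S^4$ rather than of another $4$-manifold, and that the isotopy pushing $K$ past $w$ is supported in $S^4$ and does not alter the knot type; excluding a configuration in which the vertex is essential to the embedding is exactly where the care is needed.

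Assembling the cases shows that no knotted $2$-knot admits a special shadow with a single true vertex. Hence a knotted $2$-knot has special shadow-complexity at least $2$, while the unknot has special shadow-complexity $0$, and in either case the value $1$ is never realized.
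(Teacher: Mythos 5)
Your reduction and your combinatorial setup are sound, and they agree with the paper's: for a special shadow $X$ with one true vertex, $S(X)$ is an $8$-shaped graph, it must lie entirely on $K$ (it is connected, and a closed surface cannot sit inside a single open disk region, so $K\cap S(X)$ is a nonempty open and closed subset of $S(X)$), the neighborhood $\Nbd(S(X);X)$ is then forced to be $X_{11}$, and exactly one region of $X$ is not on $K$, a disk; your Euler characteristic count $f=k+2$ is also correct. The genuine gap is precisely the step you flag as ``the main obstacle'': proving that the resulting $K$ is unknotted. Your plan --- isotope $K$ off a neighborhood of the vertex $w$ so that it lies in a subpolyhedron of $X$ carrying no true vertex, then invoke Theorem~\ref{thm:complexity0} --- cannot work as stated. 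Since $S(X)\subset K$, the vertex lies on $K$, and producing \emph{any} vertex-free shadow containing (an isotopic copy of) $K$ is, by Theorem~\ref{thm:complexity0}, equivalent to $K$ being unknotted; so this step is circular, and no local push ``past $w$'' can justify it: removing a true vertex from a shadow is a global $4$-dimensional problem, not a local isotopy. Note also that computing the knot group in this situation only yields $G(K)\cong\Z$ (cf.\ Proposition~\ref{prop:one_true_vertex_on_K}), which in the smooth category does not imply unknottedness, since the smooth unknotting conjecture is open; so no fundamental-group or homological bookkeeping from your enumeration in the earlier steps can close the gap either.

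What is actually needed, and what the paper supplies, is a genuinely $4$-dimensional input. From the forced structure ($X=K\cup(\text{one disk region})$, with complement a $1$-handlebody) one reads off the handle decomposition $S^4=\Nbd(K;S^4)\cup(\text{one 2-handle})\cup(\text{two 3-handles})\cup(\text{one 4-handle})$. The paper's Lemma~\ref{lem:handle_decomp_2-knot} shows that such a decomposition forces $K$ to be unknotted: write $\Nbd(K;S^4)\cong S^2\times D^2$ as a $0$-handle plus a $2$-handle attached along a $0$-framed unknot $L$; the attaching circle $L'$ of the extra $2$-handle forms with $L$ a $2$-component link in $S^3$, and by Gompf--Scharlemann--Thompson \cite[Proposition 3.2]{GST10} handle slides of $L'$ over $L$ turn this link into the unlink, after which $K$ can be pushed to the boundary, becomes the attaching sphere of a $3$-handle, and hence bounds a $3$-ball. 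This appeal to \cite{GST10} (a Property~2R-type theorem) is the key idea missing from your proposal; without it, or some equivalent deep substitute, the case analysis you outline cannot be completed.
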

It is noteworthy that the special shadow-complexity for closed $4$-manifolds is a finite-to-one invariant \cite[Corollary 2.7]{Mar05}.
However, that for $2$-knots is not finite-to-one as noted in Remark~\ref{rmk:not_finite-to-one}, 
where we find infinitely many $2$-knots with special shadow-complexity at most $5$. 
We actually have not determined the special shadow-complexity of any non-trivial $2$-knot. 
Note that all the special polyhedra with complexity up to $2$ are listed in \cite[Appendix B]{KN20}, 
so we already have possible candidates of shadows of $2$-knots with special shadow-complexity $2$ if such a $2$-knot exists. 

Before stating the theorem on the complexity $1$ case, we introduce a series of $2$-knots. 
For $n\in\Z$, 
let $K_n$ be a $2$-knot presented by a banded unlink diagram shown in Figure~\ref{fig:Kn}. 
See \cite{HKM20} and Section~\ref{sec:Banded unlink diagrams} for the definition and the details of banded unlink diagrams. 
Note that $K_0$ is trivial. 
The knot $K_{-1}$ is the spun trefoil, and $K_{1}$ is the knot $9_1$ in Yoshikawa's table \cite{Yos94}. 
For any $n\in\Z$, the knot $K_n$ is a ribbon $2$-knots. 
\begin{figure}[tbp]
\includegraphics[width=45mm]{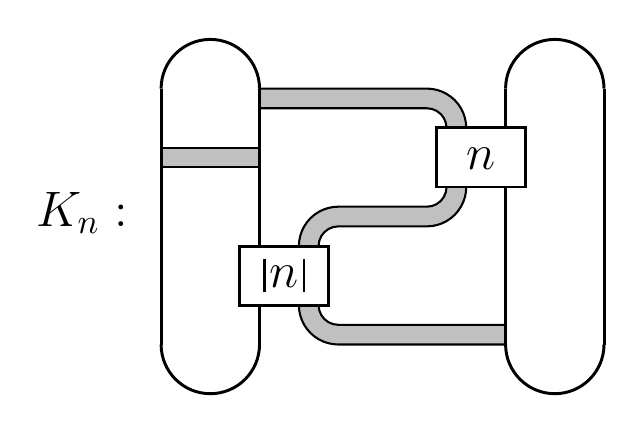}
\caption{Banded unlink diagram of $K_n$ for $n\in\Z$. 
Note that the number written in the lower-left box is the absolute value $|n|$ of $n$. }
\label{fig:Kn}
\end{figure}
As stated in Proposition~\ref{prop:Kn_diffeo}, two $2$-knots $K_n$ and $K_{n'}$ are not equivalent unless $n=n'$, 
which can be distinguished by their Alexander polynomials. 

The following is the main theorem for $2$-knots with shadow-complexity $1$. 
\begin{citingtheorem}
[\ref{thm:sc=1}]
A $2$-knot $K$ with $G(K)\not\cong\Z$ has shadow-complexity $1$ if and only if 
$K$ is diffeomorphic to $K_n$ for some non-zero integer $n$. 
\end{citingtheorem}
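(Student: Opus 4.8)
The plan is to prove the two implications separately, using the banded unlink diagram of $K_n$ as the bridge between the combinatorics of shadows and the $2$-knots themselves. Throughout, observe that the hypothesis $G(K)\not\cong\Z$ together with Theorem~\ref{thm:complexity0} already guarantees $\shco(K)\geq 1$, so the real content is to pin down exactly which knots are realized when $\shco=1$.

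For the \emph{if} direction, I would construct, for each nonzero integer $n$, an explicit shadow $X_n$ of $S^4$ that contains $K_n$ as a subpolyhedron and has exactly one true vertex. Starting from the banded unlink diagram of Figure~\ref{fig:Kn}, I would realize the $2$-sphere $K_n$ from the unlink components and the band, and then attach the remaining regions so that Turaev's reconstruction from the resulting gleams recovers $S^4$; the integer $n$ should be encoded as the gleam of a single disk region, while the lone true vertex should arise from the band. This yields $\shco(K_n)\leq 1$. Since $K_n$ is nontrivial for $n\neq 0$ by Proposition~\ref{prop:Kn_diffeo} (its Alexander polynomial is non-constant), Theorem~\ref{thm:complexity0} forbids $\shco(K_n)=0$, and hence $\shco(K_n)=1$.

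For the \emph{only if} direction, I would start from a shadow $X$ of $K$ realizing $\shco(K)=1$ and reduce it to a normal form. First I would apply the shadow moves that do not create true vertices to simplify the regions and the singular set, so that $X$ becomes one of a short list of simple polyhedra with a single true vertex that can serve as a shadow of $S^4$ carrying an embedded $2$-sphere; here I would lean on the structural analysis of low-complexity shadows in \cite{Mar11, KMN18}. For each surviving polyhedron I would read off how $K$ occupies the regions and, by inverting the construction of the \emph{if} direction, translate the data back into a banded unlink diagram. The free gleam on the relevant disk then plays the role of the integer $n$, and the diagram is forced to be that of $K_n$. Finally, the hypothesis $G(K)\not\cong\Z$ rules out the degenerate value $n=0$, whose knot is the unknot with group $\Z$, leaving precisely the $K_n$ with $n\neq 0$.

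The main obstacle will be this enumeration-and-identification step in the necessity direction: showing that, after simplification, every one-true-vertex shadow of $S^4$ carrying an embedded $2$-sphere falls into the expected list, and that the reconstructed knot is always $K_n$ and never some other $2$-knot. Controlling this requires care both with the gleams---which determine the ambient manifold and must be constrained to give $S^4$---and with the many a priori positions of the sphere relative to the singular set; the condition $G(K)\not\cong\Z$ is what lets me discard the configurations producing an unknotted or otherwise degenerate sphere.
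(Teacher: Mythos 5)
Your overall architecture is the same as the paper's: the lower bound $\shco(K_n)\geq 1$ comes from Proposition~\ref{prop:Kn_diffeo} together with Theorem~\ref{thm:complexity0}, and your necessity argument --- enumerate one-vertex shadows via the low-complexity structure theory of \cite{Mar11,KMN18}, use $G(K)\not\cong\Z$ to discard degenerate configurations, and translate the surviving normal form back into a banded unlink diagram --- is exactly the paper's Theorem~\ref{thm:sc=1_onlyifpart}, carried out there through encoding graphs, the knot-group computations of Lemmas~\ref{lem:X34}, \ref{lem:X8910} and \ref{lem:X11}, the compressing-disk Lemma~\ref{lem:compressing_disks_X_3-4}, and the graph moves of Section~\ref{sec:Classification of 2-knots with complexity one}. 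So no new route there; the hard lemmas are simply deferred.

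The genuine gap is in your \emph{if} direction. Your one concrete design decision --- that ``the lone true vertex should arise from the band'' --- cannot be realized: the band is part of the surface $K_n$, and by Proposition~\ref{prop:one_true_vertex_on_K} any one-vertex shadow whose true vertex lies on the knot has infinite cyclic knot group, whereas $G(K_n)\not\cong\Z$ for $n\neq 0$. Hence in any shadow witnessing $\shco(K_n)=1$ the true vertex must lie in $X'=X\setminus\Int\Nbd(K_n;X)$, with $K_n\cap S(X)$ a single circle splitting $K_n$ into two disks of gleams $\pm g$. Moreover, building a shadow ``starting from the banded unlink diagram of Figure~\ref{fig:Kn}'' by the paper's own recipe (Section~\ref{sec:Banded unlink diagrams}) creates one true vertex for every crossing and every trivalent vertex of the projected graph (Remark~\ref{rmk:BL_true_vertices}); that method gives bounds like $\shco(K_{-1})\leq 5$ for the spun trefoil (cf.\ Proposition~\ref{prop:sc_twist_torus_knot}), and there is no routine collapse reducing this to a single vertex. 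The paper goes the opposite way: it writes down an abstract one-vertex shadowed polyhedron (the decorated graph of Figure~\ref{fig:shadow_Kn}, with the vertex off the sphere) and then proves, by the band-move/Kirby-calculus computation of Figure~\ref{fig:shadow_Kn_cal2}, that the reconstructed ambient manifold is $S^4$ and that the embedded sphere is exactly $K_n$. That identification computation is the real content of the sufficiency direction (and is reused in necessity), and your plan supplies no substitute for it: your construction implicitly assumes both that the complement of the polyhedron is a $4$-dimensional $1$-handlebody and that the sphere you build is $K_n$, which is precisely what has to be proved.
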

The unknotting conjecture says that a $2$-knot is unknotted if its knot group is infinite cyclic, 
which is known to be true in the topological category in \cite{FQ90} 
and is also studied in the smooth category in \cite{Kaw21} (see also \cite{Kaw22}). 
Supposing the unknotting conjecture is true in the smooth category, 
we obtain the complete classification of all $2$-knots with shadow-complexity at most $1$. 

One interpretation of the shadow-complexity for $2$-knots is an analogue to the tunnel number for $1$-knots. 
We recall that the tunnel number of a $1$-knot is the minimum number of $1$-cells 
such that the complement of the neighborhood of the union of the $1$-knot and the $1$-cells is a $3$-dimensional $1$-handlebody. 
The procedure to make the complement trivial is similar to a construction of a shadow from a $2$-knot. 
By definition, a shadow of a $2$-knot $K$ can be obtained from $K$ by attaching $1$-cells and $2$-cells 
so that the complement of the neighborhood is diffeomorphic to a $4$-dimensional $1$-handlebody. 
In this sense, the shadow-complexity can seem to measure the non-triviality of the complement of a given $2$-knot. 

\subsection*{Organization}
In Section~\ref{sec:Preliminaries}, 
we give a review of the theory of Turaev shadows and encoding graphs. 
In Section~\ref{sec:Shadows of 2-knots and knot groups}, 
we define a shadow of a $2$-knot, 
and give a presentation of the knot group using a shadow. 
In Section~\ref{sec:Banded unlink diagrams}, 
we introduce a banded unlink diagram, from which we construct a shadow of a $2$-knot. 
In Section~\ref{sec:Modifications and fundamental groups}, 
we introduce two important modifications: compressing disk addition and connected-sum and reduction. 
In Section~\ref{sec:2-knots with complexity zero}, 
we give the proof for the complexity zero case. 
A large part of Section~\ref{sec:Existence of 2-knots with complexity one} is devoted to 
compute the knot groups of shadows having one true vertex. 
In Section~\ref{sec:Classification of 2-knots with complexity one}, 
we give the proofs for the complexity one case with describing the $2$-knots in banded unlink diagrams. 

\subsection*{Acknowledgement}
The author would like to express his gratitude to Masaharu Ishikawa, Seiichi Kamada and 
Yuya Koda for many valuable suggestions and their kindness, 
and also to Mizuki Fukuda for useful discussion. 
This work was supported by JSPS KAKENHI Grant Number 20K14316. 
\section{Preliminaries}
\label{sec:Preliminaries}
For polyhedral spaces $A\subset B$, 
let $\Nbd(A;B)$ denote a regular neighborhood of $A$ in $B$. 
If $B$ collapses onto $A$, we use the notation $B\searrow A$. 

For integers $0\leq k\leq n$, 
an $n$-{\it dimensional} $k$-{\it handlebody} is an $n$-dimensional manifold 
admitting a handle decomposition consisting of handles whose indices are at most $k$. 

Throughout this paper, we assume that any manifold is compact, connected, oriented and smooth unless otherwise noted. 

\subsection{Simple polyhedra and shadows of $4$-manifolds with boundary}
A {\it simple polyhedron} is a compact connected space whose every point has 
a regular neighborhood homeomorphic to one of (i)-(v) in Figure~\ref{fig:local_model}. 
\begin{figure}[tbp]
\includegraphics[width=1\hsize]{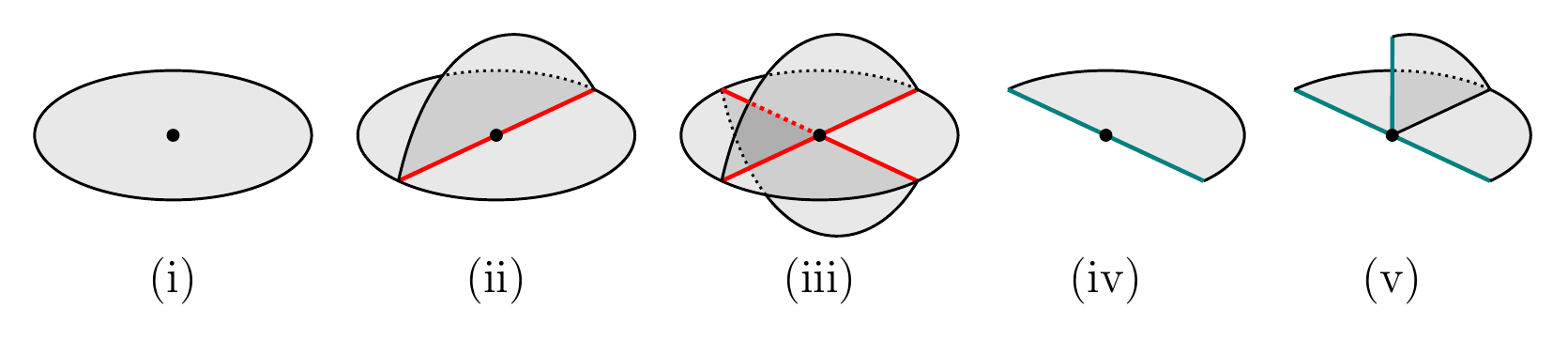}
\caption{Local models of simple polyhedra.}
\label{fig:local_model}
\end{figure}
Let $X$ be a simple polyhedron. 
The {\it singular set} $S(X)$ of $X$ is the set of points 
of type~(ii), (iii) or (v) in Figure~\ref{fig:local_model}. 
The point of type~(iii) is called a {\it true vertex}. 
Let $c(X)$ denote the number of true vertices contained in $X$, and this number is called the {\it complexity} of $X$. 
A connected component consisting of points of type~(ii) is called a {\it triple line}. 
The {\it boundary} $\partial X$ of $X$ is the set of points of type~(iv) or (v). 
It is clear that the boundary of a simple polyhedron is a 
(possibly non-connected) trivalent graph. 
If $\partial X=\emptyset$, we say that $X$ is {\it closed}. 
A {\it region} of $X$ is a connected component of $X\setminus S(X)$. 
A region is called a {\it boundary region} 
if it contains a point of $\partial X$ (or equivalently, a point of type~(iv)), 
otherwise it is called an {\it internal region}.
If every region is simply-connected, then $X$ is called a {\it special polyhedron}. 

We then define a shadow of a $4$-manifold with boundary. 
\begin{definition}
Let $M$ be a $4$-manifold with boundary. 
A simple polyhedron $X$ properly embedded in $M$ is called a {\it shadow} of $M$ 
if $X$ is locally flat in $M$ and $M\searrow X$. 
\end{definition}
Note that $X$ is {\it locally flat} in $M$ if for any $x\in X$, 
there exists a local chart $(U_x,\phi_x)$ around $x$ in $M$ such that $X\cap U_x$ is contained in a smooth $3$-ball in $U_x$. 

The notion of a shadow was introduced by Turaev \cite{Tur94}, and he proved the following. 
\begin{theorem}
[Turaev, \cite{Tur94}]
\label{thm:Tuarev}
Any $4$-dimensional $2$-handlebody admits a shadow. 
\end{theorem}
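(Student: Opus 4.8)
The plan is to construct the shadow directly from a handle decomposition of the $2$-handlebody $M$, building up a $2$-dimensional simple polyhedron one handle-index at a time and checking at each stage that the partial handlebody collapses onto it locally flatly. Since $M$ is connected I would first arrange the decomposition to have a single $0$-handle, so that $M=h^0\cup(\text{$1$-handles})\cup(\text{$2$-handles})$. The guiding principle is that each handle $D^k\times D^{4-k}$ collapses onto its core $D^k\times\{0\}$, and the task is to make these cores fit together into a properly embedded, locally flat simple polyhedron.

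For the base case, write the $0$-handle as $h^0=D^2\times D^2$; it collapses onto its core $D^2\times\{0\}$, so a single $2$-disk $X_0$ (whose boundary lies in $\partial M$, so that its points have models (i) and (iv) of Figure~\ref{fig:local_model}) serves as a shadow of $D^4$. Attaching a $1$-handle $D^1\times D^3$ along $S^0\times D^3$, I would collapse it onto the band $D^1\times D^2$ and glue this band to $X_0$. Running over all $1$-handles produces a $2$-polyhedron $X_1$ -- a disk with bands -- onto which the $4$-dimensional $1$-handlebody $N=h^0\cup(\text{$1$-handles})$ collapses. This step is routine: no triple lines are yet created, $X_1$ is an embedded surface, and local flatness is immediate.

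The essential step is the attachment of the $2$-handles. A $2$-handle $D^2\times D^2$ is glued to $\partial N$ along $S^1\times D^2$, that is, along a framed component $\gamma$ of the attaching link $L\subset\partial N$, and it collapses onto its core disk $D^2\times\{0\}$. To incorporate this core disk into the shadow I must first bring $\gamma$ onto the polyhedron $X_1$. The idea is to use a diagram of $L$: after isotoping $\partial N$ so that $L$ projects to a generic diagram lying over $X_1$, each core disk is pushed down and capped onto $X_1$ along the corresponding strand. Along each such strand three sheets of the polyhedron meet in a triple line, and at each crossing of the diagram two triple lines meet in a true vertex of type (iii). The framing of $\gamma$ is automatically recorded by the way the capping disk is embedded, that is, by its twisting relative to $X_1$, which is exactly the half-integer self-intersection data discussed in the introduction. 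One then checks that the union is again a simple polyhedron, i.e. that every point still has a neighborhood of one of the models (i)--(v).

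The main obstacle I anticipate is precisely this last step: ensuring that the framed attaching link of the $2$-handles can be laid onto the lower shadow locally flatly, and that after resolving all crossings the resulting object has only the allowed local models -- no worse singularities than triple lines and true vertices -- while faithfully recording the framings. Once this is done, combining the collapse of each $2$-handle onto its core with a collapse of $N$ onto $X_1$ that simultaneously carries the attaching circles onto $X_1$ yields a collapse $M\searrow X$ onto the simple polyhedron $X:=X_1\cup(\text{core disks})$. I would finish by confirming that $X$ is properly embedded and locally flat in $M$, so that $X$ is the desired shadow.
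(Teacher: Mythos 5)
First, note that the paper contains no proof of this statement: Theorem~\ref{thm:Tuarev} is quoted from Turaev's book \cite{Tur94} and used as a black box (for instance, to deduce Theorem~\ref{thm:2-knot_shadow}). So your sketch can only be compared with the classical proof, and it essentially \emph{is} the classical proof; it is also the same technique the paper itself uses in Section~\ref{sec:Banded unlink diagrams}: project the $1$-dimensional attaching data generically onto a $2$-dimensional polyhedron, form the mapping cylinder of the projection, and cap with the $2$-handle cores, so that triple lines appear along the projected strands and true vertices at the crossings.

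The step you flag as the main obstacle is indeed the crux, and it closes exactly as you predict once ``pushing down and capping'' is phrased as a mapping cylinder. Write $N=(\text{$0$-handle})\cup(\text{$1$-handles})\cong H\times[0,1]$ with $H$ a genus-$k$ $3$-dimensional handlebody; your $X_1$ is a spine $\Sigma$ of $H$ (a disk with $k$ bands), and $N\cong\Sigma\times D^2$, so $X_1$ is properly embedded, locally flat, and $N\searrow X_1$. Isotope the attaching link $L$ into $\Int H\times\{1\}\subset\partial N$, let $\pi:L\to\Sigma$ be the composition of the projection $H\times[0,1]\to H$ with the retraction $H\to\Sigma$, perturbed to a generic immersion, and take $X=\Sigma\cup\mathrm{Cyl}(\pi)\cup(\text{cores of the $2$-handles})$. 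The observation that makes your local check painless is that in the mapping cylinder the wall over a strand is glued only to $\Sigma$ (via $(x,0)\sim\pi(x)$), never to another wall; hence at a double point $p$ of the diagram the two walls meet each other only at $p$ itself, and the link of $p$ consists of a circle (coming from $\Sigma$) together with two \emph{disjoint} arcs joining antipodal pairs of points on it --- the $1$-skeleton of the tetrahedron, i.e.\ exactly model~(iii) and nothing worse. Local flatness at $p$ is arranged by sending the wall of the over-strand into one normal half-space of a smooth $3$-ball and the wall of the under-strand into the opposite one, which is where the crossing information is used. Finally, your worry about ``faithfully recording the framings'' is not part of this theorem: whatever the framing, a $2$-handle collapses onto its core union its attaching tube, so $M\searrow N\cup(\text{cores})\searrow X$ regardless; the framings only re-enter through the gleams in the reconstruction theorem (Theorem~\ref{thm:Turaev_reconst}), not in the existence of a shadow.
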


We next define {\it gleams} of regions of a shadow. 
Let $R$ be an internal region of $X$, and set $X_S=\Nbd(S(X);X)$. 
Then there exists a (possibly non-orientable) $3$-dimensional $1$-handlebody $H_S$ in $M$ 
such that $X_S$ is properly embedded in $H_S$ and $H_S \searrow X_S$. 
Set $\bar{R}=R\setminus\Int H_S$. 
Its boundary $\partial\bar{R} = R\cap\partial H_S$ 
forms a disjoint union of circles in the surface $\partial H_S$. 
Set $B=\Nbd(\bar{R};\partial H_S)$, 
which is a disjoint union of some annuli or M\"obius bands. 
Let $\bar{R}'$ be a small perturbation of $\bar{R}$ in $M$ 
such that $\partial \bar{R}'\subset B$ and $|\bar{R}\cap\bar{R}'|<\infty$. 
Then the {\it gleam} $\gl(R)$ is defined as 
\begin{align}
\label{align:gleam}
 \gl(R)=\# (\Int\bar{R}\cap\Int\bar{R}') + \frac12 \# (\partial \bar{R}\cap\partial \bar{R}'), 
\end{align}
where $\#$ is the algebraic intersection number. 

The number of the M\"obius bands in $B$ is actually determined only by the 
combinatorial structure of $X$, that is, it does not depends on the embedding $X\subset M$. 
If it is even, the region $R$ is said to be {\it even}, otherwise {\it odd}. 
The gleam $\gl(R)$ is an integer if and only if $R$ is even. 

\subsection{Shadowed polyhedra and shadows of closed $4$-manifolds}
For a simple polyhedron $X$, 
we assign a half integer to each internal region $R$ of $X$ 
so that it is an integer if and only if $R$ is even. 
Such a polyhedron is called a {\it shadowed polyhedron}. 

The following theorem is known as Turaev's reconstruction theorem. 

\begin{theorem}[Turaev, \cite{Tur94}]
\label{thm:Turaev_reconst}
There is a canonical way to construct a $4$-manifold $M_X$ with boundary 
from a shadowed polyhedron $X$ such that $X$ is a shadow of $M_X$. 
Moreover, the gleam of an internal region of $X$ coincides with that coming from 
the formula~(\ref{align:gleam}). 
\end{theorem}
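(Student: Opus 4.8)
The plan is to build $M_X$ by thickening $X$ stratum by stratum — over its regions, triple lines, and true vertices — and then to verify both that the resulting $4$-manifold collapses onto $X$ and that formula~(\ref{align:gleam}) recovers the prescribed gleams. First I would thicken the singular set. Setting $X_S=\Nbd(S(X);X)$, the combinatorial local models (ii) and (iii) of Figure~\ref{fig:local_model} determine a canonical $4$-dimensional thickening $N_S$ of $X_S$: over a point of a triple line one takes the product of the tripod (three half-disks meeting along a common diameter) with a transverse $2$-disk, and over a true vertex one takes the cone thickening dictated by its local model. Since $X_S$ carries no gleam data — indeed $H_S\searrow X_S$ with $H_S$ a $3$-dimensional $1$-handlebody — this thickening is unique up to diffeomorphism and, crucially, collapses onto $X_S$.

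Next, for each internal region $R$ I attach a ``$2$-handle-like'' piece. Writing $\bar R=R\setminus\Int H_S$ as in the definition of the gleam, the surface $\bar R$ is glued to $N_S$ along $\partial\bar R\subset\partial H_S$. I would thicken $\bar R$ to a $D^2$-bundle over $\bar R$ and attach it to $N_S$ so that the framing of the attaching region realizes the prescribed half-integer $\gl(R)$: when $R$ is even the normal bundle over $\bar R$ is trivial and $\gl(R)\in\Z$ specifies an honest framing, while when $R$ is odd the presence of a M\"obius band in $B=\Nbd(\bar R;\partial H_S)$ forces a half-integer correction, and the assigned value $\gl(R)\in\Z+\tfrac12$ pins down the twisting. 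Boundary regions contribute no gleam and are thickened trivially, producing part of $\partial M_X$. Gluing all of these pieces to $N_S$ yields $M_X$.

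The verification then splits into two claims. That $X$ is a shadow amounts to checking local flatness — immediate from the piecewise thickening, since each stratum sits inside a smooth $3$-ball of its chart — together with $M_X\searrow X$, which follows because each region-thickening collapses onto its core $\bar R$ and $N_S\searrow X_S$, and these collapses are compatible along the gluing. The second claim, that the reconstructed gleam equals the prescribed one, is a local computation: by construction the normal framing of $R$ was chosen to realize $\gl(R)$, and one checks that pushing $\bar R$ off itself as in~(\ref{align:gleam}) counts exactly this framing, with the $\tfrac12$-weighted boundary intersections accounting for the odd case.

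The hard part will be the middle step — organizing the thickening near $S(X)$ so that the region-attachments are mutually consistent. One must show that the framing chosen on each region can be realized simultaneously with the fixed thickening $N_S$, that the even/odd dichotomy is governed precisely by the parity of the number of M\"obius bands in $B$ (hence is combinatorial and independent of the embedding, as asserted just before the theorem), and that no further obstruction arises at the true vertices. Making the bookkeeping of normal framings around triple lines and true vertices precise, and confirming that it matches the self-intersection count of~(\ref{align:gleam}), is where the real content lies.
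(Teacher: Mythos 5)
First, a point of reference: the paper offers no proof of this statement at all --- it is cited directly from Turaev's book \cite{Tur94} (see also \cite{CT08}) --- so your proposal can only be measured against the standard construction in that literature. Your plan does follow the standard route: thicken a neighborhood of the singular set, attach a $D^2$-bundle over each internal region with the gluing twisted according to the gleam, then verify collapsibility and formula~(\ref{align:gleam}). The architecture is the right one.

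However, the two points you defer are not bookkeeping; they are the theorem, and as written both are genuine gaps. (a) The canonicity of $N_S$ is asserted, not argued --- ``$X_S$ carries no gleam data'' is not a reason. What actually makes the thickening canonical is an extension lemma: every homeomorphism of the tripod $Y$ extends to a homeomorphism of $D^2$, uniquely up to isotopy rel $Y$, so the combinatorics of $X_S$ (which of $Y_{3}$, $Y_{12}$, $Y_{111}$ and which vertex models occur, and how they are glued) determine the $3$-dimensional thickening $H_S$ uniquely; orientability of the ambient $4$-manifold then forces $N_S$ to be the unique $I$-bundle over $H_S$ with orientable total space. Without this lemma the word ``canonical'' has no content, and it is also what justifies the assertion (which you use) that the even/odd dichotomy is combinatorial. (b) More seriously, an internal region $\bar R$ generally has several boundary circles, so the gluing of its $D^2$-bundle to $N_S$ carries one unit of framing freedom \emph{per circle}, whereas $\gl(R)$ is a single number. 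You must both specify the reference framing against which twisting is measured --- this is exactly what the bands $B=\Nbd(\bar R;\partial H_S)$ provide, and it is where the half-integer shift for odd regions enters --- and prove that the diffeomorphism type of the result depends only on the \emph{total} twist, not on its distribution among the boundary circles of $R$. That invariance requires a sliding argument along the region connecting its boundary components, and it is precisely why one gleam per region suffices as input data; your proposal never mentions it. Your closing paragraph correctly identifies this as ``where the real content lies,'' but that content is absent, so what you have is a correct plan rather than a proof.
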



The gleam is a kind of ``local self-intersection number'' 
as one can see in the formula~(\ref{align:gleam}). 
Indeed, the intersection form for the $4$-manifold $M_X$, reconstructed from a shadowed polyhedron $X$, 
can be calculated with the gleam (see \cite{Tur94} and the next subsection). 
Especially, if a closed surface $F$ is embedded in a shadowed polyhedron $X$, 
the sum of all the gleams of regions contained in $F$ coincides with the self-intersection number of $F$ in $M_X$. 

We then define a shadow for a closed $4$-manifold. 
\begin{definition}
Let $W$ be a closed $4$-manifold. 
A simple polyhedron $X$ embedded in $W$ is called a {\it shadow} of $W$ 
if it is local flatly in $W$ and $W\setminus \Int\Nbd(X;W)$ is diffeomorphic to a $4$-dimensional $1$-handlebody. 
\end{definition}
By definition, $\partial \Nbd(X;W)$ must be diffeomorphic to the connected-sum of some copies of $S^1\times S^2$. 
Thus, if $X$ is a shadowed polyhedron and $\partial M_X$ is diffeomorphic to 
the connected-sum of some copies of $S^1\times S^2$, 
then $X$ is a shadow of some closed $4$-manifold by \cite{LP72}. 

We then define the complexities of $4$-manifold that was introduced by Costantino in \cite{Cos06b}. 
\begin{definition}
For a $4$-manifold $W$, 
The {\it shadow-complexity} $\shco(W)$ and the {\it special shadow-complexity} $\spshco(W)$ of $W$
are the minimum number of true vertices of all shadows of $W$ and 
that of all special shadows of $W$, respectively. 
\end{definition}
\subsection{Intersection forms}
%
Let $X$ be a shadow of a closed $4$-manifold $W$. 
Since $X$ can be considered as a $2$-skeleton of $W$, 
the inclusion $\iota:X\hookrightarrow W$ induces an epimorphism $\iota_*:H_2(X)\to H_2(W)$. 

We equip each orientable region $R_i$ of $X$ with an orientation arbitrarily. 
Then any element in $H_2(X)$ is represented by a sum $\sum a_i R_i$ 
of oriented regions $R_1,\ldots,R_n$ with integer coefficients $a_1\ldots,a_n\in\Z$. 
Defining the intersection form $Q_X$ on $H_2(X)$ as
\[
Q_X\left(\sum a_i R_i, \sum b_i R_i\right) = \sum a_i b_i \gl(R_i), 
\]
we can calculate the intersection form $Q_W$ on $H_2(W)$ as 
\[
Q_M(\alpha,\beta)=Q_X\left(\sum a_i R_i, \sum b_i R_i\right), 
\]
where $\alpha$ and $\beta$ are the images of $\sum a_i R_i$ and $\sum b_i R_i$ by $\iota_*$, respectively.
See \cite{Tur94} for the details. 

\subsection{Topological types of neighborhoods of singular sets with $c\leq1$}
\label{subsec:Top types of nbd of S(X)}
Let $X$ be a simple polyhedron and $S$ be a connected component of $S(X)$. 
Here we give a review on the topological types of $\Nbd(S;X)$ in the cases $c(\Nbd(S;X))=0$ and $1$. 
Note that $\Nbd(S;X)$ itself is a simple polyhedron. 

First suppose $c(\Nbd(S;X))=0$, that is, $S$ is a circle. 
There are three possibilities $Y_{3},\ Y_{12},\ Y_{111}$ for topological types of $\Nbd(S;X)$. 
These simple polyhedra are interpreted as follows. 
Let $\pi:S^1\times B^3\to S^1$ be the canonical projection, and 
let $L_{3},\ L_{12}$ and $L_{111}$ be the links in $S^1\times B^3$ given in Figure~\ref{fig:0-vertex_link}. 
Then $Y_{3},\ Y_{12}$ and $Y_{111}$ are the mapping cylinders of $\pi$ restricted to $L_{3},\ L_{12}$ and $L_{111}$, respectively. 
\begin{figure}[tbp]
\centering
\includegraphics[width=60mm]{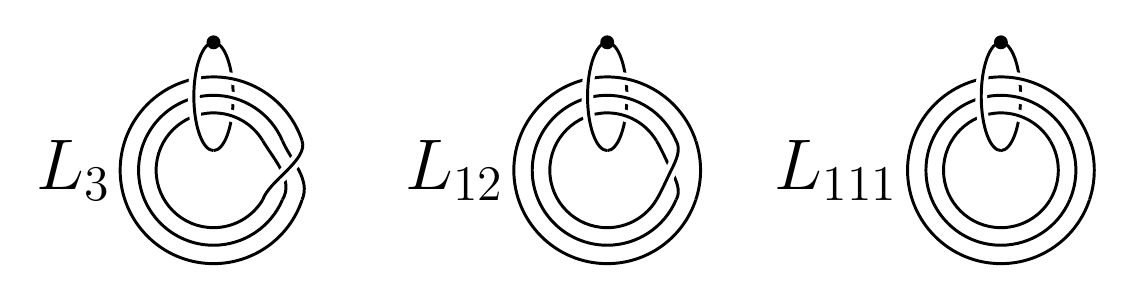}
\caption{Links in $S^1\times B^3$. 
The polyhedra $Y_{3},\ Y_{12},\ Y_{111}$ can be constructed from thes links. }
\label{fig:0-vertex_link}
\end{figure}

Next we suppose $c(\Nbd(S;X))=1$. 
Then $S$ is an $8$-shaped graph, that is, the wedge sum $S^1\vee S^1$ of two circles. 
In this case, there are 11 possible topological types $X_1,\ldots,X_{11}$ of $\Nbd(S_1;X)$, 
which are explained as follows. 
\begin{figure}[tbp]
\includegraphics[width=115mm]{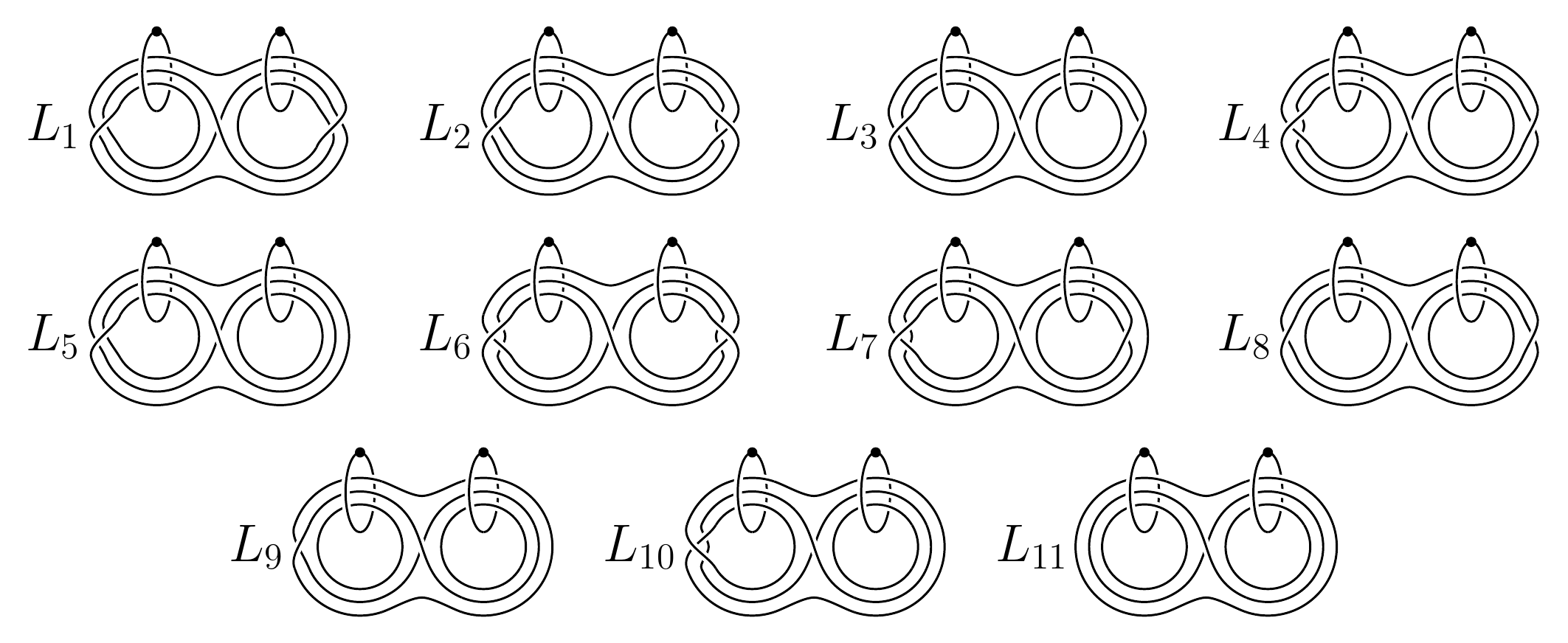}
\caption{Links in $(S^1\times B^3)\natural(S^1\times B^3)$. 
The polyhedra $X_{1},\ldots,X_{11}$ can be constructed from these links. }
\label{fig:1-vertex_link}
\end{figure}
Let $\pi$ be a natural projection from $(S^1\times B^3)\natural(S^1\times B^3)$ to $S^1\vee S^1$, and 
let $L_{i}$ be the link in $(S^1\times B^3)\natural(S^1\times B^3)$ given in Figure~\ref{fig:1-vertex_link} for $i\in\{1,\ldots,11\}$. 
Then $X_{i}$ is the mapping cylinder of $\pi$ restricted to $L_{i}$. 


Note that the over/under information of the links in Figures~\ref{fig:0-vertex_link} and \ref{fig:1-vertex_link} 
does not matter for defining the polyhedra $Y_{3},\ Y_{12},\ Y_{111},\ X_1,\ldots,X_{11}$ 
since they are links in $S^1\times B^3$ or $(S^1\times B^3)\natural(S^1\times B^3)$. 

\subsection{Encoding graphs}
Here we explain a presentation of a simple polyhedron $X$ 
using a graph consisting of some special vertices. 
This notion was introduced by Martelli in \cite{Mar11} 
for the case $c(X)=0$ and generalized in \cite{KMN18} to the case where each connected component of $S(X)$ contains at most one true vertex. 

Let $X$ be a simple polyhedron whose boundary consists of circles and suppose that $c(X)\leq1$. 
We first give a decomposition of $X$ into some fundamental portions. 
Each connected component of $S(X)$ is homeomorphic to $S^1$ or $S^1\vee S^1$. 
As reviewed in the previous subsection, 
a connected component of $\Nbd(S(X);X)$ is homeomorphic to one of 
$Y_{3},\ Y_{12},\ Y_{111},\ X_1,\ldots,X_{11}$. 
Then each component of $X\setminus\Int\Nbd(S(X);X)$ 
is a compact surface corresponding to a region of $X$. 
Note that such a surface is possibly non-orientable, 
and hence it is decomposed into some disks, pair of pants and M\"obius bands. 
Thus, we conclude that $X$ is decomposed (along circles contained in regions) 
into some copies of $D,\ P,\ Y_2,\ Y_{3},\ Y_{12},\ Y_{111},\ X_1,\ldots,X_{11}$, 
where $D$ is a $2$-disk, $P$ is a pair of pants, and $Y_2$ is a M\"obius band. 

The above decomposition induces an {\it encoding graph $G$ of $X$} that has one vertex 
for each portion $D,\ P,\ Y_2,\ Y_{3},\ Y_{12},\ Y_{111},\ X_1,\ldots,X_{11}$ 
or boundary component of $X$. 
Two vertices are connected by an edge if the corresponding portions in $X$ are adjacent. 
Hence each edge $e$ of $G$ corresponds to a simple closed curve 
contained in a region of $X$, which is determined up to isotopy. 
This simple closed curve is called a {\it lift} of $e$. 
\begin{figure}[tbp]
\includegraphics[width=100mm]{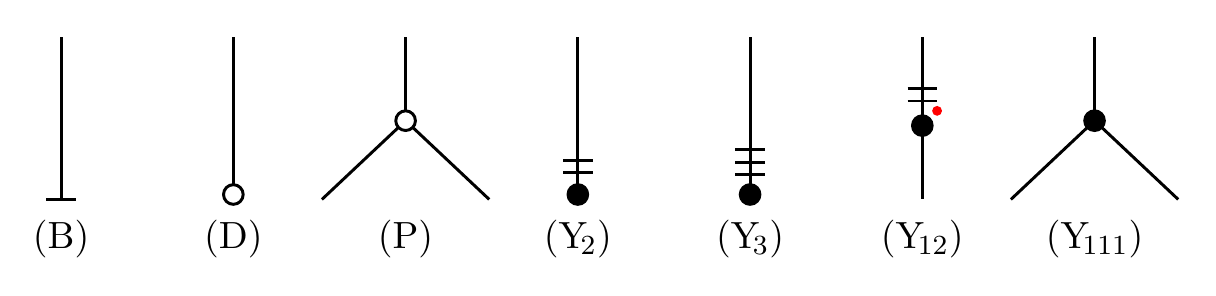}
\caption{The vertices of types 
$(\mathrm B),\ (\mathrm D),\ (\mathrm P),\ (\mathrm Y\!{}_2),\ (\mathrm Y\!{}_{3}),\ (\mathrm Y\!{}_{12}),\ (\mathrm Y\!{}_{111})$, 
which correspond to boundary components of $X$ and portions 
$D,\ P,\ Y_2,\ Y_{3},\ Y_{12}$ and $Y_{111}$, respectively. 
}
\label{fig:encoding_graph}
\includegraphics[width=100mm]{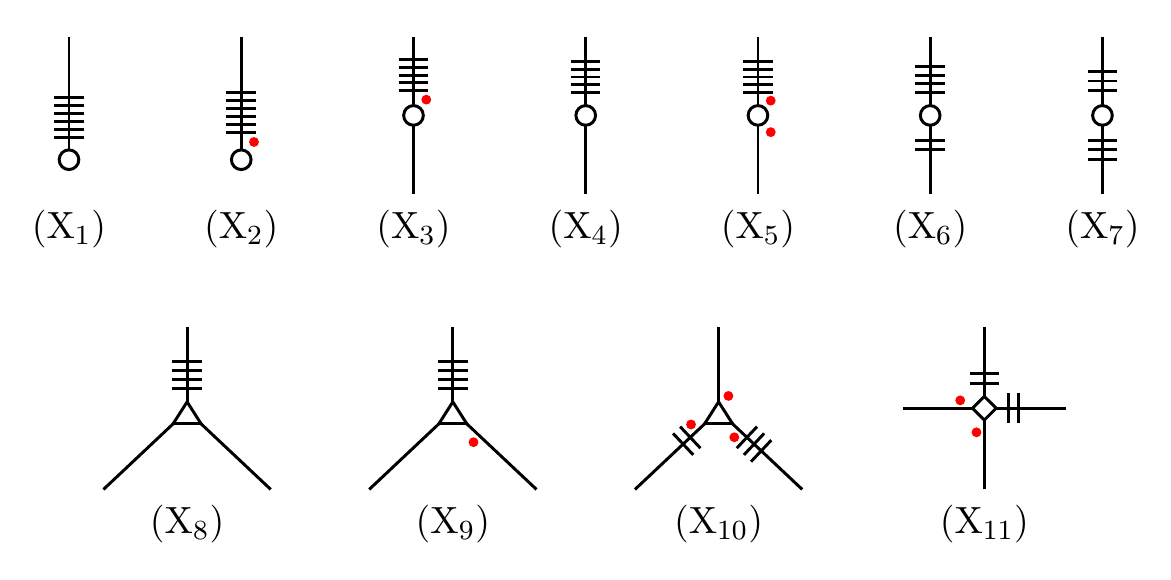}
\caption{The vertices of types $(\mathrm X_1),\ldots, (\mathrm X_{11})$, which correspond to portions $X_1,\ldots,X_{11}$, respectively. }
\label{fig:encoding_graph_Xi}
\end{figure}
The vertices corresponding to $D,\ P,\ Y_2,\ Y_{3},\ Y_{12},\ Y_{111},\ X_1,\ldots,X_{11}$ 
are said to be of types~$\vt{D}$, $\vt{P}$, $\vt{Y\!{}_2}$, $\vt{Y\!{}_{3}}$, $ \vt{Y\!{}_{12}}$, $ \vt{Y\!{}_{111}}$, $\vt{X_1},\ldots,\vt{X_{11}}$, respectively. 
The notations of them are shown in 
Figure~\ref{fig:encoding_graph} and \ref{fig:encoding_graph_Xi}, 
where the vertex of type~$\vt{B}$ indicates a boundary component of $X$. 
See also \cite{KMN18,Mar11}. 

If an encoding graph $G$ is a tree, 
the polyhedron $X$ is uniquely reconstructed from $G$ up to homeomorphism. 
In such a case, we say that $G$ {\it encodes} $X$.
Hence, in this case, the fundamental group of $X$ can be computed from $G$ by using van Kampen's theorem. 
\begin{table}[t]
\begin{tabular}{c||cccc}
\hline
\begin{minipage}[c]{13mm}\centering portion\end{minipage} & 
\begin{minipage}[c]{28mm}\centering graph\end{minipage} & 
\begin{minipage}[c]{22mm}\centering $\pi_1$\end{minipage} & 
\begin{minipage}[c]{40mm}\centering \strut boundary\\ \strut classes in $\pi_1$\end{minipage} \\
\hline
$D$ & 
\begin{minipage}[c]{25mm}
\includegraphics[width=25mm]{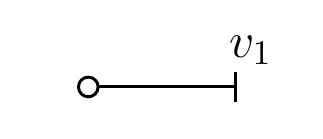}
\end{minipage}& 
$\{1\}$& 
$\gamma_1=1$ \\
\hline
$P$ &
\begin{minipage}[c]{25mm}
\includegraphics[width=25mm]{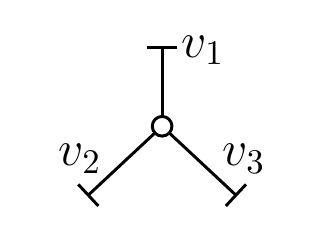}
\end{minipage}& 
$\langle x,y,z\mid xyz\rangle$& 
$\begin{array}{l}
\gamma_1=x,\\ \gamma_2=y,\\ \gamma_3=z
\end{array}$\\
\hline
$Y_2$ &
\begin{minipage}[c]{25mm}
\includegraphics[width=25mm]{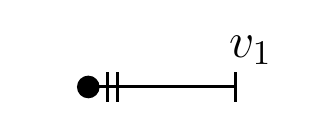}
\end{minipage}& 
$\langle x\rangle$& 
$\gamma_1=x^2$ \\
\hline
$Y_3$ &
\begin{minipage}[c]{25mm}
\includegraphics[width=25mm]{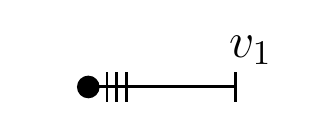}
\end{minipage}& 
$\langle x\rangle$&
$\gamma_1=x^3$ \\
\hline
$Y_{12}$ &
\begin{minipage}[c]{25mm}
\includegraphics[width=25mm]{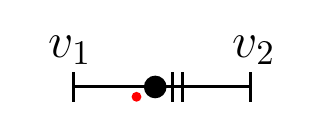}
\end{minipage}& 
$\langle x\rangle$&
$\begin{array}{l}
\gamma_1=x,\\ \gamma_2=x^2
\end{array}$ \\
\hline
$Y_{111}$ & 
\begin{minipage}[c]{25mm}
\includegraphics[width=25mm]{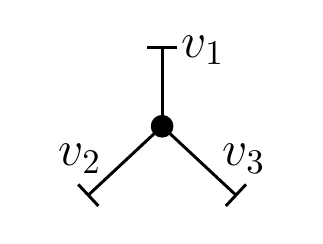}
\end{minipage}& 
$\langle x\rangle$&
$\gamma_1=\gamma_2=\gamma_3=x$\\
\hline
\end{tabular}
\caption{Encoding graphs of $D,\ P,\ Y_2,\ Y_{3},\ Y_{12},\ Y_{111}$, their fundamental groups, and the homotopy classes of the boundary components.}
\label{table:vertex}
\end{table}
\begin{table}[t]
\begin{tabular}{c||cccl}
\hline
\begin{minipage}[c]{13mm}\centering portion\end{minipage} & 
\begin{minipage}[c]{28mm}\centering graph\end{minipage} & 
\begin{minipage}[c]{22mm}\centering $\pi_1$\end{minipage} & 
\begin{minipage}[c]{40mm}\centering \strut boundary\\ \strut classes in $\pi_1$\end{minipage} \\
\hline
$X_1$ & 
\begin{minipage}[c]{25mm}
\includegraphics[width=25mm]{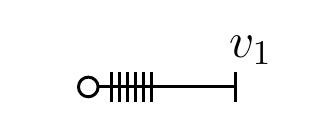}
\end{minipage}& 
$\langle x,y\rangle$&
$\gamma_1=xyx^{-2}y^{-2}$ \\
\hline
$X_2$ &
\begin{minipage}[c]{25mm}
\includegraphics[width=25mm]{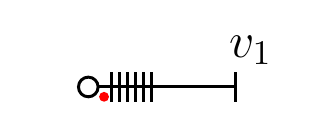}
\end{minipage}&
$\langle x,y\rangle$&
$\gamma_1=xyx^{2}y^{-2}$\\
\hline
$X_3$ &
\begin{minipage}[c]{25mm}
\includegraphics[width=25mm]{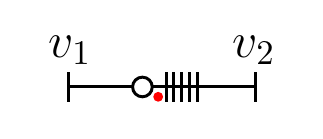}
\end{minipage}&
$\langle x,y\rangle$&
$\begin{array}{l}
\gamma_1=y,\\ 
\gamma_2=xyx^{-2}y^{-1}
\end{array}$\\
\hline
$X_4$ &
\begin{minipage}[c]{25mm}
\includegraphics[width=25mm]{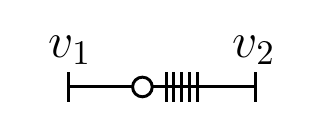}
\end{minipage}&
$\langle x,y\rangle$&
$\begin{array}{l}
\gamma_1=y,\\ 
\gamma_2=xyx^{-2}y
\end{array}$\\
\hline
$X_5$ &
\begin{minipage}[c]{25mm}
\includegraphics[width=25mm]{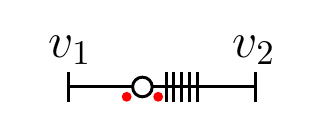}
\end{minipage}&
$\langle x,y\rangle$&
$\begin{array}{l}
\gamma_1=y,\\ 
\gamma_2=xyx^2y^{-1}
\end{array}$\\
\hline
$X_6$ &
\begin{minipage}[c]{25mm}
\includegraphics[width=25mm]{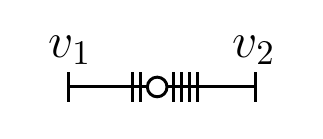}
\end{minipage}&
$\langle x,y\rangle$&
$\begin{array}{l}
\gamma_1=xy,\\ 
\gamma_2=x^2y^{-2}
\end{array}$\\
\hline
$X_7$ &
\begin{minipage}[c]{25mm}
\includegraphics[width=25mm]{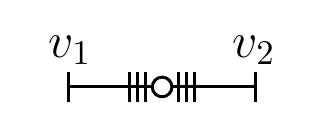}
\end{minipage}&
$\langle x,y\rangle$&
$\begin{array}{l}
\gamma_1=xy^2,\\ 
\gamma_2=x^2y^{-1}
\end{array}$\\
\hline
$X_8$ &
\begin{minipage}[c]{25mm}
\includegraphics[width=25mm]{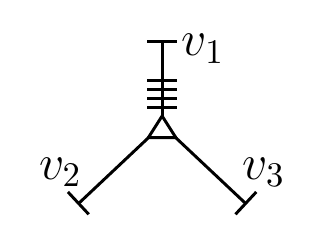}
\end{minipage}&
$\langle x,y\rangle$&
$\begin{array}{l}
\gamma_1=xyx^{-1}y^{-1},\\ 
\gamma_2=x,\\
\gamma_3=y
\end{array}$\\
\hline
$X_9$ &
\begin{minipage}[c]{25mm}
\includegraphics[width=25mm]{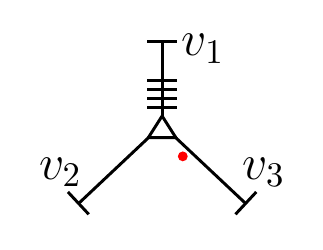}
\end{minipage}&
$\langle x,y\rangle$&
$\begin{array}{l}
\gamma_1=xyxy^{-1},\\ 
\gamma_2=x,\\
\gamma_3=y
\end{array}$\\
\hline
$X_{10}$ &
\begin{minipage}[c]{25mm}
\includegraphics[width=25mm]{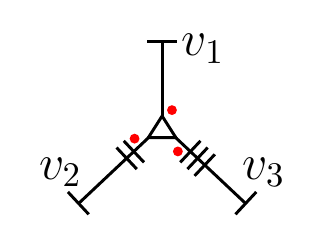}
\end{minipage}&
$\langle x,y\rangle$&
$\begin{array}{l}
\gamma_1=y,\\ 
\gamma_2=xy,\\
\gamma_3=x^2y^{-1}
\end{array}$\\
\hline
$X_{11}$ &
\begin{minipage}[c]{25mm}
\includegraphics[width=25mm]{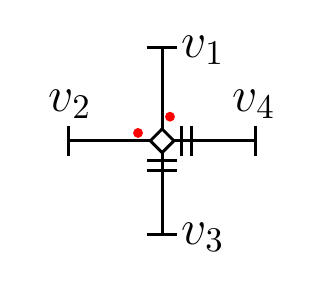}
\end{minipage}&
$\langle x,y\rangle$&
$\begin{array}{l}
\gamma_1=x,\\ 
\gamma_2=y,\\
\gamma_3=xy,\\ 
\gamma_4=xy^{-1}
\end{array}$\\
\hline
\end{tabular}
\caption{Encoding graphs of $X_1,\ldots,X_{11}$, their fundamental groups, and the homotopy classes of the boundary components.}
\label{table:vertex_Xi}
\end{table}
The necessary information is summarized 
in Tables~\ref{table:vertex} and \ref{table:vertex_Xi}, 
which exhibit encoding graphs of the portions 
$D$, $P$, $Y_2$, $Y_{3}$, $Y_{12}$, $Y_{111}$, $X_1,\ldots,X_{11}$, 
presentations of their fundamental groups 
and the homotopy classes of their boundaries. 
Here each vertex of type~$\vt{B}$ is denoted by $v_i$ for some $i\in\{1,\ldots,4\}$, 
and $\gamma_i$ is the corresponding component of the boundary of a portion. 

Each boundary component $\gamma$ of $Y_{3}$, $Y_{12}$, $Y_{111}$, $X_1,\ldots,X_{11}$ 
is represented by a word in $\langle x\rangle$ or $\langle x,y\rangle$ 
as in Tables~\ref{table:vertex} and \ref{table:vertex_Xi}, 
and its length coincides with the number of triple lines along which $\gamma$ goes, counted with multiplicity. 
This number is called the {\it length} of $\gamma$ as well. 

Several edges of $G$ are decorated with some dashes and red dots near the vertices, see Figure~\ref{fig:encoding_graph} and \ref{fig:encoding_graph_Xi}. 
The number of dashes indicates the length of the corresponding boundary, 
and a red dot indicates the parity of the corresponding region of $X$. 
Note that the length for a M\"obius band $Y_2$ has not defined, 
but the incident edge of a vertex of type $\vt{{Y}\!{}_{12}}$ 
is also decorated with two dashes for consistency with the other notations. 
We also note that a red dot of a vertex of type $\vt{Y\!{}_{12}}$ is sometimes omitted 
if no confusion arises. 


Notice that an encoding graph is not uniquely determined from $X$. 
Two moves on encoding graphs are shown in Figure~\ref{fig:YV-IH}: 
the left is a {\it YV-move} and the right is an {\it IH-move}. 
These moves correspond to giving another decomposition of a region, so they do not change the homeomorphism types of the corresponding polyhedra. 
\begin{figure}[tbp]
\includegraphics[width=90mm]{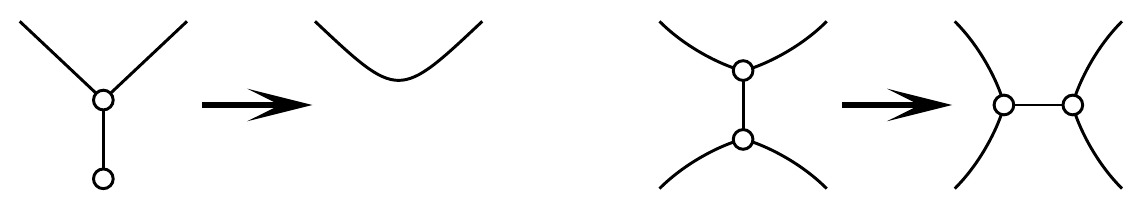}
\caption{YV-move and IH-move.}
\label{fig:YV-IH}
\end{figure}
Suppose that $G$ is a tree and let $G'\subsetneq G$ be a subgraph. 
Let $N(G')$ denote the neighborhood of $G'$, that is, 
$N(G')\subset G$ is obtained from $G'$ by adding all the vertices 
adjacent to vertices of $G'$ and all the edges between them. 
Then we replace each vertex in $N(G')\setminus G'$ with a vertex of type~$\vt{D}$, 
and the obtained graph is called the {\it $\vt{D}$-closure} of $G'$, denoted by $\widehat{G'}$. 
See Figure~\ref{fig:(D)-closure} for an example. 
\begin{figure}[tbp]
\includegraphics[width=70mm]{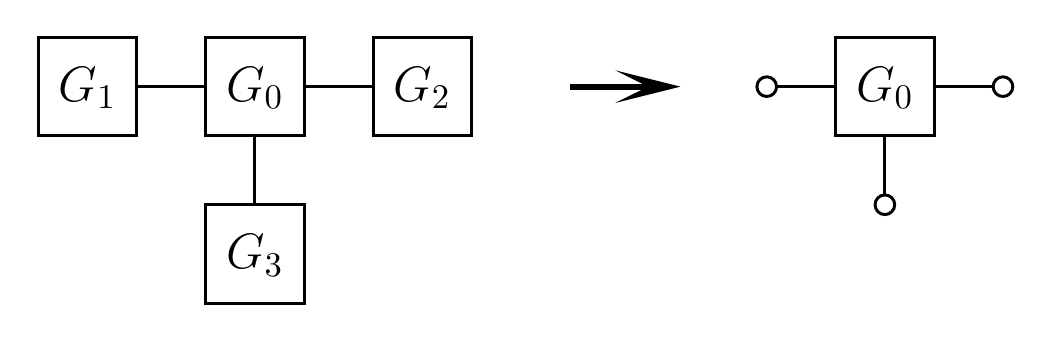}
\caption{The $\vt{D}$-closure of a subgraph $G_0$. }
\label{fig:(D)-closure}
\end{figure}
The left of the figure shows an encoding graph $G$ and subgraphs $G_0,\ldots,G_4$ of $G$, 
and the right one shows the $\vt{D}$-closure $\widehat{G_0}$ of $G_0$. 


\section{Shadows of $2$-knots and knot groups}
\label{sec:Shadows of 2-knots and knot groups}
\subsection{Shadows of $2$-knots}
A smoothly embedded surface $K$ in a $4$-manifold $W$ is called a {\it knotted surface}. 
If $K$ and $W$ are diffeomorphic to $S^2$ and $S^4$, respectively, 
$K$ is especially called a {\it $2$-knot}. 
A $2$-knot is said to be \textit{unknotted} (or \textit{trivial}) 
if it bounds a smooth $3$-ball in $S^4$. 

We now define a shadow of a $2$-knot as follows. 

\begin{definition}
\label{def:shadow_2-knot}
Let $K$ be a $2$-knot. 
A shadow $X$ of $S^4$ is called a {\it shadow} of $K$ 
if $K$ is embedded in $X$. 
\end{definition}
We can define a shadow also for a knotted surface in a similar manner, 
but it is not our focus in this paper. 

Note that an unknotted $2$-knot is a shadow of itself with gleam $0$. 
In general, a $2$-knot is unknotted if and only if it 
admits a shadow without true vertices, 
which will be shown in Theorem~\ref{thm:complexity0}. 

\begin{theorem}
\label{thm:2-knot_shadow}
Every $2$-knot admits a shadow. 
\end{theorem}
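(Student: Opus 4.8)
The plan is to start from any 2-knot $K\subset S^4$ and produce a simple polyhedron $X$ that both contains $K$ and serves as a shadow of the ambient $S^4$. The natural strategy is to build $X$ in stages, beginning with $K$ itself and then enlarging it until its complement becomes a $4$-dimensional $1$-handlebody. First I would invoke a handle decomposition of $S^4$ adapted to $K$: since $K$ is a smoothly embedded $2$-sphere, one may isotope it into a standard position (for instance via a banded unlink presentation, as the paper sets up later, or via a Morse function on $S^4$ restricting nicely to $K$) so that $K$ is swept out by bands and trivial circles. The key observation is that $S^4$ with an open neighborhood of $K$ removed can be handled by Turaev's Theorem~\ref{thm:Tuarev}, which guarantees that any $4$-dimensional $2$-handlebody admits a shadow.

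Concretely, I would remove an open tubular neighborhood $\Nbd(K;S^4)$ of $K$, obtaining a compact $4$-manifold $M = S^4\setminus\Int\Nbd(K;S^4)$ with boundary. The aim is to realize $M$ (or a suitable handle-equivalent model of it) as a $4$-dimensional $2$-handlebody, so that Theorem~\ref{thm:Tuarev} furnishes a shadow $X_0$ of $M$ with $M\searrow X_0$. The second step is to glue back the neighborhood of $K$: since $\Nbd(K;S^4)$ collapses onto $K$ itself (a disk bundle over $S^2$ collapses onto its zero-section), one forms $X = X_0 \cup K$, attaching the $2$-sphere $K$ along the relevant part of $\partial M$. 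I would then verify that the union remains a simple polyhedron — checking that along the gluing locus the local models of Figure~\ref{fig:local_model} are respected, possibly after a small perturbation to make the identifications transverse and the polyhedron locally flat in the sense defined after the shadow definition.

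The final step is to confirm that this enlarged $X$ is genuinely a shadow of $S^4$ containing $K$, i.e. that $S^4$ collapses onto $X$ and that $X$ is locally flat. Collapsibility should follow by combining $M\searrow X_0$ with the collapse of $\Nbd(K;S^4)$ onto $K$, arranging the two collapses to be compatible along the common boundary so that $S^4 = M\cup\Nbd(K;S^4)$ collapses onto $X_0\cup K = X$. Local flatness of $X$ is automatic at points of $K$ (which is smoothly embedded, hence locally flat) and at points of $X_0$ (guaranteed by Turaev's construction), with only the gluing region requiring care.

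The hard part, I expect, is the bookkeeping at the interface: ensuring that attaching $K$ to $X_0$ yields a bona fide \emph{simple} polyhedron with the correct local models rather than introducing illegal singularities, and simultaneously maintaining both local flatness and the collapse $S^4\searrow X$. A clean way to sidestep some of this is to choose the handle decomposition of $M$ so that its shadow $X_0$ meets $\partial M = \partial\Nbd(K;S^4)$ along boundary regions whose attachment to $K$ is controlled; the banded unlink viewpoint developed in Section~\ref{sec:Banded unlink diagrams} effectively provides such a construction and makes the gluing explicit, so I would ultimately lean on that combinatorial description to carry out the verification rigorously.
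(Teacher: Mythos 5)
Your construction breaks down at its first substantive step: realizing $M=S^4\setminus\Int\Nbd(K;S^4)$ as a $4$-dimensional $2$-handlebody so that Theorem~\ref{thm:Tuarev} yields a shadow $X_0$ with $M\searrow X_0$. For a general $2$-knot this is impossible, and no ``handle-equivalent model'' can fix it, because the obstruction is intrinsic to the diffeomorphism type of $M$. Indeed, if $M$ collapsed onto a $2$-dimensional polyhedron $X_0$, then $M$ would be a regular neighborhood of $X_0$; by general position ($1+2<4$) every loop in $M$ could be pushed off $X_0$, and the complement $M\setminus X_0$ retracts onto $\partial M$, so $\pi_1(\partial M)\to\pi_1(M)$ would be surjective. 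But $\pi_1(\partial M)\cong\pi_1(S^2\times S^1)\cong\Z$ is generated by a meridian, whereas $G(K)=\pi_1(M)$ is only \emph{normally} generated by a meridian: for the spun trefoil, $G(K)$ is the non-abelian (hence non-cyclic) trefoil group, which admits no surjection from $\Z$. So the complement of such a $2$-knot admits no shadow at all --- this is exactly why Definition~\ref{def:shadow_2-knot} demands that $K$ itself lie in the polyhedron. There are secondary problems as well: even granting $X_0$, the set $X_0\cup K$ is a disjoint union ($K$ is the core of the removed neighborhood, so it meets neither $M$ nor $\partial M$), hence not even connected; the complement of a neighborhood of $X_0\sqcup K$ would be $S^2\times S^1\times[0,1]$, which is not a $1$-handlebody; and in any case ``$S^4\searrow X$'' is not the right condition for a closed ambient manifold --- a closed $4$-manifold never collapses onto a proper subpolyhedron, which is why the closed-manifold definition asks instead that the complement of $\Int\Nbd(X;S^4)$ be a $1$-handlebody.

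The paper's argument runs in the opposite direction. Take a handle decomposition of $S^4$ \emph{relative to} $\Nbd(K;S^4)$: attach $1$- and $2$-handles to $\Nbd(K;S^4)\cong S^2\times D^2$, then $3$- and $4$-handles. The union of $\Nbd(K;S^4)$ with the $1$- and $2$-handles collapses onto a simple polyhedron containing $K$ (the relative form of Turaev's construction, with $K$ inside the polyhedron from the start), while the remaining $3$- and $4$-handles, viewed upside down, form precisely the required $4$-dimensional $1$-handlebody. Your fallback suggestion --- the banded unlink construction of Section~\ref{sec:Banded unlink diagrams} --- is indeed the paper's alternative proof, but it must replace your main argument, not merely patch its interface bookkeeping.
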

By considering the handle decomposition relative to $\Nbd(K;S^4)$, 
we can prove the above from Theorem~\ref{thm:Tuarev}. 
In Section~\ref{sec:Banded unlink diagrams}, 
we will give a recipe for making a shadow of a $2$-knot from a \textit{banded unlink diagram}, 
which gives an alternative proof of Theorem~\ref{thm:2-knot_shadow}. 

Then we define complexities for $2$-knots as well as for $4$-manifolds. 
\begin{definition}
For a $2$-knot $K$, 
the {\it shadow-complexity} $\shco(K)$ and the {\it special shadow-complexity} $\spshco(K)$ of $K$ 
are defined as the minimum number of true vertices of all shadows of $K$ and 
that of all special shadows of $K$, respectively. 
\end{definition}
\subsection{Knot groups}
Let $K$ be a $2$-knot. 
The {\it knot group} $G(K)$ of $K$ is the fundamental group of 
the complement of $K$. 
We will give a presentation of $G(K)$ in Proposition~\ref{prop:knot_group}. 
To state this proposition, we first give some notations. 

Let $X$ be a shadow of $K$ 
and $X_K$ be a regular neighborhood $\Nbd(K;X)$ of $K$ in $X$. 
Choose a regular neighborhood $\Nbd(K;S^4)$ so that 
$X_K$ is proper in $\Nbd(K;S^4)$, 
and let it be denoted by $M_K$. 
Set
\begin{align*}
&X'=X\setminus \Int X_K , \\
&T=\partial X'\cap\partial X_K\subset\partial X' \text{, and}\\
&M_{X'}=\Nbd(X'; S^4\setminus \Int M_{K}). 
\end{align*}
We assume that $X'$ and $T$ are connected for simplicity. 
Note that this can always be assumed 
by applying some $(0\to2)$-moves (c.f. \cite{Cos05,Tur94}) in advance, 
and see also Remark~\ref{rmk:knot_group_general}. 
The gluing map $\partial M_{X'}\cap M_K\to \partial M_{K}$ will be written as $f$. 
Note that $T$ is a graph and the valency of each vertex of $T$ is 3. 
By the definition of shadows of $2$-knots, 
the knot complement $S^4\setminus K$ admits a decomposition
\[
(M_{K}\setminus K)\cup_f M_{X'}\cup (\text{3- and 4-handles}). 
\]
We can easily see that $M_{K}\setminus K\cong (S^2\times D^2)\setminus (S^2\times\{0\})$ and 
it retracts onto $\partial M_{K}$ (${}\cong S^2\times S^1$). 
We also see that $M_{X'}$ retracts onto $X'$ with keeping $T$. 
Thus, the knot group $G(K)$ can be computed from
$\partial M_{K}\cup_{f|_T} X'$. 

Choose a base point $t\in T$ and 
a presentation 
\[
\pi_1(X',t)=\langle S \mid R\rangle. 
\]
Since $T$ is a graph, its fundamental group is freely generated by some loops 
\[
w_1,\ldots,w_m \in \pi_1(X',t). 
\]
We give an orientation to $K$ arbitrarily. 
Then the fundamental group of $\partial M_{K}$ has a presentation 
\[
\pi_1(\partial M_{K},t)=\langle \mu \rangle, 
\]
where $\mu$ is the meridian of $K$ whose orientation agrees with those 
of $K$ and $S^4$. 

For each $i\in\{1,\ldots,m\}$, 
there is a $2$-chain $D_i=\sum a_{i,j}R_j$ in $K$ 
with $\partial D_i=[w_i]$ in $H_1(\Cl(X_K\setminus K))\cong H_1(S(X)\cap K)$, 
where $R_j$ is a region contained in $K$ with an orientation induced from that of $K$. 
Set 
\begin{align}\label{align:gleam_of_chain}
\mathrm{gl}(w_i)=\sum a_{i,j}\gl(R_j). 
\end{align} 
This number is equal to the algebraic intersection number of $\tilde{D}_i$ and $K$ in $M_{K}$, 
where $\tilde{D}_i$ is a smooth oriented surface bounded by $f(w_i)$ in $M_{K}$. 
Hence we have $f(w_i)=\mu^{\mathrm{gl}(w_i)}$ in $\pi_1(\partial M_{K},\ast)$. 
Therefore, by van Kampen's theorem, we obtain the following. 
\begin{proposition}
\label{prop:knot_group}
Under the above settings, the following holds:
\[
G(K)\cong 
\langle S,\mu \mid R,w_1\mu^{-\mathrm{gl}(w_1)},\ldots,w_m\mu^{-\mathrm{gl}(w_m)}\rangle. 
\]
\end{proposition}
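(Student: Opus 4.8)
The plan is to compute the fundamental group of the complement directly from the handle decomposition of $S^4\setminus K$ recorded above and then assemble the pieces with van Kampen's theorem. The first step is to dispose of the $3$- and $4$-handles in the decomposition $(M_K\setminus K)\cup_f M_{X'}\cup(\text{3- and 4-handles})$: a $3$-handle is attached along $S^2\times D^1$ and a $4$-handle along $S^3$, and since $S^2$ and $S^3$ are simply connected, neither kind of handle introduces a generator or a relation. Hence it is enough to compute $\pi_1$ of the union $(M_K\setminus K)\cup_f M_{X'}$.

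Next I would read off the fundamental groups of the two pieces and of their overlap from the retractions already recorded. The piece $M_K\setminus K\cong(S^2\times D^2)\setminus(S^2\times\{0\})$ retracts onto $\partial M_K\cong S^2\times S^1$, whose fundamental group is the infinite cyclic group $\langle\mu\rangle$ generated by the meridian; the piece $M_{X'}$ retracts onto $X'$ keeping $T$, so $\pi_1(M_{X'},t)=\pi_1(X',t)=\langle S\mid R\rangle$; and the overlap $(M_K\setminus K)\cap M_{X'}$ retracts onto the graph $T$, whose fundamental group is free on $w_1,\dots,w_m$. This supplies all the vertex and edge groups for the van Kampen pushout along $T$.

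The substantive point, and the step I expect to be the main obstacle, is to pin down the two edge-to-vertex homomorphisms. Into $\pi_1(X')$ the generator $w_i$ maps to the class $w_i$ by construction, so that side is immediate. Into $\pi_1(\partial M_K)=\langle\mu\rangle$ the task is to prove $f(w_i)=\mu^{\mathrm{gl}(w_i)}$. Since the target is abelian, this class is detected by $H_1(\partial M_K)$ and therefore equals the algebraic intersection number with $K$ of any oriented surface $\tilde D_i$ in $M_K$ bounded by $f(w_i)$; I would build such a surface from the $2$-chain $D_i=\sum a_{i,j}R_j$ with $\partial D_i=[w_i]$ and then invoke the identification, stated just above the proposition, that the gleam sum $\mathrm{gl}(w_i)=\sum a_{i,j}\gl(R_j)$ computes exactly this intersection number. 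Verifying that the gleam of a chain equals this intersection number is the geometric heart of the argument and rests on the interpretation of gleams as local self-intersection numbers furnished by Turaev's reconstruction theorem.

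Finally, with the edge maps $w_i\mapsto w_i$ and $w_i\mapsto\mu^{\mathrm{gl}(w_i)}$ determined and with no relation imposed on the free factor $\langle\mu\rangle$, van Kampen's theorem yields the pushout presentation $\langle S,\mu\mid R,\,w_1\mu^{-\mathrm{gl}(w_1)},\dots,w_m\mu^{-\mathrm{gl}(w_m)}\rangle$, which is the asserted presentation of $G(K)$.
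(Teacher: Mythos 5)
Your proposal is correct and follows essentially the same route as the paper: the paper likewise discards the 3- and 4-handles, uses the retractions of $M_K\setminus K$ onto $\partial M_K$ and of $M_{X'}$ onto $X'$ (rel $T$), identifies $f(w_i)=\mu^{\mathrm{gl}(w_i)}$ via the intersection number of the surface $\tilde D_i$ with $K$, and concludes by van Kampen applied to $\partial M_K\cup_{f|_T}X'$. The only difference is that you spell out a couple of steps the paper leaves implicit (the handle-attachment argument and the homological detection of $f(w_i)$), which is harmless.
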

\begin{remark}
\label{rmk:knot_group_general}
The assumption that $X'$ and $T$ are connected is not essential. 
The case where $X'$ and $T$ are not connected is as follows. 
Let $X'_1,\ldots,X'_k$ and $T_1,\ldots,T_k$ denote the connected components of $X'$ and $T$, respectively. 
Then the knot group $G(K)$ is presented as
\[
G(K)\cong\langle S_1,\ldots,S_k,\mu \mid 
R_1,\ldots,R_k, w_1\mu^{-\mathrm{gl}(w_1)},\ldots,w_{m'}\mu^{-\mathrm{gl}(w_{m'})}\rangle,  
\]
where $\pi_1(X'_j)\cong\langle S_j\mid R_j\rangle$ for $j\in\{1,\ldots,k\}$ and 
$w_1,\ldots,w_{m'}$ are loops in $\pi_1(X'_1)\ast\cdots\ast\pi_1(X'_k)$ 
such that they generates $\pi_1(T_1)\ast\cdots\ast\pi_1(T_k)$. 
\end{remark}
\section{Banded unlink diagrams}
\label{sec:Banded unlink diagrams}
In this section, we give a review of a description called a {\it banded unlink diagram} for a knotted surface. 
See \cite{HKM20} for the details. 
We start with the definition of banded links in a $3$-manifold. 
\subsection{Banded links}
\label{subsec:Banded links}
For a link $L$ in a $3$-manifold $N$, 
the image $b=\mathrm{Im}\beta$ of an embedding $\beta:[0,1]\times[0,1]\to N$ 
with $L\cap b=\beta(\{0,1\}\times[0,1])$ is called a {\it band}. 
The {\it core} of $b$ is defined as $\beta([0,1]\times\{1/2\})$. 
The pair $(L,\bm{b})$ of $L$ and mutually disjoint bands 
$\bm{b}=b_1\cup\cdots\cup b_n$ is called a {\it banded link}. 
The {\it negative resolution} and the {\it positive resolution} of 
$(L,\bm{b})$, respectively, are defined as the links $L$ and $L_{\bm{b}}$, where 
\[
L_{\bm{b}}=\left(L \setminus \left(\bigcup_{i=1}^n \beta_i\big(\{0,1\}\times[0,1]\big)\right)\right)\cup
\left(\bigcup_{i=1}^n \beta_i\big([0,1]\times\{0,1\}\big)\right). 
\]

\subsection{Banded unlink diagrams}
\label{subsec:Banded unlink diagrams}
Let $W$ be a closed $4$-manifold, and 
fix a handle decomposition of $W$ having a single $0$-handle. 
For $i\in\{0,\ldots,4\}$, let $W_i$ denote the handlebody consisting of all the handles with indices at most $i$. 
Clearly, $W_0\cong B^4$ and $W_4=W$. 
Suppose that this handle decomposition is described by a Kirby diagram $\K=\mathcal{L}_1\sqcup \mathcal{L}_2\subset S^3$, 
where $\mathcal{L}_1$ is a dotted unlink indicating the $1$-handles and $\mathcal{L}_2$ is a framed link indicating the $2$-handles. 
Note that $S^3$ in which $\K$ is drawn is considered as the boundary $\partial W_0$ of the $0$-handle, 
and we can identify the complement $S^3\setminus \nu \K$ of a tubular neighborhood $\nu \K$ of $\K$ with a subset in $\partial W_2$. 

Let $(L,\bm{b})$ be a banded link in $S^3\setminus\nu \K$. 
Note that $(L,\bm{b})$ is considered as a banded link in $\partial W_2$ and also in $\partial W_1$. 
Suppose that the negative resolution $L$ and the positive resolution $L_{\bm{b}}$ 
are unlinks in $\partial W_1$ and $\partial W_2$, respectively. 
Then we call the triple $(\K,L,\bm{b})$ a {\it banded unlink diagram} in $K$. 

A banded unlink diagram $(\K,L,\bm{b})$ can be interpreted as a presentation 
of a knotted surface in the $4$-manifold $W$ in the following way. 
By the definition of a banded unlink diagram, 
there exist collections $\mathcal{D}_1\subset \partial W_1$ and $\mathcal{D}_2\subset \partial W_2$ of $2$-disks 
bounded by $L$ and $L_{\bm{b}}$, respectively. 
We push the interiors of $\mathcal{D}_1$ and $\mathcal{D}_2$ into $\Int W_1$ and $W\setminus W_2$, respectively, with keeping the boundaries. 
Then set $K=\mathcal{D}_1\cup\bm{b}\cup\mathcal{D}_2$. 
It forms a knotted surface in $W$, and $(\K,L,\bm{b})$ is also said to be a {\it banded unlink diagram} for $K$. 

If a Kirby diagram for $S^4$ consists of no dotted circles and no framed knots, 
we will say that such a diagram is the {\it trivial Kirby diagram}. 
By Kawauchi, Shibuya and Suzuki in \cite{KSS82} and also by Lomanaco \cite{Lom81}, 
it was shown that any $2$-knot admits a banded unlink diagram in the trivial Kirby diagram. 
In the general case, 
Hughes, Kim and Miller showed in \cite{HKM20} that any knotted surface in any closed $4$-manifold admits a banded unlink diagram. 


\begin{figure}[tbp]
\centering
\includegraphics[width=1\hsize]{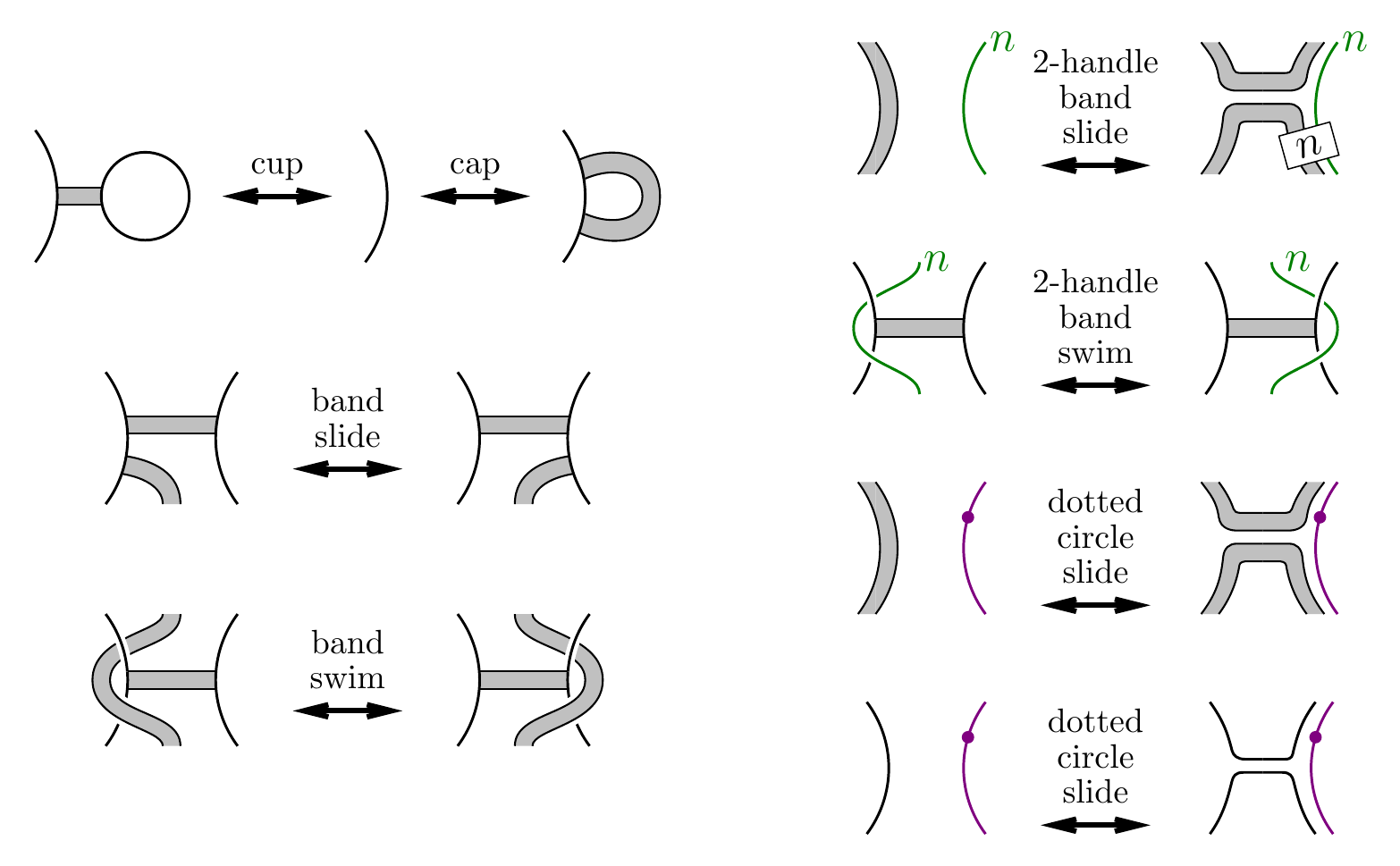}
\caption{Band moves of banded unlink diagrams. }
\label{fig:band_moves}
\end{figure} 
See Figure~\ref{fig:band_moves}. 
The three moves shown in the left of the figure (in the trivial Kirby diagram) 
were introduced by Yoshikawa in \cite{Yos94}, 
and it was shown that these moves are sufficient to relate any two banded unlink diagrams describing the same knotted surface 
by Swenton \cite{Swe01} and also by Kearton and Kurlin\cite{KK08}. 
The other moves in Figure~\ref{fig:band_moves} were introduced by Hughes, Kim and Miller \cite{HKM20} for the general case. 
The seven kinds of moves exhibited in the figure are called {\it band moves}. 
\begin{theorem}
[Hughes, Kim and Miller \cite{HKM20}]
Two banded unlink diagrams $(\K,L,\bm{b})$ and $(\K,L',\bm{b}')$ representing the same knotted surface are related by a finite sequence of band moves. 
\end{theorem}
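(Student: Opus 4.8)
The plan is to reinterpret a banded unlink diagram as a Morse-theoretic presentation of the knotted surface relative to the \emph{fixed} handle decomposition of $W$, and then to invoke the uniqueness of such presentations up to the standard generic modifications of one-parameter families. Concretely, I would first fix the function $g\colon W\to[0,4]$ induced by the handle decomposition $\K$ (essentially recording the handle index), and observe that a banded unlink diagram is precisely the data of the critical points of $g|_K$: the disks bounded by $L$ are the minima pushed into $\Int W_1$, the bands $\bm{b}$ are the saddles living between $\partial W_1$ and $\partial W_2$, and the disks bounded by $L_{\bm{b}}$ are the maxima pushed into $W\setminus W_2$. Thus $(\K,L,\bm{b})$ encodes a handle decomposition of $K$ subordinate to that of $W$, and two banded unlink diagrams sharing the same $\K$ present the same knotted surface exactly when the resulting embeddings $K,K'\hookrightarrow W$ are ambient isotopic relative to this handle structure.

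The second step is to connect the two presentations by a generic path. Given an ambient isotopy carrying $K$ to $K'$, I would keep $g$ fixed and let the surface move, putting the induced one-parameter family $\{g|_{K_s}\}_{s\in[0,1]}$ in general position by Cerf theory, so that along the family only finitely many of the standard codimension-one phenomena occur: isotopies within a level set, crossings of two critical values of $g|_{K_s}$, and births or deaths of cancelling critical pairs of $g|_{K_s}$. In addition, one must allow a critical point of $g|_{K_s}$ to cross a critical level of the ambient $g$, i.e.\ the surface sliding across a $1$- or $2$-handle of $W$.

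The third step, which is the bulk of the argument, is to translate each generic transition into band moves. Isotopies within level sets and crossings of surface critical values yield Reidemeister moves and planar isotopies of the diagrams of $L$ and $L_{\bm{b}}$, together with band slides; a birth or death of a cancelling min--saddle or saddle--max pair corresponds to the creation or removal of a trivial disk--band pair; and a critical point sliding past a $1$- or $2$-handle of $W$ produces exactly the handle-crossing moves appearing in Figure~\ref{fig:band_moves} beyond the three classical moves of \cite{Yos94}. One then verifies, move by move, that every generic elementary change is realized by a finite composition of the seven band moves, and invokes compactness of $[0,1]$ to conclude that only finitely many transitions occur, hence that the total sequence of band moves is finite.

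I expect the main obstacle to be the handle-crossing analysis: tracking how a band, or the negative and positive resolutions, interact with the attaching regions of the $1$- and $2$-handles of $W$ as a critical point of $g|_K$ passes a critical level of $g$, and checking that in every such case the unlink conditions on the two resolutions can be maintained (or immediately restored) so that each intermediate diagram is again a genuine banded unlink diagram. This is exactly the new difficulty absent in the $S^4$ setting of \cite{Yos94,Swe01,KK08}, and it is what forces the additional band moves introduced in \cite{HKM20}.
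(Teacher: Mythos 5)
First, a point of comparison: the paper does not prove this statement at all --- it is imported verbatim from Hughes--Kim--Miller \cite{HKM20} and used as a black box, so there is no in-paper argument to measure your proposal against. Judged on its own terms, your outline does follow the broad Morse/Cerf-theoretic strategy that underlies the actual proof in \cite{HKM20}: interpret a banded unlink diagram as critical-point data of the restriction of the handle-decomposition Morse function to the surface, join two such presentations by a generic one-parameter family, and convert the finitely many codimension-one events into band moves.

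As a proof, however, the proposal has a genuine gap, and it sits exactly where you place ``the bulk of the argument.'' Saying that ``one then verifies, move by move, that every generic elementary change is realized by a finite composition of the seven band moves'' is a restatement of the theorem, not an argument; essentially all of the content of \cite{HKM20} lives in that verification. Two concrete problems. (i) A generic family $\{g|_{K_s}\}$ does not yield a sequence of banded unlink diagrams: the defining conditions --- that the negative resolution be an unlink in $\partial W_1$ and the positive resolution an unlink in $\partial W_2$ --- are not preserved along the family and will fail on open intervals of $s$. You flag this yourself as ``the main obstacle,'' but flagging it is not resolving it; handling it requires the normal-form and uniqueness machinery that occupies most of \cite{HKM20}, and without it the translation of elementary transitions into band moves never gets started. (ii) Your list of codimension-one events mischaracterizes the interaction with the ambient handles: a strand of $L$, a band, or a strand of $L_{\bm{b}}$ can slide across a $1$- or $2$-handle of $W$ at a \emph{regular} point of $g|_{K_s}$, so these events are not ``a critical point of $g|_{K_s}$ crossing a critical level of $g$''; they are crossings of the surface with cocores and attaching regions of handles, and they must be enumerated and matched with the extra moves of Figure~\ref{fig:band_moves} by a separate analysis, not by Cerf theory for the single function $g|_{K_s}$. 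Finally, your first step's claim that two diagrams represent the same surface ``exactly when the embeddings are ambient isotopic relative to this handle structure'' is circular as stated: the hypothesis of the theorem is an arbitrary ambient isotopy, and trading it for one compatible with the handle structure is part of what must be proved.
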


\subsection{Shadows from banded unlink diagrams}
We again focus on the case of $2$-knots. 
In this subsection, we give a construction of a shadow of a $2$-knot from a banded unlink diagram. 

Let $K$ be a $2$-knot and $(\K,L,\bm{b})$ a banded unlink diagram for $K$, 
where $L=L_1\sqcup\cdots\sqcup L_m$ and $\bm{b}=b_1\sqcup\cdots\sqcup b_n$. 
For simplicity, we suppose $\K$ is the trivial Kirby diagram, 
and we will write $(L,\bm{b})$ instead of $(\K,L,\bm{b})$. 
In this case, the ambient $4$-manifold $S^4$ is decomposed into one $0$-handle $W_0$ and one $4$-handle, 
and $(L,\bm{b})$ is assumed to be in $\partial W_0$. 
As explained in Subsection~\ref{subsec:Banded unlink diagrams}, 
the $2$-knot $K$ lies in $S^4$ so that $K\cap\partial W_0=L\cup\bm{b}$, 
and we recall the notations 
$\mathcal{D}_1=(K\cap \Int W_0) \cup L$ and 
$\mathcal{D}_2=(K\cap S^4\setminus W_0) \cup L$. 
%
%

\begin{step}
Let $\Gamma$ be the union of $L$ and the cores of $b_1,\ldots,b_n$, 
which is a trivalent graph in $S^3=\partial B^4$. 
Let $\pi$ be a regular projection from $\Gamma$ to a $2$-disk $D_0$ 
such that the images of $L_1,\ldots,L_m$ bound mutually disjoint 
$2$-disks $D_1,\ldots,D_m\subset D_0$, respectively. 
Then consider (abstractly) the mapping cylinder 
\[
\tilde X_{(L,\bm{b},\pr)}=\big(D_0\cup (\Gamma\times[0,1])\big)/\!\sim 
\]
of $\pi$, where $\sim$ is defined by $\pi(x)\sim (x,0)$ for $x\in\Gamma$. 
Since $\pi$ is chosen so that the diagram is regular, $\tilde X_{(L,\bm{b},\pr)}$ is a simple polyhedron. 
This polyhedron can be embedded in the $4$-ball $W_0$ as a shadow 
since $\tilde X_{(L,\bm{b},\pr)}$ can collapses onto the disk $D_0$. 
Actually, there is a natural gleam on $\tilde X_{(L,\bm{b},\pr)}$ determined from the diagram of $\Gamma$ 
such that it corresponds to a $4$-ball in which $\tilde X_{(L,\bm{b},\pr)}$ is embedded as a shadow. 
See Remark~\ref{rmk:BL_gleam}. 
Then $\tilde X_{(L,\bm{b},\pr)}$ will be considered as a shadow of $W_0$, and we can naturally identify 
\begin{itemize}
 \item
$\partial \tilde X_{(L,\bm{b},\pr)}\setminus\partial D_0$ 
with $\Gamma\subset \partial W_0$, and 
\item
$\displaystyle \bigcup_{i=1}^m \big( D_i\cup\big(L_i\times[0,1]\big)\big)$ 
with $\mathcal{D}_1\subset W_0$. 
\end{itemize}
See Figure~\ref{fig:banded_link_example} for an example. 
\begin{figure}[tbp]
\centering
\includegraphics[width=110mm]{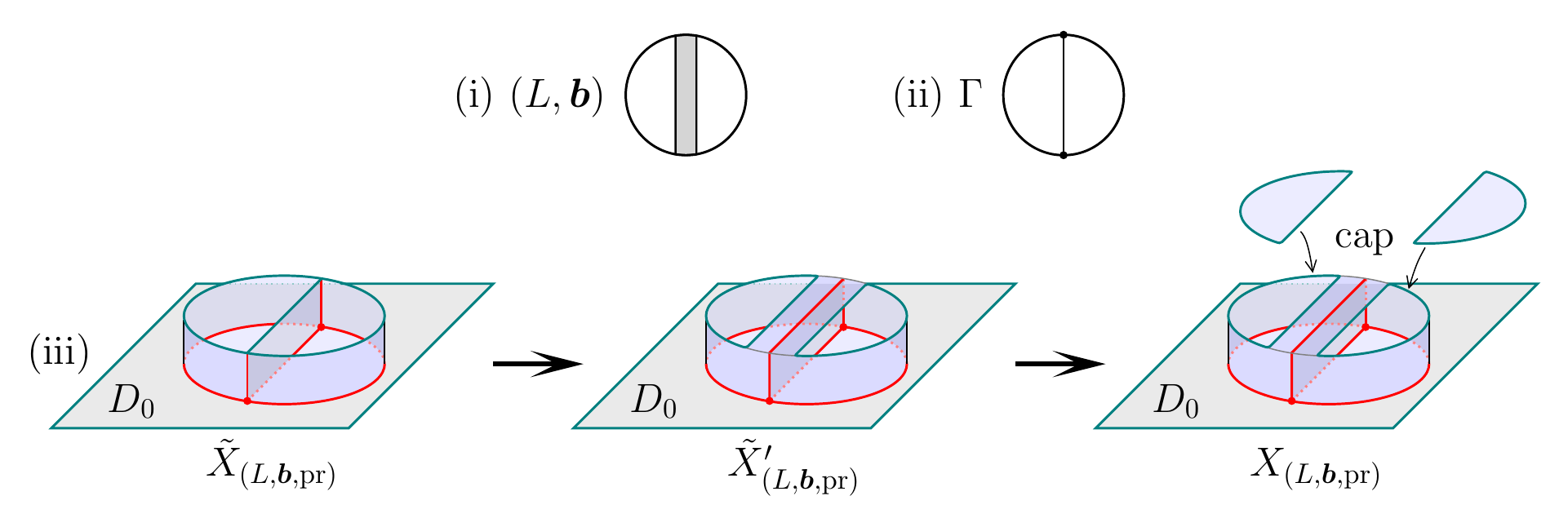}
\caption{An example of a banded unlink diagram $(L,\bm{b})$, $\Gamma$ and 
polyhedra $\tilde X_{(L,\bm{b},\pr)}$, $\tilde X'_{(L,\bm{b},\pr)}$ and $X_{(L,\bm{b},\pr)}$. }
\label{fig:banded_link_example}
\end{figure} 
The banded link $(L,\bm{b})$ shown in Figure~\mbox{\ref{fig:banded_link_example}-(i)} consists of one unknot $L$ and one band $\bm{b}=b$, 
and it is a presentation of the trivial $2$-knot. 
Figure~\mbox{\ref{fig:banded_link_example}-(ii)} shows a diagram of the graph $\Gamma=L\cup(\text{core of }b)$. 
Then $\tilde X_{(L,\bm{b},\pr)}$ is a polyhedron as shown in the leftmost of Figure~\mbox{\ref{fig:banded_link_example}-(iii)}. 
\end{step}
\begin{step}
The graph $\Gamma$ lies in $\partial W_0$ as the boundary of $\tilde X_{(L,\bm{b},\pr)}$, 
and the whole banded link $(L,\bm{b})$ is also embedded in $\partial W_0$. 
Set 
\[
\tilde X'_{(L,\bm{b},\pr)} = \tilde X_{(L,\bm{b},\pr)}\cup\bm{b}
\]
and then push a neighborhood $\Nbd\left(\bm{b};\tilde X'_{(L,\bm{b},\pr)}\right)$ 
inside $W_0$ so that $\tilde X'_{(L,\bm{b},\pr)}$ is properly embedded in $W_0$. 
See the center of Figure~\mbox{\ref{fig:banded_link_example}-(iii)}. 
Note that $\tilde X'_{(L,\bm{b},\pr)}$ is also a shadow of $W_0$. 
\end{step}

\begin{step}
The boundary $\partial \tilde X'_{(L,\bm{b},\pr)}\subset \partial W_0$
is the positive resolution $L_{\bm{b}}$ of $(L,\bm{b})$, 
which is the $m'$-component unlink, where $m'=2+n-m$.  
We attach $m'$ $2$-handles to $W_0$ along $L_{\bm{b}}$ with $0$-framing, 
and we set 
\[
X_{(L,\bm{b},\pr)}=\tilde X'_{(L,\bm{b},\pr)}\cup D'_1\cup\cdots\cup D'_{m'}, 
\]
where $D'_1,\ldots,D'_{m'}$ are the core disks of the $2$-handles. 
See the right of Figure~\mbox{\ref{fig:banded_link_example}-(iii)}. 
These disks $D'_1,\ldots,D'_{m'}$ correspond to $\mathcal{D}_2$. 
The $4$-manifold $W_0$ with the $2$-handles attached 
is diffeomorphic to $\natural_{m'}(S^2\times D^2)$, 
and $X_{(L,\bm{b},\pr)}$ is its shadow. 
We can obtain $S^4$ from this $4$-manifold 
by attaching $m'$ $3$-handles and a $4$-handle in a canonical way \cite{LP72}. 
Hence $X_{(L,\bm{b},\pr)}$ is a shadow of $S^4$, and 
the $2$-knot $K$ is realized in $S^4$ as 
\[
\bigg(\bigcup_{i=1}^m \big(D_i\cup\big(L_i\times[0,1]\big)\big)\bigg)
\cup\bm{b}\cup 
 \bigg(\bigcup_{j=1}^{m'} D'_j\bigg) 
\]
in $X_{(L,\bm{b},\pr)}$. Thus we have the following. 
\end{step}
\begin{proposition}
The polyhedron $X_{(L,\bm{b},\pr)}$ is a shadow of $K$.  
\end{proposition}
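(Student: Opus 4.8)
The plan is to verify directly the two conditions of Definition~\ref{def:shadow_2-knot}: that $X_{(L,\bm{b},\pr)}$ is a shadow of $S^4$, and that the $2$-knot $K$ presented by $(L,\bm{b})$ is embedded in it. Both ingredients are essentially furnished by Steps~1--3, so the proof amounts to assembling them and confirming that the two defining properties hold.

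First I would confirm that $X_{(L,\bm{b},\pr)}$ is a shadow of $S^4$. That $\tilde X_{(L,\bm{b},\pr)}$ is a simple polyhedron was noted in Step~1; attaching the bands $\bm{b}$ along arcs of its boundary (Step~2) and the core disks $D'_1,\ldots,D'_{m'}$ along the unlink $L_{\bm{b}}$ (Step~3) introduces only the local models of Figure~\ref{fig:local_model}, so simplicity and local flatness persist. For the remaining condition, Step~3 shows that $\natural_{m'}(S^2\times D^2)$ collapses onto $X_{(L,\bm{b},\pr)}$, so a regular neighborhood $\Nbd(X_{(L,\bm{b},\pr)};S^4)$ is diffeomorphic to $\natural_{m'}(S^2\times D^2)$. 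Since $S^4$ is obtained from this handlebody by attaching $m'$ $3$-handles and a single $4$-handle, the complement $S^4\setminus\Int\Nbd(X_{(L,\bm{b},\pr)};S^4)$ is built from handles of index $3$ and $4$; reading this decomposition from the dual side turns the $4$-handle into a $0$-handle and the $3$-handles into $1$-handles, exhibiting the complement as a $4$-dimensional $1$-handlebody. Hence $X_{(L,\bm{b},\pr)}$ is a shadow of $S^4$.

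Next I would check that $K$ is embedded in $X_{(L,\bm{b},\pr)}$, using the identifications recorded during the construction. By Step~1 the union $\bigcup_{i=1}^m\bigl(D_i\cup(L_i\times[0,1])\bigr)$ is identified with $\mathcal{D}_1\subset W_0$; the bands pushed into $W_0$ in Step~2 are the bands $\bm{b}$ of the diagram; and by Step~3 the core disks $D'_1,\ldots,D'_{m'}$ correspond to $\mathcal{D}_2$. Consequently the surface
\[
\bigg(\bigcup_{i=1}^m\bigl(D_i\cup(L_i\times[0,1])\bigr)\bigg)\cup\bm{b}\cup\bigg(\bigcup_{j=1}^{m'}D'_j\bigg)
\]
lying in $X_{(L,\bm{b},\pr)}$ coincides with $\mathcal{D}_1\cup\bm{b}\cup\mathcal{D}_2$, which is exactly the knotted surface $K$ presented by $(L,\bm{b})$. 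Combining this with the previous paragraph and Definition~\ref{def:shadow_2-knot} yields the proposition.

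The main obstacle I anticipate is not the handle-theoretic bookkeeping but verifying that the surface realized inside the shadow genuinely reproduces the $2$-knot associated to the banded unlink diagram. One must check that pushing the interiors of $\mathcal{D}_1$ and $\mathcal{D}_2$ into $\Int W_0$ and $S^4\setminus W_0$, as prescribed in Subsection~\ref{subsec:Banded unlink diagrams}, is compatible with the collapses used in Steps~1--3, and that the $0$-framing chosen when attaching the $2$-handles along $L_{\bm{b}}$ is precisely the one forcing the ambient manifold to close up to $S^4$. Once these compatibilities are made explicit, identifying the embedded surface with $K$ is routine.
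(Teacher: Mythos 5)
Your proof is correct and takes essentially the same route as the paper: the paper's own justification for this proposition is precisely the three-step construction itself, which establishes that $W_0$ with the $0$-framed $2$-handles attached is $\natural_{m'}(S^2\times D^2)$ collapsing onto $X_{(L,\bm{b},\pr)}$, that capping off with $m'$ $3$-handles and a $4$-handle canonically (Laudenbach--Po\'enaru) yields $S^4$, and that the identifications recorded in Steps~1--3 realize $K$ as $\mathcal{D}_1\cup\bm{b}\cup\mathcal{D}_2$ inside the polyhedron. Your only addition is spelling out the dual reading of the $3$- and $4$-handles as a $4$-dimensional $1$-handlebody, a detail the paper leaves implicit in its definition of a shadow of a closed $4$-manifold.
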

\begin{remark}
\label{rmk:BL_gleam}
The gleams of regions of $X_{(L,\bm{b},\pr)}$ can be easily calculated.  
\begin{figure}[tbp]
\centering
\includegraphics[height=24mm]{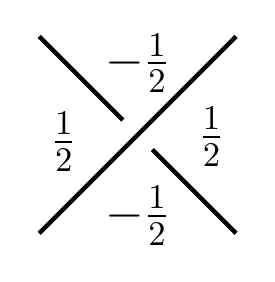}
\caption{The local contribution to the gleams. }
\label{fig:contribution}
\end{figure} 
For regions on the disk $D_0$, we can use the rule shown in Figure~\ref{fig:contribution}: 
the gleam of an internal region contained in $D_0$ is given as the sum of 
the local contribution shown in the figure at each crossing of 
the diagram of $\Gamma$ adjacent to the region \cite{Tur94,CT08}. 
The gleam of the region forming $(\text{core of }b_i)\times [0,1]$ is given as 
the number of times $b_i$ twists with respect to $D_0$ on the diagram. 
Each of the remaining regions is a part of $K$ and contains a core disk $D'_j$ of a $2$-handle. 
The gleams of them are the minus of the writhe number of $L'_j$ on $D_0$, where $L'_j$ is the component of $L_{\bm{b}}$ to which $D'_j$ is attached. 
\end{remark}
\begin{remark}
\label{rmk:BL_true_vertices}
All the true vertices of $X_{(L,\bm{b},\pr)}$ lie on $D_0$.  
Each of them derives from a crossing of the diagram of $\Gamma$ 
or a trivalent vertex of $\Gamma$. 
Thus, we can estimate the shadow-complexity of $K$ from the diagram of $\Gamma$. 
Examples will be studied in the next subsection. 
\end{remark}
\begin{remark}
\label{rmk:non-trivial_Kirby_diag}
Even if $\K$ is not trivial, 
we also can construct a shadow of $K$ 
by considering a shadow of $W_2$ instead of the disk $D_0$. 
\end{remark}

\subsection{Examples}
Let $T(2,2n+1)^k$ denote the $k$-twist spun of the classical torus knot $T(2,2n+1)$. 
Figure~\ref{fig:spun_torus} shows a banded unlink diagram $(L,\bm{b})=(L_1\sqcup L_2,b_1\cup b_2)$ 
for $T(2,2n+1)^k$ that was given in \cite{Jab16}. 
\begin{figure}[tbp]
\centering
\begin{tabular}{ccc}
\begin{minipage}[t]{60mm}
\includegraphics[height=60mm]{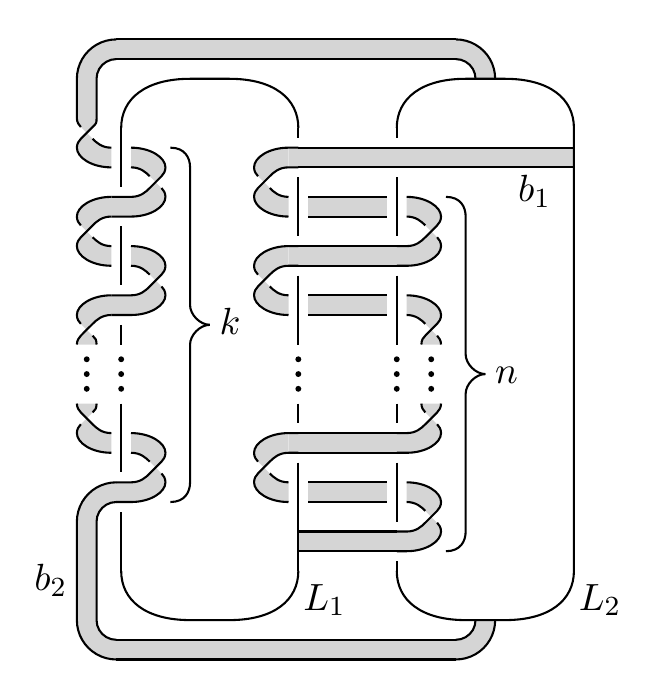}
\caption{A banded unlink diagram of the $k$-twist spun knot of $T(2,2n+1)$. }
\label{fig:spun_torus} 
\end{minipage}
\hspace{5mm}
\begin{minipage}[t]{55mm}
\centering
\includegraphics[height=60mm]{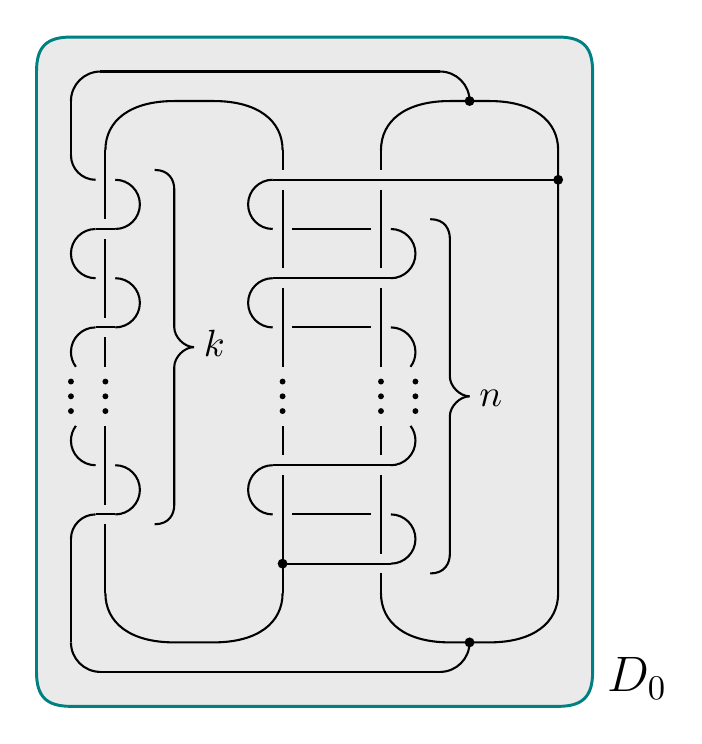}
\caption{A diagram of the graph $\Gamma$ that is the union of $L_1,L_2$ and the cores of $b_1, b_2$.}
\label{fig:spun_torus_diag} 
\end{minipage}
\end{tabular}
\end{figure} 
Considering a natural projection $\pr:\Gamma\to D_0$ 
from the graph $\displaystyle \Gamma=L_1\cup L_2\cup\left(\text{cores of }b_1,b_2\right)$ to a $2$-disk $D_0$, 
we draw a regular diagram of $\Gamma$ as in Figure~\ref{fig:spun_torus_diag}. 
This diagram has $4n+2k$ crossings, and $\Gamma$ has $4$ trivalent vertices. 
Hence $X_{(L,\bm{b},\pr)}$, a shadow of $T(2,2n+1)^k$, has $4n+2k+4$ true vertices. 
The polyhedron $X_{(L,\bm{b},\pr)}$ has a single boundary region, 
which is adjacent to $2k+3$ true vertices. 
These true vertices are eliminated by collapsing, and the resulting polyhedron is still a shadow of $T(2,2n+1)^k$. 
Therefore, we obtain an upper bound of the shadow-complexity 
of the twist spun knot $T(2,2n+1)^k$. 
\begin{proposition}
\label{prop:sc_twist_torus_knot}
$\shco\left(T(2,2n+1)^k\right)\leq 4n+1$. 
\end{proposition}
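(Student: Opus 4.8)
The plan is to read off the number of true vertices of the shadow $X_{(L,\bm{b},\pr)}$ directly from the diagram in Figure~\ref{fig:spun_torus_diag}, using Remark~\ref{rmk:BL_true_vertices}, and then to improve the naive count by collapsing the boundary region. By Remark~\ref{rmk:BL_true_vertices}, every true vertex of $X_{(L,\bm{b},\pr)}$ lies on the disk $D_0$ and comes either from a crossing of the diagram of $\Gamma$ or from a trivalent vertex of $\Gamma$. The excerpt already records that the chosen diagram has $4n+2k$ crossings and that $\Gamma$ has $4$ trivalent vertices, so the first task is simply to confirm the bookkeeping $c\bigl(X_{(L,\bm{b},\pr)}\bigr)=4n+2k+4$, which is immediate.

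The next step is the reduction by collapsing. The key observation stated in the excerpt is that $X_{(L,\bm{b},\pr)}$ has a single boundary region adjacent to $2k+3$ of the true vertices. A boundary region is, by the definitions in Section~\ref{sec:Preliminaries}, a region containing points of type~(iv); such a region can be collapsed from its free boundary, and collapsing $M\searrow X$ does not change the fact that what remains is a shadow of the same $4$-manifold, hence of the same $2$-knot $K=T(2,2n+1)^k$. First I would verify that the collapse of the boundary region legitimately removes all $2k+3$ incident true vertices (rather than merely some of them), so that the surviving polyhedron has exactly
\[
(4n+2k+4)-(2k+3)=4n+1
\]
true vertices. Since $\shco(K)$ is defined as the minimum over all shadows, exhibiting one shadow with $4n+1$ true vertices yields the asserted inequality $\shco\bigl(T(2,2n+1)^k\bigr)\le 4n+1$.

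The main obstacle is the precise justification that collapsing the single boundary region eliminates precisely the $2k+3$ true vertices adjacent to it, and that no new singular structure (or new true vertex) is created in the process. This requires a careful local analysis near the free boundary: one must check that each incident true vertex, after the free faces of the boundary region are pushed in, becomes a regular point or a point of lower type, and that the collapse can be performed simultaneously and consistently along the whole region without obstruction. In particular, one should confirm that the $2k+3$ count is correct by tracing the boundary region of $\tilde X'_{(L,\bm{b},\pr)}$ through the construction in Steps~1--3 and identifying exactly which crossings and trivalent vertices of $\Gamma$ lie on its frontier; the $2k$ comes from the $k$ full twists and the $3$ from the band-and-vertex structure near the closure disks. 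Once this local collapsing picture is verified, the counting is routine and the proposition follows.
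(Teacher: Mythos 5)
Your proposal follows exactly the paper's own argument: construct the shadow $X_{(L,\bm{b},\pr)}$ from the diagram of $\Gamma$ in Figure~\ref{fig:spun_torus_diag}, count $4n+2k$ crossings plus $4$ trivalent vertices to get $4n+2k+4$ true vertices, and then collapse the single boundary region to eliminate the $2k+3$ true vertices adjacent to it, leaving a shadow of $T(2,2n+1)^k$ with $4n+1$ true vertices. The verification steps you flag (that the collapse removes those vertices and yields a shadow of the same $2$-knot, with $K$ kept by the collapse) are exactly what the paper asserts at the same level of detail, so this is the same proof.
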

\begin{remark}
The $1$-twist spun of any $1$-knot is trivial \cite{Zee65}. 
As we will see in Theorem~\ref{thm:complexity0}, 
the shadow-complexity of an unknotted $2$-knot is $0$. 
The $2$-knot $T(2,2n+1)^k$ with $n=1$ and $k=0$ is the spun trefoil, 
which is $K_{-1}$ in our notation, see Figure~\ref{fig:Kn}. 
We will show that its shadow-complexity is $1$ in Theorem~\ref{thm:sc=1}. 
\end{remark}
\section{Modifications and fundamental groups}
\label{sec:Modifications and fundamental groups}
A subspace $Y$ of a simple polyhedron $X$ is called a {\it subpolyhedron} if 
there exist simple closed curves $\gamma_1\sqcup\cdots\sqcup\gamma_n$ in $X\setminus (S(X)\cup\partial X)$ 
such that $Y$ is the closure of a connected component of $X\setminus(\gamma_1\sqcup\cdots\sqcup\gamma_n)$. 
It is obvious that $Y$ itself is a simple polyhedron. 
Each simple closed curve $\gamma_i$ is a boundary component of $Y$, 
and it is called a {\it cut end} of $Y$ in $X$. 
In other words, a boundary component of $Y$ but not of $X$ is called a cut end. 
If $X$ is a shadowed polyhedron, 
$Y$ can also be assigned with gleams canonically: 
an internal region $R$ of $Y$ might be formed by some internal regions of $X$, 
and the sum of their gleams is the gleam of $R$, see \cite{Nao17}. 

Henceforth, we fix the following notations: 
\begin{itemize}
\item
$K$ is a $2$-knot, and 
\item
$X$ is a shadow of $K$.  
\end{itemize}
Note that $X$ is simply-connected since it is a shadow of $S^4$. 

\subsection{Compressing disk addition}
Let $\gamma$ be a simple closed curve contained in a region of $X$. 
Let $\pi:M_X\to X$ be the projection, 
where $M_X$ is the $4$-manifold obtained from $X$ by Turaev's reconstruction. 
Since $X$ is also a shadow of $S^4$, we have $\partial M_X\cong \#_h(S^1\times S^2)$ for some $h\in\Z_{\geq0}$. 
Hence $(\pi|_{\partial M_X})^{-1}(\gamma)$ is an embedded torus in $\#_h(S^1\times S^2)$. 
Every embedded torus in $\#_h(S^1\times S^2)$ has a compressing disk by Dehn's lemma, 
so let $D_\gamma$ be such a disk for the torus $(\pi|_{\partial M_X})^{-1}(\gamma)$. 
We consider a $2$-disk $D'_\gamma$ enlarged from $D_\gamma$ such that 
$D'_\gamma\setminus D_\gamma\subset \pi^{-1}(\gamma)$ and $\partial D'_\gamma\subset X$, 
and then modify the disk $D'_\gamma$ near its boundary 
so that $\partial D'_\gamma$ is a generically immersed curve in $\Nbd(\gamma;X)$ 
by a small perturbation. 
This can be done without creating self-intersections of $\Int D'_\gamma$ . 
We thus obtain a new simple polyhedron $X\cup D'_\gamma$ embedded in $S^4$. 
\begin{proposition}
[{\cite{KMN18}}]
Under the above settings, $X\cup D'_\gamma$ is a shadow of $S^4$ and also of $K$. 
\end{proposition}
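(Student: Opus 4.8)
The plan is to verify the two required properties separately: that $X\cup D'_\gamma$ remains a shadow of $S^4$, and that $K$ stays embedded in it. Since we have already constructed $X\cup D'_\gamma$ as a simple polyhedron embedded in $S^4$, the main content is the shadow condition, namely that the complement of a regular neighborhood of $X\cup D'_\gamma$ is again a $4$-dimensional $1$-handlebody.

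First I would analyze the effect of the disk addition on the boundary $3$-manifold. Recall $\partial M_X \cong \#_h(S^1\times S^2)$ and that $(\pi|_{\partial M_X})^{-1}(\gamma)$ is an embedded torus $T_\gamma$ in this manifold, for which $D_\gamma$ is a compressing disk by Dehn's lemma. Adding the enlarged disk $D'_\gamma$ to $X$ has the effect, at the level of the reconstructed $4$-manifold, of attaching a $2$-handle along $T_\gamma$ whose attaching region is guided by the compressing disk; equivalently, it amounts to a surgery on $\partial M_X$ along the curve $\partial D_\gamma$. The key step is to identify what surgery this is. Because $D_\gamma$ is a \emph{compressing} disk, its boundary bounds a disk in $\partial M_X$ on one side after the torus $T_\gamma$ is compressed, so the surgered manifold differs from $\#_h(S^1\times S^2)$ in a controlled way. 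I expect that attaching this disk corresponds to adding a $4$-dimensional $2$-handle that is cancelled by an accompanying $3$-handle, so that $M_{X\cup D'_\gamma}$ is obtained from $M_X$ by attaching a $2$-handle, and the new boundary is again a connected sum of copies of $S^1\times S^2$ (with the number of summands unchanged or decreased by one, depending on whether $\partial D_\gamma$ is null-homotopic or carries homology).

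Next, using Theorem~\ref{thm:Turaev_reconst} and the criterion stated after the definition of a shadow of a closed $4$-manifold — namely that a shadowed polyhedron whose reconstructed boundary is $\#(S^1\times S^2)$ is a shadow of a closed $4$-manifold by \cite{LP72} — I would conclude that $X\cup D'_\gamma$ is a shadow of a closed $4$-manifold. To see that this closed manifold is still $S^4$, I would note that the collapsing structure is preserved: since $D'_\gamma$ is attached along a curve lying in a region of $X$ and $\Int D'_\gamma$ has no new self-intersections, $X\cup D'_\gamma$ still collapses appropriately, and the reconstruction recovers $S^4$ because the handle attachment described above together with the canonical $3$- and $4$-handles rebuilds $S^4$. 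Finally, since $D'_\gamma$ is added away from $K$ (the curve $\gamma$ lies in a region of $X$, and the perturbation creates no intersections with $K$), the embedding $K\hookrightarrow X$ is undisturbed, so $K\subset X\cup D'_\gamma$ and hence $X\cup D'_\gamma$ is a shadow of $K$ by Definition~\ref{def:shadow_2-knot}.

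The hard part will be making precise the claim that adding the compressing disk corresponds to a $2$-handle attachment whose effect on the boundary keeps it within the class $\#(S^1\times S^2)$, and that the resulting closed $4$-manifold is still $S^4$ rather than some other manifold with the same boundary. This requires carefully tracking the framing induced on $\partial D_\gamma$ by the embedding and verifying that the compressing-disk hypothesis forces the surgery to be the trivial (or $1$-summand-reducing) one; the appeal to \cite{KMN18} indicates that this framing and handle-cancellation bookkeeping is exactly where the established machinery is invoked, so I would lean on that reference for the detailed verification rather than reprove it here.
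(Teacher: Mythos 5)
The paper itself does not reprove this proposition: it is quoted from \cite{KMN18}, and the only explanation offered is the one-line remark that adding $D'_\gamma$ corresponds to attaching a $2$-handle that is cancelled by a $3$-handle. Your sketch does land on that same slogan, but the argument you build around it has two genuine flaws. First, your route is abstract where it must be ambient. You propose to check that the boundary of the reconstructed manifold is still a connected sum of copies of $S^1\times S^2$, invoke \cite{LP72} to conclude that $X\cup D'_\gamma$ is a shadow of \emph{some} closed $4$-manifold, and then identify that manifold as $S^4$; but the identification step as you wrote it (``the handle attachment together with the canonical $3$- and $4$-handles rebuilds $S^4$'') presupposes exactly what has to be proven, namely that the complement of $\Nbd(X\cup D'_\gamma;S^4)$ in the \emph{given} $S^4$ --- the one containing $K$ --- is a $4$-dimensional $1$-handlebody. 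Being a shadow of $K$ is a statement about that ambient complement; exhibiting some abstract filling that happens to be $S^4$ does not deliver it. Second, your bookkeeping is backwards: since $\partial D_\gamma$ bounds the embedded disk $D_\gamma$ inside the $3$-manifold $\partial M_X$, the new $2$-handle is attached along an unknot with its disk framing, so the boundary becomes $\#_{h+1}(S^1\times S^2)$ --- the number of summands \emph{increases} by one (dually, the complement gains a $1$-handle); it is not ``unchanged or decreased by one'' as you claim.

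The missing idea that makes the proof short is that everything happens inside the complementary $1$-handlebody $H=S^4\setminus\Int M_X\cong\natural_h(S^1\times B^3)$. The compressing disk $D_\gamma$ lies in the $3$-manifold $\partial M_X=\partial H$; pushing its interior into $H$ makes it a properly embedded, boundary-parallel disk there, and $\Nbd(X\cup D'_\gamma;S^4)=M_X\cup\Nbd(D_\gamma;H)$. Hence the new ambient complement is $H\setminus\Int\Nbd(D_\gamma;H)$, and cutting a $4$-dimensional $1$-handlebody along a boundary-parallel properly embedded disk yields $\natural_{h+1}(S^1\times B^3)$ --- the $4$-dimensional analogue of cutting a $3$-dimensional handlebody along a boundary-parallel arc (for instance, $B^4$ minus a neighborhood of a trivial properly embedded disk is $S^1\times B^3$). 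This proves the complement condition in the original $S^4$, is precisely the ``$2$-handle cancelled by a $3$-handle'' statement, and then $K\subset X\subset X\cup D'_\gamma$ gives the second assertion. (Note also that $\gamma$ may perfectly well lie in a region of $K$, so ``the disk is added away from $K$'' is not the right justification for that last point, although the conclusion is immediate anyway.) Since you explicitly defer exactly this step to \cite{KMN18} as ``the hard part,'' and the portions you do supply contain the errors above, the proposal as it stands does not constitute a proof.
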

The disk $D'_\gamma$ is called a {\it compressing disk} of $\gamma$. 
The addition of $D'_\gamma$ corresponds to attaching a $2$-handle that is cancelled with a $3$-handle. 

Note that the image of $\partial D_\gamma$ by $\pi$ is contained in $\gamma$. 
Then we can define a map $\rho:\partial D_\gamma\to\gamma$ by $\rho(x)=\pi(x)$. 
There are two important cases: (i) $\mathrm{deg}(\rho)=0$ and (ii) $|\mathrm{deg}(\rho)|=1$. 
In other words, 
\begin{itemize}
 \item[(i)]
$\partial D'$ is null-homotopic in $\Nbd(\gamma;X)$, and 
 \item[(ii)]
$\partial D'$ is homotopic to $\gamma$ in $\Nbd(\gamma;X)$. 
\end{itemize}
\begin{figure}[tbp]
\includegraphics[width=100mm]{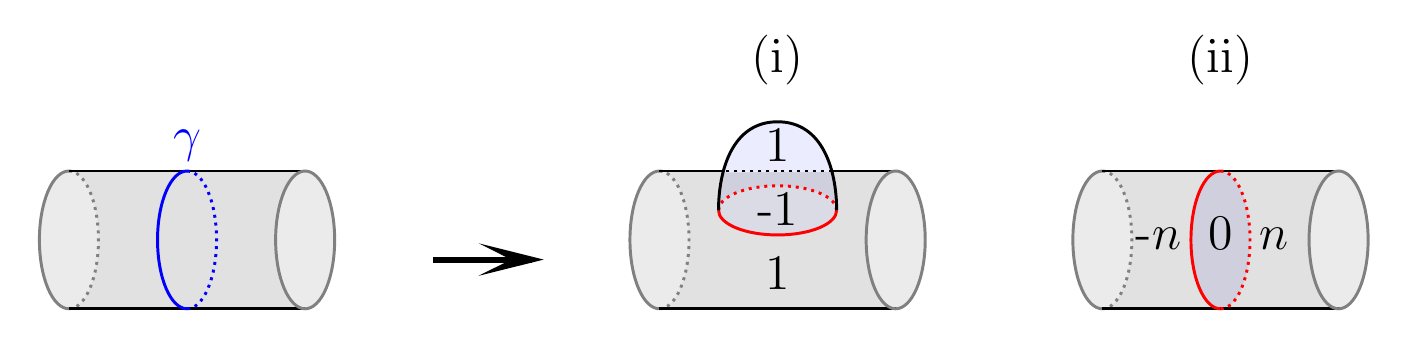}
\caption{(i) a vertical compressing disk and (ii) a horizontal compressing disk. }
\label{fig:compressing_disk}
\end{figure}
The disk $D_\gamma$ is said to be {\it vertical} if (i), and {\it horizontal} if (ii). 
Figure~\ref{fig:compressing_disk} shows the modification of $X$ to $X\cup D'_\gamma$ 
in the cases (i) and (ii). 
\begin{remark}
If $\gamma$ is a small circle bounding a disk in a region and has a vertical compressing disk, 
then we often say that the region has a vertical compressing disk. 
Clearly, any such $\gamma$ has a horizontal compressing disk. 
\end{remark}
\subsection{Connected-sum and reduction}

Suppose that $X$ has a disk region $D$ such that 
$\Nbd(\partial D;X)$ is homeomorphic to $Y_{111}$ and $\gl(D)=0$. 
The neighborhood $\Nbd(D;X)$ is shown in the left of Figure~\ref{fig:conn_sum}. 
Note that there exists a smooth $3$-ball $B_D$ in $S^4$ such that $\Nbd(D;X)\subset B_D$ since $\gl(D)=0$. 
We consider a modification as shown in Figure~\ref{fig:conn_sum}. 
Precisely, we first remove $\Int\Nbd(D;X)$ from $X$ and then 
cap each of the resulting boundary circles off a $2$-disk. 
This modification can be performed locally in $B_D$ as in the figure. 
Since $X$ is simply-connected, 
this modification produces two new simple polyhedra $X'_1$ and $X'_2$. 
We suppose that $X'_1$ contains the whole $K$, and 
we say that $X'_1$ is obtained from $X$ by the {\it connected-sum reduction} along $D$. 

\begin{proposition}
\label{prop:connected_sum}
Suppose $c(X)\leq1$ and that $X'_1$ is obtained from $X$ by the {\it connected-sum reduction} along a disk region $D$. 
Then $X'_1$ is a shadow of $K$. 
\end{proposition}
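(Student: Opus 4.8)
The plan is to verify two things: first that the modified polyhedron $X'_1$ is again a shadow of $S^4$ (so that it qualifies as a shadow of some closed $4$-manifold, and that manifold is $S^4$), and second that $K$ remains embedded in $X'_1$ so that it is in fact a shadow of $K$ in the sense of Definition~\ref{def:shadow_2-knot}. The second point is immediate from the construction: we chose $X'_1$ to be the component containing the whole $K$, and the modification takes place inside the $3$-ball $B_D$, which we may arrange to be disjoint from $K$ (since $\Nbd(D;X)$ is a neighborhood of the auxiliary disk region $D$, not of $K$). So the entire work is in showing $X'_1$ is a shadow of $S^4$.

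First I would make the local model precise. Since $\gl(D)=0$ and $\Nbd(\partial D;X)\cong Y_{111}$, the neighborhood $\Nbd(D;X)$ embeds in a smooth $3$-ball $B_D\subset S^4$ as depicted in the left of Figure~\ref{fig:conn_sum}. The key structural observation is that removing $\Int\Nbd(D;X)$ and capping the three resulting boundary circles with disks is a purely local surgery supported in $B_D$, and the fact that $X$ is simply-connected forces the result to split into two pieces $X'_1$ and $X'_2$ rather than staying connected; this is exactly the ``connected-sum'' phenomenon named in the statement. Correspondingly, on the level of reconstructed $4$-manifolds, this operation realizes $S^4$ as a connected sum $M_{X'_1}\# M_{X'_2}$ decomposition along a separating $3$-sphere near $\partial B_D$.

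The main step is then to identify the two summands. Because $\gl(D)=0$, the region $D$ together with its $Y_{111}$ collar contributes a standard local piece whose reconstructed $4$-manifold $M_{X'_2}$ is $S^4$ itself (the trivial summand); equivalently, the disk region with zero gleam sitting on a triple line configuration of type $Y_{111}$ is precisely the shadow-theoretic signature of a separating $S^3$ in $S^4$ bounding a standard ball. Since $S^4 = M_{X'_1}\# M_{X'_2}$ and $M_{X'_2}\cong S^4$, we conclude $M_{X'_1}\cong S^4$. This uses the hypothesis $c(X)\leq1$ in a mild way: with at most one true vertex, the combinatorial possibilities for the local picture around $D$ are controlled by the classification of neighborhoods reviewed in Subsection~\ref{subsec:Top types of nbd of S(X)}, so the capping-off operation does indeed produce a genuine simple polyhedron with the claimed splitting.

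The hard part will be verifying rigorously that the capping-off yields a simple polyhedron whose reconstructed boundary is still a connected sum of copies of $S^1\times S^2$, so that $X'_1$ qualifies as a shadow of a \emph{closed} $4$-manifold via \cite{LP72}, and that no extra true vertex or gleam obstruction is introduced by the three new disk caps. Concretely, I would track the effect of the surgery on $\partial M_X\cong \#_h(S^1\times S^2)$, showing that excising the torus or sphere piece over $\partial\Nbd(D;X)$ and regluing the caps changes this boundary only by a connected-sum decomposition compatible with the $S^4=M_{X'_1}\# M_{X'_2}$ splitting. Once $\partial M_{X'_1}$ is confirmed to be a connected sum of $S^1\times S^2$'s and $M_{X'_1}\cong S^4$ is established, the definition of a shadow of a closed $4$-manifold is satisfied, and combined with $K\subset X'_1$ this gives that $X'_1$ is a shadow of $K$.
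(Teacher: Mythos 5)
Your reduction of the problem to showing that $X'_1$ is a shadow of $S^4$ is the right framing, and your observation that the local data ($Y_{111}$ collar, $\gl(D)=0$) produces a separating $3$-sphere, hence a splitting $S^4\cong W_1\# W_2$ with $X'_i$ a shadow of $W_i$, matches the paper (which obtains this from \cite[Proposition 4.1]{Mar11}). The genuine gap is in your identification of the trivial summand. You assert that $M_{X'_2}\cong S^4$ because ``the region $D$ together with its $Y_{111}$ collar contributes a standard local piece.'' But $X'_2$ is not that local piece: $\Nbd(D;X)$ is what gets \emph{removed}, and $X'_2$ is one of the two \emph{global} components of the capped-off polyhedron. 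It may contain the true vertex of $X$ and be arbitrarily complicated, so nothing about the picture inside $B_D$ identifies its reconstructed manifold. What the splitting alone gives is that both $W_1$ and $W_2$ are homotopy $4$-spheres (van Kampen and Mayer--Vietoris applied to $W_1\# W_2\cong S^4$); concluding that either one is \emph{diffeomorphic} to $S^4$ is exactly the hard point, and in general that is the open smooth $4$-dimensional Poincar\'e conjecture. Your argument never crosses this gap.

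This is also where you misplace the hypothesis $c(X)\leq 1$: it is not a ``mild'' condition controlling the combinatorics near $D$ (the local picture is already pinned down by $\Nbd(\partial D;X)\cong Y_{111}$ and $\gl(D)=0$); it is the essential global input. Since the reduction creates no true vertices, $c(X)\leq1$ forces at least one of $X'_1$, $X'_2$ to have complexity $0$, and then \cite[Corollary 1.8]{Mar11} (the classification in complexity zero) identifies the corresponding summand, a homotopy $4$-sphere admitting a shadow without true vertices, as the standard $S^4$. The other summand is then also $S^4$ because connected sum with $S^4$ is trivial: if, say, $W_2\cong S^4$, then $W_1\cong W_1\# W_2\cong S^4$. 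In particular the summand $W_1$ containing $K$ is standard, which is what you need. A further minor error: your parenthetical claim that $B_D$ can be arranged disjoint from $K$ is false in the paper's main application (Lemma~\ref{lem:disk_region_on_K}), where $D$ is a disk region lying \emph{on} $K$; that $K$ (up to an isotopy supported in $B_D$) survives into $X'_1$ is built into the definition of the reduction, not a consequence of disjointness.
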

\begin{figure}[tbp]
\includegraphics[width=80mm]{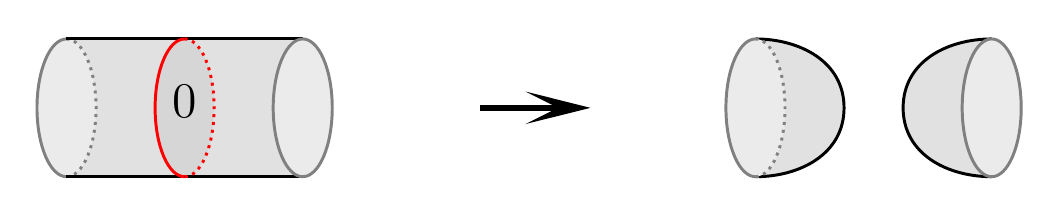}
\caption{connected sum.}
\label{fig:conn_sum}
\end{figure}

\begin{proof}
Let $W$ denote the $4$-sphere in which $K$ and $X$ are embedded. 
By \cite[Proposition 4.1]{Mar11}, 
the $4$-sphere $W$ can be decomposed as $W_1\# W_2$, 
where $W_1$ and $W_2$ are closed $4$-manifolds admitting shadows $X'_1$ and $X'_2$, respectively. 
Since either $X'_1$ or $X'_2$ has no true vertices by $c(X)\leq1$, 
the corresponding $4$-manifold, namely $W_1$ or $W_2$, is diffeomorphic to $S^4$ by \cite[Corollary 1.8]{Mar11}. 
Thus, both $W_1$ and $W_2$ are diffeomorphic to $S^4$. 
Then $W\setminus \Int\Nbd(X'_1;W)$ is diffeomorphic to a $4$-dimensional $1$-handlebody, 
and hence $X'_1$ is a shadow of $K$. 
\end{proof}

\subsection{Lemmas on encoding graphs} 
Here we prepare some lemmas 
about the shape and the types of the vertices of an encoding graph of $X$. 

\begin{lemma}[{\cite[Claim 1]{Mar11}}]
\label{lem:loop_shrinking_lemma}
Suppose that a loop $\gamma\subset X$ separates 
$X$ into two connected components $V_1$ and $V_2$. 
Then both $V'_1$ and $V'_2$ are simply-connected, 
where $V'_1=V_1\cup_{\gamma}D^2$ and $V'_2=V_2\cup_{\gamma}D^2$. 
\end{lemma}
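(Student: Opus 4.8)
The plan is to reduce the statement to an elementary group-theoretic fact via van Kampen's theorem. First I would observe that, since $\gamma$ is a simple closed curve lying in the interior of a region of $X$ and it separates $X$, it must be two-sided there; hence a regular neighborhood $\Nbd(\gamma;X)$ is an annulus, and I can thicken $V_1$ and $V_2$ to open sets $U_1,U_2$ with $U_1\cap U_2$ deformation retracting onto $\gamma$. Writing $A=\pi_1(V_1)$, $B=\pi_1(V_2)$ and $C=\pi_1(\gamma)=\langle t\rangle\cong\Z$, and letting $a$ and $b$ denote the images of $t$ under the two inclusion-induced maps $C\to A$ and $C\to B$, van Kampen's theorem gives
\[
\pi_1(X)\cong A\ast_C B\cong (A\ast B)/\langle\langle ab^{-1}\rangle\rangle.
\]
Since $X$ is a shadow of $S^4$, it is simply-connected, so this amalgamated product is trivial.

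Next I would record the effect of the two cappings at the level of fundamental groups. Attaching a $2$-disk to $V_1$ along $\gamma$ kills the normal closure of the attaching class, so $\pi_1(V'_1)\cong A/\langle\langle a\rangle\rangle$, and likewise $\pi_1(V'_2)\cong B/\langle\langle b\rangle\rangle$. Thus it suffices to prove the purely algebraic claim: if $(A\ast B)/\langle\langle ab^{-1}\rangle\rangle$ is trivial, then both $A/\langle\langle a\rangle\rangle$ and $B/\langle\langle b\rangle\rangle$ are trivial.

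For the statement about $A$ (the one about $B$ being symmetric), I would build the homomorphism $\psi\colon A\ast B\to A/\langle\langle a\rangle\rangle$ that is the canonical projection on $A$ and the trivial map on $B$. Then $\psi(ab^{-1})$ equals the class of $a$, which is trivial by construction, so $\langle\langle ab^{-1}\rangle\rangle\subseteq\ker\psi$ and $\psi$ descends to the pushout $(A\ast B)/\langle\langle ab^{-1}\rangle\rangle$. As that pushout is trivial, $\psi$ is the trivial homomorphism; but $\psi|_A$ is the surjection $A\twoheadrightarrow A/\langle\langle a\rangle\rangle$, and a surjection onto the trivial group forces $A/\langle\langle a\rangle\rangle=1$, as desired.

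I expect the genuinely delicate points to be the topological bookkeeping rather than the algebra: namely confirming that $\gamma$ is two-sided, so that the van Kampen decomposition really has connected, $S^1$-shaped intersection, and that capping off a single curve contributes exactly the one relator $a$ (respectively $b$). Once these are in place, the conceptual crux is the observation that collapsing the factor $B$ to a point transforms the amalgamating relation $ab^{-1}$ into the relation $a$; this is precisely what transfers the triviality from the glued space to each capped half, and the lemma follows at once.
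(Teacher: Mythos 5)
Your proof is correct, but it takes a different route from the paper's. The paper's argument is a two-line geometric one: collapse $\gamma$ to a point, observe that the quotient $X/\gamma$ is the wedge $V'_1\vee V'_2$, and use the fact that collapsing a connected subspace induces a surjection $\pi_1(X)\to\pi_1(X/\gamma)\cong\pi_1(V'_1)\ast\pi_1(V'_2)$; since $\pi_1(X)=\{1\}$, both free factors vanish simultaneously. You instead keep $X$ intact, decompose it by van Kampen as the amalgamated product $\pi_1(V_1)\ast_{\Z}\pi_1(V_2)$, and then kill one factor at a time by constructing the explicit homomorphism $\psi\colon A\ast B\to A/\langle\langle a\rangle\rangle$ that projects $A$ and annihilates $B$ — checking that the amalgamating relator $ab^{-1}$ dies, so $\psi$ descends to the trivial group and forces $A/\langle\langle a\rangle\rangle=1$. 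The two proofs rest on the same input (simple connectivity of $X$, which holds because $X$ is a shadow of $S^4$), but yours trades the slick identification of the quotient space for elementary algebra: it avoids having to argue that $X/\gamma$ is (up to homotopy) the wedge of the capped pieces, at the cost of the bookkeeping about two-sidedness of $\gamma$, thickenings, and connectedness of the overlap — all of which you handle correctly, and which is legitimate here since the curves to which the lemma is applied lie in regions of $X$. Your version also makes transparent exactly where the amalgamating relation $ab^{-1}$ becomes the capping relation $a$, which is implicit in the paper's collapsing argument.
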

\begin{proof}
The quotient space $X/\gamma$ is homeomorphic to the wedge sum $V'_1\vee V'_2$. 
Then we have a surjection from $\pi_1(X)=\{1\}$ onto $\pi_1(V'_1)\ast\pi_1(V'_2)$. 
\end{proof}

We rephrase the above lemma in terms of encoding graphs as below. 
\begin{lemma}
\label{lem:loop_shrinking_lemma_for_graph}
Let $G$ be a tree encoding $X$ and $G'\subsetneq G$ be a subgraph. 
Then a simple polyhedron encoded by the $\vt{D}$-closure $\widehat{G'}$ is also simply-connected.
\end{lemma}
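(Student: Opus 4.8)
The statement to prove is Lemma~\ref{lem:loop_shrinking_lemma_for_graph}: if $G$ is a tree encoding a simply-connected simple polyhedron $X$, and $G'\subsetneq G$ is a subgraph, then the polyhedron encoded by the $\vt{D}$-closure $\widehat{G'}$ is simply-connected. My plan is to deduce this directly from the preceding Lemma~\ref{lem:loop_shrinking_lemma} by identifying the $\vt{D}$-closure operation with the ``cap off with disks'' operation appearing there. The essential observation is that the $\vt{D}$-closure $\widehat{G'}$ is obtained from $G'$ by taking its neighborhood $N(G')$ in $G$ and replacing every vertex of $N(G')\setminus G'$ with a vertex of type $\vt{D}$; in the polyhedron, this exactly corresponds to cutting $X$ along the lifts of the boundary edges of $G'$ and capping the resulting cut ends with $2$-disks.

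\textbf{Reduction to the single-edge case.} First I would reduce to the case where $G'$ is obtained from $G$ by deleting the edges meeting a single cut curve. Since $G$ is a tree, each edge $e$ on the boundary of $G'$ (i.e.\ each edge of $N(G')$ joining $G'$ to $N(G')\setminus G'$) has a well-defined lift $\gamma_e$, which is a simple closed curve contained in a region of $X$. Because $G$ is a tree, deleting $e$ disconnects $G$, and correspondingly $\gamma_e$ separates $X$ into two pieces. I would then build up $\widehat{G'}$ one boundary edge at a time: cut along $\gamma_e$, apply Lemma~\ref{lem:loop_shrinking_lemma} to cap off the component containing $G'$ with a disk while recording that this component remains simply-connected, and discard the other side. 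Iterating over all finitely many boundary edges of $G'$ yields $\widehat{G'}$ with each cut end replaced by a disk region, which is precisely a vertex of type $\vt{D}$.

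\textbf{The key verification.} The step I expect to require the most care is checking that the single-edge cut genuinely satisfies the hypothesis of Lemma~\ref{lem:loop_shrinking_lemma}, namely that $\gamma_e$ separates $X$ into two connected components $V_1$ and $V_2$ and that capping $\gamma_e$ with a disk turns the $G'$-side into the polyhedron encoded by $\widehat{G'_{(e)}}$ (the $\vt{D}$-closure with respect to just this edge). This is exactly where the tree hypothesis is used: in a tree, removing a single edge produces two connected subtrees, and because the edge corresponds to a separating simple closed curve $\gamma_e$ lying in a region of $X$, the polyhedron splits accordingly. Replacing the far vertex by a $\vt{D}$-vertex means attaching a disk along $\gamma_e$, so the $G'$-side of the cut is $V_1\cup_{\gamma_e}D^2 = V_1'$ in the notation of Lemma~\ref{lem:loop_shrinking_lemma}, which is therefore simply-connected.

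\textbf{Closing the induction.} Finally I would argue that after performing all the cuts, the resulting simply-connected polyhedron is the one encoded by $\widehat{G'}$: each boundary vertex of $N(G')\setminus G'$ has been replaced by a $\vt{D}$-vertex, matching the definition of the $\vt{D}$-closure, and the interior of $G'$ is left untouched. Since simple-connectedness is preserved at each step by Lemma~\ref{lem:loop_shrinking_lemma}, the polyhedron encoded by $\widehat{G'}$ is simply-connected, completing the proof. The main subtlety to handle with care is bookkeeping when $G'$ has several boundary edges attached to the same vertex or to adjacent vertices, so I would order the cuts to peel off outermost branches of the tree first, ensuring each intermediate polyhedron is still encoded by a tree and each cut curve is still separating.
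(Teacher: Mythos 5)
Your proposal is correct and follows the paper's intended route: the paper offers no separate proof, presenting Lemma~\ref{lem:loop_shrinking_lemma_for_graph} as a direct rephrasing of Lemma~\ref{lem:loop_shrinking_lemma}, and your argument—cutting along the lifts of the boundary edges of $G'$ (each of which separates $X$ because $G$ is a tree), capping the $G'$-side with a disk so that the far subtree becomes a vertex of type~$\vt{D}$, and iterating while simple-connectedness is preserved—is exactly the verification that rephrasing requires. The only point worth noting is that the iteration applies Lemma~\ref{lem:loop_shrinking_lemma} to the intermediate polyhedra rather than to $X$ itself, which is legitimate since its quotient-space proof works verbatim for any simply-connected polyhedron.
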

\begin{lemma}
\label{lem:graph_lemma}
Suppose $c(X)\leq1$. 
Let $G$ be a graph encoding $X$. 
\begin{enumerate}
 \item
$G$ is a tree. 
 \item
$G$ does not have a vertex of type~$\vt{Y\!{}_{2}}$, $\vt{Y\!{}_{3}}$, 
$\vt{X_1}$, $\vt{X_{2}}$, $\vt{X_{5}}$, $\vt{X_{6}}$ nor $\vt{X_{7}}$. 
\item
$G$ does not contain a subgraph shown in Figure~\ref{fig:graph_lemma}-(i) nor -(ii). 
\end{enumerate}
\end{lemma}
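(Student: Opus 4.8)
The plan is to reduce every assertion to the simple-connectivity of $X$, which holds because $X$ is a shadow of $S^4$. Throughout I use that an encoding graph records $X$ as a union of the portions $D,P,Y_2,Y_3,Y_{12},Y_{111},X_1,\dots,X_{11}$ glued along the lifts of the edges, so that $\pi_1(X)$ is computed by van Kampen's theorem from Tables~\ref{table:vertex} and~\ref{table:vertex_Xi}. The common mechanism for parts (2) and (3) is that, if a forbidden vertex or subgraph $G'$ occurred, then by Lemma~\ref{lem:loop_shrinking_lemma_for_graph} its $\vt{D}$-closure $\widehat{G'}$ would encode a simply-connected polyhedron, whereas a direct computation will exhibit a nontrivial quotient of its fundamental group.

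For (1), I would first observe that $G$ is connected because $X$ is. Collapsing each portion to its vertex and each gluing annulus to its edge defines a map $q\colon X\to G$ onto the underlying graph, and any loop in $G$ is the image of a loop in $X$ running through the corresponding portions; hence $q_*\colon\pi_1(X)\to\pi_1(G)$ is surjective. Since $\pi_1(X)=\{1\}$, the graph $G$ has trivial fundamental group, and a connected graph with trivial fundamental group is a tree.

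For (2), fix a vertex $v$ of one of the listed types and take $G'=\{v\}$; this is a proper subgraph of $G$, because each boundary circle of the portion contributes an incident edge to a distinct vertex (of type $\vt{B}$ or otherwise), so Lemma~\ref{lem:loop_shrinking_lemma_for_graph} applies and, by part (1), $\widehat{G'}$ is again a tree that genuinely encodes a polyhedron. The $\vt{D}$-closure caps every boundary circle of the portion with a disk, so $\pi_1(\widehat{G'})$ is the fundamental group of the portion modulo the normal closure of its boundary classes, read off from the tables. These quotients are $\langle x\mid x^2\rangle$ for $Y_2$, $\langle x\mid x^3\rangle$ for $Y_3$ and (after setting $\gamma_1=y=1$) for $X_5$, $\langle x\mid x^4\rangle$ for $X_6$, and $\langle x\mid x^5\rangle$ for $X_7$, while $X_1$ and $X_2$ have abelianization $\Z$; all are nontrivial, contradicting the simple-connectivity guaranteed by the lemma.

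For (3), I would apply the identical strategy to the two configurations in Figure~\ref{fig:graph_lemma}: set $G'$ equal to the displayed subgraph, form $\widehat{G'}$, and compute $\pi_1(\widehat{G'})$ by van Kampen, using the boundary words of the incident portions from Tables~\ref{table:vertex} and~\ref{table:vertex_Xi} together with the relations $\gamma_i=1$ coming from the $\vt{D}$-neighbors. Exhibiting one nontrivial quotient, for instance through the abelianization, again contradicts Lemma~\ref{lem:loop_shrinking_lemma_for_graph}. The main obstacle is precisely this bookkeeping: along each internal edge one must correctly match a generator (or boundary word) of one portion with the boundary curve of the adjacent portion, up to orientation and conjugation, while tracking the dashes and parity dots that record lengths and evenness. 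Organizing these identifications so that the resulting one- or two-generator presentation visibly has nontrivial abelianization is the delicate step, whereas the group-theoretic simplifications themselves are routine.
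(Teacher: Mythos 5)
Your proposal follows the paper's proof essentially verbatim: part (1) via a surjection $\pi_1(X)\to\pi_1(G)$ (the paper realizes $G$ as a retract of $X$ rather than collapsing $X$ onto $G$, an immaterial difference), and parts (2)--(3) via the $\vt{D}$-closure of the offending vertex or subgraph combined with Lemma~\ref{lem:loop_shrinking_lemma_for_graph}, with your computed groups $\Z/2\Z$, $\Z/3\Z$, $\Z/3\Z$, $\Z/4\Z$, $\Z/5\Z$ and the infinite abelianizations for $X_1,X_2$ matching the paper's remark exactly. For (3) you only outline the bookkeeping without exhibiting the nontrivial quotient, but the paper's own proof is equally terse (``similar to that of (2)''), the answer being recorded in its remark as $\Z/2\Z$ for both subgraphs.
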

\begin{figure}[tbp]
\includegraphics[width=75mm]{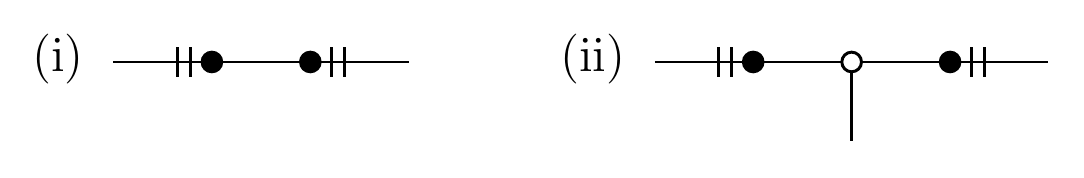}
\caption{Graphs those are not contained in a graph encoding a simply-connected polyhedron.}
\label{fig:graph_lemma}
\end{figure}
\begin{proof}
\begin{enumerate}
 \item
We can embed $G$ in $X$ as a retract, and hence there is a surjection $\pi_1(X)\to\pi_1(G)$. 
\item
Assume that $G$ has a vertex $v$ of type~$\vt{Y\!{}_{2}}$, $\vt{Y\!{}_{3}}$, 
$\vt{X_1}$, $\vt{X_{2}}$, $\vt{X_{5}}$, $\vt{X_{6}}$ or $\vt{X_{7}}$. 
Then the polyhedron encoded by the $\vt{D}$-closure $\widehat{v}$ is not simply-connected, 
which is a contradiction to Lemma~\ref{lem:loop_shrinking_lemma_for_graph}. 
\item
The proof is similar to that of (2). 
\end{enumerate}
\end{proof}
\begin{remark}
\begin{enumerate}
 \item
The fundamental groups of the special polyhedra encoded by the $\vt{D}$-closures 
of vertices of types~$\vt{Y\!{}_{2}}$, $\vt{Y\!{}_{3}}$, 
$\vt{X_1}$, $\vt{X_{2}}$, $\vt{X_{5}}$, $\vt{X_{6}}$ and $\vt{X_{7}}$ are isomorphic to 
$\Z/2\Z$, $\Z/3\Z$, 
$\langle x,y \mid xyx^{-2}y^{-2}\rangle$, 
$\langle x,y \mid xyx^{2}y^{-2}\rangle$, 
$\Z/3\Z$, $\Z/4\Z$ and $\Z/5\Z$, respectively. 
\item
The two polyhedra encoded by the $\vt{D}$-closures of the subgraphs shown in Figure~\ref{fig:graph_lemma}-(i) and -(ii) are homeomorphic, 
and their fundamental groups are isomorphic to  $\Z/2\Z$. 
\end{enumerate}
\end{remark}

\subsection{Lemmas on the fundamental groups of subpolyhedra} 
In this subsection, we present 4 lemmas on the fundamental group of subpolyhedron in $X$. 
Lemmas~\ref{lem:pi_1_of_subpolyhedron1} and \ref{lem:pi_1_of_subpolyhedron1-2} treat a subpolyhedron with one cut end, and  
Lemmas~\ref{lem:pi_1_of_subpolyhedron2} and \ref{lem:pi_1_of_subpolyhedron2-2} treat a subpolyhedron with two cut ends. 
Note that we will assume that $X$ is closed in these lemmas, 
which actually does not matter for our main theorems due to Lemma~\ref{lem:can_make_closed}. 

\begin{definition}
Let $G$ be a tree graph encoding a simple polyhedron.
Let $v$ and $v'$ be vertices of $G$ and $v'$ is of type~$\vt{Y{}\!_{12}}$. 
If the edge incident to $v'$ with no dashes is contained in the shortest path from $v$ to $v'$, 
then $v'$ is said to be {\it one-sided to $v$}. 
Otherwise we say that $v'$ is {\it two-sided to $v$}. 
\end{definition}
\begin{lemma}
\label{lem:pi_1_of_subpolyhedron1}
Suppose $X$ is closed, and 
let $V$ be a subpolyhedron of $X$ with a single cut end $\gamma$ and $c(V)=0$. 
Then there exists a non-negative integer $k$ such that 
$\pi_1(V)\cong\langle \gamma\mid \gamma^{2^k} \rangle$. 
\end{lemma}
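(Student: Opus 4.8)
The statement concerns a subpolyhedron $V\subset X$ with a single cut end $\gamma$ and no true vertices, i.e. $c(V)=0$. Since $V$ has no true vertices, every connected component of its singular set is a circle, so $\Nbd(S(V);V)$ is built only from copies of $Y_3$, $Y_{12}$, $Y_{111}$ (the $0$-vertex portions), together with the surface portions $D$, $P$, $Y_2$. My plan is to work entirely on the level of the encoding graph $G$ of $V$ and compute $\pi_1(V)$ by van Kampen using the portion data recorded in Table~\ref{table:vertex}.

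First I would establish that $G$ is a tree. Since $X$ is closed and simply-connected, and $V$ is obtained by cutting along the single curve $\gamma$, I can cap $\gamma$ with a disk to form $V'=V\cup_\gamma D^2$; by Lemma~\ref{lem:loop_shrinking_lemma} (with the other side playing the role of $V_2$) the capped polyhedron $V'$ is simply-connected, hence by the argument of Lemma~\ref{lem:graph_lemma}(1) its encoding graph, and therefore $G$ itself, is a tree. This also forbids the ``bad'' vertex types via Lemma~\ref{lem:graph_lemma}(2): in the $c(V)=0$ setting the only disallowed types relevant here are $\vt{Y\!{}_2}$ and $\vt{Y\!{}_{3}}$, whose $\vt{D}$-closures have fundamental groups $\Z/2\Z$ and $\Z/3\Z$. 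But note these closures are taken inside the \emph{capped} polyhedron; within $V$ itself such vertices may still be permitted, so I must be careful to track which constraints come from $V'$ being simply-connected versus from $V$ alone. The precise claim I want is about $\pi_1(V)$, the uncapped object, which still carries the free generator $\gamma$.

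The core computation is then an induction on the tree $G$, reading off from Table~\ref{table:vertex} how each portion glues. The only portions contributing a relation that squares a boundary are $Y_2$ (giving $\gamma_1=x^2$) and $Y_{12}$ (giving $\gamma_2=x^2$), each multiplying the order of a generator by $2$; the $Y_3$-type giving a cube is excluded by the tree/simply-connected constraint inherited from $V'$, and $P$ and $Y_{111}$ merely identify or freely combine boundary classes without introducing new torsion. Propagating from the leaves toward the cut end $\gamma$, each passage through a $Y_2$ or a two-sided $Y_{12}$ portion replaces the current generator $g$ by a square root, so the class of $\gamma$ becomes a $2^k$-th power of a free generator, where $k$ counts these passages along the path from $\gamma$ inward. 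This yields $\pi_1(V)\cong\langle\gamma\mid\gamma^{2^k}\rangle$ for the resulting $k\geq 0$.

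\textbf{Main obstacle.}
The hard part will be organizing the van Kampen induction so that the two-sidedness distinction for $Y_{12}$ vertices is handled correctly, since whether a $Y_{12}$ portion contributes a factor of $2$ to the exponent depends on which incident edge lies on the path toward $\gamma$ (the \emph{one-sided} versus \emph{two-sided} dichotomy just introduced in the definition preceding the lemma). I expect I will need to prove, as the key inductive invariant, that after collapsing any subtree hanging off the $\gamma$-path the fundamental group is cyclic generated by the boundary class on the cut, with order a power of $2$; the one-sided $Y_{12}$ passages leave the exponent unchanged while two-sided ones double it, and all other portion types preserve the cyclic-of-$2$-power-order form. Verifying that no other portion can create $3$-torsion or non-cyclic behavior in the uncapped $V$ — i.e. that the exclusion of $\vt{Y\!{}_3}$ and the branching $\vt{X_i}$ types genuinely follows from $c(V)=0$ together with the simple-connectivity of the capped object — is the step I would scrutinize most carefully.
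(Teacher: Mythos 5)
Your setup (encoding graph of $V$, tree-ness via capping $\gamma$ and Lemma~\ref{lem:loop_shrinking_lemma}, exclusion of the $\vt{X_i}$ types from $c(V)=0$) matches the paper's proof, but the core of your argument has a genuine gap. First, your main computational claim is a non sequitur: if ``the class of $\gamma$ becomes a $2^k$-th power of a free generator,'' then $\pi_1(V)\cong\Z$ generated by that free generator and $\gamma$ generates an index-$2^k$ subgroup; that is \emph{not} $\langle\gamma\mid\gamma^{2^k}\rangle$, which is finite cyclic generated by $\gamma$. The torsion relation does not come from $\gamma$ being a power; it comes from disk leaves killing generators at the far end of the tree (in the chain $v_\gamma$--$Y_{12}$--$\cdots$--$Y_{12}$--$D$ the terminal disk imposes $\gamma^{2^m}=1$). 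Relatedly, your handling of $Y_2$ is wrong on its own terms: a M\"obius band has a single boundary circle, so a $\vt{Y_2}$ vertex can only be a leaf, never a ``passage,'' and a $Y_2$ leaf produces exactly the bad situation above ($\pi_1=\Z\langle x\rangle$ with boundary class $x^2$), contradicting the lemma. In fact $\vt{Y_2}$ vertices are excluded outright: the $\vt{D}$-closure of such a vertex inside the tree encoding the simply-connected $V\cup_\gamma D^2$ encodes $\RP^2$, contradicting Lemma~\ref{lem:loop_shrinking_lemma_for_graph}. Your hedge that such vertices ``may still be permitted within $V$'' is the opposite of what the proof needs.

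Second, and more seriously, your induction cannot close at branch vertices without an input you never invoke. If a $\vt{P}$ vertex carries two subtrees with $\pi_1(V_i)\cong\langle\gamma_i\mid\gamma_i^{2^{k_i}}\rangle$ and $k_1,k_2\geq1$, the gluing yields $\Z/2^{k_1}\ast\Z/2^{k_2}$ (already infinite dihedral for $k_1=k_2=1$), which is not cyclic; so ``$P$ and $Y_{111}$ merely identify or freely combine boundary classes'' is precisely where the statement can fail, and no amount of one-sided/two-sided bookkeeping for $Y_{12}$'s repairs it. The paper kills this possibility quantitatively at every branch: applying Lemma~\ref{lem:loop_shrinking_lemma_for_graph} to the $\vt{D}$-closure of the subtree $v\cup G_1\cup G_2$ forces $\langle\gamma_1,\gamma_2\mid\gamma_1^{2^{k_1}},\gamma_2^{2^{k_2}},\gamma_1\gamma_2\rangle$ to be trivial, hence $\min(k_1,k_2)=0$; and at the last step it uses $\pi_1(V\cup_\gamma D^2)=\{1\}$ once more to force $m'=0$ or $k=0$ in the residual relation $\gamma^{2^m}\gamma_0^{2^{m'}}$. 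This repeated use of simple-connectivity of capped subpolyhedra --- not just for tree-ness and type exclusion --- is the key idea missing from your plan; you flag the branch case as the step to scrutinize, but the tool that resolves it is never identified.
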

\begin{proof}
Let $G$ be an encoding graph of $V$. 
From Lemma~\ref{lem:loop_shrinking_lemma}, the polyhedron $V\cup_{\gamma}D^2$ is simply-connected, 
and $G$ is a tree from Lemma~\ref{lem:graph_lemma}~(1). 
This graph $G$ has exactly one vertex of type~$\vt{B}$ corresponding to $\gamma$, 
and let $v_\gamma$ denote it. 
Other vertices in $G$ are of types~$\vt{D}$, $\vt{Y{}\!_{12}}$, $\vt{P}$ or $\vt{Y{}\!_{111}}$ 
by Lemma~\ref{lem:graph_lemma}~(2). 
Note that the valencies of types~$\vt{D}$, $\vt{Y{}\!_{12}}$, $\vt{P}$ and $\vt{Y{}\!_{111}}$ are $1, 2, 3$ and $3$, respectively. 

\begin{figure}[tbp]
\includegraphics[width=90mm]{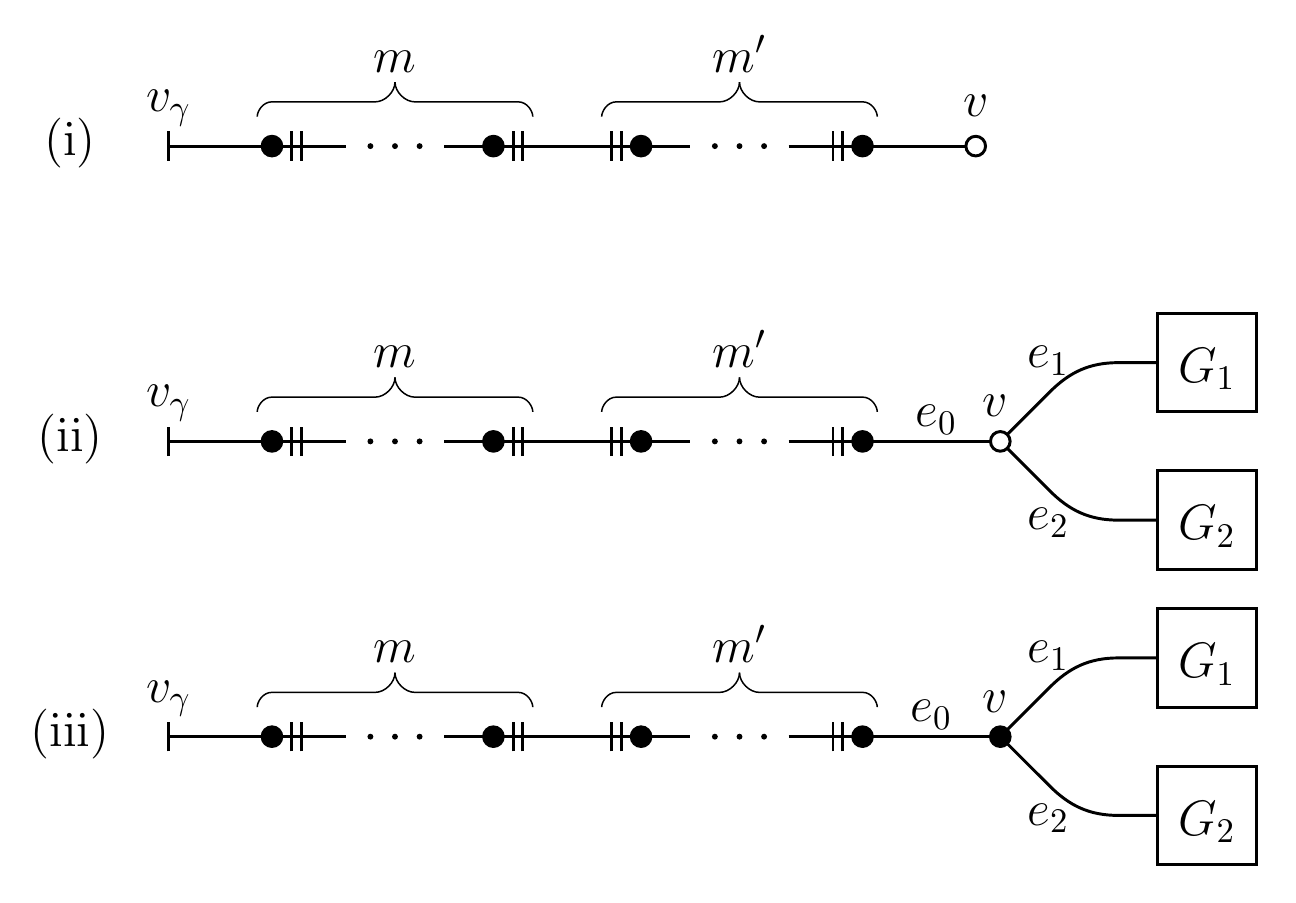}
\caption{The possible cases of a graph $G$ encoding $V$. }
\label{fig:graph_lemma6}
\end{figure}
We give an orientation to $\gamma$ arbitrarily. 
Let $v$ be the nearest vertex from $v_\gamma$ among those of types~$\vt{D}$, $\vt{P}$ and $\vt{Y{}\!_{111}}$. 
By Lemma~\ref{lem:graph_lemma}-(3), 
the possible cases are shown in Figure~\ref{fig:graph_lemma6}, where $m,m'\in\Z_{\geq0}$. 
We prove the argument by induction on the number of vertices of 
types~$\vt{D}$, $\vt{P}$ and $\vt{Y{}\!_{111}}$ in $G$, 
so it is enough to consider the following (i)-(iii).  
\begin{itemize}
 \item[(i)]
Suppose that $G$ is as shown in Figure~\ref{fig:graph_lemma6}-(i). 
Then we easily obtain $\pi_1(V)\cong\langle \gamma\mid \gamma^{2^{m}}\rangle$. 
 \item[(ii)]
Suppose that $G$ is as shown in Figure~\ref{fig:graph_lemma6}-(ii) 
and that $\pi_1(V_i)\cong\langle \gamma_i\mid \gamma_i^{2^{k_i}}\rangle$ for $i\in\{1,2\}$, 
where $V_i$ is the polyhedron encoded by the subgraph $G_i$ and $\gamma_i=\partial V_i$. 
Let $\gamma_0$ be a lift of $e_0$. 
Note that $\gamma_i$ is a lift of $e_i$ for $i\in\{1,2\}$. 
By Lemma~\ref{lem:loop_shrinking_lemma_for_graph} applied to the subgraph $v\cup G_1\cup G_2$, the group  
$\langle \gamma_1,\gamma_2\mid \gamma_1^{2^{k_1}},\ \gamma_2^{2^{k_2}},\ \gamma_1\gamma_2\rangle$
must be trivial, and hence we have $k_1=0$ or $k_2=0$. 
Suppose $k_2=0$. 
Then we have 
\begin{align*}
\pi_1(V)
&\cong\langle 
\gamma, \gamma_0,\gamma_1,\gamma_2
\mid 
\gamma^{2^{m}}\gamma_0^{2^{m'}},\ \gamma_1^{2^{k_1}},\ \gamma_2^{2^{k_2}},\ \gamma_0\gamma_1\gamma_2
\rangle. \\
&\cong\langle 
\gamma, \gamma_0,\gamma_1
\mid 
\gamma^{2^{m}}\gamma_0^{2^{m'}},\ \gamma_1^{2^{k_1}},\ \gamma_0\gamma_1
\rangle. \\
&\cong\langle 
\gamma, \gamma_0
\mid 
\gamma^{2^{m}}\gamma_0^{2^{m'}},\ \gamma_0^{2^{k_1}}
\rangle. 
\end{align*}
It becomes trivial by adding a relation $\gamma=1$ since $\pi_1(V\cup_{\gamma}D^2)=\{1\}$. 
Hence $m'=0$ or $k_1=0$, and the lemma follows in either case. 
 \item[(iii)]
Suppose that $G$ is as shown in Figure~\ref{fig:graph_lemma6}-(iii) 
and that $\pi_1(V_i)\cong\langle \gamma_i\mid \gamma_i^{2^{k_i}}\rangle$ 
for $i\in\{1,2\}$, 
where $V_i$ is the polyhedron encoded by the subgraph $G_i$ and $\gamma_i=\partial V_i$. 
Let $\gamma_0$ be a lift of $e_0$. 
We have 
\begin{align*}
\pi_1(V)
&\cong\langle 
\gamma, \gamma_0,\gamma_1,\gamma_2
\mid 
\gamma^{2^{m}}\gamma_0^{2^{m'}},\ \gamma_1^{2^{k_1}},\ \gamma_2^{2^{k_2}},\ \gamma_0=\gamma_1=\gamma_2
\rangle. \\
&\cong\langle 
\gamma, \gamma_0
\mid 
\gamma^{2^{m}}\gamma_0^{2^{m'}},\ \gamma_0^{2^{k_3}} 
\rangle, \\
\end{align*}
where $k_3=\min\{k_1,k_2\}$. 
It becomes trivial by adding a relation $\gamma=1$ since $\pi_1(V\cup_{\gamma}D^2)=\{1\}$. 
Hence $m'=0$ or $k_3=0$, and the lemma follows in either case. 
\end{itemize}
\end{proof}

For each vertex $v$ of type~$\vt{D}$,  
let $k(v)$ denote the number of vertices of type~$\vt{Y{}\!_{12}}$ one-sided to $v_\gamma$ contained in the geodesic from $v_\gamma$ to $v$. 
By the proof of Lemma~\ref{lem:pi_1_of_subpolyhedron1}, 
the integer $k$ stated in the lemma is given as 
the minimum of $k(v)$ for any vertex $v$ of type~$\vt{D}$. 
Thus we also have proved the following: 
\begin{lemma}
\label{lem:pi_1_of_subpolyhedron1-2}
Under the same notation as in Lemma~\ref{lem:pi_1_of_subpolyhedron1} and its proof, 
if $\pi_1(V)=\{1\}$, then there exists a leaf 
such that all the vertices of type~$\vt{Y{}\!_{12}}$ contained 
in the geodesic (in $G$) from $v_\gamma$ to the leaf are two-sided to $v_\gamma$. 
\end{lemma}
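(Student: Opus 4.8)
The plan is to obtain the statement as a direct reading of the computation already carried out for Lemma~\ref{lem:pi_1_of_subpolyhedron1}, so I would not start a fresh induction. First I would record the elementary observation that the group $\langle\gamma\mid\gamma^{2^k}\rangle$ is trivial precisely when $k=0$, since $\gamma^{2^0}=\gamma$. Hence the hypothesis $\pi_1(V)=\{1\}$ is equivalent to $k=0$ for the integer $k$ produced by Lemma~\ref{lem:pi_1_of_subpolyhedron1}.

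Next I would invoke the refined description of $k$ stated just before the lemma, namely that $k=\min k(v)$, where $v$ ranges over the vertices of type~$\vt{D}$ (equivalently, over the leaves of $G$ other than $v_\gamma$) and $k(v)$ counts the vertices of type~$\vt{Y\!{}_{12}}$ that are one-sided to $v_\gamma$ on the geodesic from $v_\gamma$ to $v$. Granting this, $k=0$ forces the existence of a leaf $v$ of type~$\vt{D}$ with $k(v)=0$; by the definition of $k(v)$ this means that no vertex of type~$\vt{Y\!{}_{12}}$ on the geodesic from $v_\gamma$ to $v$ is one-sided, i.e.\ every such vertex is two-sided to $v_\gamma$. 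This $v$ is the desired leaf, which completes the argument.

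The only real content, and the step I expect to be the main obstacle, is justifying the formula $k=\min k(v)$ from the inductive van Kampen computation of cases (i)--(iii). Concretely, I would track the order of the boundary loop as one proceeds outward along $G$: passing a one-sided $\vt{Y\!{}_{12}}$ replaces a boundary class $x$ by $x^{2}$ and therefore multiplies the eventual exponent by $2$, contributing to $2^{k(v)}$, whereas a two-sided $\vt{Y\!{}_{12}}$ imposes only a square-root-type identification $x^{2}=(\cdot)$ and does not raise the count. At each branch vertex of type~$\vt{P}$ or $\vt{Y\!{}_{111}}$ the relations coming from the subtrees combine to the minimum of the branch exponents, exactly as in cases (ii) and (iii). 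Here I would lean on Lemma~\ref{lem:loop_shrinking_lemma} (simple-connectivity of $V\cup_\gamma D^2$) together with Lemma~\ref{lem:graph_lemma} to discard the degenerate configurations---such as a two-sided $\vt{Y\!{}_{12}}$ adjacent to $v_\gamma$ immediately followed by a one-sided one---that would otherwise produce a copy of $\Z/2\Z$ in which $\gamma$ is trivial, contradicting the cyclic presentation of Lemma~\ref{lem:pi_1_of_subpolyhedron1}. Once this bookkeeping is made explicit, the lemma follows with no further work.
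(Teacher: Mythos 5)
Your proposal follows the same skeleton as the paper's own argument: note that $\langle\gamma\mid\gamma^{2^k}\rangle$ is trivial exactly when $k=0$, then feed this into the description of $k$ as the minimum of the one-sided counts $k(v)$ over the $\vt{D}$-leaves. The first step is correct, and the second is precisely the remark the paper places before the lemma. The gap is in the step you yourself identify as the real content, namely your justification of $k=\min_v k(v)$. Your combination rule is right at a vertex of type~$\vt{Y\!{}_{111}}$ (there $\gamma_0=\gamma_1=\gamma_2$, so the exponents combine to $\min(k_1,k_2)$, as in case~(iii) of the proof of Lemma~\ref{lem:pi_1_of_subpolyhedron1}), but it is wrong at a vertex of type~$\vt{P}$: there the relation is $\gamma_0\gamma_1\gamma_2=1$, simple-connectivity (Lemma~\ref{lem:loop_shrinking_lemma_for_graph}) forces $\min(k_1,k_2)=0$, say $k_2=0$, and the exponent handed upward is then $k_1$, i.e.\ the \emph{maximum}; this is exactly the paper's case~(ii), whose sub-case $m'=0$ gives $\pi_1(V)\cong\langle\gamma\mid\gamma^{2^{m+k_1}}\rangle$, so $k=m+k_1$. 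Because of this, the identity you plan to prove is false as an equality. Concretely, let $G$ consist of $v_\gamma$, a $\vt{P}$-vertex, one branch carrying a $\vt{Y\!{}_{12}}$-vertex one-sided to $v_\gamma$ followed by a $\vt{D}$-vertex, and a second branch consisting of a single $\vt{D}$-vertex: this tree uses only allowed vertex types, contains no forbidden subgraph of Lemma~\ref{lem:graph_lemma}~(3), and all its $\vt{D}$-closures are simply-connected, yet $\pi_1(V)\cong\langle\gamma\mid\gamma^{2}\rangle$ (so $k=1$) while the second branch furnishes a leaf with $k(v)=0$. Any proof organized around a uniform minimum rule must break on this configuration.

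What the induction of Lemma~\ref{lem:pi_1_of_subpolyhedron1} actually yields---and all the present lemma needs---is the one-sided estimate $\min_v k(v)\le k$. By induction, each subtree $G_i$ hanging off the branch vertex contains a $\vt{D}$-leaf whose internal geodesic carries at most $k_i$ one-sided vertices (one-sidedness toward the root of $G_i$ agrees with one-sidedness toward $v_\gamma$), so $G$ has a leaf with $k(v)\le m+\min(k_1,k_2)$; and in case~(ii) one has $\min(k_1,k_2)=0$ with $k\ge m$, while in case~(iii) either $k_3=\min(k_1,k_2)=0$ and $k=m$, or $m'=0$ and $k=m+k_3$. In every case $m+\min(k_1,k_2)\le k$, and taking $k=0$ produces the all-two-sided leaf, which is the lemma. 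So your overall strategy is the paper's and is salvageable (the equality you invoke is stated in the paper in the same overstated form), but as written your third paragraph would set out to prove a false identity; it should be replaced by the inequality above, with the $\vt{P}$-vertex rule corrected from minimum to ``one branch is forced to die, the other passes through.''
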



\begin{lemma}
\label{lem:pi_1_of_subpolyhedron2}
Suppose $X$ is closed, and 
let $U$ be a subpolyhedron of $X$ with exactly two cut ends $\gamma_1\sqcup \gamma_2$ and 
$c(U)=0$. 
Suppose that $[\gamma_1]$ is not a torsion element in $H_1(U)$. 
\begin{enumerate}
\item
$[\gamma_2]$ is also not a torsion element in $H_1(U)$. 
\item
$\pi_1(U)\cong\langle \gamma_1,\gamma_2 \mid (\gamma_1^{2^m} \gamma_2^{2^l})^{2^k}\rangle$ for some $k,l,m\in\Z_{\geq0}$. 
\end{enumerate}
\end{lemma}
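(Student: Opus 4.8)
The plan is to mimic the structure of the proof of Lemma~\ref{lem:pi_1_of_subpolyhedron1}, working on an encoding tree $G$ of $U$ but now keeping track of the two vertices $v_{\gamma_1}$ and $v_{\gamma_2}$ of type~$\vt{B}$. By Lemma~\ref{lem:graph_lemma}, the tree $G$ contains only vertices of types $\vt{B}$, $\vt{D}$, $\vt{P}$, $\vt{Y\!{}_{12}}$ and $\vt{Y\!{}_{111}}$. The unique geodesic $\sigma$ in $G$ joining $v_{\gamma_1}$ to $v_{\gamma_2}$ is the backbone of the argument: along $\sigma$ the only admissible interior vertices are of type~$\vt{Y\!{}_{12}}$, $\vt{P}$ or $\vt{Y\!{}_{111}}$, and at each $\vt{P}$ or $\vt{Y\!{}_{111}}$ vertex there hangs a pendant subtree $G_j$ that does not contain either $v_{\gamma_i}$. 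By Lemma~\ref{lem:pi_1_of_subpolyhedron1}, each such pendant piece $V_j$ contributes $\pi_1(V_j)\cong\langle\delta_j\mid\delta_j^{2^{k_j}}\rangle$ through its single cut end $\delta_j$.

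First I would establish part~(1) essentially for free. Since $[\gamma_1]$ is non-torsion in $H_1(U)$ and $U\cup_{\gamma_1}D^2\cup_{\gamma_2}D^2$ is simply-connected by Lemma~\ref{lem:loop_shrinking_lemma} (applied twice, or once to the separating loop), abelianizing gives $H_1(U)$ with the relations $[\gamma_1]=[\gamma_2]=0$ trivializing everything; if $[\gamma_2]$ were torsion while $[\gamma_1]$ is not, the resulting abelian group could not collapse correctly once we kill $[\gamma_1]$ alone, so the two cut ends must be homologically symmetric. More carefully, I would read off $H_1(U)$ from the geodesic: the backbone contributes a single free $\Z$ generated by a backbone loop $t$, subject only to the $\vt{Y\!{}_{12}}$ doubling relations, and both $[\gamma_1]$ and $[\gamma_2]$ are identified with $\pm2^{(\cdot)}t$ modulo the torsion from pendants; hence one is torsion iff the other is.

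For part~(2) I would induct on the number of $\vt{P}$ and $\vt{Y\!{}_{111}}$ vertices lying on the geodesic $\sigma$, exactly paralleling cases (i)--(iii) of Lemma~\ref{lem:pi_1_of_subpolyhedron1}. In the base case $\sigma$ carries only $\vt{Y\!{}_{12}}$ vertices, so $U$ is a chain of M\"obius-type bands and van Kampen gives directly $\pi_1(U)\cong\langle\gamma_1,\gamma_2\mid\gamma_1^{2^a}\gamma_2^{2^b}\rangle$ for the appropriate exponents coming from one-sided versus two-sided $\vt{Y\!{}_{12}}$ vertices (which is the $k=0$ instance of the claimed form). In the inductive step, splitting at the first $\vt{P}$ or $\vt{Y\!{}_{111}}$ vertex on $\sigma$ produces a pendant $V_j$ with $\pi_1\cong\langle\delta_j\mid\delta_j^{2^{k_j}}\rangle$ and a smaller two-cut-end subpolyhedron to which induction applies; then I would assemble the presentation by van Kampen, impose the single relation $\gamma=1$ coming from $\pi_1(U\cup_\gamma D^2)=\{1\}$ as in the previous lemma, and use that relation to eliminate redundant generators and force exponents to collapse into the single compound relation $(\gamma_1^{2^m}\gamma_2^{2^l})^{2^k}$.

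The hard part will be the bookkeeping in the inductive step: showing that after eliminating the pendant generators the relators genuinely combine into one word of the prescribed shape $(\gamma_1^{2^m}\gamma_2^{2^l})^{2^k}$, rather than several independent torsion relations. The mechanism that saves this is precisely the non-torsion hypothesis together with simple-connectivity of the capped-off polyhedron: each time a $\vt{Y\!{}_{111}}$ or $\vt{P}$ node would introduce a competing relation, Lemma~\ref{lem:loop_shrinking_lemma_for_graph} applied to the $\vt{D}$-closure of the relevant subtree forces one of the exponents to vanish (as in cases (ii)--(iii) above), so that all but one relator degenerate. I expect the cleanest route is to phrase the outcome as: the backbone yields the single ``compound meridian'' $\gamma_1^{2^m}\gamma_2^{2^l}$, and the surviving $\vt{Y\!{}_{12}}$ vertices lying on $\sigma$ on the $\gamma$-side contribute the outer exponent $2^k$, all other potential torsion having been killed. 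Verifying that the two cases $m'=0$ versus $k=0$ from the previous lemma interlock correctly here is the delicate point, but it is routine once the geodesic is isolated as the only source of non-torsion.
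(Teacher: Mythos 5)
Your overall skeleton---encode $U$ by a tree, work along the geodesic between the two $\vt{B}$ vertices, feed the pendant subtrees into Lemma~\ref{lem:pi_1_of_subpolyhedron1}, and use simple-connectivity of $\vt{D}$-closures to kill competing relators---is the same as the paper's, but two steps are genuinely broken. First, your argument for (1) does not work: simple-connectivity of $U\cup_{\gamma_1}D^2\cup_{\gamma_2}D^2$ only tells you that killing both $[\gamma_1]$ and $[\gamma_2]$ trivializes $H_1(U)$, and that is perfectly compatible with $[\gamma_1]$ non-torsion and $[\gamma_2]$ torsion: take $H_1(U)\cong\Z\oplus\Z/2\Z$ with $[\gamma_1]=(1,0)$ and $[\gamma_2]=(0,1)$. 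So ``could not collapse correctly'' proves nothing. Your fallback, reading $H_1(U)$ off the backbone as ``a single free $\Z$ subject to $\vt{Y\!{}_{12}}$ doublings,'' silently assumes that no $\vt{Y\!{}_{111}}$ vertex lies on the geodesic, which is not known at this stage; in the paper, ruling that out is the first half of the proof of (2), and it \emph{uses} part (1). The missing idea is the paper's three-line argument: cap off $\gamma_2$ alone; then $U\cup_{\gamma_2}D^2$ has a single cut end and no true vertices, so Lemma~\ref{lem:pi_1_of_subpolyhedron1} forces $\pi_1(U\cup_{\gamma_2}D^2)$ to be a \emph{finite} cyclic group generated by $\gamma_1$; if $[\gamma_2]$ were torsion in $H_1(U)$, then $[\gamma_1]$ would remain non-torsion after killing $[\gamma_2]$, a contradiction. (The same confusion resurfaces when you write $\pi_1(U\cup_\gamma D^2)=\{1\}$: for a two-cut-end $U$, capping one end gives a finite cyclic group that need not be trivial; only the double capping is simply-connected.)

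Second, in (2) you treat $\vt{P}$ and $\vt{Y\!{}_{111}}$ vertices on $\sigma$ uniformly and claim each competing relator dies because ``one of the exponents vanishes.'' For $\vt{Y\!{}_{111}}$ this fails: all three boundary curves of $Y_{111}$ are homotopic to its core, so a $\vt{Y\!{}_{111}}$ vertex on $\sigma$ identifies the two backbone curves with the pendant's cut end, whose class is torsion by Lemma~\ref{lem:pi_1_of_subpolyhedron1}; even when that pendant is simply-connected, this forces $[\gamma_1]$ to be torsion, contradicting the hypothesis. Hence this case must be shown to be \emph{impossible}---the paper does so by applying (1) to the piece of $U$ cut off just past the $\vt{Y\!{}_{111}}$ vertex---rather than absorbed as a degenerate relator; an induction that keeps it alive produces presentations with two independent relators, which are not of the claimed one-relator form. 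Finally, the ``routine bookkeeping'' you defer is exactly the substance of the paper's proof: one must show that at most one $\vt{P}$ vertex on $\sigma$ carries a non-simply-connected pendant (done via Lemma~\ref{lem:loop_shrinking_lemma_for_graph} applied to the $\vt{D}$-closure of a suitable subgraph), and one must verify at each inductive step that the new cut end is still non-torsion in the smaller piece so that the induction hypothesis applies. These facts are provable, but they are not automatic, and your proposal does not supply them.
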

\begin{proof}
(1)
We have $c(U\cup_{\gamma_2}D^2)=0$. 
Hence the fundamental group of $U\cup_{\gamma_2}D^2$ is a finite cyclic group generated by $\gamma_1$ by Lemma~\ref{lem:pi_1_of_subpolyhedron1}, 
and $[\gamma_1]$ is a torsion element in $H_1(U\cup_{\gamma_2}D^2)$. 
Suppose, to derive a contradiction, that $[\gamma_2]$ is a torsion element in $H_1(U)$. 
Then $[\gamma_1]$ and $[\gamma_2]$ are linearly independent in $H_1(U)$. 
Hence $[\gamma_1]$ is not a torsion element also in $H_1(U\cup_{\gamma_2}D^2)$, 
which is a contradiction. \\[2mm] 
(2)
Let $G$ be a tree encoding $U$, 
and let $v_1$ and $v_2$ be the vertices of type~$\vt{B}$ corresponding to $\gamma_1$ and $\gamma_2$, respectively. 
Let $\ell$ be the geodesic from $v_1$ to $v_2$. 
The vertices in $G$ other than $v_1$ or $v_2$ are of types~$\vt{D}$, $\vt{Y{}\!_{12}}$, $\vt{P}$ or $\vt{Y{}\!_{111}}$ 

We assume that $\ell$ contains a vertex $v$ of type~$\vt{Y{}\!_{111}}$ and lead to contradiction. 
Recall that there are three edges incident to $v$. See Figure~\ref{fig:graph_lemma7}-(i-1).
\begin{figure}[tbp]
\includegraphics[width=110mm]{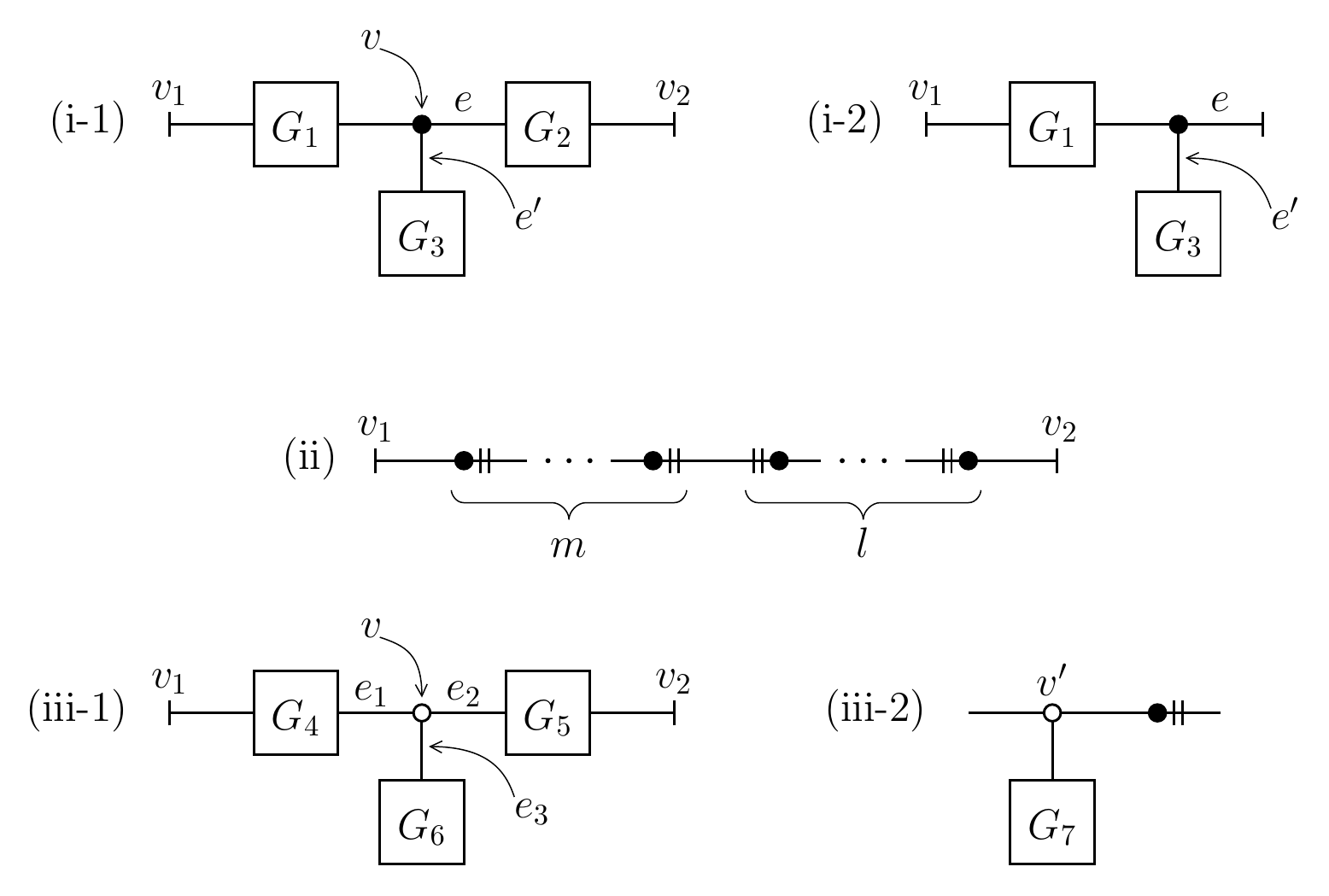}
\caption{Encoding graphs of $U$ used in the proof of Lemma~\ref{lem:pi_1_of_subpolyhedron2}. }
\label{fig:graph_lemma7}
\end{figure}
Let $e$ be the edge such that it is incident to $v$ and between $v$ and $v_2$, 
and let $e'$ be the edge incident to $v$ and not on $\ell$. 
Let $G_1, G_2$ and $G_3$ be subgraphs of $G$ as indicated in Figure~\ref{fig:graph_lemma7}-(i-1).
Now let $U'$ be one of the components containing $\gamma_1$ 
obtained by cutting $U$ along a lift of $e$, 
which is encoded in Figure~\ref{fig:graph_lemma7}-(i-2). 
This subpolyhedron $U'$ has two boundary components: 
one is $\gamma_1$ and the other, namely a lift of $e$, will be denoted by $\gamma'_2$. 
Since $U'$ itself satisfies the assumption of the lemma, the cycle $[\gamma'_2]$ is not a torsion element in $H_1(U')$ by (1). 
It is homologous to a cycle represented by a lift of $e'$, 
which is a torsion element in the subpolyhedron encoded by $G_3$ by Lemma~\ref{lem:pi_1_of_subpolyhedron1}. 
It is a contradiction. 
Therefore, the vertices between $v_1$ and $v_2$ are of types~$\vt{Y{}\!_{12}}$ or $\vt{P}$. 

If $\ell$ does not contain vertices of type~$\vt{P}$, 
the graph $G$ is as shown in Figure~\ref{fig:graph_lemma7}-(ii) by Lemma~\ref{lem:graph_lemma}~(3). 
We then have $\pi_1(U)\cong\langle \gamma_1,\gamma_2 \mid \gamma_1^{2^m} \gamma_2^{2^l}\rangle$. 

We then suppose that $G$ has a vertex $v$ of type~$\vt{P}$, and $G$ is as shown in Figure~\ref{fig:graph_lemma7}-(iii-1). 
We can assume that each vertex of type~$\vt{Y{}\!_{12}}$ in $\ell$ is two-sided to $v$ by Lemma~\ref{lem:graph_lemma}~(3). 
Suppose that there is a subgraph of $G_4$ as shown in Figure~\ref{fig:graph_lemma7}-(iii-2), 
where $v'$ is a vertex of type~$\vt{P}$ contained in $\ell$. 
The fundamental group of the subpolyhedron encoded by $G_7$ is 
isomorphic to $\langle \gamma'\mid \gamma'^{2^{k'}} \rangle$ for some $k'\in\Z_{\geq0}$ by Lemma~\ref{lem:pi_1_of_subpolyhedron1}, 
where $\gamma'$ is the boundary of the subpolyhedron. 
The simple polyhedron encoded by the $\vt{D}$-closure of the graph shown in Figure~\ref{fig:graph_lemma7}-(iii-2) 
is simply-connected by Lemma~\ref{lem:loop_shrinking_lemma_for_graph}. 
Hence the subpolyhedron encoded by $G_7$ must be simply-connected as well. 
Therefore, the graph $G_4$ in Figure~\ref{fig:graph_lemma7}-(iii-1) encodes a polyhedron whose fundamental group is 
presented by $\langle \gamma_1, u_1\mid u_1\gamma_1^{2^m}\rangle$, 
where $u_1$ is a lift of $e_1$ and $m$ is the number of vertices of type~$\vt{Y{}\!_{12}}$ in $G_4\cap\ell$. 
Similarly, the graph $G_5$ in Figure~\ref{fig:graph_lemma7}-(iii-1) encodes a polyhedron whose fundamental group is 
presented by $\langle \gamma_2, u_2\mid u_2\gamma_2^{2^l}\rangle$, 
where $u_2$ is a lift of $e_2$ and $l$ is the number of vertices of type~$\vt{Y{}\!_{12}}$ in $G_5\cap\ell$. 
By Lemma~\ref{lem:pi_1_of_subpolyhedron1}, 
the graph $G_6$ in Figure~\ref{fig:graph_lemma7}-(iii-1) encodes a polyhedron whose fundamental group is 
presented by $\langle u_3\mid u_3^{2^k}\rangle$ for some $k\in\in\Z_{\geq0}$, 
where $u_3$ is a lift of $e_3$. 
Thus, we obtain a presentation
\begin{align*}
\pi_1(U)
&\cong
\langle 
\gamma_1,\gamma_2,u_1,u_2,u_3
\mid
u_1\gamma_1^{2^m},\ u_2\gamma_2^{2^l} ,\ u_3^{2^k},\ u_1u_2u_3
\rangle\\
&\cong
\langle 
\gamma_1,\gamma_2
\mid
(\gamma_1^{2^m}\gamma_2^{2^l})^{2^k}
\rangle. 
\end{align*}
\end{proof}
From the above proof, we immediately obtain the following lemma, 
which will be used in the proof of Theorem~\ref{thm:sc=1_onlyifpart}. 
\begin{lemma}
\label{lem:pi_1_of_subpolyhedron2-2}
Under the same notation as in Lemma~\ref{lem:pi_1_of_subpolyhedron2} and its proof, 
if $\pi_1(U)\cong\langle \gamma_1,\gamma_2\mid \gamma_1^{2^m}\gamma_2\rangle$, 
the following holds:
\begin{itemize}
\item
any vertex lying in $\ell$ is of type~$\vt{Y{}\!_{12}}$ or $\vt{P}$, 
\item
$m$ is the number of the vertices of type~$\vt{Y{}\!_{12}}$ lying in $\ell$, 
all of which are one-sided to $v_1$, and 
\item
the subpolyhedron corresponding to each connected component of 
$G\setminus\ell$ is simply-connected. 
\end{itemize}
\end{lemma}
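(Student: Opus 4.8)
The plan is to extract all three conclusions directly from the proof of Lemma~\ref{lem:pi_1_of_subpolyhedron2}. That proof already establishes that the geodesic $\ell$ from $v_1$ to $v_2$ meets only vertices of type $\vt{Y{}\!_{12}}$ and $\vt{P}$, and it produces the presentation $\pi_1(U)\cong\langle\gamma_1,\gamma_2\mid(\gamma_1^{2^m}\gamma_2^{2^l})^{2^k}\rangle$ together with a combinatorial dictionary: $m$ (resp.\ $l$) is the number of $\vt{Y{}\!_{12}}$-vertices of $\ell$ one-sided to $v_1$ (resp.\ to $v_2$), while $k$ records the order of the cyclic fundamental group of the branch hanging off the distinguished $\vt{P}$-vertex, every other branch along $\ell$ having already been forced simply-connected in that proof. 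The task is therefore purely group-theoretic: to show that the hypothesis $\pi_1(U)\cong\langle\gamma_1,\gamma_2\mid\gamma_1^{2^m}\gamma_2\rangle$ forces $k=0$ and $l=0$.

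First I would abelianize both presentations, using that both compute the single group $H_1(U)$. The presentation of Lemma~\ref{lem:pi_1_of_subpolyhedron2} gives $H_1(U)\cong\Z^2/\langle(2^{m+k},2^{l+k})\rangle\cong\Z\oplus\Z/2^{k+\min\{m,l\}}\Z$, whereas the hypothesis gives $H_1(U)\cong\Z^2/\langle(2^m,1)\rangle\cong\Z$. Comparing torsion subgroups forces $k+\min\{m,l\}=0$, hence $k=0$ and $\min\{m,l\}=0$; no marking is needed for this.

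It remains to see that it is $l$, not $m$, that vanishes, and here I would use that the isomorphism in the hypothesis is marked, i.e.\ carries the cut-end classes $[\gamma_1],[\gamma_2]$ to the generators of the presented group; in that group $[\gamma_1]$ is a primitive element of $H_1(U)\cong\Z$, with $[\gamma_2]=-2^m[\gamma_1]$. With $k=0$ the presentation of Lemma~\ref{lem:pi_1_of_subpolyhedron2} reads $\langle\gamma_1,\gamma_2\mid\gamma_1^{2^m}\gamma_2^{2^l}\rangle$, in which $[\gamma_1]$ is primitive precisely when $l=0$, since if instead $m=0<l$ then $[\gamma_1]=-2^l[\gamma_2]$ is divisible by $2^l$. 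Hence $l=0$ and the remaining exponent agrees with the $m$ of the hypothesis. Transcribing these equalities through the dictionary yields the three assertions: the vertices of $\ell$ are of type $\vt{Y{}\!_{12}}$ or $\vt{P}$; since $l=0$ every $\vt{Y{}\!_{12}}$-vertex of $\ell$ is one-sided to $v_1$ and there are exactly $m$ of them; and since $k=0$ the distinguished branch is simply-connected, which together with the branches already known to be simply-connected shows that every connected component of $G\setminus\ell$ is simply-connected.

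I expect the only genuine subtlety to be the marking in the third step: the abelianization alone detects only $\min\{m,l\}=0$, and separating $l=0$ from the symmetric possibility $m=0<l$ relies on remembering which cut end is sent to which generator. Keeping track of this is exactly what fixes the asymmetric conclusion that all the relevant $\vt{Y{}\!_{12}}$-vertices are one-sided to $v_1$ rather than to $v_2$; the remainder is a direct reading-off from the proof of Lemma~\ref{lem:pi_1_of_subpolyhedron2}.
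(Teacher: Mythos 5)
Your proposal is correct and takes essentially the same approach as the paper: the paper's entire proof of this lemma is the remark that it follows immediately from the proof of Lemma~\ref{lem:pi_1_of_subpolyhedron2}, and your argument is precisely that reading-off, with the step the paper leaves implicit (that the marked presentation $\langle\gamma_1,\gamma_2\mid\gamma_1^{2^m}\gamma_2\rangle$ forces $k=l=0$, via the torsion of $H_1(U)$ together with primitivity of $[\gamma_1]$) spelled out. Your point that the marking of the cut ends is what breaks the symmetry between $m$ and $l$, and hence yields the asymmetric ``one-sided to $v_1$'' conclusion, is exactly the content the paper treats as immediate.
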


\section{$2$-knots with complexity zero}
\label{sec:2-knots with complexity zero}
From now on, we discuss the classification of $2$-knots according to the shadow-complexity, 
and we give the proof of the theorem for the case $\shco(K)=0$ in this section. 

Let us start with the following lemma. 
It is an analogue of \cite[Lemma~1.3]{KN20}, 
and the original statement in the paper is for shadows of ``4-manifolds''. 
Roughly speaking, the shadow-complexity of any $2$-knot is always attained by a closed shadow. 
\begin{lemma}
\label{lem:can_make_closed} 
If $\shco(K)=n$, 
then $K$ admits a closed shadow with complexity exactly $n$. 
\end{lemma}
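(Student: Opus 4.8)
The plan is to reduce the statement to the corresponding fact for shadows of the closed $4$-manifold $S^4$, while keeping track of the embedded surface $K$. First I would fix a shadow $X$ of $K$ with $c(X)=\shco(K)=n$; since every closed shadow of $K$ is in particular a shadow, it has at least $n$ true vertices, so it suffices to produce a \emph{closed} shadow of $K$ with at most $n$ true vertices. The key preliminary observation is that $K\cap\partial X=\emptyset$: as $K$ is a closed surface embedded in the simple polyhedron $X$, each of its points has a neighborhood in $K$ homeomorphic to an open disk, which cannot sit inside the local models (iv) or (v) of Figure~\ref{fig:local_model}; hence $K$ is disjoint from $\partial X$ and lies in the union of the internal regions and triple lines of $X$. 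Since $K$ and $\partial X$ are compact and disjoint, a sufficiently small neighborhood $\Nbd(\partial X;S^4)$ misses $K$, so any modification of $X$ supported in $\Nbd(\partial X;S^4)$ leaves $K$ untouched.

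Next I would close up $X$ by the same procedure used for $4$-manifolds in \cite[Lemma~1.3]{KN20}. By definition of a shadow of the closed manifold $S^4$, the complement $C=S^4\setminus\Int\Nbd(X;S^4)$ is a $4$-dimensional $1$-handlebody, so $\partial C=\partial\Nbd(X;S^4)\cong\#_h(S^1\times S^2)$ for some $h$. After a preliminary simplification I may assume that $\partial X$ is a disjoint union of circles. I would then cap off each boundary circle $\gamma$ by a $2$-disk $D_\gamma$ with $\partial D_\gamma=\gamma$ and $\Int D_\gamma\subset\Int C$, and set $\bar X=X\cup\bigcup_\gamma D_\gamma$. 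Then $\Nbd(\bar X;S^4)$ is obtained from $\Nbd(X;S^4)$ by attaching $2$-handles along the curves $\gamma$, and the new complement $S^4\setminus\Int\Nbd(\bar X;S^4)$ is $C$ cut along the disks $D_\gamma$. As in \cite[Lemma~1.3]{KN20}, the disks can be chosen so that this cut complement is again a $1$-handlebody; hence $\bar X$ is a closed shadow of $S^4$. Since each $D_\gamma$ is a disk it carries no true vertex, so $c(\bar X)=c(X)=n$.

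Finally, because the caps $D_\gamma$ are attached along $\partial X$ and pushed into $C$, the whole modification takes place in $\Nbd(\partial X;S^4)$, which is disjoint from $K$ by the first paragraph. Hence $K\subset\bar X$, so $\bar X$ is a closed shadow of $K$, and combined with $c(\bar X)\ge\shco(K)=n$ we conclude $c(\bar X)=n$, as required.

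I expect the main obstacle to be the control of the capping itself: one must first dispose of the boundary-triple points (type~(v)) so that $\partial X$ becomes a union of circles, and then arrange the capping disks so that cutting $C$ along them preserves the property of being a $1$-handlebody while creating no new true vertices. This is precisely the content imported from \cite[Lemma~1.3]{KN20}; the only additional point needed here, which is what makes the $2$-knot version go through, is that the support of this procedure can be confined to $\Nbd(\partial X;S^4)$, and this is automatic since $K$ is disjoint from $\partial X$.
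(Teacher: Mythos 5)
Your reduction to ``produce a closed shadow with at most $n$ true vertices'' and your observation that $K\cap\partial X=\emptyset$ are both correct, but the central step of your construction has a genuine gap. You close $X$ by capping each boundary circle $\gamma$ with a disk pushed into the complement $C=S^4\setminus\Int\Nbd(X;S^4)$, and you assert that, as in \cite[Lemma~1.3]{KN20}, the disks can be chosen so that the cut complement is again a $1$-handlebody. That is not what \cite[Lemma~1.3]{KN20} provides: its proof (like the proof in this paper) closes a shadow by \emph{collapsing}, i.e.\ by removing material, never by adding disks, so there is nothing there to import for a capping step. And the step is not routine. First, the pushoff of $\gamma$ into $\partial C\cong\#_h(S^1\times S^2)$ need not bound an embedded disk in $C$ at all; nothing in the definition of a shadow forces that curve to be null-homotopic in $C$. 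Second, even when a properly embedded disk $\hat D\subset C$ with the required boundary exists, cutting $C\cong\natural_h(S^1\times B^3)$ along $\hat D$ need not yield a $1$-handlebody: unlike in dimension $3$, properly embedded disks in a $4$-dimensional $1$-handlebody can be knotted (the complement of a knotted slice disk in $B^4$ has non-free fundamental group), so the conclusion depends on a careful choice of disk which you have not shown to exist in the isotopy class forced on you by $\gamma$. The compressing-disk machinery of Section~5 does not rescue this either: it applies to the torus $(\pi|_{\partial M_X})^{-1}(\gamma)$ over a curve interior to a region, whereas the preimage of a boundary circle is a solid torus, and Dehn's lemma there supplies a disk inside the $3$-manifold $\partial C$, not an arbitrary disk in the $4$-manifold $C$. (A smaller unaddressed point: $\partial X$ is in general a trivalent graph, and reducing it to a union of circles already requires collapsing-type moves.)

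The paper's actual proof avoids all of this precisely by never adding anything: one collapses $X$ inside $\Nbd(X;S^4)$ onto an almost-simple polyhedron $Y$ that is minimal with respect to collapsing. Collapsing cannot create true vertices, it keeps $K$ (a closed surface embedded in $X$ offers no free faces, which is the role your first observation plays there), and --- this is the key point your approach loses --- it does not change the regular neighborhood: $\Nbd(Y;S^4)=\Nbd(X;S^4)$, so the complement is literally the same $1$-handlebody and $Y$ is automatically a shadow of $K$. By Matveev's theory \cite{Mat03}, such a minimal $Y$ is a closed simple polyhedron, possibly together with a graph, and the graph part is removed as in \cite[Lemma~1.3]{KN20} without changing the regular neighborhood or adding vertices. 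If you want to salvage the capping strategy, you would need to prove the missing existence statement for the capping disks, which is a nontrivial $4$-dimensional problem; the collapsing argument is the standard way around it.
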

\begin{proof}
The proof is almost the same as that of \cite[Lemma~1.3]{KN20}, 
so we only sketch the proof. 

Let $X$ be a shadow of $K$ with $c(X)=n$ and $\partial X\ne\emptyset$. 
Then $X$ collapses onto an almost-simple polyhedron $Y$ 
(see \cite{Mat03} for the definition and the details) 
that is minimum with respect to collapsing and has at most $n$ true vertices. 
Note that the collapsing is done in a regular neighborhood $\Nbd(X;S^4)$, 
which is also a regular neighborhood of $Y$ in $S^4$. 
Since $K$ is a $2$-sphere embedded in $X$, 
it is kept by collapsing, that is, $K$ is also embedded in $Y$. 
The polyhedron $Y$ is either a closed simple polyhedron 
or the union of a closed simple polyhedron and a graph. 
If the former, this $Y$ is what we required. 
If the latter, as in the proof of \cite[Lemma~1.3]{KN20}, 
$Y$ can be modified to a closed simple polyhedron $Y'$ such that 
$Y$ and $Y'$ have the same regular neighborhood in $S^4$ 
and also that no new true vertices are created by the modification. 
\end{proof}

As well as in Lemma~\ref{lem:pi_1_of_subpolyhedron1}, 
we consider a subpolyhedron having one cut end in the next lemma. 
However, unlike in Lemma~\ref{lem:pi_1_of_subpolyhedron1}, 
Lemma~\ref{lem:H_1_of_subpolyhedron} gives a homological condition, 
and a subpolyhedron can have true vertices 
and boundary components other than the cut end. 
Recall the notation $\mathrm{gl}(\gamma)$ defined in the formula (\ref{align:gleam_of_chain}). 

\begin{lemma}
\label{lem:H_1_of_subpolyhedron}
Set $X_K=\Nbd(K;X)$, and 
suppose that $\partial X_K$ has a circle component $\gamma$. 
Let $X'$ be a connected component of $X\setminus \Int X_K$ with 
$X_K\cap\partial X'=\gamma$. 
Give orientations to $K$ and $\gamma$ arbitrarily. 
Then at least one of the following holds:
\begin{itemize}
 \item
$H_1(X')$ is an infinite cyclic group generated by $\gamma$, or  
\item
$\mathrm{gl}(\gamma)=0$. 
\end{itemize}
\end{lemma}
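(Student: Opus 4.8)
The plan is to analyze the subpolyhedron $X'$ via its encoding graph and split into cases according to whether the cut end $\gamma$ is torsion or not in $H_1(X')$. First I would observe that since $X$ is a shadow of $S^4$, it is simply-connected, so the loop $\gamma$ together with the portion $X_K=\Nbd(K;X)$ on the other side constrains $X'$ strongly: capping $X'$ off along $\gamma$ yields a polyhedron whose fundamental group is a quotient of $\pi_1(X)=\{1\}$. I would like to reduce to a setting close to Lemma~\ref{lem:pi_1_of_subpolyhedron1}, but note that here $X'$ may carry true vertices and may have additional boundary components, so that lemma does not apply verbatim. Hence the argument must be genuinely homological rather than about $\pi_1$, which is why the conclusion is stated in terms of $H_1(X')$ and the gleam $\mathrm{gl}(\gamma)$.

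The key dichotomy is: either $[\gamma]$ has infinite order in $H_1(X')$ or it is torsion. If $[\gamma]$ is non-torsion, I would argue that in fact $H_1(X')$ is infinite cyclic generated by $[\gamma]$; the point is that any homology carried by $X'$ beyond the class of $\gamma$ would have to survive into $H_1(S^4)=0$ or contribute to $H_1(\partial M_X)$ in a way incompatible with $X$ being a shadow of $S^4$ with $K$ embedded. So this first bullet should follow from a Mayer--Vietoris / van Kampen comparison of $X'$ with the rest of $X$ across $\gamma$, using that the whole thing glues up to a simply-connected polyhedron. If instead $[\gamma]$ is torsion in $H_1(X')$, then I want to conclude $\mathrm{gl}(\gamma)=0$. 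Recall from the formula~(\ref{align:gleam_of_chain}) that $\mathrm{gl}(\gamma)$ is computed by choosing a $2$-chain $D=\sum a_j R_j$ in $K$ with $\partial D=[\gamma]$ in $H_1(S(X)\cap K)$ and summing the gleams; more intrinsically it equals the algebraic intersection of $K$ with a surface in $M_K$ bounded by $f(\gamma)$. The idea is that if $[\gamma]$ is torsion in $H_1(X')$, say $n[\gamma]=0$, then $n\gamma$ bounds a $2$-chain inside $X'$, i.e. on the side away from $K$; pairing this chain homologically against $K$ forces $n\,\mathrm{gl}(\gamma)=0$, whence $\mathrm{gl}(\gamma)=0$ since the gleam is a well-defined rational (in fact half-integer) number.

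Concretely, I would realize $\mathrm{gl}(\gamma)$ as the self-intersection type pairing $[\gamma]\cdot[K]$ inside the reconstructed $4$-manifold $M_X$, using the discussion in the intersection-form subsection: the sum of gleams over a $2$-chain bounding $\gamma$ equals the algebraic intersection number of that chain with $K$. A chain on the $X'$ side is disjoint from $\Int X_K$, hence meets $K$ trivially, so if $n\gamma$ bounds such a chain then $n\,\mathrm{gl}(\gamma)=0$. This is the mechanism that converts a torsion condition in $H_1(X')$ into the vanishing of the gleam.

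The main obstacle I anticipate is the non-torsion case: pinning down that $H_1(X')$ is not merely of rank one but is exactly infinite cyclic \emph{generated by} $[\gamma]$, rather than containing $[\gamma]$ as a finite-index or proper subgroup. Controlling the other boundary components of $X'$ and any true vertices it may carry, and showing they contribute no extra free homology, will require the simple-connectivity of $X$ together with a careful Mayer--Vietoris argument across $\gamma$; this is where I expect the real work to lie, whereas the torsion case reduces cleanly to the gleam-as-intersection-number computation sketched above.
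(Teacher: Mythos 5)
Your proposal is correct and takes essentially the same route as the paper: a Mayer--Vietoris argument across $\gamma$ using $H_1(X_K)\cong H_1(X)\cong 0$ shows that $H_1(X')$ is cyclic generated by $[\gamma]$, and in the torsion case one forms the closed cycle $C-kD$ (a $2$-chain in $X'$ bounding $k\gamma$ minus $k$ times the defining chain $D\subset K$), pairs it with $[K]$ via $Q_X$, and uses the triviality of the intersection form of $S^4$ to conclude $\mathrm{gl}(\gamma)=0$. The only discrepancy is one of emphasis: the generation statement you single out as ``the real work'' falls out immediately from the Mayer--Vietoris sequence, so the substantive content is exactly the torsion-case gleam computation you sketched.
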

\begin{proof}
We have $H_1(\gamma)\cong\Z\langle\gamma\rangle$ and 
$H_1(X_K)\cong H_1(X)\cong0$. 
From the Mayer-Vietoris sequence
\[
H_1(\gamma)\to H_1(X_K)\oplus H_1(X')\to H_1(X), 
\]
$H_1(X')$ admits a surjection from $\Z$ generated by $[\gamma]$. 

If $H_1(X')=\Z/k\Z\langle\gamma\rangle$ for some $k\in\Z_{>0}$, 
there is a $2$-chain $C$ in $X'$ such that $\partial C=k[\gamma]$. 
Let $D=\sum a_j R_j$ be a $2$-chain in $X_K$ with $\partial D=[\gamma]$, 
where $R_j$ is a region contained in $K$. 
Then $C-kD$ is a homology cycle in $X$, and we have 
\[
Q([K],C-kD)=-k\sum a_j\gl(R_j)=-k\mathrm{gl}(\gamma), 
\]
where $Q$ is the intersection form on $H_2(X)$. 
Since the intersection form of $S^4$ is trivial, 
we have $\mathrm{gl}(\gamma)=0$. 
\end{proof}
\begin{lemma}
\label{lem:disk_region_on_K}
Suppose $X$ is closed and 
that $S(X)\cap K$ has a circle component $\gamma$ bounding a disk region $D$ on $K$. 
Let $X'$ be a connected component of $X\setminus K$ with $\partial X'=\gamma$. 
If $X'$ does not contain true vertices, then $X\setminus X'$ is a shadow of $K$. 
\end{lemma}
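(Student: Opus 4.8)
The plan is to reduce the statement to an application of the connected-sum reduction of Proposition~\ref{prop:connected_sum}, performed along the disk region $D$, after first verifying the two hypotheses that make that reduction available: that $\Nbd(\gamma;X)\cong Y_{111}$ and that $\gl(D)=0$. First I would analyze the local structure along $\gamma=\partial D$. Since $\gamma$ is a circle component of $S(X)\cap K$ bounding the disk region $D$ on $K$, the three regions meeting along $\gamma$ are $D$, the region of $K$ lying on the opposite side of $\gamma$, and the single region of $X'$ adjacent to $\gamma$; these are pairwise distinct, as the first two lie on $K$ and the third does not. Because $D$ is a disk whose boundary runs once along $\gamma$, its sheet-germ is fixed by the monodromy around $\gamma$, and the remaining two germs belong to distinct regions and so cannot be interchanged. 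Hence $\Nbd(\gamma;X)\cong Y_{111}$.

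Next I would show $\gl(D)=0$. Pushing $\gamma$ slightly off $S(X)$ into the region of $X'$ produces a cut end $\gamma^{+}$ whose bounded subpolyhedron deformation-retracts onto $X'$; since $X$ is closed and $c(X')=0$, Lemma~\ref{lem:pi_1_of_subpolyhedron1} gives $\pi_1(X')\cong\langle\gamma\mid\gamma^{2^{k}}\rangle$, so $H_1(X')$ is finite and $[\gamma]$ is torsion. Viewing $\gamma$ as a component of $\partial\Nbd(K;X)$, Lemma~\ref{lem:H_1_of_subpolyhedron} then rules out the possibility that $H_1(X')$ is infinite cyclic on $\gamma$ and forces $\mathrm{gl}(\gamma)=0$. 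As $D$ is itself a disk region on $K$ bounded by $\gamma$, the bounding $2$-chain may be taken to be $D$, so $\mathrm{gl}(\gamma)=\gl(D)=0$.

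With $\gl(D)=0$ and $\Nbd(\partial D;X)\cong Y_{111}$ in hand, I would carry out the connected-sum reduction along $D$. Following the proof of Proposition~\ref{prop:connected_sum} verbatim up to the decomposition, this exhibits $S^4=W_1\#W_2$ along $\partial B_D$, where $W_1$ carries a shadow $X'_1\supset K$ (obtained by detaching the $X'$-side and recapping) and $W_2$ carries a shadow $X'_2\supset X'$. The one place where that proof invoked $c(X)\le 1$ I would replace by the observation that $X'_2$ consists of the complexity-zero polyhedron $X'$ together with a capping disk, so $c(X'_2)=0$; hence $W_2\cong S^4$ by \cite[Corollary 1.8]{Mar11}, and therefore $W_1\cong S^4$ as well, making $X'_1$ a shadow of $K$. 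Finally I would identify $X'_1$ with $X\setminus X'$: the reduction modifies $X$ only inside $B_D$, removing $\Nbd(D;X)$ together with the triple circle $\gamma$ and recapping the $K$-side by a gleam-$0$ disk $\hat D$, so $X'_1$ is homeomorphic to $X\setminus X'$ with diffeomorphic complement in $S^4$, whence $X\setminus X'$ is a shadow of $K$.

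I expect the main obstacle to be this last identification: verifying that the reduction along $D$ detaches \emph{precisely} $X'$ and nothing more, and that the shadowed polyhedron $X'_1$ and $X\setminus X'$ have diffeomorphic regular neighborhoods in $S^4$, which requires the gleam bookkeeping on $\hat D$ and the local cut-and-cap picture inside $B_D$. A secondary delicate point is justifying that the summand $W_2$ is $S^4$ without assuming $c(X)\le 1$, for which the complexity-zero structure of $X'$ supplied by the hypothesis is exactly what is needed.
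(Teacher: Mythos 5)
Your proposal is correct and takes essentially the same route as the paper's proof: Lemma~\ref{lem:pi_1_of_subpolyhedron1} gives $\pi_1(X')\cong\Z/2^k\Z$, Lemma~\ref{lem:H_1_of_subpolyhedron} then forces $\mathrm{gl}(\gamma)=\gl(D)=0$, and the connected-sum reduction of Proposition~\ref{prop:connected_sum} concludes. Your replacement of that proposition's hypothesis $c(X)\leq1$ by the observation that the detached piece has complexity zero (so its summand of $S^4$ is standard by Martelli's result) is a legitimate tightening, since the lemma as stated makes no global assumption on $c(X)$, whereas the paper cites Proposition~\ref{prop:connected_sum} directly without comment on this point.
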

\begin{proof}
From Lemma~\ref{lem:pi_1_of_subpolyhedron1}, 
$H_1(X')\cong \Z/2^k\Z$ for some $k\in\Z_{\geq0}$. 
Then $\gl(D)=0$ by Lemma~\ref{lem:H_1_of_subpolyhedron}, 
and $X\setminus X'$ is a shadow of $K$ by Lemma~\ref{prop:connected_sum}. 
\end{proof}
\begin{theorem}
\label{thm:complexity0}
A $2$-knot $K$ is unknotted if and only if $\shco(K)=0$. 
\end{theorem}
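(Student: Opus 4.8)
The plan is to prove both implications. For the easy direction, suppose $K$ is unknotted, so it bounds a smooth $3$-ball; then $\Nbd(K;S^4)\cong S^2\times D^2$ and its complement $S^4\setminus\Int\Nbd(K;S^4)\cong S^1\times D^3$ is a $4$-dimensional $1$-handlebody. Hence $K$ itself, viewed as the simple polyhedron $X=S^2$ whose single region carries gleam $0$ (the self-intersection of $K$ in $S^4$, which vanishes because $H_2(S^4)=0$), is a shadow of $K$. Since $S^2$ has empty singular set it has no true vertices, and therefore $\shco(K)=0$.

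For the converse, suppose $\shco(K)=0$. By Lemma~\ref{lem:can_make_closed} I may assume $K$ has a closed shadow $X$ with $c(X)=0$; in particular $S(X)$ is a disjoint union of circles, and $S(X)\cap K$ is a (possibly empty) union of circles $c_1,\ldots,c_r$ cutting the sphere $K$ into regions of $X$. I would argue by induction on $r$. When $r=0$ the sphere $K$ is a region disjoint from $S(X)$, and since $X$ is connected this forces $X=K=S^2$ with gleam $0$; by the computation in the easy direction this is exactly the standard unknotted configuration, so $K$ is unknotted. This is the base case.

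For the inductive step with $r>0$, I would pick a circle $\gamma$ among $c_1,\ldots,c_r$ that is innermost on $K$, so it bounds a disk region $D\subset K$. Since $K$ is smooth and $\gamma\subset S(X)$, the model $\Nbd(\gamma;X)$ is $Y_{111}$: the two regions of $K$ abutting $\gamma$ together with one further region $\rho$ leaving $K$. Letting $X'$ be the component of $X\setminus K$ attached along $\gamma$ (which carries no true vertices as $c(X)=0$), Lemma~\ref{lem:pi_1_of_subpolyhedron1} gives $\pi_1(X')\cong\langle\gamma\mid\gamma^{2^{k}}\rangle$, so $H_1(X')$ is finite. Lemma~\ref{lem:H_1_of_subpolyhedron} then forces $\gl(D)=0$, and Lemma~\ref{lem:disk_region_on_K} (equivalently, the connected-sum reduction of Proposition~\ref{prop:connected_sum} along $D$) shows that $X\setminus X'$ is again a shadow of the same $2$-knot $K$, now with $\gamma$ removed from $S(X)\cap K$. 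This strictly decreases $r$, and the induction closes.

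The hard part will be guaranteeing the hypothesis of Lemma~\ref{lem:disk_region_on_K}: that the innermost circle $\gamma$ can be chosen so that the complementary component $X'$ of $X\setminus K$ has \emph{exactly} $\gamma$ as its cut end. This can fail, since the sheet $\rho$ leaving $K$ at $\gamma$ may run to other circles of $S(X)\cap K$, producing a complementary component with several cut ends. I would reduce to the controlled situation by first simplifying such a component---either by adding a vertical compressing disk along a suitable curve to split a multi-ended component into single-ended pieces, or, for a component with two cut ends, by analysing it directly through Lemmas~\ref{lem:pi_1_of_subpolyhedron2} and \ref{lem:pi_1_of_subpolyhedron2-2} to exhibit a disk region with $Y_{111}$ boundary and vanishing gleam on which the connected-sum reduction still applies. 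Checking that some such reduction is always available, and that it does not create true vertices (so that $c(X)=0$ is preserved throughout), is the crux; granting it, the induction above yields that $K$ is unknotted.
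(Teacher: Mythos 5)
Your overall strategy is exactly the paper's: pass to a closed shadow with $c(X)=0$ via Lemma~\ref{lem:can_make_closed}, then repeatedly remove complementary components by Lemma~\ref{lem:disk_region_on_K} (i.e.\ the connected-sum reduction of Proposition~\ref{prop:connected_sum}, justified through Lemmas~\ref{lem:pi_1_of_subpolyhedron1} and \ref{lem:H_1_of_subpolyhedron}) until $X=K$, and both your base case and your easy direction are fine. The problem is the step you yourself flag as ``the crux'': you never prove that some circle $\gamma$ of $S(X)\cap K$ bounding a disk region has its attached component $X'$ of $X\setminus K$ satisfying $\partial X'=\gamma$, and the two workarounds you sketch would not close the argument as stated --- ``splitting a multi-ended component by compressing disks'' is not a defined operation in this setting, and Lemma~\ref{lem:pi_1_of_subpolyhedron2} carries a hypothesis (that $[\gamma_1]$ is not torsion in $H_1(U)$) which you have not verified and which need not hold here. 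So as written the proposal is an incomplete proof.

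What you are missing is that the configuration you worry about cannot occur, for a reason already in the paper's toolkit: $X$ is simply connected, being a shadow of $S^4$. Decompose $X$ into $\Nbd(K;X)$ and the closures of the components of $X\setminus K$, glued along circles; note that along each circle of $S(X)\cap K$ exactly two of the three local sheets lie on $K$ (since $K$ is an embedded orientable surface, which is also why the neighborhood of such a circle is $Y_{111}$, as you observed), so exactly one sheet leaves $K$. The dual graph of this decomposition has one vertex for $K$, one vertex for each component of $X\setminus K$, and one edge per circle of $S(X)\cap K$; as in the proof of Lemma~\ref{lem:graph_lemma}~(1), collapsing the pieces gives a surjection $\pi_1(X)\to\pi_1(\text{dual graph})$, so this graph is a tree. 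Since every edge joins the vertex $K$ to a component of $X\setminus K$, a component with two or more cut ends would give parallel edges, i.e.\ a cycle --- impossible. (Equivalently, a Mayer--Vietoris count using $H_1(X)=0$ shows the number of gluing circles equals the number of components of $X\setminus K$.) Hence \emph{every} component of $X\setminus K$ has exactly one cut end, any innermost circle on the sphere $K$ bounds a disk region, and Lemma~\ref{lem:disk_region_on_K} applies at every stage with no auxiliary modifications and no risk of creating true vertices. With this observation inserted, your induction becomes precisely the paper's proof, which iterates the same lemma.
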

\begin{proof}
Let $X$ be a shadow of a $2$-knot $K$ with $c(X)=0$. 
By Lemma~\ref{lem:can_make_closed}, we can assume $\partial X=\emptyset$. 
Since $c(X)=0$, 
each component of $S(X)\cap K$ is a circle, 
and each component of $X\setminus K$ does not contain true vertices. 
By iterating Lemma~\ref{lem:disk_region_on_K}, $K$ admits itself as a shadow. 
Hence $K$ is unknotted. 
The converse is obvious. 
\end{proof}
\section{Existence of $2$-knots with complexity one}
\label{sec:Existence of 2-knots with complexity one}

From here, we always assume the following. 
\begin{itemize}
 \item
$X$ is a closed shadow of a $2$-knot $K$, and 
\item
$c(X)=1$. 
\end{itemize}
Note that $K\cap S(X)$ is not empty. 
Then there are two cases; 
(i) the true vertex is contained in a component of $K\cap S(X)$, which is an 8-shaped graph, and 
(ii) the true vertex does not lie on $K$ and every component of $K\cap S(X)$ is a circle. 
Therefore, 
we can also assume the following by Lemma~\ref{lem:disk_region_on_K}. 
\begin{itemize}
\item
$K\cap S(X)$ is connected, and it is a circle or an 8-shaped graph.  
\end{itemize}
Moreover, we fix the notation 
\begin{itemize}
 \item
$X'=X\setminus \Int\Nbd(K;X)$, 
\end{itemize}
which is also connected by the above assumption. 

\subsection{True vertex lies on $K$} 
Suppose that the true vertex of $X$ lies on $K$. 
Let us first consider the case of special shadow-complexity 1. 
We need the following lemma. 
\begin{lemma}
\label{lem:handle_decomp_2-knot}
Let $K$ be a $2$-knot. 
Suppose that $S^4$ admits a decomposition consisting of 
$\Nbd(K;S^4)$, one $2$-handle, two $3$-handles and one $4$-handle. 
Then $K$ is unknotted. 
\end{lemma}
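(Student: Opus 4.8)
The plan is to show that the complement of $K$ is standard by exploiting the economy of the handles together with the Laudenbach--Poénaru theorem \cite{LP72}, so that no appeal to the smooth unknotting conjecture is needed.

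First I would record that the normal bundle of $K$ in $S^4$ is trivial, since $[K]=0$ in $H_2(S^4)$; hence $\Nbd(K;S^4)\cong S^2\times D^2$ and $K=S^2\times\{0\}$. Set $N=\Nbd(K;S^4)\cup(\text{the }2\text{-handle})$, so that $N$ is obtained from $S^2\times D^2$ by attaching a single $2$-handle. By hypothesis $S^4=N\cup(\text{two }3\text{-handles})\cup(\text{one }4\text{-handle})$. Turning the cap upside down, the two $3$-handles and the $4$-handle assemble into a copy of $H:=\natural_2(S^1\times B^3)=h^0\cup 2h^1$, glued to $N$ along $\partial N$. Since $\partial H=\#_2(S^1\times S^2)$ and $S^4$ is closed, this already forces $\partial N\cong\#_2(S^1\times S^2)$.

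Next I would build, by hand, a standard way of capping $N$. Attaching a $3$-handle that cancels the single $2$-handle returns $N$ to $S^2\times D^2=\Nbd(K)$; then attaching the remaining $3$-handle along the essential sphere $S^2\times\{pt\}\subset S^2\times S^1=\partial(S^2\times D^2)$ cancels the inner $2$-handle of $S^2\times D^2$, leaving a $4$-ball, which the $4$-handle closes up to $S^4$. In this model $K=S^2\times\{0\}$ is isotopic to a sphere parallel to $S^2\times\{pt\}$, and therefore bounds the core $3$-ball of the last $3$-handle; thus $K$ is unknotted in the resulting copy of $S^4$, call it $M_0$.

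It remains to identify the given $S^4$ with $M_0$ by a diffeomorphism fixing $N$, and this is where the main work lies. Both $S^4$ and $M_0$ are of the form $N\cup_\psi H$ for gluings $\psi$ of $\partial H=\#_2(S^1\times S^2)$ to $\partial N$. The composition of the two gluings is an orientation-preserving self-diffeomorphism of $\#_2(S^1\times S^2)$, which by Laudenbach--Poénaru \cite{LP72} extends over $H=\natural_2(S^1\times B^3)$. Extending it over the cap and taking the identity on $N$ produces a diffeomorphism $S^4\to M_0$ that is the identity on $N$, hence carries $(S^4,K)$ to $(M_0,K)$. Since $K$ is unknotted in $M_0$, it is unknotted in $S^4$. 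The delicate point throughout is to keep the diffeomorphism supported in the cap so that it fixes $N$, and with it $K$, pointwise; once that is arranged the conclusion is immediate and the smooth unknotting conjecture is never invoked.
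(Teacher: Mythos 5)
Your endgame---comparing the two fillings of $\partial N$ by $H=\natural_2(S^1\times B^3)$ and invoking Laudenbach--Po\'enaru to get a diffeomorphism $S^4\to M_0$ that is the identity on $N$---is sound, and the paper also relies on \cite{LP72} at the corresponding moment. The genuine gap is earlier, in the construction of the model $M_0$, at the sentence ``attaching a $3$-handle that cancels the single $2$-handle returns $N$ to $S^2\times D^2$.'' A $3$-handle cancels $h'_2$ only if its attaching sphere is an embedded $2$-sphere in $\partial N$ meeting the belt circle of $h'_2$ transversely in one point, and the existence of such a sphere is not a general-position fact: it is essentially the whole content of the lemma. Unwinding definitions, such a sphere exists if and only if the attaching circle $L'\subset\partial\Nbd(K;S^4)\cong S^2\times S^1$ of $h'_2$ bounds an embedded disk in $S^2\times S^1$ inducing the handle's framing. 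What you know at that point---that $\partial N\cong\#_2(S^1\times S^2)$, hence that the belt circle is \emph{homologically} dual to some sphere class---is far too weak, because algebraic intersection number $\pm1$ cannot in general be upgraded to geometric intersection number $1$: there are knots in $S^2\times S^1$ homologous to $\{\mathrm{pt}\}\times S^1$ but not isotopic to it (for instance winding-number-one satellites with non-solid-torus exterior), and by Laudenbach's isotopy classification of essential spheres together with the three-dimensional light bulb theorem, such a knot meets \emph{every} essential embedded sphere at least three times. So ``$h'_2$ can be cancelled'' is a Property~R--type theorem about $L'$, not something one may simply assert.

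This is exactly where the paper brings in Gompf--Scharlemann--Thompson. Writing $\Nbd(K;S^4)=h_0\cup h_2$ with $h_2$ attached along a $0$-framed unknot $L\subset\partial h_0\cong S^3$, the hypothesis of the lemma says that surgery on the two-component link $L\sqcup L'$ yields $\#_2(S^1\times S^2)$, and \cite[Proposition~3.2]{GST10} (whose proof rests on Gabai's and Scharlemann's sutured-manifold results) then trivializes $L\sqcup L'$ using handle slides of $L'$ over $L$ only; such slides are isotopies of $L'$ inside $\partial\Nbd(K;S^4)$, so they never move $K$. After that, $h'_2$ is trivially attached, your model $M_0$ exists, and either your Laudenbach--Po\'enaru argument or the paper's variant (push $K$ into $\partial N$ and use \cite{LP72} to see it bounds the core of a $3$-handle) finishes the proof. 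As written, however, your proof silently assumes this deep input, so the claim that nothing beyond \cite{LP72} is needed is not correct.
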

\begin{proof}
We regard $\Nbd(K;S^4)$ as the union of a $0$-handle $h_0$ and a $2$-handle $h_2$. 
This $2$-handle $h_2$ is attached along the $0$-framed unknot $L$ lying on the boundary of $h_0$ 
since $\Nbd(K;S^4)\cong S^2\times D^2$. 
Let $h'_2$ be the other $2$-handle in the decomposition, 
and let $L'$ be its attaching circle. 
This famed knot $L'$ is contained in $\partial h_0\setminus \nu L$, 
and hence $L\sqcup L'$ forms a $2$-component link in $\partial h_0\cong S^3$. 
By \cite[Proposition 3.2]{GST10}, we can modify the link $L\sqcup L'$ into the unlink 
only by using handle-slides of $L'$ over $L$. 
Then $L'$ is contained in a small $3$-ball in the boundary of $h_0\cup h_2$ ($=\Nbd(\K;S^4)$), 
and the $2$-sphere $K$ can be pushed to the boundary of $h_0\cup h_2\cup h_2'$ by isotopy. 
The resulting $2$-sphere plays a role of the attaching sphere of a $3$-handle by \cite{LP72}. 
Hence $K$ bounds a $3$-ball in $S^4$, which is the definition of $K$ to be unknotted. 
\end{proof}
\begin{theorem}
\label{thm:no_spsc=1}
There are no $2$-knots with special shadow-complexity $1$.
\end{theorem}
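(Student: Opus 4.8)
The plan is to prove the contrapositive at the level of complexity: I will show that \emph{any} special shadow $X$ of a $2$-knot $K$ with $c(X)=1$ forces $K$ to be unknotted. Since the trivial $2$-knot carries a special shadow with no true vertices (as recorded in the discussion around Theorem~\ref{thm:complexity0}), an unknotted $K$ has $\spshco(K)=0$; hence no $2$-knot can realize the value $\spshco=1$, which is exactly the assertion. Throughout I may assume, after applying Lemma~\ref{lem:disk_region_on_K} to peel off vertex-free components as in the opening of this section, that $K\cap S(X)$ is connected, so it is either a single circle or a single $8$-shaped graph.

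First I would determine the combinatorics. Because $X$ is a special shadow with a single true vertex, its regions are open disks and its singular set is built from one true vertex together with triple arcs ending at it, so $S(X)$ is a figure-eight $S^1\vee S^1$ with wedge point the true vertex $v$. I would then note that $K\cap S(X)\ne\emptyset$: otherwise the sphere $K$ would lie in the closure of one simply-connected region, which is impossible. In the $8$-shaped case treated in this subsection, $v$ lies on $K$ and in fact $S(X)\subset K$, since $K$ is smooth at $v$ and therefore contains all four triple-arc germs emanating from $v$, hence both triple loops. Consequently the figure-eight cuts $K\cong S^2$ into exactly three disk regions, and the complement $X'=X\setminus\Int\Nbd(K;X)$ contains no singular point of $X$; thus $X'$ is a disjoint union of $r$ disk regions, each attached to $\partial\Nbd(K;S^4)=S^2\times S^1$ along a circle.

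Next I would assemble the handle decomposition. Thickening the $r$ disks of $X'$ turns them into $2$-handles attached to $\Nbd(K;S^4)\cong S^2\times D^2$, producing $N=\Nbd(X;S^4)$; since $X$ is a shadow of $S^4$, the complement $S^4\setminus\Int N$ is a $4$-dimensional $1$-handlebody $\natural_a(S^1\times D^3)$, which dually contributes $a$ three-handles and a single four-handle. Comparing Euler characteristics relative to $\Nbd(K;S^4)$ gives $2=\chi(S^4)=\chi(S^2\times D^2)+r-a+1=2+r-a+1$, so $a=r+1$. Therefore, as soon as $r=1$ the decomposition consists precisely of $\Nbd(K;S^4)$, one $2$-handle, two $3$-handles and one $4$-handle, and Lemma~\ref{lem:handle_decomp_2-knot} immediately yields that $K$ is unknotted.

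The main obstacle is thus the single equality $r=1$ (equivalently, exactly two $3$-handles). To establish it I would analyze the local model at $v$, the cone over the $1$-skeleton of a tetrahedron: the smooth surface $K$ passes through $v$ as a disk subdivided by the four triple arcs, so its link is a Hamiltonian cycle in the tetrahedral graph, whence exactly four of the six region germs lie in $K$ and precisely two lie in $X'$. I would then track how these two germs close up along the two triple loops, invoking the classification of $\Nbd(S(X);X)$ among the types $X_1,\dots,X_{11}$ and discarding the non-simply-connected types by Lemma~\ref{lem:graph_lemma}; in the surviving types the two germs join into a single disk region, giving $r=1$. (The remaining reduced possibility, in which $K\cap S(X)$ is a single circle and $v\notin K$, I would dispose of separately by showing the relevant gleam vanishes via Lemma~\ref{lem:H_1_of_subpolyhedron} and applying the connected-sum reduction of Proposition~\ref{prop:connected_sum} to drop to complexity $0$, so that $K$ is unknotted by Theorem~\ref{thm:complexity0}.) This local germ-tracking step is where the real work lies; once $r=1$ is secured, Lemma~\ref{lem:handle_decomp_2-knot} completes the argument.
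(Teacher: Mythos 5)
Your argument is correct and is essentially the paper's own proof: both identify $S(X)$ as a figure-eight contained in $K$, deduce that $\Nbd(S(X);X)$ must be of type $X_{11}$ (your germ-tracking step amounts to the paper's observation that $\Nbd(S(X);K)$ is a pair of pants, which forces $X_{11}$ among $X_1,\dots,X_{11}$ by counting boundary components and lengths), conclude that $X'$ is a single disk so that $r=1$, and then invoke Lemma~\ref{lem:handle_decomp_2-knot} on the resulting decomposition of $S^4$ into $\Nbd(K;S^4)$, one $2$-handle, two $3$-handles and one $4$-handle. Two minor remarks: the exclusion of a triangular link of $v$ in $K$ follows not from smoothness (a cone over any unknotted cycle is locally flat) but from the fact that $K$, once it contains a germ of a triple line, must contain the entire line, so three germs at $v$ would force the fourth; and your residual case ($K\cap S(X)$ a circle with $v\notin K$) is vacuous here, since every circle in the figure-eight $S(X)$ passes through $v$.
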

\begin{proof}
Let us suppose that $X$ is a special shadow $X$ of $K$ with $c(X)=1$. 
Then $S(X)$ is connected and $S(X)\subset K$. 
Moreover, it is homeomorphic to an 8-shaped graph, and $\Nbd(S;K)$ is homeomorphic to a pair of pants. 
Hence $\Nbd(S;X)$ is homeomorphic to $X_{11}$ (see Subsection~\ref{subsec:Top types of nbd of S(X)}), 
and $X'$ is a $2$-disk. 
Thus, $S^4$ is decomposed as in Lemma~\ref{lem:handle_decomp_2-knot}, 
and $K$ is unknotted and $\spshco(K)=0$. 
\end{proof}

We next consider the non-special case. 
\begin{proposition}
\label{prop:one_true_vertex_on_K}
If the true vertex of $X$ lies on $K$, 
then $G(K)$ is an infinite cyclic group. 
\end{proposition}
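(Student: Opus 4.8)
The plan is to read off a presentation of $G(K)$ from Proposition~\ref{prop:knot_group} and to show that it collapses to the infinite cyclic group generated by the meridian $\mu$. Write $X_K=\Nbd(K;X)$, recall $X'=X\setminus\Int X_K$, and set $T=\partial X'\cap\partial X_K$. The guiding observation is that the unique true vertex lies on $K$, hence in the interior of $X_K$, so the complementary piece carries no true vertex at all: $c(X')=0$. Thus $X'$ is exactly the kind of subpolyhedron treated in Lemmas~\ref{lem:pi_1_of_subpolyhedron1} and \ref{lem:H_1_of_subpolyhedron}, and the whole argument reduces to identifying $\pi_1(X')$ together with the gleam of its cut end.

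First I would establish that $T$ is a single circle. Every triple line meeting $K$ is contained in the $8$-shaped graph $K\cap S(X)\subset\Int X_K$, while any further component of $S(X)$ is a circle lying in the interior of $X'$; consequently no trivalent point of $S(X)$ lies on $\partial X_K$, so $T$ is a closed $1$-manifold, and by our standing connectedness assumption it is a single circle $\gamma$. Verifying this is the one step that genuinely depends on the combinatorics at the true vertex: using the local models of Subsection~\ref{subsec:Top types of nbd of S(X)}, and the fact that $\Nbd(S(X);K)$ is necessarily a pair of pants (a regular neighborhood of a figure-eight in the sphere $K$ is planar, hence has $\chi=-1$ and three boundary circles), one checks that the sheets of $\Nbd(S(X);X)$ not belonging to $K$ have frontier closing up into exactly one circle. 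I expect this bookkeeping with the admissible neighborhood types to be the main obstacle, since the remaining steps are formal consequences of the two subpolyhedron lemmas.

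Granting that $\gamma=T$ is a single cut end with $c(X')=0$, Lemma~\ref{lem:pi_1_of_subpolyhedron1} gives $\pi_1(X')\cong\langle\gamma\mid\gamma^{2^{k}}\rangle$ for some $k\in\Z_{\geq0}$, so $H_1(X')\cong\Z/2^{k}\Z$ is finite and in particular is never infinite cyclic. Applying Lemma~\ref{lem:H_1_of_subpolyhedron} to the circle component $\gamma$ of $\partial X_K$ then forces $\mathrm{gl}(\gamma)=0$, since the alternative that $H_1(X')$ be infinite cyclic generated by $\gamma$ is excluded.

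Finally I would assemble the presentation. Here $\pi_1(X')=\langle\gamma\mid\gamma^{2^{k}}\rangle$ plays the role of $\langle S\mid R\rangle$, and the single free generator of $\pi_1(T)$ is $w_1=\gamma$. Proposition~\ref{prop:knot_group} thus gives
\[
G(K)\cong\langle \gamma,\mu\mid \gamma^{2^{k}},\ \gamma\mu^{-\mathrm{gl}(\gamma)}\rangle
=\langle \gamma,\mu\mid \gamma^{2^{k}},\ \gamma\rangle,
\]
using $\mathrm{gl}(\gamma)=0$. The relation $\gamma=1$ renders $\gamma^{2^{k}}$ redundant and eliminates the generator $\gamma$, leaving $G(K)\cong\langle\mu\rangle\cong\Z$. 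The degenerate case $k=0$ is included automatically: then $\pi_1(X')$ is already trivial, $w_1=1$, and the meridian relation is vacuous, again yielding $G(K)\cong\Z$.
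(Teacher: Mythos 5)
Your proof is correct and takes essentially the same route as the paper's: the paper pins down $\Nbd(K\cap S(X);X)\cong X_{11}$ (via exactly the pair-of-pants observation you make) to conclude that $\partial X'$ is a single circle with $c(X')=0$, and then applies Lemma~\ref{lem:pi_1_of_subpolyhedron1}, Lemma~\ref{lem:H_1_of_subpolyhedron} (to get $\mathrm{gl}(\gamma)=0$ from the finiteness of $H_1(X')$), and Proposition~\ref{prop:knot_group}, exactly as you do.
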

\begin{proof}
Let $S$ be the component of $K\cap S(X)$ containing the true vertex. 
As well as in the proof of Theorem~\ref{thm:no_spsc=1}, 
$\Nbd(S;X)$ is homeomorphic to $X_{11}$, 
and $\partial X'=X'\cap \Nbd(K;X)$ is a circle. 
By Lemma~\ref{lem:pi_1_of_subpolyhedron1},
we have $\pi_1(X')\cong \langle \gamma\mid \gamma^{2^k} \rangle$ for some $k\in\Z_{\geq0}$, 
where $\gamma=\partial X'$. 
Then we have $G(K)\cong \langle \gamma,\mu\mid \gamma^{2^k}, \gamma\mu^{-\mathrm{gl}(\gamma)}$ by Proposition~\ref{prop:knot_group}. 
Since $\mathrm{gl}(\gamma)=0$ by Lemma~\ref{lem:H_1_of_subpolyhedron}, 
$G(K)$ is an infinite cyclic group. 
\end{proof}

\subsection{True vertex does not lie on $K$} 
Hereafter we suppose that the true vertex of $X$ does not lie on $K$, 
that is, it is contained in $X'$. 
In this subsection, we investigate the knot group of $K$. 

The part $S(X)\cap K$ of the singular set separates $K$ into two disk regions, 
and their gleams are $g$ and $-g$ for some $g\in\Z_{\geq0}$ since the self-intersection number of $K$ is $0$. 

If $g=0$, $K$ is a shadow of itself by Proposition~\ref{prop:connected_sum}. 
Then $K$ is unknotted. 

We henceforth suppose $g>0$. 
Let us orient $K$ arbitrarily, and 
then an oriented meridian $\mu$ of $K$ is defined. 
Set $\gamma=\partial X'$, 
and let $G$ be a graph encoding $X'$. 
This graph $G$ has exactly one vetex of type~$\vt{B}$, 
which corresponds to $\gamma$ and will be denoted by $v$. 
By Lemma~\mbox{\ref{lem:graph_lemma}-(2)}, 
$G$ have exactly one vertex $v_0$ of type~$\vt{X_3}$, $\vt{X_4}$, 
$\vt{X_8}$, $\vt{X_9}$, $\vt{X_{10}}$ or $\vt{X_{11}}$, 
and hence $G$ is one of those shown in Figures~\ref{fig:c=1cases1}, \ref{fig:c=1cases2} and \ref{fig:c=1cases3}. 
Note that 
$X_8$ and $X_{11}$ have symmetries that interchange 
two boundary components with the same length. 
Let $X_{v_0}$ be the connected component of $\Nbd(S(X);X)$ corresponding to $v_0$, 
which is homeomorphic to one of $X_3$, $X_4$, $X_8$, $X_9$, $X_{10}$ or $X_{11}$. 

Let $\mathfrak{a}$ denote the abelianization map of a group. 
In the following, we discuss what kind of $2$-knot admits a shadow encoded by a graph in Figures~\ref{fig:c=1cases1}, \ref{fig:c=1cases2} and \ref{fig:c=1cases3}. 
\begin{figure}[tbp]
\includegraphics[width=.85\hsize]{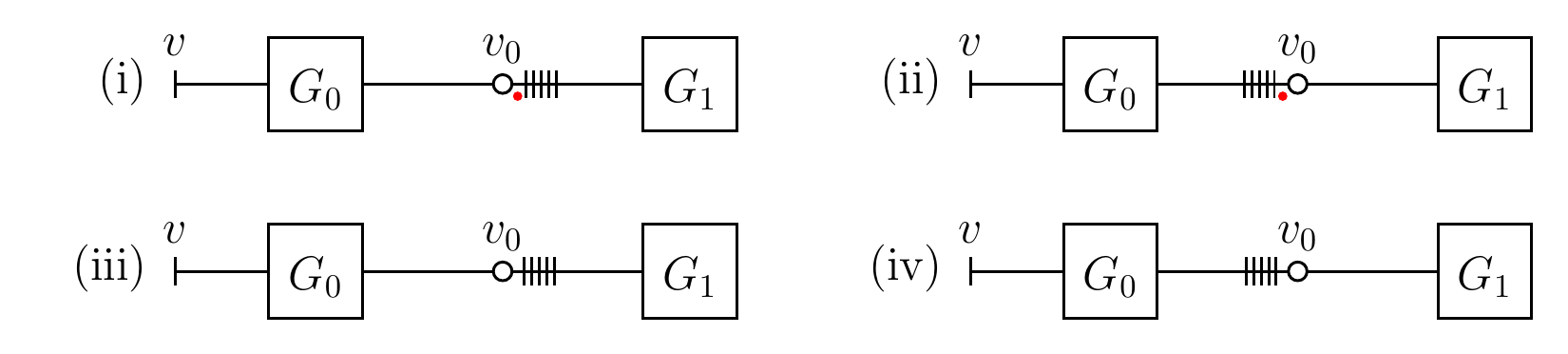}
\caption{
The possible cases of encoding graph $G$ of $X$ 
such that $v_0$ is of type $\vt{X_3}$ or $\vt{X_{4}}$. }
\label{fig:c=1cases1}
\end{figure}
\begin{case}
[$v_0$ is of type $\vt{X_3}$ or $\vt{X_4}$]
The graph $G$ is one of those shown in Figure~\ref{fig:c=1cases1}. 
Let $G_0$ and $G_1$ be subgraphs of $G$ as indicated in the figure. 
The subpolyhedron $X'$ is decomposed into $U$, $X_{v_0}$ and $V$, 
where $U$ and $V$ are the subpolyhedron corresponding to $G_0$ and $G_1$, respectively. 
Note that $U$ has two cut ends $\gamma\sqcup\gamma_0$, 
and also note that $V$ has one cut end $\gamma_1$. 
Then we have $\pi_1(V)\cong\langle \gamma_1\mid\gamma_1^{2^{k_1}} \rangle$ 
by Lemma~\ref{lem:pi_1_of_subpolyhedron1} for some $k_1\in\Z_{\geq0}$. 
We also have $H_1(X')\cong\Z\langle\gamma\rangle$ by Lemma~\ref{lem:H_1_of_subpolyhedron}. 
Then we can apply Lemma~\ref{lem:pi_1_of_subpolyhedron2} to $U$, 
and we have $\pi_1(U)\cong\langle \gamma,\gamma_0\mid (\gamma^{2^m}\gamma_0^{2^l})^{2^{k_0}} \rangle$ 
for some $k_0,l,m\in\Z_{\geq0}$. 
\begin{lemma}
\label{lem:X34}
The following hold. 
\begin{enumerate}
\item
If $G$ is shown in Figure~\ref{fig:c=1cases1}-(i), 
then $\pi_1(U)\cong\langle \gamma,\gamma_0\mid \gamma^{2^m}\gamma_0\rangle$ and 
$V$ is simply-connected. 
Moreover, $G(K)\cong\langle x,\mu\mid x^2\mu^nx^{-1}\mu^{-n} \rangle$, where $n=2^mg$. 
\item
If $G$ is shown in Figure~\ref{fig:c=1cases1}-(ii), 
then $G(K)$ is an infinite cyclic group. 
\item
If $G$ is shown in Figure~\ref{fig:c=1cases1}-(iii), 
then $\pi_1(U)\cong\langle \gamma,\gamma_0\mid \gamma^{2^m}\gamma_0\rangle$ and 
$V$ is simply-connected. 
Moreover, $G(K)\cong\langle x,\mu\mid x^2\mu^nx^{-1}\mu^n \rangle$, where $n=2^mg$. 
\item
If $G$ is shown in Figure~\ref{fig:c=1cases1}-(iv), 
then $G(K)$ is an infinite cyclic group. 
\end{enumerate}
\end{lemma}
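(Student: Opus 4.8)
The plan is to compute $\pi_1(X')$ from the decomposition $X'=U\cup X_{v_0}\cup V$ by van Kampen's theorem, pin down the exponents $k_0,l,k_1$ using the homological constraint of Lemma~\ref{lem:H_1_of_subpolyhedron}, and then read off $G(K)$ from Proposition~\ref{prop:knot_group}. Writing $\langle x,y\rangle$ for $\pi_1(X_{v_0})$, the two boundary classes are $y$ and $xyx^{-2}y^{-1}$ when $v_0$ is of type $\vt{X_3}$, and $y$ and $xyx^{-2}y$ when $v_0$ is of type $\vt{X_4}$ (Table~\ref{table:vertex_Xi}). Gluing the cut end $\gamma_0$ of $U$ and the cut end $\gamma_1$ of $V$ to these two boundary classes, in the manner prescribed by the dashes in Figure~\ref{fig:c=1cases1}, yields a presentation of $\pi_1(X')$ on the generators $x,y,\gamma$. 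The four sub-cases (i)--(iv) differ in whether $v_0$ is of type $\vt{X_3}$ or $\vt{X_4}$ and in which of the two boundary classes is glued to the length-one end reaching $\gamma$ through $U$.

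The key step is to exploit Lemma~\ref{lem:H_1_of_subpolyhedron}: since $g>0$, we have $H_1(X')\cong\Z\langle\gamma\rangle$, infinite cyclic \emph{generated by $[\gamma]$}. Abelianizing the gluing relators, noting that $xyx^{-2}y^{-1}\mapsto -x$ and $xyx^{-2}y\mapsto -x+2y$ while $y\mapsto y$, presents $H_1(X')$ as a quotient of $\Z^3=\langle x,y,\gamma\rangle$ by two relations whose coefficients are powers of $2$. Requiring this group to be torsion-free of rank one with $[\gamma]$ a generator forces $k_0=l=k_1=0$ in every case; here the hypothesis that $[\gamma]$ (and not merely a multiple) generates is exactly what rules out the residual possibility $m=0,\ l>0$. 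In cases (i) and (iii) this is precisely the assertion that $\pi_1(U)\cong\langle\gamma,\gamma_0\mid\gamma^{2^m}\gamma_0\rangle$ and that $V$ is simply-connected.

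With the exponents gone, I substitute $\gamma_0=\gamma^{-2^m}$ and the vanishing of $\gamma_1$ into the van Kampen presentation and eliminate $y$. In cases (i) and (iii) (length-one end toward $U$) a short manipulation reduces the surviving relator to $\gamma^{2^m}x\gamma^{-2^m}=x^2$ for $\vt{X_3}$ and to $\gamma^{-2^m}x\gamma^{-2^m}=x^2$ for $\vt{X_4}$, leaving a two-generator group on $x,\gamma$. In cases (ii) and (iv) (length-one end toward $V$) the relation $\gamma_1=1$ kills $y$ first, and the remaining relation collapses $x$ to a power of $\gamma$, so that $\pi_1(X')\cong\langle\gamma\rangle\cong\Z$.

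Finally I invoke Proposition~\ref{prop:knot_group}: here $T=\gamma$ is a single circle, so $\pi_1(T)$ is infinite cyclic and $G(K)$ is obtained by adjoining a meridian $\mu$ together with the single relation $\gamma=\mu^{\mathrm{gl}(\gamma)}$. Since $\gamma$ bounds a disk region of $K$ of gleam $\pm g$, after orienting $K$ suitably we may take $\mathrm{gl}(\gamma)=g$, whence $\gamma^{2^m}=\mu^{n}$ with $n=2^{m}g$. Substituting $\gamma=\mu^{g}$ turns the two reduced relators into $x^2\mu^{n}x^{-1}\mu^{-n}$ in case (i) and $x^2\mu^{n}x^{-1}\mu^{n}$ in case (iii), yielding the stated presentations, while in cases (ii) and (iv) the group becomes $\langle\mu\rangle\cong\Z$. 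The main obstacle is the bookkeeping of the homological step, namely correctly abelianizing the two distinct boundary words, verifying that the rank-one, torsion-free, $\gamma$-generated condition genuinely forces all three exponents to vanish, and matching the sign of $\mathrm{gl}(\gamma)$ and each gluing pattern to the correct sub-case of Figure~\ref{fig:c=1cases1}.
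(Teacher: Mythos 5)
Your proposal is correct and follows essentially the same route as the paper's proof: a van Kampen computation of $\pi_1(X')$ from the decomposition $U\cup X_{v_0}\cup V$ using the boundary words of Table~\ref{table:vertex_Xi}, the abelianization argument that $H_1(X')$ being infinite cyclic and generated by $[\gamma]$ (Lemma~\ref{lem:H_1_of_subpolyhedron}, since $g>0$) forces $k_0=k_1=l=0$, and finally Proposition~\ref{prop:knot_group} to adjoin $\mu$ with the single relation $\gamma=\mu^{g}$ and simplify to the stated presentations. Your explicit observation that it is generation by $[\gamma]$ itself, not merely rank one, which rules out the residual case $m=0,\ l>0$ makes precise a point the paper leaves implicit, but this is a clarification of the same argument rather than a different approach.
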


\begin{proof}
(1) 
The polyhedron $X'$ is decomposed as $U\cup X_3\cup V$, 
and $U$ and $V$ are glued with $X_3$ along the boundary components of $X_3$ with length 1 and 5, respectively. 
Then the fundamental group of $X'$ and its abelianization are obtained as follows: 
\begin{align*}
\pi_1(X')
&\cong
\langle
x,y,\gamma,\gamma_0,\gamma_1 
\mid
(\gamma^{2^m}\gamma_0^{2^l})^{2^{k_0}}, \gamma_1^{2^{k_1}}, \gamma_0=y, \gamma_1=xyx^{-2}y^{-1}
\rangle\\
&\cong
\langle
x,y,\gamma
\mid
(\gamma^{2^m}y^{2^l})^{2^{k_0}}, (xyx^{-2}y^{-1})^{2^{k_1}}
\rangle\\
&\overset{\mathfrak{a}}{\longrightarrow}
\Z\langle x\rangle\oplus\Z\langle y\rangle\oplus\Z\langle \gamma \rangle/
\Z\langle 2^{m+k_0}\gamma+2^{l+k_0}y\rangle\oplus\Z\langle 2^{k_1}x\rangle\\
&\cong H_1(X'). 
\end{align*}
It must be an infinite cyclic group generated by $[\gamma]$. 
Hence $k_0=k_1=l=0$. 
Thus we have the following: 
\begin{align*}
G(K)
&\cong
\langle
x,y,\gamma,\mu
\mid
(\gamma^{2^m}y^{2^l})^{2^{k_0}}, (xyx^{-2}y^{-1})^{2^{k_1}}, \gamma\mu^{-g}
\rangle\\
&\cong
\langle x,\mu\mid x^2\mu^nx^{-1}\mu^{-n} \rangle,
\end{align*}
where $n=2^mg$. 

\vspace{2mm}
\noindent (2)
The polyhedron $X'$ is decomposed as $U\cup X_3\cup V$, 
and $U$ and $V$ are glued with $X_3$ along the boundary components of $X_3$ with length 5 and 1, respectively. 
Then the fundamental group of $X'$ and its abelianization are obtained as follows: 
\begin{align*}
\pi_1(X')
&\cong
\langle
x,y,\gamma,\gamma_0,\gamma_1
\mid
y=\gamma_1, xyx^{-2}y^{-1}=\gamma_0, (\gamma^{2^m}\gamma_0^{2^l})^{2^{k_0}}, \gamma_1^{2^{k_1}}
\rangle\\
&\cong
\langle
x,y,\gamma
\mid
(\gamma^{2^m}(xyx^{-2}y^{-1})^{2^l})^{2^{k_0}}, y^{2^{k_1}}
\rangle\\
&\overset{\mathfrak{a}}{\longrightarrow}
\Z\langle x\rangle\oplus\Z\langle y\rangle\oplus\Z\langle \gamma \rangle/
\Z\langle 2^{m+k_0}\gamma+2^{l+k_0}x\rangle\oplus\Z\langle 2^{k_1}y\rangle\\
&\cong H_1(X'). 
\end{align*}
It must be an infinite cyclic group generated by $[\gamma]$. Hence $k_0=k_1=l=0$. 
Thus we have the following: 
\begin{align*}
G(K)
&\cong
\langle
x,y,\gamma,\gamma_0,\gamma_1, \mu
\mid
y=\gamma_1, xyx^{-2}y^{-1}=\gamma_0, (\gamma^{2^m}\gamma_0^{2^l})^{2^{k_0}}, \gamma_1^{2^{k_1}}, \gamma\mu^{-g}
\rangle\\
&\cong
\langle \mu \rangle.
\end{align*}

\vspace{2mm}
\noindent (3)
The polyhedron $X'$ is decomposed as $U\cup X_4\cup V$, 
and $U$ and $V$ are glued with $X_4$ along the boundary components of $X_4$ with length 1 and 5, respectively. 
Then the fundamental group of $X'$ and its abelianization are obtained as follows: 
\begin{align*}
\pi_1(X')
&\cong
\langle
x,y,\gamma,\gamma_0,\gamma_1
\mid
y=\gamma_0, xyx^{-2}y=\gamma_1, (\gamma^{2^m}\gamma_0^{2^l})^{2^{k_0}}, \gamma_1^{2^{k_1}}
\rangle\\
&\cong
\langle
x,y,\gamma
\mid
(\gamma^{2^m}y^{2^l})^{2^{k_0}}, (xyx^{-2}y)^{2^{k_1}}
\rangle\\
&\overset{\mathfrak{a}}{\longrightarrow}
\Z\langle x\rangle\oplus\Z\langle y\rangle\oplus\Z\langle \gamma \rangle/
\Z\langle 2^{m+k_0}\gamma+2^{l+k_0}y\rangle\oplus\Z\langle 2^{k_1}(2y-x)\rangle\\
&\cong H_1(X'). 
\end{align*}
It must be an infinite cyclic group generated by $[\gamma]$. 
Hence $k_0=k_1=l=0$. 
Thus we have the following: 
\begin{align*}
G(K)
&\cong
\langle
x,y,\gamma,\gamma_0,\gamma_1, \mu
\mid
y=\gamma_0, xyx^{-2}y=\gamma_1, (\gamma^{2^m}\gamma_0^{2^l})^{2^{k_0}}, \gamma_1^{2^{k_1}}, \gamma\mu^{-g}
\rangle\\
&\cong
\langle x,y\mid x^2y^n x^{-1}y^{n} \rangle, 
\end{align*}
where $n=2^mg$. 

\vspace{2mm}
\noindent (4)
The polyhedron $X'$ is decomposed as $U\cup X_4\cup V$, 
and $U$ and $V$ are glued with $X_4$ along the boundary components of $X_4$ with length 5 and 1, respectively. 
Then the fundamental group of $X'$ and its abelianization are obtained as follows: 
\begin{align*}
\pi_1(X')
&\cong
\langle
x,y,\gamma,\gamma_0,\gamma_1
\mid
y=\gamma_1, xyx^{-2}y=\gamma_0, (\gamma^{2^m}\gamma_0^{2^l})^{2^{k_0}}, \gamma_1^{2^{k_1}}
\rangle\\
&\cong
\langle
x,y,\gamma
\mid
(\gamma^{2^m}(xyx^{-2}y)^{2^l})^{2^{k_0}}, y^{2^{k_1}}
\rangle\\
&\overset{\mathfrak{a}}{\longrightarrow}
\Z\langle x\rangle\oplus\Z\langle y\rangle\oplus\Z\langle \gamma \rangle/
\Z\langle 2^{m+k_0}\gamma+2^{l+k_0}(2y-x)\rangle\oplus\Z\langle 2^{k_1}y\rangle\\
&\cong H_1(X'). 
\end{align*}
It must be an infinite cyclic group generated by $[\gamma]$. 
Hence $k_0=k_1=l=0$. 
Thus we have the following: 
\begin{align*}
G(K)
&\cong
\langle
x,y,\gamma,\mu
\mid
(\gamma^{2^m}(xyx^{-2}y)^{2^l})^{2^{k_0}}, y^{2^{k_1}}, \gamma\mu^{-g}
\rangle\cong\langle \mu \rangle.
\end{align*}
\end{proof}
\end{case}
\begin{figure}[tbp]
\includegraphics[width=.85\hsize]{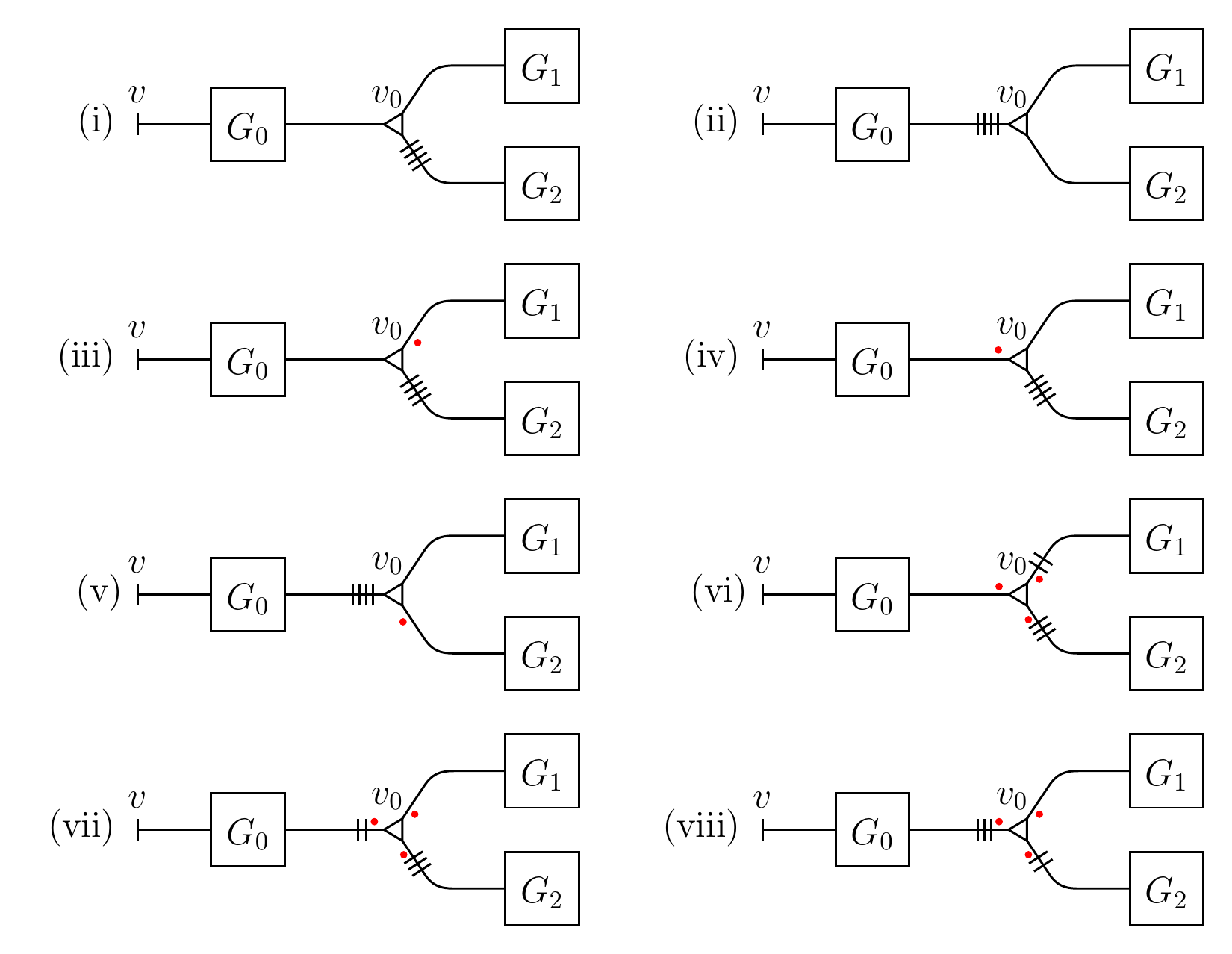}
\caption{
The possible cases of encoding graph $G$ of $X$ 
such that $v_0$ is of type $\vt{X_8}$, $\vt{X_9}$ or $\vt{X_{10}}$. }
\label{fig:c=1cases2}
\end{figure}
\begin{case}
[$v_0$ is of type $\vt{X_8}, \vt{X_9}$ or $\vt{X_{10}}$]
The graph $G$ is one of those shown in Figure~\ref{fig:c=1cases2}. 
Let $G_0$, $G_1$ and $G_2$ be subgraphs of $G$ as indicated in the figure. 
The subpolyhedron $X'$ is decomposed into $U$, $X_{v_0}$, $V_1$ and $V_2$, 
where $U$, $V_1$ and $V_2$ are the subpolyhedron corresponding to $G_0$, $G_1$ and $G_2$, respectively. 
Note that $U$ has two cut ends $\gamma\sqcup\gamma_0$, 
and also note that $V_i$ has one cut end $\gamma_i$ for $i\in\{1,2\}$. 
Then we have $\pi_1(V_i)\cong\langle \gamma_i\mid\gamma_i^{2^{k_i}} \rangle$ 
by Lemma~\ref{lem:pi_1_of_subpolyhedron1} for some $k_i\in\Z_{\geq0}$. 
We also have $H_1(X')\cong\Z\langle\gamma\rangle$ by Lemma~\ref{lem:H_1_of_subpolyhedron}. 
Then we can apply Lemma~\ref{lem:pi_1_of_subpolyhedron2} to $U$, 
and we have $\pi_1(U)\cong\langle \gamma,\gamma_0\mid (\gamma^{2^m}\gamma_0^{2^l})^{2^{k_0}} \rangle$ 
for some $k_0,l,m\in\Z_{\geq0}$. 
\begin{lemma}
\label{lem:X8910}
The following hold. 
\begin{enumerate}
\item
If $G$ is shown in Figure~\ref{fig:c=1cases2}-(i), 
then $G(K)$ is an infinite cyclic group. 
\item
Figure~\ref{fig:c=1cases2}-(ii) does not encodes a shadow of any $2$-knot. 
\item
Figure~\ref{fig:c=1cases2}-(iii) does not encodes a shadow of any $2$-knot. 
\item
If $G$ is shown in Figure~\ref{fig:c=1cases2}-(iv), 
then $G(K)$ is an infinite cyclic group. 
\item
Figure~\ref{fig:c=1cases2}-(v) does not encodes a shadow of any $2$-knot. 
\item
Figure~\ref{fig:c=1cases2}-(vi) does not encodes a shadow of any $2$-knot. 
\item
Figure~\ref{fig:c=1cases2}-(vii) does not encodes a shadow of any $2$-knot. 
\item
Figure~\ref{fig:c=1cases2}-(viii) does not encodes a shadow of any $2$-knot. 
\end{enumerate}
\end{lemma}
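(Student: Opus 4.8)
The plan is to handle all eight configurations by the single template already employed in the proof of Lemma~\ref{lem:X34}, now applied to the four-piece decomposition $X'=U\cup X_{v_0}\cup V_1\cup V_2$. Writing $\pi_1(X_{v_0})=\langle x,y\rangle$ and reading the three boundary words of $X_{v_0}$ from Table~\ref{table:vertex_Xi}, I would first assemble a presentation of $\pi_1(X')$ by van Kampen's theorem: the cut end $\gamma_0$ shared with $U$ is identified with the boundary word of $X_{v_0}$ prescribed by the edge labels in Figure~\ref{fig:c=1cases2}, and the cut ends $\gamma_1,\gamma_2$ shared with $V_1,V_2$ with the remaining two boundary words. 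Feeding in $\pi_1(U)\cong\langle\gamma,\gamma_0\mid(\gamma^{2^m}\gamma_0^{2^l})^{2^{k_0}}\rangle$ and $\pi_1(V_i)\cong\langle\gamma_i\mid\gamma_i^{2^{k_i}}\rangle$ produces a presentation of $\pi_1(X')$ on the generators $x,y,\gamma$ with exactly three defining relations.

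The decisive step is abelianization. Let $\bar w_0,\bar w_1,\bar w_2\in H_1(X_{v_0})=\Z\langle x\rangle\oplus\Z\langle y\rangle$ denote the images of the boundary words glued to $U,V_1,V_2$ respectively. The three relations abelianize to $2^{m+k_0}\gamma+2^{l+k_0}\bar w_0$, $2^{k_1}\bar w_1$ and $2^{k_2}\bar w_2$, so $H_1(X')$ is the cokernel of a $3\times3$ integer matrix whose determinant equals, up to sign, $2^{m+k_0+k_1+k_2}$ times the $2\times2$ determinant $\bar w_1\times\bar w_2$ of the two words sent to $V_1$ and $V_2$. By Lemma~\ref{lem:H_1_of_subpolyhedron} together with the standing hypothesis $g>0$, we must have $H_1(X')\cong\Z\langle\gamma\rangle$, which is infinite; hence this determinant has to vanish, forcing $\bar w_1\times\bar w_2=0$. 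In other words, a configuration can come from a $2$-knot only if the two boundary words glued to $V_1$ and $V_2$ are linearly dependent in $H_1(X_{v_0})$.

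I expect this linear-dependence test to be the crux and to dispatch the impossible items at once. Using the abelianized boundary words from the table ($xyx^{-1}y^{-1}\mapsto 0$, $xyxy^{-1}\mapsto 2x$, and $x,\ y,\ xy,\ x^2y^{-1}$ to their evident images), one checks in each of cases (ii), (iii), (v), (vi), (vii) and (viii) that the pair assigned to $V_1,V_2$ is independent, so $\bar w_1\times\bar w_2\neq0$, the matrix is nonsingular, $H_1(X')$ is finite, and no such shadow of a $2$-knot exists. For $X_{10}$ no two of $y$, $xy$, $x^2y^{-1}$ are parallel, which accounts in one stroke for all three of its configurations being impossible.

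For the two surviving configurations (i) and (iv) the dependence does hold, and I would finish exactly as in Lemma~\ref{lem:X34}. Imposing $H_1(X')\cong\Z\langle\gamma\rangle$ forces $k_0=l=k_1=0$, so the generator identified with the length-one word glued to one of the $V_i$ becomes trivial; once that generator is set to $1$, the remaining length-four relation (the commutator $xyx^{-1}y^{-1}$ for $X_8$, or $xyxy^{-1}$ for $X_9$) collapses to the trivial relation, and the relation coming from $U$ expresses the last generator as a power of $\gamma$. This leaves $\pi_1(X')\cong\langle\gamma\rangle$, and adjoining the meridian relation $\gamma\mu^{-g}$ of Proposition~\ref{prop:knot_group} yields $G(K)\cong\langle\mu\rangle$. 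The only genuinely delicate point is the bookkeeping: matching each labelled graph of Figure~\ref{fig:c=1cases2} to the correct assignment of the three boundary words, and invoking the $x\leftrightarrow y$ symmetry of $X_8$ to merge redundant assignments, so that the eight items of the statement line up precisely with the eight admissible gluing patterns.
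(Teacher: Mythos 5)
Your proposal is correct and takes essentially the same route as the paper's proof: the same decomposition $X'=U\cup X_{v_0}\cup V_1\cup V_2$, the same van Kampen presentation on $x,y,\gamma$ fed by Lemmas~\ref{lem:pi_1_of_subpolyhedron1} and \ref{lem:pi_1_of_subpolyhedron2}, abelianization against $H_1(X')\cong\Z\langle\gamma\rangle$ from Lemma~\ref{lem:H_1_of_subpolyhedron} (with $g>0$), and the same finish via Proposition~\ref{prop:knot_group} for items (1) and (4). The only difference is organizational: where the paper rules out the six impossible configurations by inspecting each abelianized presentation separately, you compress them into a single determinant/linear-dependence test on the two boundary words glued to $V_1$ and $V_2$; I checked that criterion and its evaluation in all eight configurations (in particular the asymmetry of $X_9$, whose length-four word abelianizes to $2x$, correctly makes (iv) survive while (iii) fails), and it reproduces the paper's conclusions exactly.
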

\begin{proof}

(1) 
The polyhedron $X'$ is decomposed as $U\cup X_8\cup V_1\cup V_2$, 
and $U$, $V_1$ and $V_2$ are glued with $X_8$ along the boundary components of $X_8$ with length 1, 1 and 4, respectively. 
The fundamental group of $X'$ and its abelianization are obtained as follows: 
\begin{align*}
\pi_1(X')
&\cong
\lrangle
{x,y,\gamma,\gamma_0,\gamma_1, \gamma_2}
{\begin{array}{c}
(\gamma^{2^m}\gamma_0^{2^l})^{2^{k_0}}, \gamma_1^{2^{k_1}}, \gamma_2^{2^{k_2}},\\
\gamma_0=x, \gamma_1=y, \gamma_2=xyx^{-1}y^{-1}
 \end{array}}\\
&\cong
\langle
x,y,\gamma
\mid
(\gamma^{2^m}x^{2^l})^{2^{k_0}}, y^{2^{k_1}}, (xyx^{-1}y^{-1})^{2^{k_2}}
\rangle\\
&\overset{\mathfrak{a}}{\longrightarrow}
\Z\langle x\rangle\oplus\Z\langle y\rangle\oplus\Z\langle \gamma \rangle/
\Z\langle 2^{m+k_0}\gamma+2^{l+k_0}x\rangle\oplus\Z\langle 2^{k_1}y\rangle\\
&\cong H_1(X'). 
\end{align*}
It must be an infinite cyclic group generated by $[\gamma]$. 
Hence $k_0=k_1=l=0$. 
Thus we have the following: 
\begin{align*}
G(K)
&\cong
\langle
x,y,\gamma,\mu
\mid
(\gamma^{2^m}x^{2^l})^{2^{k_0}}, y^{2^{k_1}}, (xyx^{-1}y^{-1})^{2^{k_2}}, \gamma\mu^{-g}
\rangle\cong
\langle
\mu
\rangle. 
\end{align*}

\vspace{2mm}
\noindent (2)
Suppose $G$ is as shown in Figure~\ref{fig:c=1cases2}-(ii). 
The polyhedron $X'$ is decomposed as $U\cup X_8\cup V_1\cup V_2$, 
and $U$, $V_1$ and $V_2$ are glued with $X_8$ along the boundary components of $X_8$ with length 4, 1 and 1, respectively. 
Then the fundamental group of $X'$ and its abelianization are obtained as follows: 
\begin{align*}
\pi_1(X')
&\cong
\lrangle
{x,y,\gamma,\gamma_0,\gamma_1, \gamma_2}
{\begin{array}{c}
(\gamma^{2^m}\gamma_0^{2^l})^{2^{k_0}}, \gamma_1^{2^{k_1}}, \gamma_2^{2^{k_2}},\\
\gamma_0=xyx^{-1}y^{-1}, \gamma_1=x, \gamma_2=y
 \end{array}}\\
&\cong
\langle
x,y,\gamma
\mid
(\gamma^{2^m}(xyx^{-1}y^{-1})^{2^l})^{2^{k_0}}, x^{2^{k_1}}, y^{2^{k_2}}
\rangle\\
&\overset{\mathfrak{a}}{\longrightarrow}
\Z\langle x\rangle\oplus\Z\langle y\rangle\oplus\Z\langle \gamma \rangle/
\Z\langle 2^{m+k_0}\gamma\rangle\oplus\Z\langle 2^{k_1}x\rangle\oplus\Z\langle 2^{k_2}y\rangle\\
&\cong H_1(X'). 
\end{align*}
It must be an infinite cyclic group generated by $[\gamma]$, 
which is impossible. 

\vspace{2mm}
\noindent (3)
Suppose $G$ is as shown in Figure~\ref{fig:c=1cases2}-(iii). 
The polyhedron $X'$ is decomposed as $U\cup X_9\cup V_1\cup V_2$. 
One of the boundary components of $X_9$ has length 4 
and the other two have 1. 
Note that, however, $X_9$ does not have a symmetry such as $X_8$. 
The boundary components of $X_9$ are represented by words $xyxy^{-1}$, $x$ and $y$. 
Here $U$, $V_1$ and $V_2$ are glued with $X_9$ 
along the boundary components of $X_9$ corresponding to $x$, $y$, $xyxy^{-1}$, respectively. 
Then the fundamental group of $X'$ and its abelianization are obtained as follows: 
\begin{align*}
\pi_1(X')
&\cong
\lrangle
{x,y,\gamma,\gamma_0,\gamma_1, \gamma_2}
{\begin{array}{c}
(\gamma^{2^m}\gamma_0^{2^l})^{2^{k_0}}, \gamma_1^{2^{k_1}}, \gamma_2^{2^{k_2}},\\
\gamma_0=x, \gamma_1=y, \gamma_2=xyxy^{-1}
 \end{array}}\\
&\cong
\langle
x,y,\gamma
\mid
(\gamma^{2^m}x^{2^l})^{2^{k_0}}, y^{2^{k_1}}, (xyxy^{-1})^{2^{k_2}}
\rangle\\
&\overset{\mathfrak{a}}{\longrightarrow}
\Z\langle x\rangle\oplus\Z\langle y\rangle\oplus\Z\langle \gamma \rangle/
\Z\langle 2^{m+k_0}\gamma+2^{l+k_0}x\rangle\oplus\Z\langle 2^{k_1}y\rangle\oplus\Z\langle 2^{k_2+1}x\rangle\\
&\cong H_1(X'). 
\end{align*}
It must be an infinite cyclic group generated by $[\gamma]$, 
which is impossible. 

\vspace{2mm}
\noindent (4)
The polyhedron $X'$ is decomposed as $U\cup X_9\cup V_1\cup V_2$, 
and $U$, $V_1$ and $V_2$ are glued with $X_9$ 
along the boundary components of $X_9$ corresponding to $y$, $x$, $xyxy^{-1}$, respectively. 
Then the fundamental group of $X'$ and its abelianization are obtained as follows: 
\begin{align*}
\pi_1(X')
&\cong
\lrangle
{x,y,\gamma,\gamma_0,\gamma_1, \gamma_2}
{\begin{array}{c}
(\gamma^{2^m}\gamma_0^{2^l})^{2^{k_0}}, \gamma_1^{2^{k_1}}, \gamma_2^{2^{k_2}},\\
\gamma_0=y, \gamma_1=x, \gamma_2=xyxy^{-1}
 \end{array}}\\
&\cong
\langle
x,y,\gamma
\mid
(\gamma^{2^m}y^{2^l})^{2^{k_0}}, x^{2^{k_1}}, (xyxy^{-1})^{2^{k_2}}
\rangle\\
&\overset{\mathfrak{a}}{\longrightarrow}
\Z\langle x\rangle\oplus\Z\langle y\rangle\oplus\Z\langle \gamma \rangle/
\Z\langle 2^{m+k_0}\gamma+2^{l+k_0}y\rangle\oplus\Z\langle 2^{k_1}x\rangle\oplus\Z\langle 2^{k_2+1}x\rangle\\
&\cong H_1(X'). 
\end{align*}
It must be an infinite cyclic group generated by $[\gamma]$. 
Hence $k_0=k_1=l=0$. 
Thus we have the following: 
\begin{align*}
G(K)
&\cong
\langle
x,y,\gamma,\mu
\mid
(\gamma^{2^m}y^{2^l})^{2^{k_0}}, x^{2^{k_1}}, (xyxy^{-1})^{2^{k_2}}, \gamma\mu^{-g}
\rangle\cong
\langle
\mu
\rangle. 
\end{align*}

\vspace{2mm}
\noindent (5)
Suppose $G$ is as shown in Figure~\ref{fig:c=1cases2}-(v). 
The polyhedron $X'$ is decomposed as $U\cup X_9\cup V_1\cup V_2$, 
and $U$, $V_1$ and $V_2$ are glued with $X_9$ 
along the boundary components of $X_9$ corresponding to $xyxy^{-1}$, $x$, $y$, respectively. 
Then the fundamental group of $X'$ and its abelianization are obtained as follows: 
\begin{align*}
\pi_1(X')
&\cong
\lrangle
{x,y,\gamma,\gamma_0,\gamma_1, \gamma_2}
{\begin{array}{c}
(\gamma^{2^m}\gamma_0^{2^l})^{2^{k_0}}, \gamma_1^{2^{k_1}}, \gamma_2^{2^{k_2}},\\
\gamma_0=xyxy^{-1}, \gamma_1=x, \gamma_2=y
 \end{array}}\\
&\cong
\langle
x,y,\gamma
\mid
(\gamma^{2^m}(xyxy^{-1})^{2^l})^{2^{k_0}}, x^{2^{k_1}}, y^{2^{k_2}}
\rangle\\
&\overset{\mathfrak{a}}{\longrightarrow}
\Z\langle x\rangle\oplus\Z\langle y\rangle\oplus\Z\langle \gamma \rangle/
\Z\langle 2^{m+k_0}\gamma+2^{l+k_0+1}x\rangle\oplus\Z\langle 2^{k_1}x\rangle\oplus\Z\langle 2^{k_2}y\rangle\\
&\cong H_1(X'). 
\end{align*}
It must be an infinite cyclic group generated by $[\gamma]$, 
which is impossible. 

\vspace{2mm}
\noindent (6)
Suppose $G$ is as shown in Figure~\ref{fig:c=1cases2}-(vi). 
The polyhedron $X'$ is decomposed as $U\cup X_{10}\cup V_1\cup V_2$, 
and $U$, $V_1$ and $V_2$ are glued with $X_{10}$ 
along the boundary components of $X_{10}$ with length 1, 2 and 3, respectively. 
Then the fundamental group of $X'$ and its abelianization are obtained as follows: 
\begin{align*}
\pi_1(X')
&\cong
\lrangle
{x,y,\gamma,\gamma_0,\gamma_1, \gamma_2}
{\begin{array}{c}
(\gamma^{2^m}\gamma_0^{2^l})^{2^{k_0}}, \gamma_1^{2^{k_1}}, \gamma_2^{2^{k_2}},\\
\gamma_0=x, \gamma_1=xy, \gamma_2=xy^{-2}
 \end{array}}\\
&\cong
\langle
x,y,\gamma
\mid
(\gamma^{2^m}x^{2^l})^{2^{k_0}}, (xy)^{2^{k_1}}, (xy^{-2})^{2^{k_2}}
\rangle\\
&\overset{\mathfrak{a}}{\longrightarrow}
\frac
{\Z\langle x\rangle\oplus\Z\langle y\rangle\oplus\Z\langle \gamma \rangle}
{\Z\langle 2^{m+k_0}\gamma+2^{l+k_0}x\rangle\oplus\Z\langle 2^{k_1}(x+y)\rangle\oplus\Z\langle 2^{k_2}(x-2y)\rangle}\\
&\cong H_1(X'). 
\end{align*}
It must be an infinite cyclic group generated by $[\gamma]$, 
which is impossible. 

\vspace{2mm}
\noindent (7)
Suppose $G$ is as shown in Figure~\ref{fig:c=1cases2}-(vii). 
The polyhedron $X'$ is decomposed as $U\cup X_{10}\cup V_1\cup V_2$, 
and $U$, $V_1$ and $V_2$ are glued with $X_{10}$ 
along the boundary components of $X_{10}$ with length 2, 1 and 3, respectively. 
Then the fundamental group of $X'$ and its abelianization are obtained as follows: 
\begin{align*}
\pi_1(X')
&\cong
\lrangle
{x,y,\gamma,\gamma_0,\gamma_1, \gamma_2}
{\begin{array}{c}
(\gamma^{2^m}\gamma_0^{2^l})^{2^{k_0}}, \gamma_1^{2^{k_1}}, \gamma_2^{2^{k_2}},\\
\gamma_0=xy, \gamma_1=x, \gamma_2=xy^{-2}
 \end{array}}\\
&\cong
\langle
x,y,\gamma
\mid
(\gamma^{2^m}(xy)^{2^l})^{2^{k_0}}, x^{2^{k_1}}, (xy^{-2})^{2^{k_2}}
\rangle\\
&\overset{\mathfrak{a}}{\longrightarrow}
\frac
{\Z\langle x\rangle\oplus\Z\langle y\rangle\oplus\Z\langle \gamma \rangle}
{\Z\langle 2^{m+k_0}\gamma+2^{l+k_0}(x+y)\rangle\oplus\Z\langle 2^{k_1}x\rangle\oplus\Z\langle 2^{k_2}(x-2y)\rangle}\\
&\cong H_1(X'). 
\end{align*}
It must be an infinite cyclic group generated by $[\gamma]$, 
which is impossible. 

\vspace{2mm}
\noindent (8)
Suppose $G$ is as shown in Figure~\ref{fig:c=1cases2}-(viii). 
The polyhedron $X'$ is decomposed as $U\cup X_{10}\cup V_1\cup V_2$, 
and $U$, $V_1$ and $V_2$ are glued with $X_{10}$ 
along the boundary components of $X_{10}$ with length 3, 1 and 2, respectively. 
Then the fundamental group of $X'$ and its abelianization are obtained as follows: 
\begin{align*}
\pi_1(X')
&\cong
\lrangle
{x,y,\gamma,\gamma_0,\gamma_1, \gamma_2}
{\begin{array}{c}
(\gamma^{2^m}\gamma_0^{2^l})^{2^{k_0}}, \gamma_1^{2^{k_1}}, \gamma_2^{2^{k_2}},\\
\gamma_0=xy^{-2}, \gamma_1=x, \gamma_2=xy
 \end{array}}\\
&\cong
\langle
x,y,\gamma
\mid
(\gamma^{2^m}(xy^{-2})^{2^l})^{2^{k_0}}, x^{2^{k_1}}, (xy)^{2^{k_2}}
\rangle\\
&\overset{\mathfrak{a}}{\longrightarrow}
\frac
{\Z\langle x\rangle\oplus\Z\langle y\rangle\oplus\Z\langle \gamma \rangle}
{\Z\langle 2^{m+k_0}\gamma+2^{l+k_0}(x-2y)\rangle\oplus\Z\langle 2^{k_1}x\rangle\oplus\Z\langle 2^{k_2}(x+y)\rangle}\\
&\cong H_1(X'). 
\end{align*}
It must be an infinite cyclic group generated by $[\gamma]$, 
which is impossible. 
\end{proof}

\end{case}
\begin{figure}[tbp]
\includegraphics[width=.85\hsize]{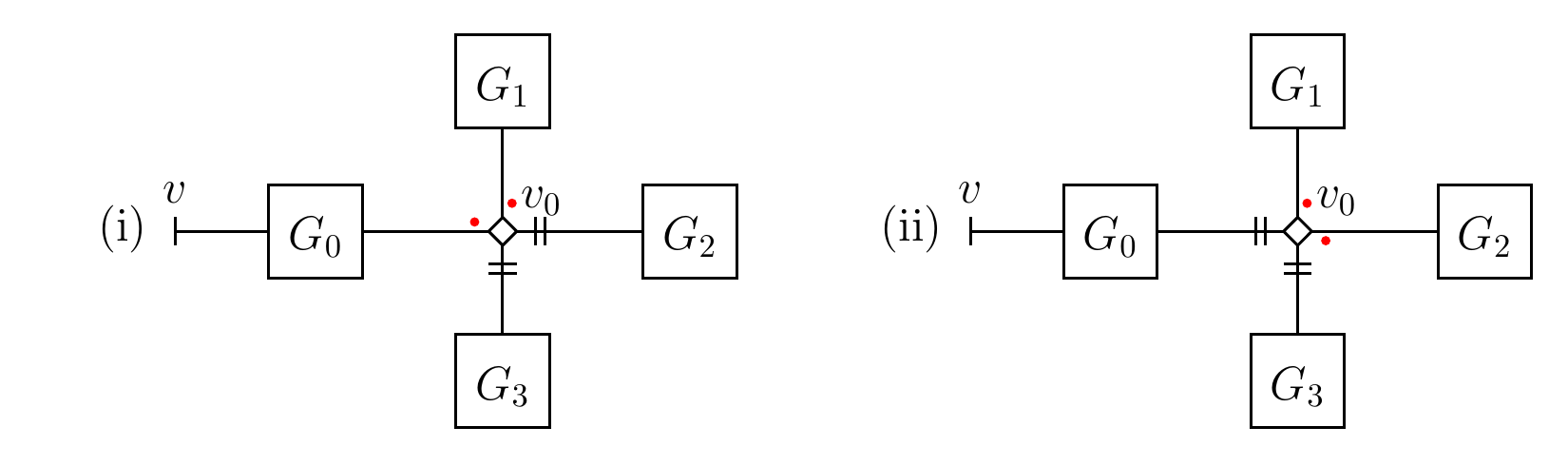}
\caption{
The possible cases of encoding graph $G$ of $X$ 
such that $v_0$ is of type $\vt{X_{11}}$. }
\label{fig:c=1cases3}
\end{figure}
\begin{case}
[$v_0$ is of type $\vt{X_{11}}$]
The graph $G$ is one of those shown in Figures~\ref{fig:c=1cases3}-(i) or -(ii). 
Let $G_0$, $G_1$, $G_2$ and $G_3$ be subgraphs of $G$ as indicated in the figure. 
The subpolyhedron $X'$ is decomposed into $U$, $X_{v_0}$, $V_1$, $V_3$ and $V_3$, 
where $U$, $V_1$, $V_3$ and $V_3$ are the subpolyhedron corresponding to $G_0$, $G_1$, $G_2$ and $G_3$. 
Note that $U$ has two cut ends $\gamma\sqcup\gamma_0$ as a subpolyhedron of $X$, 
and also note that $V_i$ has one cut end $\gamma_i$ for $i\in\{1,2,3\}$. 
Then we have $\pi_1(V_i)\cong\langle \gamma_1\mid\gamma_1^{2^{k_i}} \rangle$ 
by Lemma~\ref{lem:pi_1_of_subpolyhedron1} for some $k_i\in\Z_{\geq0}$. 
We also have $H_1(X')\cong\Z\langle\gamma\rangle$ by Lemma~\ref{lem:H_1_of_subpolyhedron}. 
Then we can apply Lemma~\ref{lem:pi_1_of_subpolyhedron2} to $U$, 
and we have $\pi_1(U)\cong\langle \gamma,\gamma_0\mid (\gamma^{2^m}\gamma_0^{2^l})^{2^{k_0}} \rangle$ 
for some $k_0,l,m\in\Z_{\geq0}$. 
\begin{lemma}
\label{lem:X11}
The following hold. 
\begin{enumerate}
\item
Figure~\ref{fig:c=1cases3}-(i) does not encodes a shadow of any $2$-knot. 
\item
Figure~\ref{fig:c=1cases3}-(ii) does not encodes a shadow of any $2$-knot. 
\end{enumerate}
\end{lemma}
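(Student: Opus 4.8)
The plan is to follow exactly the template established in the proof of Lemma~\ref{lem:X8910}: present $\pi_1(X')$ by van Kampen's theorem from the decomposition $X'=U\cup X_{11}\cup V_1\cup V_2\cup V_3$, abelianize, and confront the outcome with the requirement $H_1(X')\cong\Z\langle\gamma\rangle$ supplied by Lemma~\ref{lem:H_1_of_subpolyhedron}. Recall that $\pi_1(U)\cong\langle\gamma,\gamma_0\mid(\gamma^{2^m}\gamma_0^{2^l})^{2^{k_0}}\rangle$ by Lemma~\ref{lem:pi_1_of_subpolyhedron2}, that $\pi_1(V_i)\cong\langle\gamma_i\mid\gamma_i^{2^{k_i}}\rangle$ by Lemma~\ref{lem:pi_1_of_subpolyhedron1}, and that by Table~\ref{table:vertex_Xi} the four boundary curves of $X_{11}$ read $x,\ y,\ xy,\ xy^{-1}$. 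Up to the symmetry of $X_{11}$ interchanging the two length-$1$ boundaries and, separately, the two length-$2$ boundaries, there are exactly two inequivalent ways to attach $U$, which are precisely the graphs (i) and (ii); I would first record this reduction.

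I would then compute the two abelianizations. In case (i), gluing $\gamma_0$ to the length-$1$ boundary $x$ and $V_1,V_2,V_3$ to $y,\ xy,\ xy^{-1}$ yields
\[
\pi_1(X')\cong\langle x,y,\gamma\mid(\gamma^{2^m}x^{2^l})^{2^{k_0}},\ y^{2^{k_1}},\ (xy)^{2^{k_2}},\ (xy^{-1})^{2^{k_3}}\rangle,
\]
whose abelianization is $\Z\langle x\rangle\oplus\Z\langle y\rangle\oplus\Z\langle\gamma\rangle$ modulo the four relations $2^{m+k_0}\gamma+2^{l+k_0}x$, $\ 2^{k_1}y$, $\ 2^{k_2}(x+y)$, $\ 2^{k_3}(x-y)$. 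In case (ii), gluing $\gamma_0$ to the length-$2$ boundary $xy$ and $V_1,V_2,V_3$ to $x,\ y,\ xy^{-1}$ gives the relations $2^{m+k_0}\gamma+2^{l+k_0}(x+y)$, $\ 2^{k_1}x$, $\ 2^{k_2}y$, $\ 2^{k_3}(x-y)$.

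The decisive step, which I would isolate as the crux, is a rank count. Since $X_{11}$ has four boundary components and none of the words $x,y,xy,xy^{-1}$ is a commutator, each contributes a relation that is nontrivial after abelianization. The three relations coming from $V_1,V_2,V_3$ lie in the sublattice $\Z\langle x\rangle\oplus\Z\langle y\rangle$, and since any two of the vectors $x,\ y,\ x+y,\ x-y$ are $\Q$-linearly independent, these already span a rank-$2$ subgroup there. The remaining relation, coming from $U$, has $\gamma$-coefficient $2^{m+k_0}\ne0$ and is therefore independent of them. Hence in both (i) and (ii) the four relations span a rank-$3$ subgroup of $\Z^3$, so $H_1(X')$ is finite, contradicting $H_1(X')\cong\Z\langle\gamma\rangle$; thus neither graph encodes a shadow of a $2$-knot.

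I expect no genuine obstacle beyond bookkeeping. The instructive contrast with the admissible cases of Lemma~\ref{lem:X8910} is that there one boundary word of $X_8$ or $X_9$ is a commutator, abelianizing to $0$, so only two effective relations survive and $H_1$ can be infinite cyclic; for $X_{11}$, by contrast, all four boundary words are primitive in homology, forcing a third independent relation in the $\langle x,y\rangle$-plane. The only care needed is to justify the symmetry reduction to cases (i) and (ii) and to check the pairwise $\Q$-independence of $x,y,x+y,x-y$, both of which are elementary.
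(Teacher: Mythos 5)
Your proposal is correct and follows essentially the same route as the paper: the same decomposition $X'=U\cup X_{11}\cup V_1\cup V_2\cup V_3$, the same presentations and abelianized relation vectors, with your explicit rank-$3$ count merely spelling out the step the paper compresses into ``which is impossible.'' (One immaterial slip in your closing aside: the length-$4$ boundary word $xyxy^{-1}$ of $X_9$ is not a commutator---it abelianizes to $2x$---and the admissible $X_9$ case in Lemma~\ref{lem:X8910} survives because two relations become parallel, not because one vanishes.)
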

\begin{proof}
(1) 
Suppose $G$ is as shown in Figure~\ref{fig:c=1cases3}-(xiii). 
The polyhedron $X'$ is decomposed as $U\cup X_{11}\cup V_1\cup V_2\cup V_3$, 
and $U$, $V_1$, $V_2$ and $V_3$ are glued with $X_{11}$ 
along the boundary components of $X_{11}$ with length 1, 1, 2 and 2, respectively. 
The fundamental group of $X'$ and its abelianization are obtained as follows: 
\begin{align*}
\pi_1(X')
&\cong
\lrangle
{x,y,\gamma,\gamma_0,\gamma_1, \gamma_2, \gamma_3}
{\begin{array}{c}
(\gamma^{2^m}\gamma_0^{2^l})^{2^{k_0}}, \gamma_1^{2^{k_1}}, \gamma_2^{2^{k_2}}, \gamma_3^{2^{k_3}},\\
\gamma_0=x, \gamma_1=y, \gamma_2=xy, \gamma_3=xy^{-1}
 \end{array}}\\
&\cong
\langle
x,y,\gamma
\mid
(\gamma^{2^m}x^{2^l})^{2^{k_0}}, y^{2^{k_1}}, (xy)^{2^{k_2}}, (xy^{-1})^{2^{k_3}}
\rangle\\
&\overset{\mathfrak{a}}{\longrightarrow}
\frac
{\Z\langle x\rangle\oplus\Z\langle y\rangle\oplus\Z\langle \gamma \rangle}
{\Z\langle 2^{m+k_0}\gamma+2^{l+k_0}x\rangle\oplus\Z\langle 2^{k_1}y\rangle
\oplus\Z\langle 2^{k_2}(x+y)\rangle\oplus\Z\langle 2^{k_3}(x-y)\rangle}\\
&\cong H_1(X'). 
\end{align*}
It must be an infinite cyclic group generated by $[\gamma]$, 
which is impossible. 

\vspace{2mm}
\noindent (2) 
Suppose $G$ is as shown in Figure~\ref{fig:c=1cases3}-(xiv). 
The polyhedron $X'$ is decomposed as $U\cup X_{11}\cup V_1\cup V_2\cup V_3$, 
and $U$, $V_1$, $V_2$ and $V_3$ are glued with $X_{11}$ 
along the boundary components of $X_{11}$ with length 2, 1, 1 and 2, respectively. 
Then the fundamental group of $X'$ and its abelianization are obtained as follows: 
\begin{align*}
\pi_1(X')
&\cong
\lrangle
{x,y,\gamma,\gamma_0,\gamma_1, \gamma_2, \gamma_3}
{\begin{array}{c}
(\gamma^{2^m}\gamma_0^{2^l})^{2^{k_0}}, \gamma_1^{2^{k_1}}, \gamma_2^{2^{k_2}}, \gamma_3^{2^{k_3}},\\
\gamma_0=xy, \gamma_1=x, \gamma_2=y, \gamma_3=xy^{-1}
 \end{array}}\\
&\cong
\langle
x,y,\gamma
\mid
(\gamma^{2^m}(xy)^{2^l})^{2^{k_0}}, x^{2^{k_1}}, y^{2^{k_2}}, (xy^{-1})^{2^{k_3}}
\rangle\\
&\overset{\mathfrak{a}}{\longrightarrow}
\frac
{\Z\langle x\rangle\oplus\Z\langle y\rangle\oplus\Z\langle \gamma \rangle}
{\Z\langle 2^{m+k_0}\gamma+2^{l+k_0}(x+y)\rangle\oplus\Z\langle 2^{k_1}x\rangle
\oplus\Z\langle 2^{k_2}y\rangle\oplus\Z\langle 2^{k_3}(x-y)\rangle}\\
&\cong H_1(X'). 
\end{align*}
It must be an infinite cyclic group generated by $[\gamma]$, 
which is impossible. 
\end{proof}
\end{case}

\section{Classification of $2$-knots with complexity one}
\label{sec:Classification of 2-knots with complexity one}
\subsection{Lemmas on decorated graphs}
We now define a {\it decoration} of an edge $e$ of an encoding graph $G$ as a half-integer 
such that it is an integer if and only if the number of red dots appended to $e$ is even (actually, zero or two). 
If every edge of $G$ is assigned with a decoration, $G$ is called a {\it decorated graph}. 
A decoration corresponds to a gleam, and a decorated tree encodes a shadowed polyhedron. 

We can easily describe how a decorated graph $G$ changes by 
adding a compressing disk and a connected-sum reduction. 
See Figure~\ref{fig:comp_disk_conn_sum}. 
\begin{figure}[tbp]
\includegraphics[width=110mm]{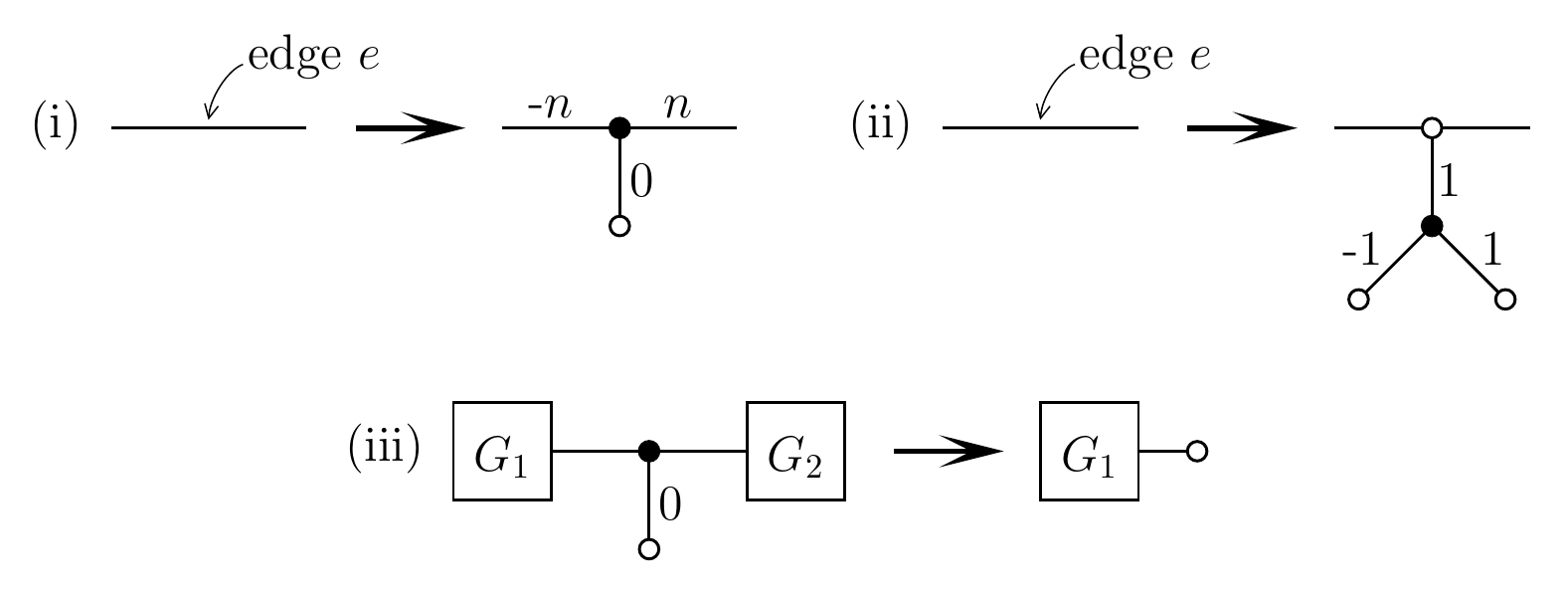}
\caption{(i) adding a horizontal compressing disk, (ii) adding a vertical compressing disk, and (iii) connected-sum reduction. }
\label{fig:comp_disk_conn_sum}
\end{figure}
If a lift of an edge $e$ of $G$ has a horizontal (resp. vertical) compressing disk, 
we can replace the edge $e$ as shown in Figure~\ref{fig:comp_disk_conn_sum}-(i) (resp. -(ii)). 
If a decorated graph is as shown in the left of Figure~\ref{fig:comp_disk_conn_sum}-(iii) 
and if the subpolyhedron corresponding to the subgraph $G_1$ contains $K$, 
we can adopt a decorated graph shown in the right of the figure. 

In this subsection, we provide some modifications of shadows and decorated graphs not changing a $2$-knot. 


\begin{figure}[tbp]
\includegraphics[width=80mm]{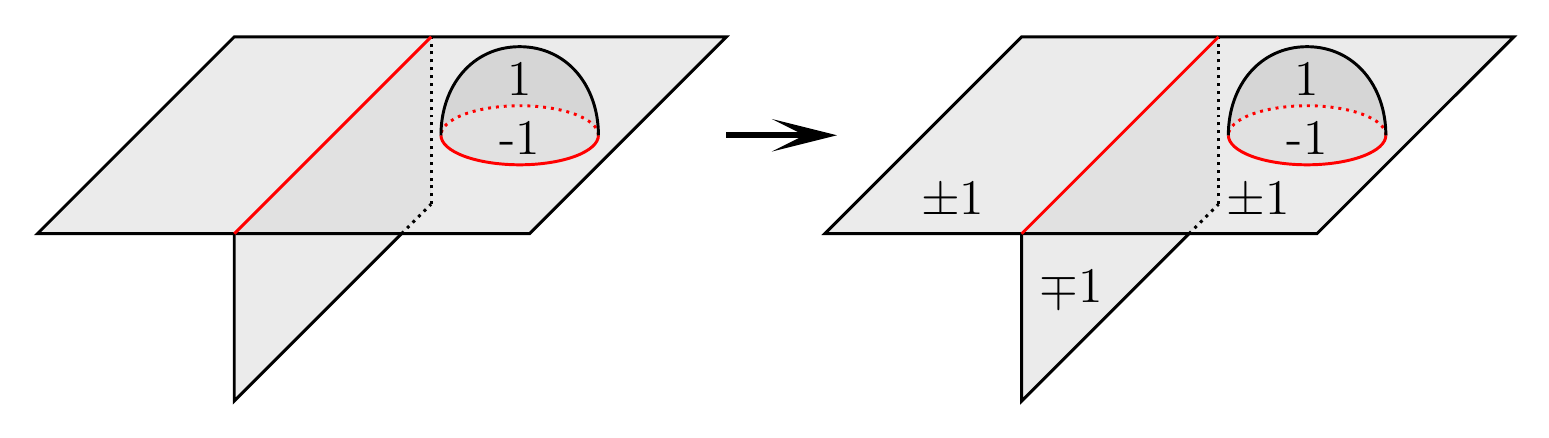}
\caption{A move on a shadow: if a region $R$ has a vertical compressing disk, 
then we can modify the gleams of $R,R'$ and $R''$ as in the figure, where $R'$ and $R''$ are regions adjacent to a common triple line with $R$.}
\label{fig:gleam_shift}
\end{figure}
\begin{lemma}
\label{lem:gleam_shift}
Suppose that $X'$ has a part as shown in the left of Figure~\ref{fig:gleam_shift}. 
Then the move shown in the figure and its inverse modify $X$ to another shadow of $K$. 
\end{lemma}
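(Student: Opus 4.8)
The plan is to realize the move as the addition of a vertical compressing disk followed by a collapse, and then to read off the new gleams from the self-intersection formula~(\ref{align:gleam}). First I would choose a small simple closed curve $\gamma$ in the region $R$ running parallel to the triple line $\ell$ along which $R$, $R'$ and $R''$ meet. Since $R$ is assumed to have a vertical compressing disk, the curve $\gamma$ has one as well, so by the proposition on compressing-disk addition the polyhedron $X\cup D'_\gamma$ obtained by attaching the vertical compressing disk of $\gamma$ is again a shadow of $K$. Because the disk is vertical, its boundary is null-homotopic in $\Nbd(\gamma;X)$ and therefore contributes no algebraic intersection in~(\ref{align:gleam}); hence the gleam of the new disk region is $0$.

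Next I would analyze the local picture near $\ell$. The annulus $A\subset R$ cobounded by $\gamma$ and $\ell$, capped off by $D'_\gamma$, forms a disk that meets the remainder of $X$ only along the collar of $\ell$. Attaching $D'_\gamma$ frees this collar, so that $X\cup D'_\gamma$ collapses not back onto $X$ but in the transverse direction, pushing the collar of $R$ across the triple line onto $R'$ and $R''$; the polyhedron resulting from this collapse is precisely the one drawn on the right of Figure~\ref{fig:gleam_shift}. Since both the addition of a vertical compressing disk and collapsing preserve the property of being a shadow of $K$ in $S^4$, the modified polyhedron is again a shadow of $K$, and the inverse move is obtained by running the same construction backwards.

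It then remains to confirm that the gleams change exactly as indicated. Transferring the collar of $R$ across $\ell$ alters the relative self-intersection numbers of the three regions incident to $\ell$: by~(\ref{align:gleam}) the contribution of the pushed collar is subtracted from $\gl(R)$ and redistributed to $\gl(R')$ and $\gl(R'')$, while the auxiliary disk contributes nothing since its gleam is $0$. These changes preserve the total self-intersection data, so that the reconstructed $4$-manifold stays $S^4$ and the embedded sphere stays the same $2$-knot $K$. The main obstacle is this last step: I must control the collapse so that it proceeds transversally across $\ell$ without creating new true vertices or disturbing the embedding of $K$, and carry out the intersection-number bookkeeping in~(\ref{align:gleam}) carefully enough to verify both that the gleam shifts agree with Figure~\ref{fig:gleam_shift} and that neither the ambient manifold nor the knot type is affected.
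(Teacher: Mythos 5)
Your first step (adding the vertical compressing disk along a curve $\gamma\subset R$ parallel to the triple line) is legitimate: this is exactly the compressing-disk proposition quoted from \cite{KMN18} in Section~\ref{sec:Modifications and fundamental groups}. Note, however, that the paper itself gives no proof of Lemma~\ref{lem:gleam_shift} at all --- its entire ``proof'' is the citation to \cite{KMN18} --- so your argument must stand on its own, and it does not. First, observe what the move actually is: by the caption of Figure~\ref{fig:gleam_shift}, the underlying polyhedron is unchanged and only the gleams of $R$, $R'$, $R''$ are redistributed. Your central step, the ``collapse in the transverse direction,'' cannot produce this and in fact cannot be performed. Under the standing assumptions carried from Section~\ref{sec:Existence of 2-knots with complexity one}, $X$ is a \emph{closed} shadow, so $X\cup D'_\gamma$ is again a closed simple polyhedron; a collapse must start at a free face, i.e.\ at a boundary point of the polyhedron, and attaching a disk along its entire boundary only adds material and can never create free faces. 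So no collapse of $X\cup D'_\gamma$ exists, and ``attaching $D'_\gamma$ frees this collar'' has no polyhedral meaning. If what you intend is instead to \emph{delete} the annulus of $R$ between $\gamma$ and the triple line, then (i) that is not a collapse, and region-removal is precisely the kind of operation whose shadow-preservation is the content to be proved (the complement of the neighborhood gains a thickened $2$-cell, and one must show it remains a $4$-dimensional $1$-handlebody), and (ii) it yields the wrong polyhedron anyway: removing that annulus destroys the triple line, merging $R'$ with $R''$ and merging $D'_\gamma$ with the rest of $R$, whereas the right-hand side of Figure~\ref{fig:gleam_shift} has the same regions and the same triple line as the left.

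Two further assertions are unjustified. The claim that the new disk region has gleam $0$ does not follow from $\partial D'_\gamma$ being null-homotopic in $\Nbd(\gamma;X)$: formula~(\ref{align:gleam}) computes a framing discrepancy of the perturbed disk relative to the polyhedron, not a homotopy class, and a vertical compressing disk region can perfectly well carry nonzero gleam --- this is why the decorated-graph moves of Figure~\ref{fig:comp_disk_conn_sum} and Lemma~\ref{lem:graph_moves} involve nontrivial decoration bookkeeping rather than simply appending a $0$-decorated leaf. And the final redistribution of gleams is asserted rather than derived; you flag this yourself as ``the main obstacle,'' which is an accurate self-assessment, since that bookkeeping together with the verification that the ambient manifold stays $S^4$ \emph{is} the lemma. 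A correct argument must exhibit, for the same polyhedron equipped with the new gleams, an embedding in $S^4$ realizing it as a shadow that still contains $K$. The natural mechanism, which is what the vertical compressing disk is for, is to cap the compressing disk with a fiber disk of the projection $\Nbd(X;S^4)\to X$ to get an embedded $2$-sphere meeting $X$ transversely in one point of $R$, and to slide the sheets $R'$ and $R''$ across this sphere; this changes the relative self-intersection numbers of the three regions incident to the triple line exactly as in Figure~\ref{fig:gleam_shift}, and since the hypothesis places the picture inside $X'$, the whole modification takes place away from $K$, so $K$ and its shadow property are preserved. Your proposal never constructs such a re-embedding, so the gap is not cosmetic but is the heart of the proof.
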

See \cite{KMN18} for the proof of the above. 
\begin{remark}
The regions appeared in Figure~\ref{fig:gleam_shift} must not a part of $K$. 
\end{remark}

We next introduce eight moves on decorated graphs as shown in Figure~\ref{fig:graph_moves}; 
moves-(a), -(b), -(c), -(d), -(e), -(f), -(g) and -(h). 
Note that the decoration $r$ in a move-(g) is not $\pm\frac12$. 
\begin{lemma}
\label{lem:graph_moves}
Let $G$ be a decorated graph of $X$ and $G'$ be the subgraph corresponding to $X'$. 
Then the moves shown in Figure~\ref{fig:graph_moves} that is performed on $G'$ 
modify $G$ to another decorated graph encoding a shadow of $K$. 
\end{lemma}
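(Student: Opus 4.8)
The plan is to verify the eight moves one at a time by exhibiting each of them as a composition of elementary modifications already known to send a shadow of $K$ to another shadow of $K$: the additions of horizontal and vertical compressing disks (Figure~\ref{fig:comp_disk_conn_sum}-(i), (ii)), the connected-sum reduction (Proposition~\ref{prop:connected_sum}), the gleam-shift of Lemma~\ref{lem:gleam_shift}, and the standard YV-, IH- and $(0\to2)$-moves. The decisive observation is that every move in Figure~\ref{fig:graph_moves} is performed inside $G'$, hence inside $X'=X\setminus\Int\Nbd(K;X)$, so that $K$ lies in the complement of the locus where the polyhedron is altered. Consequently $K$ survives each modification as an embedded $2$-sphere, and the only thing left to prove is that the modified polyhedron is again a shadow of $S^4$, equivalently that the boundary of its regular neighbourhood remains a connected sum of copies of $S^1\times S^2$.

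First I would fix, for each move, the geometric operation it encodes. A move that creates or deletes a pendant vertex of type~$\vt{D}$ together with its incident edge is realized by attaching or removing a compressing disk; whether it is horizontal or vertical is dictated by the decoration and the red-dot pattern of Figure~\ref{fig:comp_disk_conn_sum}, and in either case the operation amounts to attaching a $2$-handle cancelled by a $3$-handle, so the ambient manifold stays $S^4$. A move that reroutes edges or exchanges vertex types without altering decorations is a YV- or an IH-move, hence merely re-describes the same region and changes neither $X$ nor $K$ up to homeomorphism. The move carrying a generic decoration $r\neq\pm\frac12$ (move-(g)) is the graph incarnation of the gleam-shift of Lemma~\ref{lem:gleam_shift}, the hypothesis $r\neq\pm\frac12$ being exactly the condition under which that lemma may be applied to the region in question.

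Next I would check that the decorations transform exactly as drawn. Since a decoration is a gleam, this reduces to matching, for each move, the gleam bookkeeping prescribed by Figure~\ref{fig:comp_disk_conn_sum} and by Lemma~\ref{lem:gleam_shift} against the half-integer labels of Figure~\ref{fig:graph_moves}, keeping track of the fact that a label is an integer precisely when the number of red dots on the edge is even. This is a routine but careful case-by-case verification, using that the gleams of two regions add when the regions are merged and that the gleam-shift redistributes a unit of gleam among the three regions meeting along a triple line.

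The main obstacle will be establishing, for each move that uses the gleam-shift or the connected-sum reduction, that the required compressing disk actually exists. For the gleam-shift one needs the pertinent region of $X'$ to admit a \emph{vertical} compressing disk, and for the connected-sum reduction one needs a disk region whose neighbourhood is homeomorphic to $Y_{111}$ and whose gleam vanishes. I would obtain these disks from the standing hypotheses of Section~\ref{sec:Existence of 2-knots with complexity one}: since $\partial M_X\cong\#_h(S^1\times S^2)$, the preimage $(\pi|_{\partial M_X})^{-1}(\gamma)$ of a simple closed curve $\gamma$ lying in a region is an embedded torus, which admits a compressing disk $D_\gamma$ by Dehn's lemma, and the degree of the induced map $\rho\colon\partial D_\gamma\to\gamma$ separates the vertical case ($\deg\rho=0$) from the horizontal one ($|\deg\rho|=1$). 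Confirming that this degree, and the vanishing of the relevant gleam, are the ones demanded by each individual move is the crux of the argument; once they are in hand, all eight moves follow from the elementary operations assembled above.
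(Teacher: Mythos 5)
Your overall framework---realizing each move as a composition of compressing-disk additions, connected-sum reductions, gleam-shifts, and YV/IH-moves, and noting that $K$ is untouched because everything happens in $X'$---is the same skeleton the paper uses for moves (a)--(f). But there are two genuine problems. First, you misidentify move-(g): it is not ``the graph incarnation of the gleam-shift of Lemma~\ref{lem:gleam_shift}'' (the gleam-shift gives moves (a) and (b)); move-(g) deletes an entire pendant chain, and its validity hinges on producing a \emph{vertical} compressing disk for a lift of the edge adjacent to that chain. Second, and more seriously, your plan defers exactly the step that carries all the content. Dehn's lemma only guarantees \emph{some} compressing disk for the torus $(\pi|_{\partial M_X})^{-1}(\gamma)$; its boundary slope is not constrained to be vertical or horizontal (those are the ``two important cases,'' not a dichotomy), and nothing in the standing hypotheses forces the slope you need. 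Your closing sentence---``confirming that this degree \ldots is the crux of the argument; once they are in hand, all eight moves follow''---is an accurate description of the difficulty, but the proposal never supplies that confirmation, so the proof is incomplete precisely at its decisive point.

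What is actually needed, and what the paper does, is a $3$-manifold--topology computation for each of the two hard moves. For move-(g), the preimage under $\pi|_{\partial M_X}$ of the relevant subpolyhedron is identified with the complement of the $(2,2r)$-torus knot in $S^3$; since $r\neq\pm\frac12$ this knot is nontrivial, and Property~P together with Property~R forces the only Dehn filling yielding a connected sum of copies of $S^1\times S^2$ to be the one bounded by a vertical compressing disk. For move-(h), the preimage of the $Y_{111}$-portion is $P\times S^1$, the subpolyhedron corresponds to the Seifert fibered space $(P;(2,1),(2,-1))$, and one checks that its $(p,q)$-Dehn filling is never $\#_h(S^1\times S^2)$ unless $q=0$, in which case it is $S^1\times S^2$; this again pins down the vertical slope. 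Without these identifications (or some substitute argument ruling out all other boundary slopes), moves (g) and (h) remain unproved, and Lemma~\ref{lem:leaf} and Theorem~\ref{thm:sc=1_onlyifpart}, which rely on them, would not follow.
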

\begin{figure}[tbp]
\includegraphics[width=1\hsize]{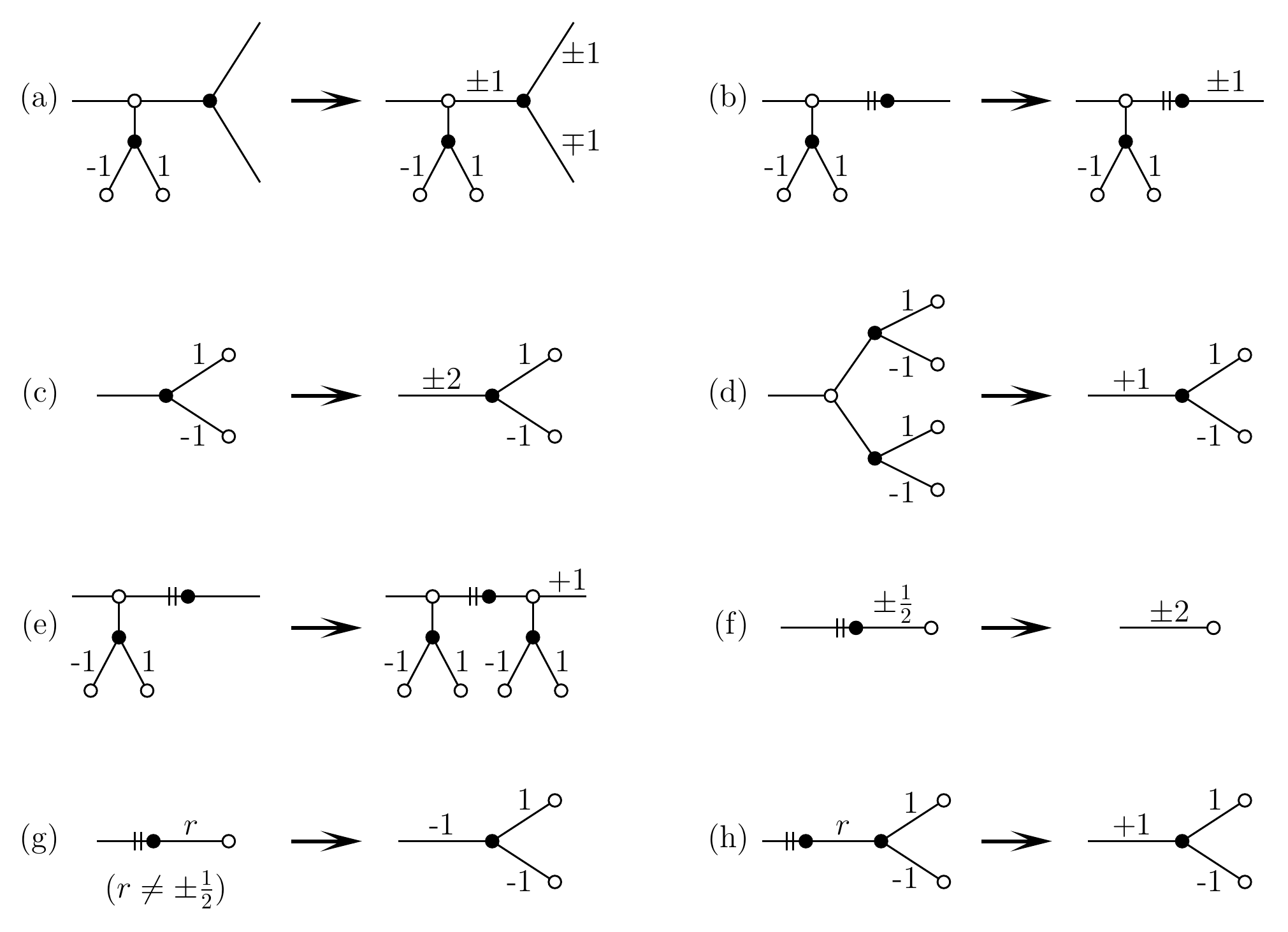}
\caption{Eight moves on decorated graphs. The decoration $r$ in (g) is not $\pm\frac12$. }
\label{fig:graph_moves}
\end{figure}
\begin{proof}
Moves-(a) and -(b) are obtained by a move in Figure~\ref{fig:gleam_shift}. 

A move-(c) is explained in \cite[Figure 34-(7)]{Mar11}. 

A move-(d) is a obtained by 
a move-(a), a connected-sum reduction, and a YV-move. 

A move-(e) is a kind of {\it propagation principle} \cite{KMN18}: 
if two of the three regions adjacent to a triple line have vertical compressing disks, 
then the other also has. 

A move-(f) is explained in \cite[Figure 34-(4)]{Mar11}. 

\begin{figure}[tbp]
\includegraphics[width=1\hsize]{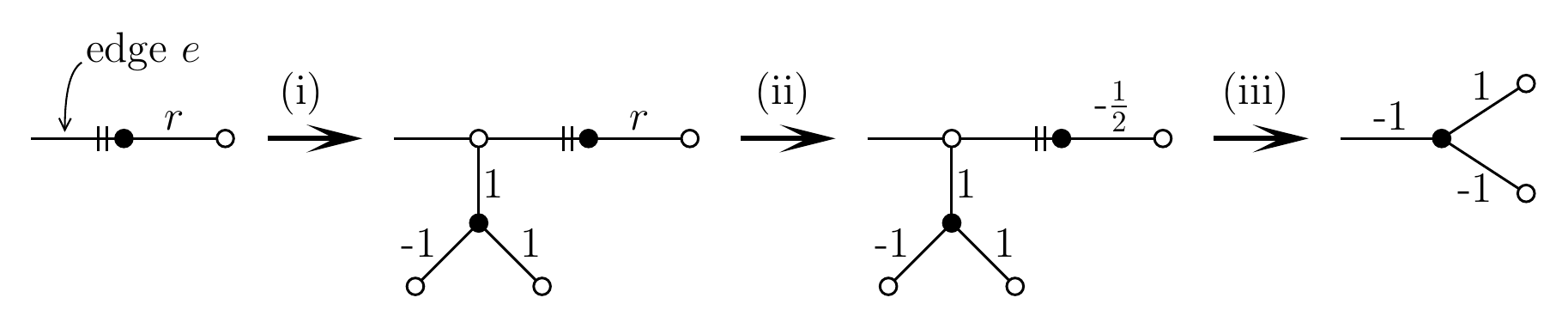}
\caption{The proof of a move-(g). }
\label{fig:graph_lemma_leaf2}
\end{figure}
A move-(g) is explained in Figure~\ref{fig:graph_lemma_leaf2}. 
Let $\pi:M_X\to X$ be a natural projection, where $M_X=\Nbd(X;S^4)$. 
Then the preimage of the subpolyhedron corresponding to the leftmost graph of Figure~\ref{fig:graph_lemma_leaf2} by 
$\pi|_{\partial M_X}$ is homeomorphic to the complement of the $(2,2r)$-torus knot in $S^3$, see \cite[Fig. 11]{IK17}. 
Recall that $r\ne\frac12$. 
Let $e$ be an edge as indicated in the figure, 
then a lift of $e$ has a vertical compressing disk by Property~P and Property~R. 
Hence we can add a vertical compressing disk as in Figure~\ref{fig:graph_lemma_leaf2}-(i). 
The move in Figure~\ref{fig:graph_lemma_leaf2}-(ii) 
is obtained by performing a move-(b) as many times as necessary. 
The move in Figure~\ref{fig:graph_lemma_leaf2}-(iii) 
is done by a move-(f) and a YV-move. 

\begin{figure}[tbp]
\includegraphics[width=1\hsize]{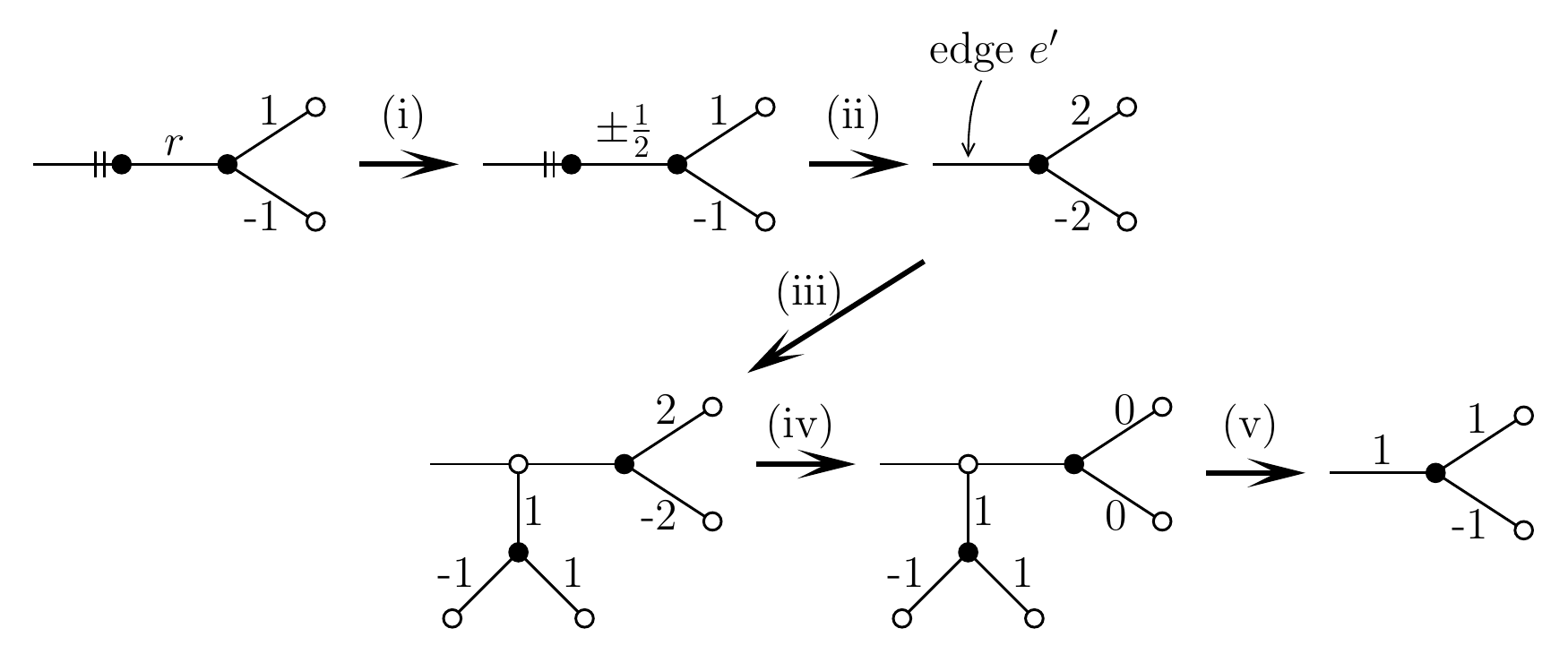}
\caption{The proof of a move-(h). }
\label{fig:graph_lemma_leaf3}
\end{figure}
A move-(h) is explained in Figure~\ref{fig:graph_lemma_leaf3}. 
The move in Figure~\ref{fig:graph_lemma_leaf3}-(i) is obtained by performing 
moves-(c) as many times as necessary. 
The move in Figure~\ref{fig:graph_lemma_leaf3}-(ii) can be done by using \cite[Figure 34-(3)]{Mar11}. 
Here we need the following claim:
\begin{claim}
\label{clm:comp_disk}
A lift of $e'$ has a vertical compressing disk, where $e'$ is an edge indicated in the figure. 
\end{claim}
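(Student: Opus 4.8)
The plan is to follow the scheme already used for move-(g): to exhibit the lift of $e'$ as the boundary torus of a nontrivial torus-knot complement sitting inside $\partial M_X$, and then to use Property~P and Property~R to show that the only slope on that torus bounding a disk is the fibre of $\pi$, which is precisely the condition for a \emph{vertical} compressing disk. Let $\pi\colon M_X\to X$ be the natural projection with $M_X=\Nbd(X;S^4)$, so that $\partial M_X\cong\#_h(S^1\times S^2)$ for some $h\in\Z_{\geq0}$, and let $\gamma'$ be a lift of the edge $e'$ indicated in Figure~\ref{fig:graph_lemma_leaf3}. Then $T'=(\pi|_{\partial M_X})^{-1}(\gamma')$ is an embedded torus carrying two distinguished slopes: the fibre $\mu$ of $\pi$, which projects to a point of $\gamma'$, and a section, which projects homeomorphically onto $\gamma'$. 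By the degree description of compressing disks recalled above, producing a vertical compressing disk for $\gamma'$ is the same as producing an embedded disk in $\partial M_X$ whose boundary is the slope $\mu$.

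First I would identify the piece of $\partial M_X$ cut off by $T'$. As in the proof of move-(g), the subpolyhedron of $X'$ bounded by $\gamma'$ in Figure~\ref{fig:graph_lemma_leaf3} has a combinatorial type whose preimage under $\pi|_{\partial M_X}$ is the complement $C$ of a $(2,2r)$-torus knot $K_0$ in $S^3$, the relevant model being the one recorded in \cite[Fig.~11]{IK17}; here the decoration is not $\pm\frac12$, so that $K_0$ is nontrivial. Consequently $C$ is irreducible and $T'=\partial C$ is incompressible in $C$, and under this identification the fibre $\mu$ of $\pi$ coincides with the meridian of $K_0$.

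Now, since $\pi_1(T')\cong\Z^2$ cannot inject into the free group $\pi_1(\partial M_X)\cong F_h$, the torus $T'$ is compressible in $\partial M_X$, so by the loop theorem it bounds an embedded disk along some essential slope $s$; by incompressibility of $C$ this disk lies on the side opposite to $C$. Compressing along it expresses $\partial M_X$ as a connected sum $K_0(s)\#N$, where $K_0(s)$ denotes the Dehn filling of $C$ along $s$, and by uniqueness of prime decomposition $K_0(s)$ must itself be a connected sum of copies of $S^1\times S^2$. If $s\neq\mu$ this would be a nontrivial surgery on the nontrivial knot $K_0$: a simply-connected summand is excluded by Property~P, while $H_1(K_0(s))$ is infinite only along the longitudinal slope, at which Property~R forces $K_0$ to be unknotted --- a contradiction in either case. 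Hence $s=\mu$, the compressing disk projects with degree $0$ onto $\gamma'$, and the claim follows. The step I expect to be the main obstacle is the identification in the second paragraph --- reading off from Figure~\ref{fig:graph_lemma_leaf3} that $\gamma'$ bounds precisely the stated torus-knot complement, and that the fibre $\mu$ is its meridian --- since once the complement and its meridian are correctly pinned down, the concluding Property~P/Property~R argument is routine.
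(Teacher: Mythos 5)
There is a genuine gap, and it is exactly at the step you flagged as the main obstacle: the identification of the piece of $\partial M_X$ cut off by the lift of $e'$. You import the model from move-(g), namely the exterior of a $(2,2r)$-torus knot from \cite[Fig.~11]{IK17}. But the configuration relevant to Claim~\ref{clm:comp_disk} (the graph obtained after move (ii) in the proof of move-(h)) is different: the subpolyhedron bounded by a lift of $e'$ contains a portion homeomorphic to $Y_{111}$, whose preimage under $\pi|_{\partial M_X}$ is $P\times S^1$, with two of the three boundary tori filled in by the preimages of the two odd legs. The resulting $3$-manifold is the Seifert fibered space $(P;(2,1),(2,-1))$, a Seifert manifold over the disk with \emph{two exceptional fibers of order $2$} (the orientable twisted $I$-bundle over the Klein bottle). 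This is not a torus-knot exterior: a torus-knot exterior is Seifert fibered over the disk with exceptional fibers of \emph{coprime} orders, and it has $H_1\cong\Z$, whereas $(P;(2,1),(2,-1))$ has $H_1\cong\Z\oplus\Z/2\Z$.

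This misidentification is fatal to your concluding step, because Property~P and Property~R are statements about knots in $S^3$ and have no meaning for this piece. Indeed, since $H_1$ of any Dehn filling of $(P;(2,1),(2,-1))$ is a quotient of $\Z\oplus\Z/2\Z$ by one element, no filling is simply connected, so the trichotomy you run (``$S^3$ excluded by Property~P, $S^1\times S^2$ excluded by Property~R, hence $s=\mu$'') cannot even be set up; moreover the correct conclusion here is structurally different from move-(g): the unique filling of this piece that is a connected sum of copies of $S^1\times S^2$ is $S^1\times S^2$ itself (other fillings are lens spaces, prism-type Seifert manifolds, or $\RP^3\#\RP^3$), realized precisely along the slope that $\pi$ collapses to a point. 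That Dehn-filling classification of the Seifert manifold $(P;(2,1),(2,-1))$ is what the paper's proof of Claim~\ref{clm:comp_disk} actually carries out, in place of any Property~P/R input. Your outer argument --- every torus in $\#_h(S^1\times S^2)$ compresses since a free group contains no $\Z^2$; the compression must occur on the far side because the piece has incompressible boundary; capping off shows the filling along the compression slope is a connected summand of $\#_h(S^1\times S^2)$, hence itself such a connected sum --- agrees with the reasoning implicit in the paper and is fine; it is the input identification, and consequently the tool used to rule out the bad slopes, that is wrong.
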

\begin{proof}[Proof of Claim~\ref{clm:comp_disk}]
Let $\pi:M_X\to X$ be a natural projection. 
The preimage of a subpolyhedron homeomorphic to $Y_{111}$ by $\pi|_{\partial M_X}$ is homeomorphic to $P\times S^1$. 
Hence the subgraph after the move in Figure~\mbox{\ref{fig:graph_lemma_leaf2}-(ii)} corresponds to a $3$-manifold 
homeomorphic to the Seifert fibered space $(P;(2,1),(2,-1))$, which has one torus boundary. 
The Dehn filling of this manifold along the $(p,q)$-slope is $(P;(2,1),(2,-1),(p,q))$. 
Note that the slope with $(p,q)=(p,1)$ is sent to a lift of the edge $e'$ by $\pi$ injectively, 
and the slope with $(p,q)=(1,0)$ is sent to one point by $\pi$. 
The $3$-manifold $(P;(2,1),(2,-1),(p,q))$ is not homeomorphic to $\#_h(S^1\times S^2)$ for any $h\in\Z_{\geq0}$ unless $q=0$. 
If $q=0$, $(P;(2,1),(2,-1),(p,0))$ is homeomorphic to $S^1\times S^2$. 
It follows that a lift of $e'$ has a vertical compressing disk. 
\end{proof}
We then continue the proof for the move-(h). 
The move of Figure~\ref{fig:graph_lemma_leaf3}-(iii) is the addition of a compressing disk of a lift of $e'$. 
The move in Figure~\ref{fig:graph_lemma_leaf3}-(iv) is done by applying move-(a) twice. 
The move in Figure~\ref{fig:graph_lemma_leaf3}-(v) is 
is done by a connected-sum reduction and a YV-move. 
\end{proof}

\begin{lemma}
\label{lem:graph_lemma8}
Let $V$ be a subpolyhedron of $X$ with a single cut end $\gamma$ and $c(V)=0$. 
Let $G$ be a graph encoding $V$ and $v_\gamma$ be the vertex of type~$\vt{B}$ corresponding to $\gamma$. 
Suppose that $G$ has a vertex $v$ of type~$\vt{D}$ that is adjacent to a vertex $v'$ of type~$\vt{Y{}\!_{111}}$. 
Let $R$ be the disk region of $X$ corresponding to $v$. 
If $G$ has another vertex of type~$\vt{Y{}\!_{111}}$ between $v_\gamma$ and $v'$, that is, 
if $G$ is as shown in Figure~\ref{fig:graph_lemma8}, 
then $\gl(R)=0$. 
\end{lemma}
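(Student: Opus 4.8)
The plan is to exhibit an embedded $2$-sphere $F$ inside $X$ that contains the disk region $R$ as one of its regions, and then to recover $\gl(R)$ from the self-intersection number of $F$ in $S^4$. First I would record the local picture at $v'$: three sheets meet there along a triple circle $T'$, one of these sheets being the disk $R$, whose boundary is a lift of the edge joining $v$ and $v'$. A closed surface lying in $X$ must traverse exactly two of the three sheets along any triple circle it crosses, so I would follow the sheet that turns away from $R$ at $v'$ into the region separating $v'$ from the second $\vt{Y\!{}_{111}}$ vertex $v''$ on the geodesic from $v_\gamma$ to $v'$. The presence of $v''$ is precisely what allows this sheet-path to turn back and close up into a sphere $F$ at $v''$, rather than escaping toward the cut end $\gamma$.

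The engine of the argument is the identity recalled in the subsection on intersection forms: for a closed surface $F$ embedded in a shadowed polyhedron $X$, its self-intersection number in the reconstructed $4$-manifold equals the sum of the gleams of the regions contained in $F$. Since $X$ is a shadow of $S^4$ and $H_2(S^4)=0$, the class $[F]$ is null-homologous, so this self-intersection number is $0$; hence $\gl(R)$ together with the gleams of the remaining regions of $F$ must sum to $0$. To conclude that $\gl(R)=0$, I would show that each auxiliary region of $F$ carries gleam $0$. The natural route is first to produce vertical compressing disks for the relevant edge-lifts from the Seifert-fibered description of the corresponding boundary tori, exactly as in Claim~\ref{clm:comp_disk} and in the proof of move-(g), where Property~P and Property~R exclude all but one Dehn filling, and then to apply the gleam-shift move of Lemma~\ref{lem:gleam_shift} to normalize the gleams of these auxiliary regions to $0$ without changing the $2$-knot $K$.

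The step I expect to be the main obstacle is verifying that the sheet-path starting at $R$ really closes into an embedded, rather than merely immersed, $2$-sphere built from the two $\vt{Y\!{}_{111}}$ vertices, and that the compressing-disk additions and gleam shifts used to clear the auxiliary gleams neither alter $\gl(R)$ itself nor change the homeomorphism type of the shadow of $K$. Tracking the parities so that every gleam in play is an honest integer, and confirming that the framings introduced by the vertical compressing disks contribute $0$ to the self-intersection count, is where the delicate bookkeeping lies. Granting this, the triviality of the intersection form of $S^4$ yields $\gl(R)=0$ at once, which is exactly the hypothesis needed to perform a connected-sum reduction along $R$.
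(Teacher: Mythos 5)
Your engine---the vanishing of the intersection pairing for a shadow of $S^4$---is the right one, and it is indeed what the paper uses, but both of the steps you flag as ``delicate'' are genuine failures rather than bookkeeping. First, an embedded $2$-sphere through $R$ need not exist. Lemma~\ref{lem:pi_1_of_subpolyhedron1} only gives $\pi_1\cong\Z/2^{k}\Z$ for the branches hanging off the two vertices of type~$\vt{Y{}\!_{111}}$, so the circle along which your sheet-path would have to close up bounds only a $2$-chain with multiplicity $2^{k}$, not a disk; similarly, by Lemma~\ref{lem:pi_1_of_subpolyhedron2}, the piece between the two $\vt{Y{}\!_{111}}$ vertices satisfies only a relation of the form $(\gamma_1^{2^m}\gamma_2^{2^l})^{2^k}$. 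Nothing forces these subpolyhedra to be disks or annuli, so the object you describe is in general a singular $2$-cycle, not an embedded sphere. Second, and more fatally, a single closed surface (or cycle) through $R$ only tells you that the \emph{sum} of the gleams of its regions vanishes. Your plan to isolate $\gl(R)$ by zeroing the auxiliary gleams via compressing-disk additions and the move of Lemma~\ref{lem:gleam_shift} cannot work as stated: those moves change the shadow $X$ itself, and the gleam-shift move transfers gleam between $R$ and its neighbors, so after ``normalizing'' you would be computing the gleam of a different region in a different polyhedron. Moreover, there is simply no reason the auxiliary regions have gleam $0$ in the given $X$; the lemma makes no claim about them, and they can be arbitrary.

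The paper's proof needs neither an embedded representative nor any knowledge of the other gleams. It builds \emph{two} $2$-cycles through $R$: $C_3=c_3-2^{k_3}[R]$, capped in the branch $V_3$ attached at $v'$, and $C_1$, routed through the piece $U_2$ between the two singular vertices and capped in the branch $V_1$ attached at $v''$ (this is exactly where the hypothesis of a \emph{second} vertex of type~$\vt{Y{}\!_{111}}$ enters: it lets the cycle turn off the geodesic before reaching the cut end $\gamma$), with the capping multiplicities supplied by Lemmas~\ref{lem:pi_1_of_subpolyhedron1} and \ref{lem:pi_1_of_subpolyhedron2}. Away from $R$ these two cycles are supported in disjoint subpolyhedra, and $Q_X$ is diagonal with respect to regions, so $Q_X(C_1,C_3)=p_2\,2^{k_3}\gl(R)$ with $p_2\neq0$. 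Since $X$ is a shadow of $S^4$, this pairing must vanish, hence $\gl(R)=0$. If you want to salvage your write-up, replace the embedded sphere by these two cycles; the self-intersection identity you quote is then never needed---only the vanishing of the pairing between two distinct classes whose sole common region is $R$.
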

\begin{figure}[tbp]
\includegraphics[width=80mm]{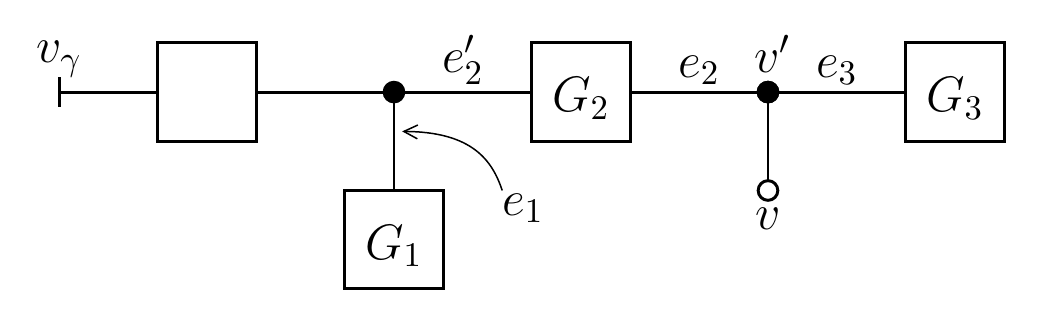}
\caption{An encoding graph restricting the gleam of a disk region. }
\label{fig:graph_lemma8}
\end{figure}
\begin{proof}
We give an orientation to $R$ arbitrarily, 
and we define edges $e_1$, $e_2$, $e'_2$, $e_3$ 
and subgraphs $G_1$, $G_2$, $G_3$ of $G$ as indicated in Figure~\ref{fig:graph_lemma8}. 
Let $V_1$, $U_2$ and $V_3$ be subpolyhedra of $V$ encoded by $G_1$, $G_2$ and $G_3$, respectively. 
Note that each $V_1$ and $V_3$ has one cut end and $U_2$ has two. 
Set $\gamma_i=\partial V_i$ for $i\in\{1,3\}$, 
and note that it is a lift of $e_i$. 
Let $\gamma_2$ and $\gamma'_2$ denote the cut ends of $U_2$, 
and also note that $\gamma_2$ and $\gamma'_2$ are lifts of $e_2$ and $e'_2$, respectively. 
By Lemma~\ref{lem:pi_1_of_subpolyhedron1}, 
we have $\pi_1(V_i)\cong \langle \gamma_i \mid \gamma_i^{2^{k_i}}\rangle$ 
for some $k_i\in\Z_{\geq0}$, 
and hence there is a $2$-chain $c_i$ in $V_i\subset V$ such that $\partial c_i=2^{k_i}[\gamma_i]$ for $i\in\{1,3\}$. 
We then define a $2$-chain $C_1$ according to the order of $[\gamma_2]$; 
\begin{itemize}
\item
in the case $p_2[\gamma_2]=0$ in $H_1(U_2)$ for some $p_2\in\Z_{>0}$, 
there exists a $2$-chain $c_2$ in $U_2$ with $\partial c_2=p_2[\gamma_2]$, 
and then set $C_1=c_2-p_2[R]$; 
\item
in the case where $[\gamma_2]$ is not a torsion element in $H_1(U_2)$, 
we have $p'_2[\gamma'_2]=p_2[\gamma_2]$ in $H_1(U_2)$ 
for some $p_2,p'_2\in\Z\setminus\{0\}$ by Lemma~\ref{lem:pi_1_of_subpolyhedron2}, 
and then set $C_1=p'_2c_1-p_2[R]$. 
\end{itemize}
Define another $2$-chain as $C_3=c_3-2^{k_3}[R]$. 
These $2$-chains $C_1$ and $C_3$ are homology cycles in $H_2(V)$ 
since $[\gamma_1]=[\gamma'_2]$ and $[\gamma_2]=[\gamma_3]=\partial[R]$. 
Then we have $Q(C_1,C_3)=p_22^{k_3}\gl(R)$, which must be $0$. 
Hence $\gl(R)=0$. 
\end{proof}

\begin{lemma}
\label{lem:leaf}
Suppose that $X'$ contains a simply-connected subpolyhedron $V$ 
with one cut end such that a lift of the cut end has a vertical compressing disk.
Then $K$ admits a shadow obtained from $X$ by replacing $V$ with a $2$-disk. 
\end{lemma}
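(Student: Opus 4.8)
The plan is to realize the replacement of $V$ by a disk as a composition of two operations already available: first adding the vertical compressing disk guaranteed by the hypothesis, and then performing a connected-sum reduction along the newly created disk region.

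First I would add the vertical compressing disk. Let $\gamma$ denote the single cut end of $V$; by hypothesis a lift of $\gamma$ admits a vertical compressing disk $D'_\gamma$. Since $\gamma$ lies in the interior of a region of $X$, the compressing-disk construction applies, and the cited proposition of \cite{KMN18} gives that $X_1:=X\cup D'_\gamma$ is again a shadow of $S^4$ and of $K$. Because the disk is vertical, its boundary is null-homotopic in $\Nbd(\gamma;X)$, so the local effect is the one recorded in Figure~\ref{fig:comp_disk_conn_sum}-(ii): the curve $\gamma$ becomes a triple circle $S$ with $\Nbd(S;X_1)\cong Y_{111}$, and $D'_\gamma$ becomes a disk region with $\gl(D'_\gamma)=0$. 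In particular no true vertex is created, since the neighborhood $Y_{111}$ and the new disk region contain none; hence $c(X_1)=c(X)$, and in particular $c(X_1)\le 1$ under the standing assumption, as is needed below.

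Next I would run the connected-sum reduction of Proposition~\ref{prop:connected_sum} along the disk region $D'_\gamma$. This is legitimate because $D'_\gamma$ is a disk region with $\gl(D'_\gamma)=0$ whose boundary has a $Y_{111}$ neighborhood, and $c(X_1)\le 1$. The triple circle $S$ bounds three sheets: the disk $D'_\gamma$ itself, the portion of the original region lying on the $V$-side, and the portion lying on the side containing $K$. Removing $\Int\Nbd(D'_\gamma;X_1)$ and capping the two resulting boundary circles with $2$-disks therefore separates $X_1$ into $V\cup_\gamma D^2$ and $X'':=(X\setminus\Int V)\cup_\gamma D^2$, the latter containing $K$. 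Since $X''$ is precisely $X$ with $V$ replaced by a $2$-disk, Proposition~\ref{prop:connected_sum} yields that $X''$ is a shadow of $K$, which is the assertion.

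The step requiring the most care is the identification in the previous paragraph that the reduction splits $X_1$ into exactly these two pieces, with $K$ on the $X''$-side. This rests on $\gamma$ being the \emph{unique} cut end of $V$, so that $V$ is glued to the rest of $X$ only along $\gamma$, together with the simple-connectivity of $X_1$ (it is a shadow of $S^4$), which is what guarantees that the reduction produces two polyhedra rather than one. The hypothesis that $V$ is simply-connected ensures that the split-off piece $V\cup_\gamma D^2$ is itself simply-connected, consistent with its reconstruction being $S^4$ as in the proof of Proposition~\ref{prop:connected_sum}. The only genuinely computational point, the gleam and neighborhood claims of the first step, I would verify directly from the local model of a vertical compressing disk in Figure~\ref{fig:comp_disk_conn_sum}-(ii).
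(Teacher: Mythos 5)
Your argument fails at its first and crucial step: you assert that adding the \emph{vertical} compressing disk turns $\gamma$ itself into a triple circle whose three sheets are $D'_\gamma$, the $V$-side of the region, and the $K$-side. That is the effect of adding a \emph{horizontal} compressing disk: by the paper's definition, horizontal means $\partial D'_\gamma$ is homotopic to $\gamma$ in $\Nbd(\gamma;X)$, so the disk is attached along a parallel copy of $\gamma$ and its addition really does separate the two sides. A vertical compressing disk instead has $\partial D'_\gamma$ \emph{null-homotopic} in the annulus $\Nbd(\gamma;X)$: after the perturbation, $\partial D'_\gamma$ is a small circle bounding a disk $d$ inside the region, and the new triple circle is this small circle, not $\gamma$. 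The resulting ``$Y_{111}$ plus gleam-zero disk'' configuration is a bubble hanging off the interior of the region; it does not separate $V$ from the side containing $K$. (This is visible in the paper's own Figure~\ref{fig:graph_lemma_leaf5}: the vertical-disk addition at the cut end inserts a vertex of type~$\vt{P}$ into the cut-end edge with the $\vt{Y{}\!_{111}}$-and-disks structure hanging off it, rather than inserting a $\vt{Y{}\!_{111}}$-vertex into the edge itself as your picture requires.) Consequently, the connected-sum reduction of Proposition~\ref{prop:connected_sum} applied along $D'_\gamma$ (or along $d$) splits off only the $2$-sphere $d\cup D'_\gamma$, capped off, and returns a polyhedron homeomorphic to $X$ itself: $V$ has not been touched, and your two-step argument accomplishes nothing.

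The gap cannot be repaired by upgrading vertical to horizontal, because a vertical compressing disk for $\gamma$ does not imply a horizontal one: for instance, if $\Nbd(\gamma;X)$ is an annulus whose regular neighborhood in $S^4$ is $S^1\times B^3$, the fiber of the torus $(\pi|_{\partial M_X})^{-1}(\gamma)$ bounds a disk in $S^1\times S^2$ while a section generates $H_1(S^1\times S^2)$ and bounds nothing. Indeed, in the paper's application of this lemma (in the proof of Theorem~\ref{thm:sc=1_onlyifpart}), only vertical disks are supplied by Lemma~\ref{lem:compressing_disks_X_3-4}, which is exactly why the lemma is stated this way and why its proof is not a one-shot reduction. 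The paper instead runs an induction on a decorated tree encoding $V$: it uses Lemma~\ref{lem:pi_1_of_subpolyhedron1-2} to locate a leaf all of whose $\vt{Y{}\!_{12}}$-vertices are two-sided, then whittles $V$ down with the graph moves -(f), -(g), -(h) of Lemma~\ref{lem:graph_moves}, the gleam constraint of Lemma~\ref{lem:graph_lemma8}, and repeated \emph{local} connected-sum reductions applied to disk regions produced along the way, the vertical disk at the cut end serving only to seed these moves (via the propagation principle, move-(e)). Had the hypothesis been a horizontal compressing disk, your short proof (Figure~\ref{fig:comp_disk_conn_sum}-(i) followed by -(iii)) would essentially be correct; under the actual hypothesis it is not.
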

\begin{figure}[tbp]
\includegraphics[width=60mm]{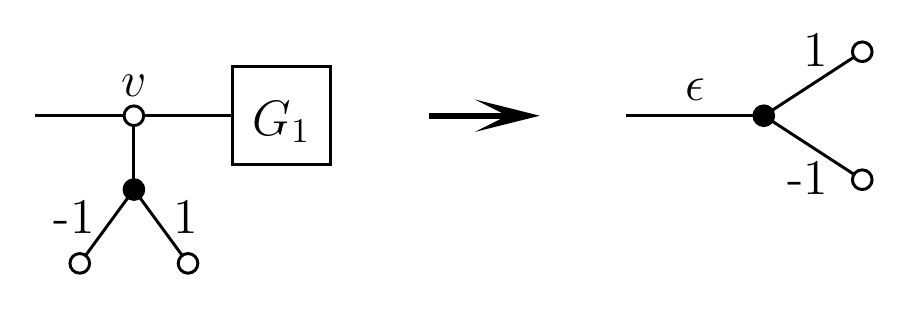}
\caption{The modification of a decorated graph as in Lemma~\ref{lem:leaf}. }
\label{fig:graph_lemma_leaf5}
\end{figure}
\begin{proof}
We give the proof by using decorated graphs. 
The assumption in the statement implies that 
a decorated graph of $X'$ has 
a subgraph $G$ as shown in the left of Figure~\ref{fig:graph_lemma_leaf5}, 
where the subgraph $G_1$ in the figure corresponds to $V$. 
It is enough to modify the graph as in the figure, 
where $\epsilon\in\{-\frac12,0,\frac12,1\}$. 

Let $v$ be the vertex of type~$\vt{P}$ shown in the left of Figure~\ref{fig:graph_lemma_leaf5}. 
By Lemma~\ref{lem:pi_1_of_subpolyhedron1-2}, 
there exists a leaf in $G_1$ such that all the vertices of type~$\vt{Y{}\!_{12}}$ contained 
in the geodesic from $v$ to the leaf are two-sided to $v$. 
Taking the union of all such geodesics $\ell_1,\ldots,\ell_m$, 
and it will be denoted by $T_\ell=\bigcup_{i=1}^m\ell_i$, 
which is a subtree of $G$ whose leaves are of type $\vt{D}$ execpt for $v$. 
Note that a vertex of type~$\vt{P}$ contained in $T_\ell\setminus v$ is 
a trivalent vertex even in $T_\ell$. 
We divide the proof into the following three cases: 
\begin{enumerate}
 \item
$T_\ell\setminus v$ does not contain a vertex of type~$\vt{Y{}\!_{111}}$ nor $\vt{P}$; 
 \item
$T_\ell\setminus v$ contains vertices of type~$\vt{Y{}\!_{111}}$ or $\vt{P}$, 
and farthest one from $v$ among them is of type~$\vt{Y{}\!_{111}}$; 
 \item
$T_\ell\setminus v$ contains vertices of type~$\vt{Y{}\!_{111}}$ or $\vt{P}$, 
and farthest one from $v$ among them is of type~$\vt{P}$. 
\end{enumerate}

\noindent (1) 
In this case, $T_\ell$ is a line, and 
all the vertices between $v$ and the leaf are of type~$\vt{Y{}\!_{12}}$. 
We can modify $G$ as in Figure~\ref{fig:graph_lemma_leaf5} by using moves-(f), -(g) and -(h). 

\begin{figure}[tbp]
\includegraphics[width=1\hsize]{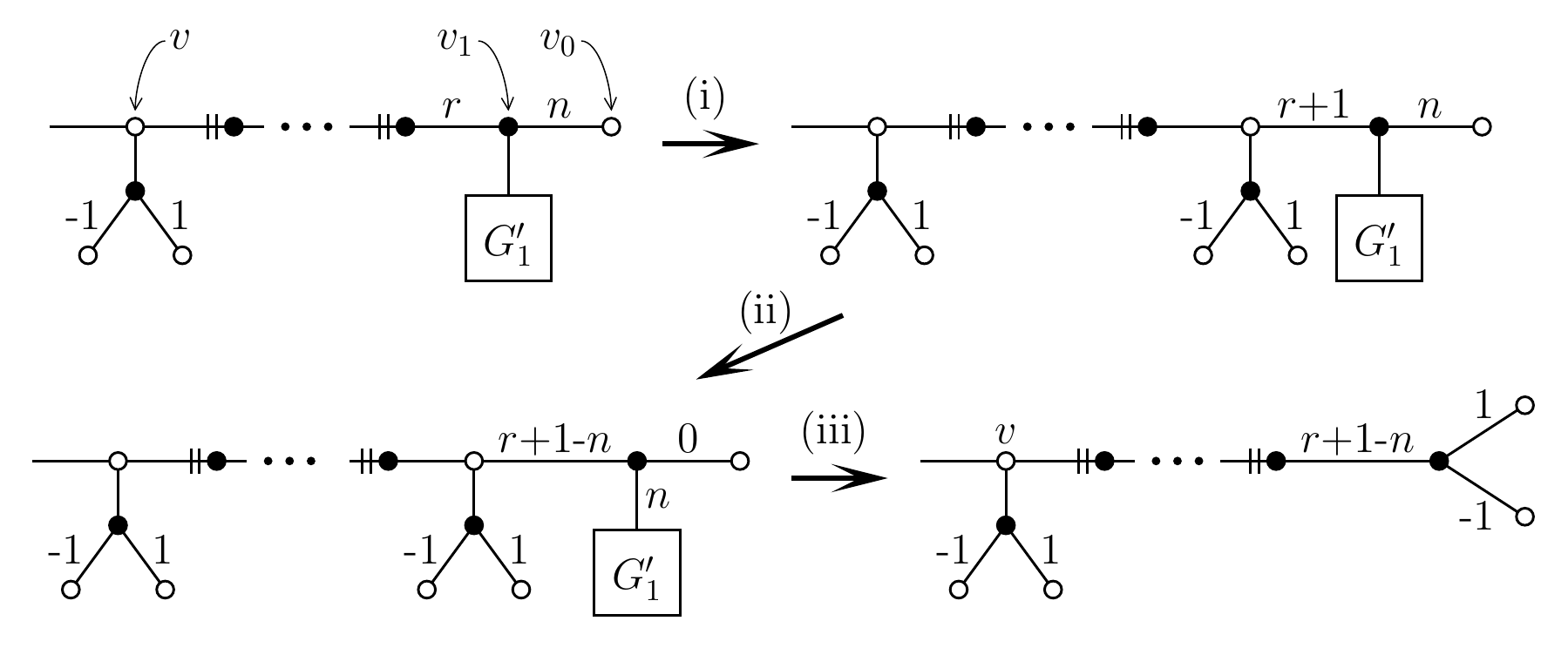}
\caption{The case (2) in the proof of Lemma~\ref{lem:leaf}. }
\label{fig:graph_lemma_leaf8}
\end{figure}
\noindent (2) 
Let $v_1$ be the vertex of type~$\vt{Y{}\!_{111}}$ farthest from $v$. 
Then there is a geodesic $\ell_i$ containing $v_1$, 
and let $v_0$ be the other endpoint than $v$. 
The vertices between $v_0$ and $v_1$ are of type~$\vt{Y{}\!_{12}}$. 
If the edge incident to $v_0$ is decorated by $r\ne\frac12$, 
a move-(g) can be applied, 
which is contrary to Lemma~\ref{lem:graph_lemma8}. 
Hence we can eliminate all the vertices of type~$\vt{Y{}\!_{12}}$ 
between $v_0$ and $v_1$ using only moves-(f), 
and then we can assume that $v_0$ and $v_1$ are connected by one edge. 
Let $e$ denote this edge. 

If $\ell_i$ has a vertex of type~$\vt{Y{}\!_{111}}$ other than $v_1$, 
the edge $e$ is decorated with $0$ by Lemma~\ref{lem:graph_lemma8}. 
Then the vertex $v_1$ can be eliminated by a connected-sum reduction. 

If $\ell_i$ has no vertices of type~$\vt{Y{}\!_{111}}$ other than $v_1$, 
then $G$ is as shown in the upper left of Figure~\ref{fig:graph_lemma_leaf8}. 
The modifications in Figure~\ref{fig:graph_lemma_leaf8}-(i) and -(ii) are 
done by moves-(e) and -(a), respectively. 
The move in Figure~\ref{fig:graph_lemma_leaf8}-(iii) is a connected-sum reduction and a YV-move. 
The lower right graph in Figure~\ref{fig:graph_lemma_leaf8} 
can be modified as we required 
by moves-(h), -(d) and -(c). 

\begin{figure}[tbp]
\includegraphics[width=1\hsize]{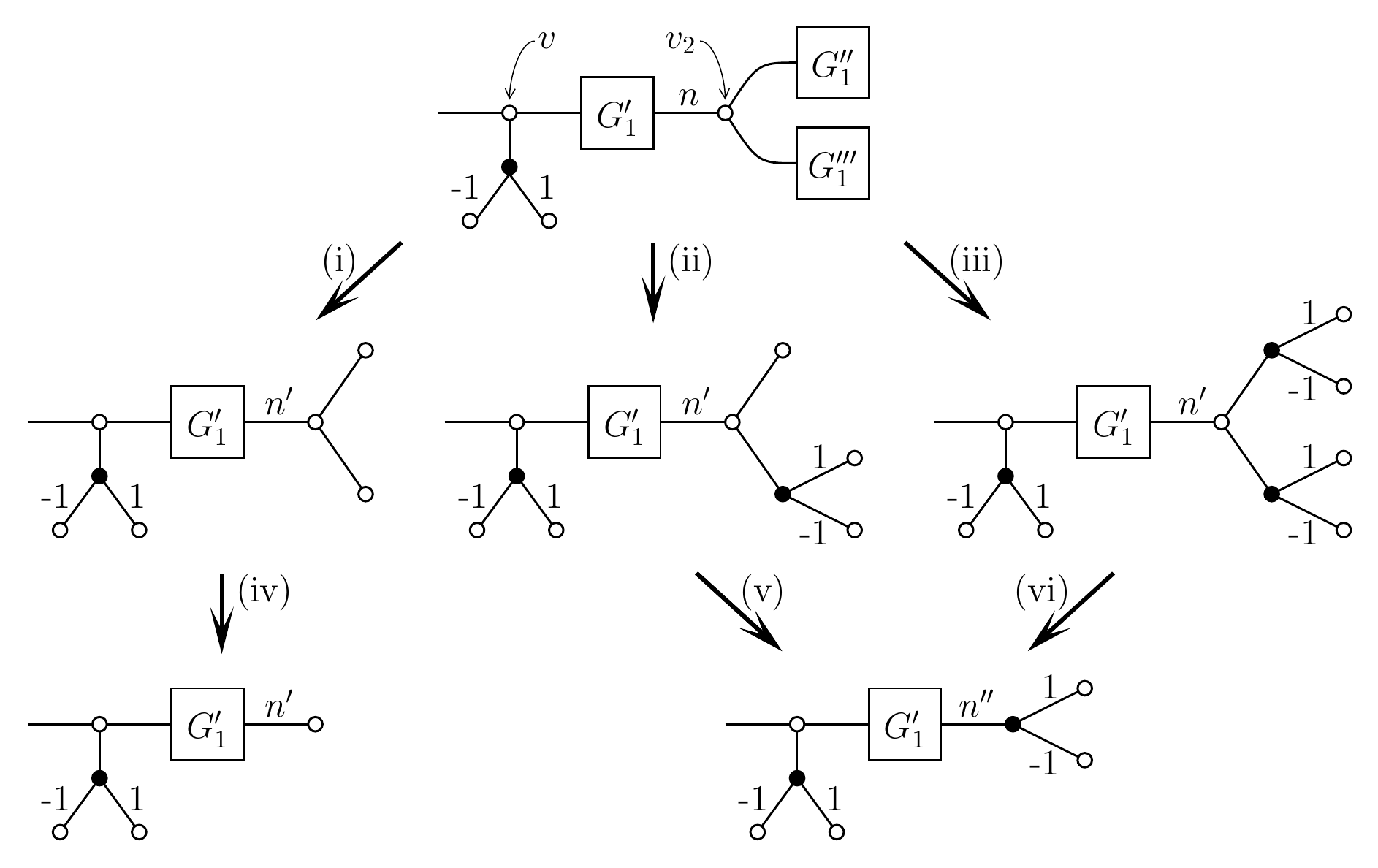}
\caption{The case (3) in the proof of Lemma~\ref{lem:leaf}. }
\label{fig:graph_lemma_leaf10}
\end{figure}
\noindent (3)
Let $v_2$ be the vertex of type~$\vt{P}$ farthest from $v$. 
Then $G$ is as the uppermost graph in Figure~\ref{fig:graph_lemma_leaf10}. 
Let $G'_1$, $G''_1$ and $G'''_1$ be subgraphs of $G_1$ as defined in the figure. 
The subgraphs $G_1''$ and $G_1'''$ do not contain vertices of type~$\vt{Y{}\!_{111}}$ nor $\vt{P}$. 
Then we can apply moves-(f), -(g) or -(h) as well as in (1) to these subgraphs, 
and $G$ is modified as shown in one of Figure~\ref{fig:graph_lemma_leaf10}-(i), -(ii) or -(iii). 
Moreover, 
the moves of Figure~\ref{fig:graph_lemma_leaf10}-(iv) and -(v) are obtained by YV-moves, 
and the move of Figure~\ref{fig:graph_lemma_leaf10}-(vi) 
is done by a move-(d). 
In either case, the vertex $v_2$ is eliminated, 
and we obtain the modification in Figure~\ref{fig:graph_lemma_leaf5} inductively. 
\end{proof}

\subsection{Existence of compressing disks}
For $i\in\{1,\ldots,11\}$, the polyhedron $X_i$ can be 
embedded in $\natural_2(S^1\times B^3)$ as a shadow, and 
the complement of $\partial X_i$ in $\partial (\natural_2(S^1\times B^3))$ ($=\#_2(S^1\times S^2)$) is a $3$-manifold with tori boundary. 
Note that this $3$-manifold $\#_2(S^1\times S^2)\setminus \partial X_i$ admits a complete hyperbolic structure with finite volume \cite{CT08}. 
In \cite{KMN18}, 
Dehn fillings on this $3$-manifold giving $\#_h(S^1 \times S^2)$ for some $h\in\Z_{\geq0}$ are 
strudied, and it leads to the following. 
\begin{lemma}
[\cite{KMN18}]
\label{lem:compressing_disks_X_3-4}
Suppose that $X$ contains a subpolyhedron $Y$ homeomorphic to $X_3$ or $X_4$. 
Then at least one of the following holds:
\begin{enumerate}
 \item
both of the components of $\partial Y$ have vertical compressing disks; or
 \item
the component of $\partial Y$ with length $1$ has a horizontal compressing disk. 
\end{enumerate}
\end{lemma}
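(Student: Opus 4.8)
The plan is to reduce the statement to a Dehn-filling computation on the cusped hyperbolic $3$-manifold that is the exterior of $\partial Y$, exactly in the spirit of the discussion preceding the lemma. First I would fix a standard embedding of $Y\cong X_3$ (respectively $X_4$) into $\natural_2(S^1\times B^3)$ as a shadow, so that its boundary $\partial Y=\gamma_1\sqcup\gamma_2$ becomes a two-component link in $\#_2(S^1\times S^2)=\partial(\natural_2(S^1\times B^3))$, where $\gamma_1$ has length $1$ and $\gamma_2$ has length $5$. By \cite{CT08} the exterior $N=\#_2(S^1\times S^2)\setminus\Int\Nbd(\partial Y)$ is a finite-volume hyperbolic $3$-manifold with two torus cusps $T_1,T_2$, the cusp $T_j$ being the torus $(\pi|)^{-1}(\gamma_j)$ appearing in the definition of a compressing disk, where $\pi$ is the projection of $\natural_2(S^1\times B^3)$ onto $Y$.

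The second step is to translate the two conclusions into slope data on $T_1$ and $T_2$. On each cusp $T_j$ the restriction $\pi|\colon T_j\to\gamma_j$ is a circle bundle, so there are two distinguished slopes: the fiber slope $\phi_j=(\pi|)^{-1}(\mathrm{pt})$, whose bounding disk is a \emph{vertical} compressing disk (projection of degree $0$), and a section slope $\lambda_j$ projecting homeomorphically to $\gamma_j$, whose bounding disk is a \emph{horizontal} compressing disk (projection of degree $\pm1$). A slope $\sigma$ on $T_j$ bounds a compressing disk in $\#_2(S^1\times S^2)$ exactly when the Dehn filling of $N$ obtained by attaching a solid torus to $T_j$ along $\sigma$, with the remaining cusp filled along the slope recovering the ambient boundary, is a connected sum $\#_h(S^1\times S^2)$. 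In these terms, conclusion (1) asserts that filling $\phi_1$ and filling $\phi_2$ are both of this reducible type, while conclusion (2) asserts that filling $\lambda_1$ is.

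The heart of the argument, and the step I expect to be the main obstacle, is to enumerate the reducible fillings of $N$ and to verify that they realize exactly the claimed dichotomy. Here I would use the identification of $N$ with a concrete fundamental-shadow-link exterior together with the Dehn-filling analysis carried out in \cite{KMN18}: since $N$ is hyperbolic, all but finitely many slopes on each cusp give hyperbolic, hence irreducible, fillings, so the reducible fillings are confined to a short explicit list of exceptional slopes. Comparing this list with the positions of $\phi_1,\phi_2,\lambda_1,\lambda_2$ then forces that either both fiber slopes $\phi_1,\phi_2$ are reducible (yielding two vertical disks) or else the section slope $\lambda_1$ on the length-$1$ cusp is reducible (yielding the horizontal disk). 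The main difficulty is controlling the filling on the length-$5$ cusp $T_2$, where the combinatorics of the five triple lines along which $\gamma_2$ runs makes the slope identification more delicate than on the length-$1$ cusp; once the exceptional-slope computation from \cite{KMN18} is in hand, the two cases fall out directly.
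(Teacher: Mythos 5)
Your overall skeleton---hyperbolicity of the block exterior from \cite{CT08} plus the exceptional Dehn-filling enumeration from \cite{KMN18}---is exactly the route the paper points to (the lemma is quoted from \cite{KMN18}, with precisely this explanation preceding it). But the step you supply yourself, the translation in your second paragraph, is where the real content lies, and as stated it is wrong. In the setting of the lemma the tori $T_j=(\pi|_{\partial M_X})^{-1}(\gamma_j)$ sit inside $\partial M_X\cong\#_h(S^1\times S^2)$, where $X$ is the given shadow of $S^4$; the closure of the complement of $N$ there is the part of $\partial M_X$ lying over the rest of the shadow, and it is in general \emph{not} a pair of solid tori. So ``$\sigma$ bounds a compressing disk'' is not equivalent to ``the Dehn filling of $N$ along $\sigma$, with the other cusp filled along a distinguished slope, is a connected sum of copies of $S^1\times S^2$''; your equivalence is phrased for the standard embedding $Y\subset\natural_2(S^1\times B^3)$, which is not the situation being studied. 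The implication actually needed is the following bridge, absent from your proposal: compressing disks for $T_1$ and $T_2$ exist because $\pi_1(\#_h(S^1\times S^2))$ is free (Dehn's lemma, as recalled in the paper); an innermost-circle argument makes them disjoint from each other and from both tori, and incompressibility of the cusps of the hyperbolic manifold $N$ then forces both disks to lie in the complement of $N$; attaching to $N$ the two $2$-handles $\Nbd(D_1)$, $\Nbd(D_2)$ exhibits $N(\sigma_1,\sigma_2)$ minus two $3$-balls embedded in $\partial M_X$ with sphere boundary, so uniqueness of the prime decomposition yields $N(\sigma_1,\sigma_2)\cong\#_{h'}(S^1\times S^2)$. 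Only after this step does the filling enumeration of \cite{KMN18} say anything about the disks.

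Relatedly, your concluding sentence runs the implication backwards: you say the reducible slopes on the list ``yield'' vertical or horizontal disks. Whether a given filling of $N$ is reducible is a property of $N$ alone; it produces no disk in the actual complement of $N$ in $\partial M_X$, and the converse direction (a reducible filling implies the slope bounds a disk in the ambient manifold) is a Generalized Property R--type assertion that is unknown in general and, fortunately, not needed. The correct logic is: the disks that do exist have some slopes $(\sigma_1,\sigma_2)$; by the bridge above $N(\sigma_1,\sigma_2)\cong\#_{h'}(S^1\times S^2)$; the enumeration says any such pair either consists of the two fiber slopes or has $\sigma_1$ of degree $\pm1$ on the length-$1$ cusp; hence the existing disks are both vertical, or the one over the length-$1$ component is horizontal. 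A smaller imprecision in the same step: ``horizontal'' means degree $\pm1$, which singles out infinitely many slopes $\lambda_1+k\phi_1$ ($k\in\Z$), not one distinguished section $\lambda_1$, so the comparison with the filling list must be made in terms of degree rather than of a single slope.
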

\subsection{Banded unlink diagram of $2$-knot with complexity one}
%
Recall that, for $n\in\Z$, $K_n$ is a $2$-knot defined by the banded unlink diagram shown in Figure~\ref{fig:Kn}. 
We first prove the essential part of Theorem~\ref{thm:sc=1}: 
\begin{theorem}
\label{thm:sc=1_onlyifpart}
If a $2$-knot $K$ with $G(K)\not\cong\Z$ has shadow-complexity $1$, 
then $K$ is diffeomorphic to $K_n$ for some non-zero integer $n$. 
\end{theorem}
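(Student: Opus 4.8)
The plan is to combine the knot-group computations of Section~\ref{sec:Existence of 2-knots with complexity one} with the graph-simplification machinery of Section~\ref{sec:Classification of 2-knots with complexity one} to pin the shadow down to a standard form, and then to read a banded unlink diagram off that standard shadow.

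First I would place myself in the standing situation of Section~\ref{sec:Existence of 2-knots with complexity one}: by Lemma~\ref{lem:can_make_closed} I may take $X$ to be a closed shadow of $K$ with $c(X)=1$, and I may assume $K\cap S(X)$ is connected and that $X'=X\setminus\Int\Nbd(K;X)$ is connected. If the unique true vertex lay on $K$, then $G(K)$ would be infinite cyclic by Proposition~\ref{prop:one_true_vertex_on_K}, contradicting $G(K)\not\cong\Z$; hence the true vertex lies in $X'$. Then $S(X)\cap K$ is a circle splitting $K$ into two disk regions of gleams $\pm g$, and if $g=0$ the knot $K$ is unknotted with $G(K)\cong\Z$, so $g>0$. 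The encoding graph $G$ of $X'$ carries a single vertex $v_0$ of one of the six types $\vt{X_3},\vt{X_4},\vt{X_8},\vt{X_9},\vt{X_{10}},\vt{X_{11}}$, and I would run through Lemmas~\ref{lem:X34}, \ref{lem:X8910} and \ref{lem:X11}. Every case there either forces $G(K)\cong\Z$ or fails to encode a shadow of a $2$-knot, except parts~(1) and~(3) of Lemma~\ref{lem:X34}; so $G(K)\not\cong\Z$ leaves only $v_0$ of type $\vt{X_3}$ or $\vt{X_4}$, with $G(K)\cong\langle x,\mu\mid x^2\mu^n x^{-1}\mu^{\mp n}\rangle$ and $n=2^m g\ne0$.

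Next I would simplify the shadow within these two surviving cases. By Lemma~\ref{lem:X34} the subpolyhedron $V$ is simply connected, so Lemma~\ref{lem:leaf} lets me replace $V$ by a $2$-disk; Lemma~\ref{lem:compressing_disks_X_3-4} supplies the compressing disks around the $X_3$- or $X_4$-block that are needed to trigger the moves of Lemma~\ref{lem:graph_moves}, while Lemma~\ref{lem:graph_lemma8} controls the gleams of the disk regions created during the reduction. Applying these moves together with connected-sum reductions (Proposition~\ref{prop:connected_sum}) and YV- and IH-moves, I expect the decorated encoding graph to collapse to a canonical shape recording only $g>0$, the number $m$ of vertices of type~$\vt{Y\!{}_{12}}$, and the sign distinguishing $\vt{X_3}$ from $\vt{X_4}$; that is, a standard shadow determined by the single integer $n=\pm 2^m g$.

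Finally I would convert this canonical shadow into a banded unlink diagram. Reversing the construction of Section~\ref{sec:Banded unlink diagrams}, reading off the disk region $D_0$ and the band coming from the $X_3$- or $X_4$-block, and interpreting gleams as framings and band twists via Remark~\ref{rmk:BL_gleam}, the standard shadow should yield exactly the diagram of Figure~\ref{fig:Kn} with parameter $n$, so $K$ is diffeomorphic to $K_n$; the type $\vt{X_3}$ versus $\vt{X_4}$ matches the sign of $n$, consistent with $K_{-1}$ being the spun trefoil. The main obstacle is exactly these last two steps rather than the group bookkeeping: the presentations $\langle x,\mu\mid x^2\mu^n x^{-1}\mu^{\mp n}\rangle$ do not by themselves determine a smooth $2$-knot, so I must push the simplification all the way to a genuinely canonical shadow and then verify diagrammatically that the resulting banded unlink diagram is precisely the one in Figure~\ref{fig:Kn}. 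Tracking gleams, hence framings and band twists, through every move, and confirming that $n=2^m g$ is the invariant the diagram actually reads off, is where the real work lies.
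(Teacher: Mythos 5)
Your proposal is correct and follows essentially the same route as the paper's proof: the same reduction via Lemmas~\ref{lem:X34}, \ref{lem:X8910} and \ref{lem:X11} to the two surviving types $\vt{X_3}$ and $\vt{X_4}$, the same simplification of the decorated encoding graph using Lemma~\ref{lem:compressing_disks_X_3-4}, Lemma~\ref{lem:leaf} and the moves of Lemma~\ref{lem:graph_moves} to reach a canonical shadow determined by $n=\pm 2^m g$, and the same final translation into a banded unlink diagram followed by handle and band moves to arrive at Figure~\ref{fig:Kn}. The two steps you flag as the ``real work'' are exactly what the paper carries out explicitly in Figures~\ref{fig:graph_Kn3} and \ref{fig:shadow_Kn_cal2} (using the dichotomy of Lemma~\ref{lem:compressing_disks_X_3-4}, whose first alternative is discarded because it would yield a complexity-$0$ shadow, and the translation into Kirby/banded-unlink data from \cite{KN20} as in Remark~\ref{rmk:Kirby_diag}).
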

\begin{figure}[tbp]
\includegraphics[width=45mm]{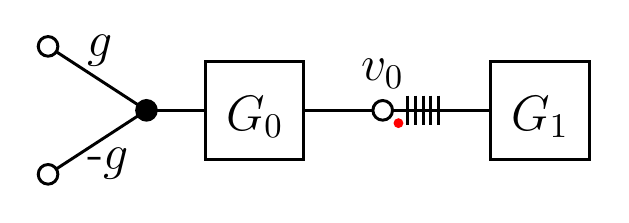}
\caption{A decorated graph encoding a shadow of $K$ having a subpolyhedron homeomorphic to $X_3$. }
\label{fig:graph_Kn1}
\end{figure}
\begin{figure}[tbp]
\includegraphics[width=90mm]{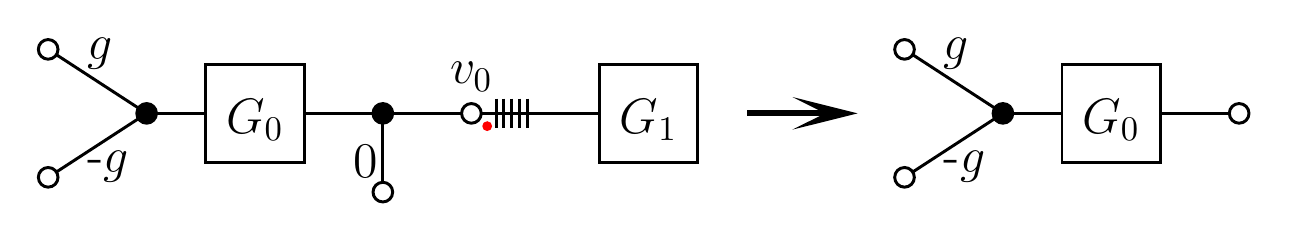}
\caption{A decorated graph encoding a shadow of $K$ having a subpolyhedron homeomorphic to $X_3$ 
such that the boundary component of the subpolyhedron with length $1$ has a horizontal compressing disk. }
\label{fig:graph_Kn2}
\end{figure}
\begin{figure}[tbp]
\includegraphics[width=90mm]{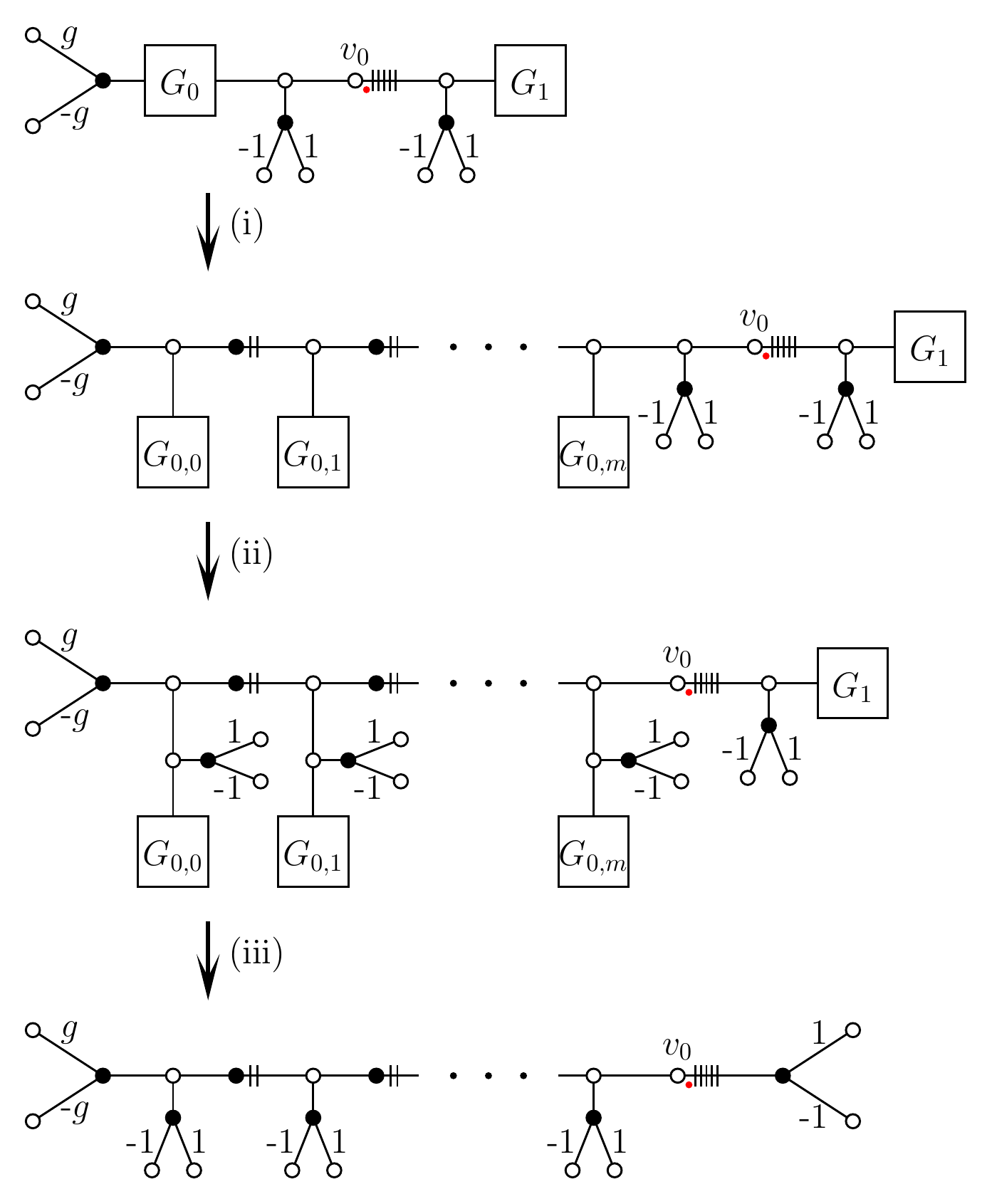}
\caption{A decorated graph encoding a shadow of $K$ having a subpolyhedron homeomorphic to $X_3$ 
such that the two boundary components of the subpolyhedron has vertical compressing disks. }
\label{fig:graph_Kn3}
\end{figure}
\begin{proof}
Let $X$ be a shadow of $K$ with $c(X)=1$, 
and let $G$ be a decorated tree graph for $X$. 
By Lemmas~\ref{lem:X34}, \ref{lem:X8910} and \ref{lem:X11}, 
$G$ has exactly one vertex $v_0$ of type~$\vt{X_3}$ or $\vt{X_4}$. 
Here we suppose that $v_0$ is of type~$\vt{X_3}$. 
Then $G$ is as shown in Figure~\ref{fig:graph_Kn1} by Lemma~\ref{lem:X34}. 

If Lemma~\ref{lem:compressing_disks_X_3-4}-(1) holds, 
then the graph $G$ can be assumed as shown in the left of Figure~\ref{fig:graph_Kn2}. 
Applying a connected-sum reduction, we obtain the right graph. 
This graph encodes a simple polyhedron without true vertex, 
which is not our focus here. 

We then assume that Lemma~\ref{lem:compressing_disks_X_3-4}-(2) holds, 
and the graph $G$ is as shown in the top of Figure~\ref{fig:graph_Kn3}. 
Let $U$ and $V$ be the subpolyhedra corresponding to the subgraphs $G_0$ and $G_1$, respectively. 
By Lemmas~\ref{lem:X34}, we have $\pi_1(U)\cong\langle \gamma,\gamma_1\mid \gamma^{2^m}\gamma_1\rangle$ and $V$ is simply-conneted. 
We apply Lemmas~\ref{lem:pi_1_of_subpolyhedron1-2} and \ref{lem:pi_1_of_subpolyhedron2-2} to $U$ and $V$, respectively, 
and then the graph $G$ can be assumed to be the second graph in Figure~\ref{fig:graph_Kn3}. 
Note that subgraphs $G_{1,0},G_{1,1}\ldots,G_{1,m}$ in the figure 
encode simply-connected subpolyhedra by Lemma~\ref{lem:pi_1_of_subpolyhedron2-2}. 
The move in Figure~\ref{fig:graph_Kn3}-(ii) is done by iterating moves-(e). 
and the move in Figure~\ref{fig:graph_Kn3}-(iii) is done by that in Figure~\ref{fig:graph_lemma_leaf5} 
(c.f. Lemma~\ref{lem:leaf}). 

From the bottom graph in Figure~\ref{fig:graph_Kn3}, 
we obtain a banded unlink diagram shown in the top of Figure~\ref{fig:shadow_Kn_cal2}. 
\begin{figure}[tbp]
\includegraphics[width=.95\hsize]{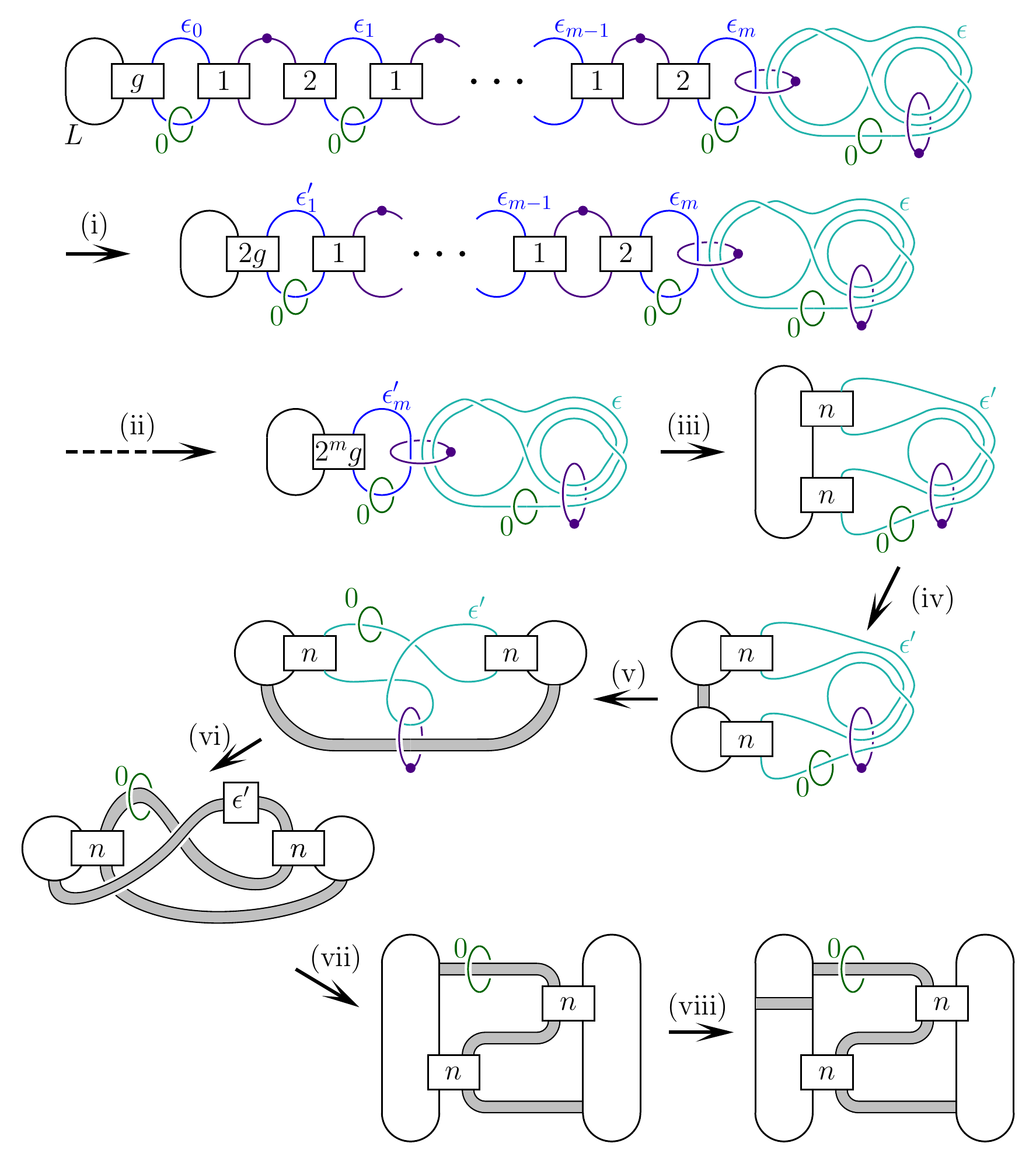}
\caption{A decorated graph of a shadow of $2$-knot with complexity one. }
\label{fig:shadow_Kn_cal2}
\end{figure}
We refer the reader to \cite{KN20} for a translation of a shadow into a Kirby diagram, and see also Remark~\ref{rmk:Kirby_diag} and \cite{CT08, Mar05}. 
Though the framings $\epsilon_0, \epsilon_1,\ldots,\epsilon_m$ and $\epsilon$ are determined from the gleams, 
each of them can be assumed to be $0$ or $1$ since there is a $0$-framed knot as a meridian. 
The first move of Figure~\ref{fig:shadow_Kn_cal2}-(i) is obtained by handle-slides and a cancellation of a 1-2 pair. 
We iterate the same process in Figure~\ref{fig:shadow_Kn_cal2}-(ii). 
The move~(iii) is obtained by handle-slides and a cancellation of a 1-2 pair, and we set $n=2^mg$ and $\epsilon'=0$ or $1$. 
The move~(iv) is done by a cup and 2-handle band swims. 
The move~(v) is an isotopy, and (iv) is obtained by a 2-handle band slide and a cancellation of a 1-2 pair. 
The move~(vii) is done by an isotopy if $\epsilon'=1$, and we also need 2-handle band slides if $\epsilon'=0$. 
The move~(viii) is obtained by a cap and 2-handle band slides. 
Finally, applying a 2-handle band swim and a cancellation of a 2-3 pair, 
we obtain the diagram shown in Figure~\ref{fig:Kn}. 

One can show the case where $v_0$ is of type~$\vt{X_4}$ in a similar way to the above, so we skip the details. 
\end{proof}

\begin{remark}
\label{rmk:Kirby_diag}
The method of a translation of a shadow only to a Kirby diagram is treated in \cite{KN20}, and that to a banded unlink diagram is actually not discussed. 
However, we can draw a diagram as shown in Figure~\ref{fig:shadow_Kn_cal2} 
by considering a decomposition $X=\Nbd(K;X)\cup X'$ and using \cite[Lemmas~1.1 and 1.2]{KN20}. 
Note that $\Nbd(K;X)$ is a shadow of $\Nbd(K;S^4)\cong S^2\times D^2$ and 
$\partial \Nbd(K;X)$ is a knot in $\partial \Nbd(K;S^4)\cong S^2\times S^1$ such that it winds $g$ times along $\{\mathrm{pt.}\}\times S^1$. 
\end{remark}
\begin{remark}
\label{rmk:Alex_poly}
If $v_0$ is of type~$\vt{X_3}$, 
the $2$-knot $K$ is diffeomorphic to $K_n$ with $n=2^mg>0$. 
On the other hand, if $v_0$ is of type~$\vt{X_4}$, 
the $2$-knot $K$ is diffeomorphic to $K_n$ with $n=-2^mg<0$.  
\end{remark}

The following implies that there exist infinitely many $2$-knots with shadow-complexity $1$. 
\begin{proposition}
\label{prop:Kn_diffeo}
The $2$-knots $K_n$ and $K_{n'}$ are not equivalent unless $n=n'$. 
\end{proposition}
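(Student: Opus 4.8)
The plan is to distinguish the $K_n$ by their Alexander polynomials, which depend only on the knot group together with its meridian and hence are invariants of the $2$-knot. First I would record the groups. By Lemma~\ref{lem:X34} and Remark~\ref{rmk:Alex_poly}, the group of $K_n$ for $n>0$ has the presentation $\langle x,\mu\mid x^2\mu^n x^{-1}\mu^{-n}\rangle$ arising from a vertex of type~$\vt{X_3}$, while for $n<0$, setting $N=-n>0$, the type~$\vt{X_4}$ case gives $\langle x,\mu\mid x^2\mu^{N}x^{-1}\mu^{N}\rangle$ (Lemma~\ref{lem:X34}~(3), after rewriting its relator in terms of the meridian $\mu$). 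In both cases $\mu$ is the meridian and generates $H_1(S^4\setminus K_n)\cong\Z=\langle t\rangle$; abelianizing the relator shows $x\mapsto t^{0}$ when $n>0$ and $x\mapsto t^{-2N}$ when $n<0$. For $n=0$ the knot is trivial and $G(K_0)\cong\Z$.

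Next I would compute the Alexander module $\mathcal{A}=G'/G''$ as a module over $\Lambda=\Z[t^{\pm1}]$, with $t$ acting by conjugation by $\mu$. In each nontrivial case $\mathcal{A}$ is cyclic: for $n>0$ it is generated by the class of $x$ (note $x\in G'$), and for $n<0$ by the class of $c=x\mu^{2N}\in G'$. Rewriting the single defining relator as a product of $\mu$-conjugates of this generator and reading off its image in $G'/G''$ yields one relation. A uniform computation for a relator of the form $x^2\mu^a x^{-1}\mu^b$, with $(a,b)=(n,-n)$ or $(N,N)$, gives $\mathcal{A}\cong\Lambda/(t^{a+b}-t^{a}+1)$, so that
\[
\Delta_{K_n}(t)\doteq
\begin{cases}
2-t^{n} & (n>0),\\
1 & (n=0),\\
t^{2|n|}-t^{|n|}+1 & (n<0),
\end{cases}
\]
up to a unit $\pm t^{k}$ of $\Lambda$; one checks $\Delta_{K_n}(1)=\pm1$ in every case, as required for a $2$-knot.

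Finally I would show these polynomials are pairwise non-associate. The number of nonzero terms of $\Delta_{K_n}$ is invariant under units and equals $1$, $2$, or $3$ according as $n=0$, $n>0$, or $n<0$, which separates the three sign classes. Within each class the breadth, i.e.\ the difference between the largest and smallest exponents occurring with nonzero coefficient, is also a unit-invariant and equals $0$, $n$, or $2|n|$ respectively, and hence recovers $|n|$ and therefore $n$. Thus $\Delta_{K_n}\doteq\Delta_{K_{n'}}$ forces $n=n'$, and since the Alexander polynomial is an invariant of the $2$-knot, $K_n$ and $K_{n'}$ are inequivalent whenever $n\neq n'$.

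I expect the main obstacle to be the negative case. There the defining relator carries $\mu^{+N}$ rather than $\mu^{-N}$, so $x$ no longer lies in the commutator subgroup and one must pass to $c=x\mu^{2N}$ before the module structure is visible; the delicate step is the bookkeeping of conjugation exponents when expressing the relator as a product of conjugates of $c$, and it is exactly this that turns the two-term polynomial $2-t^{n}$ of the positive case into the three-term polynomial $t^{2N}-t^{N}+1$. A secondary point is to confirm that each Alexander module is genuinely cyclic with no relations beyond the one coming from the defining relator, which holds because each presentation has a single relation and the chosen generator together with $\mu$ normally generates the group.
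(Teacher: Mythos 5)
Your proposal is correct and follows essentially the same route as the paper: both extract the group presentations of $G(K_n)$ from Lemma~\ref{lem:X34} together with Remark~\ref{rmk:Alex_poly}, compute the Alexander polynomials ($2-t^n$ for $n>0$ and, up to a unit, $1-t^n+t^{2n}$ for $n<0$), and conclude by distinguishing these polynomials. Your write-up is somewhat more careful than the paper's at the final step, where the paper simply asserts the polynomials are distinct while you verify non-associateness (via the number of nonzero terms and the breadth), which is a worthwhile precision since Alexander polynomials are only defined up to units $\pm t^k$.
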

\begin{proof}
From Lemma~\ref{lem:X34} and Remark~\ref{rmk:Alex_poly}, 
we have 
\[
G(K_n)\cong\langle x,y\mid x^2y^{|n|}x^{-1}y^{-n} \rangle, 
\]
and its Alexander polynomial is given as following: 
\begin{align*}
\varDelta_{K_n}(t)=
\begin{cases}
2-t^n          &\text{if }n\geq0,\\
1-t^{n}+t^{2n} &\text{if }n<0.
\end{cases}
\end{align*}
If $n\ne n'$, two polynomials $\varDelta_{K_n}(t)$ and $\varDelta_{K_{n'}}(t)$ are distinct. 
\end{proof}
At last, we give the proof of the complexity one case. 

\begin{theorem}
\label{thm:sc=1}
A $2$-knot $K$ with $G(K)\not\cong\Z$ has shadow-complexity $1$ if and only if 
$K$ is diffeomorphic to $K_n$ for some non-zero integer $n$. 
\end{theorem}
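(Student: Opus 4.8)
The plan is to assemble Theorem~\ref{thm:sc=1} from the pieces already established in the paper, treating it as a straightforward combination of a ``only if'' direction and an ``if'' direction. The ``only if'' direction is precisely the content of Theorem~\ref{thm:sc=1_onlyifpart}: assuming $\shco(K)=1$ and $G(K)\not\cong\Z$, that theorem produces a diffeomorphism $K\cong K_n$ for some nonzero integer $n$. So the first step is simply to invoke Theorem~\ref{thm:sc=1_onlyifpart} and observe that it immediately yields one implication. The work that remains is the converse: that each $K_n$ with $n\neq0$ actually realizes shadow-complexity exactly $1$.

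For the ``if'' direction I would argue in two halves, namely the upper bound $\shco(K_n)\leq 1$ and the lower bound $\shco(K_n)\geq 1$. For the upper bound, the banded unlink diagram of $K_n$ in Figure~\ref{fig:Kn} is exactly the diagram that appears at the end of the proof of Theorem~\ref{thm:sc=1_onlyifpart}, obtained from a decorated graph encoding a shadow with a single vertex of type $\vt{X_3}$ (or $\vt{X_4}$ for $n<0$). Reversing the reduction sequence, or more directly reading off the shadow encoded by the bottom graph of Figure~\ref{fig:graph_Kn3}, gives an explicit shadow of $K_n$ containing exactly one true vertex; hence $\shco(K_n)\leq 1$. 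For the lower bound, I would use Theorem~\ref{thm:complexity0}: a $2$-knot has shadow-complexity $0$ if and only if it is unknotted. By Proposition~\ref{prop:Kn_diffeo} (together with the computation of $G(K_n)$ and $\varDelta_{K_n}(t)$ in its proof), the knot $K_n$ for $n\neq0$ has $G(K_n)\cong\langle x,y\mid x^2y^{|n|}x^{-1}y^{-n}\rangle$, which is not infinite cyclic and in particular $K_n$ is knotted. Therefore $\shco(K_n)\neq 0$, forcing $\shco(K_n)=1$. This also confirms the hypothesis $G(K_n)\not\cong\Z$, so that $K_n$ genuinely falls under the scope of the theorem.

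The main obstacle, such as it is, lies not in the logical assembly but in making sure the two directions fit together cleanly, in particular verifying that every $K_n$ with $n\neq0$ does satisfy $G(K_n)\not\cong\Z$ so that the biconditional is not vacuous on one side. This is already handled by the explicit Alexander polynomial computation in Proposition~\ref{prop:Kn_diffeo}: since $\varDelta_{K_n}(t)$ is nonconstant for $n\neq0$, the abelianized Alexander module is nontrivial and $G(K_n)$ cannot be $\Z$. With this in hand, the proof reduces to citing Theorem~\ref{thm:sc=1_onlyifpart} for one implication and combining the explicit shadow of $K_n$ with Theorem~\ref{thm:complexity0} and Proposition~\ref{prop:Kn_diffeo} for the other.

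\begin{proof}
The ``only if'' part is exactly Theorem~\ref{thm:sc=1_onlyifpart}. We prove the ``if'' part. Let $n$ be a nonzero integer. By Proposition~\ref{prop:Kn_diffeo} and its proof, we have $G(K_n)\cong\langle x,y\mid x^2y^{|n|}x^{-1}y^{-n}\rangle$ with nonconstant Alexander polynomial $\varDelta_{K_n}(t)$, so that $G(K_n)\not\cong\Z$; in particular $K_n$ is knotted, and hence $\shco(K_n)\geq 1$ by Theorem~\ref{thm:complexity0}. On the other hand, the shadow encoded by the bottom decorated graph in Figure~\ref{fig:graph_Kn3}, whose true vertex is the single vertex of type~$\vt{X_3}$ (or $\vt{X_4}$ when $n<0$), is a shadow of $K_n$ with exactly one true vertex, so $\shco(K_n)\leq 1$. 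Therefore $\shco(K_n)=1$, completing the proof.
\end{proof}
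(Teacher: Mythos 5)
Your proposal is correct and follows essentially the same route as the paper: the ``only if'' part is delegated to Theorem~\ref{thm:sc=1_onlyifpart}, the upper bound $\shco(K_n)\leq 1$ comes from the explicit one-vertex shadow whose identification with $K_n$ reuses the banded-unlink computation from that proof (the paper cites Figure~\ref{fig:shadow_Kn}, which is the same graph you read off from the bottom of Figure~\ref{fig:graph_Kn3}), and the lower bound comes from Theorem~\ref{thm:complexity0} together with Proposition~\ref{prop:Kn_diffeo}. Your explicit check that $G(K_n)\not\cong\Z$ via the non-unit Alexander polynomial is a small but welcome addition that the paper leaves implicit.
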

\begin{proof}
The only if part has been already discussed in Theorem~\ref{thm:sc=1_onlyifpart}. 

Let $n$ be an arbitrary non-zero integer. 
The banded unlink diagram of $K_n$ shown in Figure~\ref{fig:Kn} 
can be obtained from a shadow encoded in Figure~\mbox{\ref{fig:shadow_Kn}-(i)} or -(ii) 
in the same way as in the proof of Theorem~\ref{thm:sc=1_onlyifpart}. 
Therefore, $\shco(K_n)\leq 1$, and hence $\shco(K_n)=1$ by Proposition~\ref{prop:Kn_diffeo} and Theorem~\ref{thm:complexity0}. 
\end{proof}
\begin{figure}[tbp]
\includegraphics[width=.8\hsize]{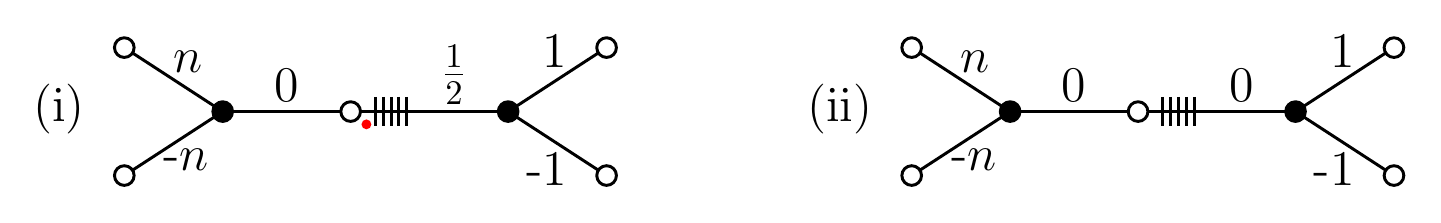}
\caption{A decorated graph of a shadow of $2$-knot with complexity one. }
\label{fig:shadow_Kn}
\end{figure}
\begin{remark}
\label{rmk:not_finite-to-one}
Let $X$ be a shadow of $K_n$ encoded by a decorated graph as shown in Figure~\mbox{\ref{fig:shadow_Kn}}. 
Its singular set $S(X)$ has 3 connected components: two circles and one 8-shaped graph. 
Then we can obtain a special shadow of $K_n$ from $X$ by applying $(0\to2)$-moves twice \cite{Cos05,Tur94}, and hence we have $\spshco(K_n)\leq5$. 
This implies that the special shadow-complexity for $2$-knots is not 
a finite-to-one invariant, 
while that for closed $4$-manifolds is finite-to-one \cite[Corollary~2.7]{Mar05}. 
\end{remark}



\begin{thebibliography}{99}
\bibitem{Art25}
E. Artin, 
{\it Zur Isotopie zweidimensionaler F\"achen im $R_4$}, 
Abh. Math. Sem. Univ. Hamburg {\bf 4} (1925) 174--177.

\bibitem{BCTT22}
R. Blair, M. Campisi, S. A. Taylor and M. Tomova, 
{\it Kirby–Thompson distance for trisections of knotted surfaces}
J. London Math. Soc. {\bf 105} (2022), no. 2, 765--793. 




\bibitem{CRS97}
J. S. Carter, J. H. Rieger and M. Saito, 
{\it A combinatorial description of knotted surfaces and their isotopies},
Adv. Math. {\bf 127} (1997), no. 1, 1--51.

\bibitem{Cos05}
F. Costantino, 
{\it Shadows and branched shadows of $3$ and $4$-manifolds},
Scuola Normale Superiore, Edizioni della Normale, Pisa, Italy, 2005.

\bibitem{Cos06}
F. Costantino,
{\it Stein domains and branched shadows of $4$-manifolds},
Geom. Dedicata {\bf 121} (2006), 89--111.

\bibitem{Cos06b}
F. Costantino,
{\it Complexity of $4$-manifolds}, 
Experiment. Math. {\bf 15} (2006), no. 2, 237--249.

\bibitem{Cos08}
F. Costantino,
{\it Branched shadows and complex structures on $4$-manifolds},
J. Knot Theory Ramifications {\bf 17} (2008), no. 11, 1429--1454.

\bibitem{CT08}
F. Costantino and D.~Thurston,
{\it $3$-manifolds efficiently bound $4$-manifolds},
J. Topol. {\bf 1} (2008), no. 3, 703--745. 

\bibitem{FQ90}
M. H. Freedman and F. Quinn, 
{\it Topology of 4-manifolds}, 
Princeton Mathematical Series {\bf 39}. Princeton University Press, Princeton, NJ, 1990. 

\bibitem{GK16}
D. Gay and R. Kirby, 
{\it Trisecting $4$-manifolds}, 
Geom. Topol. {\bf 20} (2016), no. 6, 3097--3132. 

\bibitem{GST10}
R.E. Gompf, M. Scharlemann and A. Thompson, 
{\it Fibered knots and potential counterexamples to the Property $2$R and Slice-Ribbon Conjectures},
Geom. Topol. {\bf 14} (2010) 2305--2347. 


\bibitem{HKM20}
M. C. Hughes, S. Kim and M. Miller, 
{\it Isotopies of surfaces in $4$-manifolds via banded unlink diagrams}, 
Geom. Topol. {\bf 24} (2020), no. 3, 1519--1569.

\bibitem{IK17}
M.~Ishikawa and Y.~Koda, 
{\it Stable maps and branched shadows of $3$-manifolds},
Math. Ann. {\bf 367} (2017), no. 3-4, 1819--1863.

\bibitem{IN20}
M. Ishikawa and H. Naoe, 
{\it Milnor fibration, A'Campo's divide and Turaev's shadow}, 
Singularities ----- Kagoshima 2017, 
Proceedings of the 5th Franco-Japanese-Vietnamese Symposium on Singularities, 
World Scientific Publishing, 2020, pp. 71--93. 

\bibitem{Jab16}
M. Jab\l onowski, 
{\it On a banded link presentation of knotted surfaces}, 
J. Knot Theory Ramifications {\bf 25} (2016), no. 3, 1640004, 11 pp. 

\bibitem{Kaw21}
A. Kawauchi, 
{\it Ribbonness of a stable-ribbon surface-link, I. A stably trivial surface-link}
Topology Appl. {\bf 301} (2021), Paper No. 107522, 16 pp. 

\bibitem{Kaw22}
A. Kawauchi, 
{\it Uniqueness of an orthogonal $2$-handle pair on a surface-link}, 
preprint (2022), added as a separated paper in arXiv:1804.02654v12. 

\bibitem{KSS82}
A. Kawauchi, T. Shibuya and S. Suzuki, 
{\it Descriptions on surfaces in four-space, I, Normal forms},
Math. Sem. Notes Kobe Univ. {\bf 10} (1982), 72--125. 

\bibitem{KK08}
C. Kearton and V. Kurlin, 
{\it All $2$–dimensional links in $4$–space live inside a universal $3$–dimensional polyhedron}, 
Algebr. Geom. Topol. {\bf 8} (2008), no. 3, 1223--1247. 

\bibitem{KMN18}
Y. Koda, B. Martelli and H. Naoe, 
{\it Four-manifolds with shadow-complexity one}, 
to appear in Ann. Fac. Sci. Toulouse. 

\bibitem{KN20}
Y. Koda and H. Naoe, 
{\it Shadows of acyclic $4$-manifolds with sphere boundary},
Algebr. Geom. Topol. {\bf 20} (2020), no. 7, 3707--3731

\bibitem{LP72}
F. Laudenbach, V. Po\'{e}naru, 
{\it A note on $4$-dimensional handlebodies}, 
Bull. Soc. Math. France {\bf 100} (1972), 337--344.

\bibitem{Lom81}
S. J. Lomonaco Jr., 
{\it The Homotopy Groups of Knots I. How to Compute the Algebraic $2$-Type},
Pacific J. Math. {\bf 95} (1981), no. 2, 349--390.

\bibitem{Mat03}
S Matveev, 
{\it Algorithmic topology and classification of $3$–manifolds}, 
Algorithms Comput. Math. 9, Springer (2003). 

\bibitem{Mar05} 
B. Martelli, 
{\it Links, two-handles, and four-manifolds}, 
Int. Math. Res. Not. IMRN  {\bf 2005},  no. 58, 3595--3623. 

\bibitem{Mar11} 
B. Martelli, 
{\it Four-manifolds with shadow-complexity zero}, 
Int. Math. Res. Not. IMRN  {\bf 2011},  no. 6, 1268--1351. 

\bibitem{MZ17}
J. Meier and A. Zupan, 
{\it Bridge trisections of knotted surfaces in $S^4$}, 
Trans. Amer. Math. Soc. {\bf 369} (2017), no. 10, 7343--7386. 

\bibitem{MZ18}
J. Meier and A. Zupan, 
{\it Bridge trisections of knotted surfaces in $4$-manifolds}, 
Proc. Natl. Acad. Sci. USA {\bf 115} (2018), no. 43, 10880--10886.


\bibitem{Nao17}
H. Naoe, 
{\it Shadows of $4$-manifolds with complexity zero and polyhedral collapsing}, 
Proc. Amer. Math. Soc. {\bf 145} (2017), no. 10, 4561--4572. 


\bibitem{Ros98}
D. Roseman, 
{\it Reidemeister-type moves for surfaces in four-dimensional space}, 
Knot theory (Warsaw, 1995), 347--380,
Banach Center Publ., 42, Polish Acad. Sci. Inst. Math., Warsaw, 1998.

\bibitem{SS05}
M. Saito and S. Satoh, 
{\it The spun trefoil needs four broken sheets},  
J. Knot Theory Ramifications {\bf 14} (2005), no. 7, 853--858.

\bibitem{Sat00}
S. Satoh, 
{\it On non-orientable surfaces in $4$-space which are projected with at most one triple point}, 
Proc. Amer. Math. Soc. {\bf 128} (2000), no. 9, 2789--2793.


\bibitem{Sat09}
S. Satoh,
{\it Triviality of a $2$-knot with one or two sheets}, 
Kyushu J. Math. {\bf 63} (2009), no. 2, 239--252. 


\bibitem{SS04}
S. Satoh and A. Shima, 
{\it The $2$-twist-spun trefoil has the triple point number four}, 
Trans. Amer. Math. Soc. {\bf 356} (2004), no. 3, 1007--1024.



\bibitem{Swe01}
F. J. Swenton, 
{\it On a calculus for $2$-knots and surfaces in $4$-space}, 
J. Knot Theory Ramifications {\bf 10} (2001), no. 8, 1133-1141. 

\bibitem{Tur94}
V.G. Turaev, 
{\it Quantum invariants of knots and $3$-manifolds}, 
De Gruyter Studies in Mathematics, vol 18, Walter de Gruyter \& Co., Berlin, 1994.

\bibitem{Yaj64}
T. Yajima, 
{\it On simply knotted spheres in $R^4$}, 
Osaka J. Math. {\bf 1} (1964), 133--152.

\bibitem{Yos94}
K. Yoshikawa, 
{\it An enumeration of surfaces in four-space}, 
Osaka J. Math. \textbf{31} (1994), no. 3, 497--522.

\bibitem{Zee65}
E.C. Zeeman, 
{\it Twisting spun knots}, 
Trans. Amer. Math. Soc. {\bf 115} (1965), 471--495. 

\end{thebibliography}
\end{document}